\DeclareMathAlphabet{\mathbbold}{U}{bbold}{m}{n}
\def\th@plain{%
	\thm@notefont{}%
	\itshape %
}
\def\th@definition{%
	\thm@notefont{}%
	\normalfont %
}
\theoremstyle{plain}
\newtheorem{theorem}{Theorem}[section]
\newtheorem*{theorem*}{Theorem}
\newtheorem*{conj*}{Conjecture}
\newtheorem{lemma}[theorem]{Lemma}
\newtheorem{prop}[theorem]{Proposition}
\newtheorem{cor}[theorem]{Corollary}
\newtheorem{thmx}{Theorem}
\theoremstyle{definition}
\newtheorem{definition}[theorem]{Definition}
\newtheorem{rem}[theorem]{Remark}
\theoremstyle{remark}
\newtheorem*{remark}{Remark}
\numberwithin{equation}{section}
\numberwithin{theorem}{section}
\numberwithin{table}{section}
\numberwithin{figure}{section}
\providecommand{\defn}[1]{\emph{#1}}
\renewcommand{\leq}{\leqslant}
\renewcommand{\geq}{\geqslant}
\newcommand{\diam}  {\operatorname{diam}}
\newcommand{\R}{\mathbb{R}}
\newcommand{\C}{\mathbb{C}}
\newcommand{\N}{\mathbb{N}}
\newcommand{\Z}{\mathbb{Z}}
\newcommand{\CDach}{\widehat{\mathbb{C}}}
\newcommand{\D}{\mathbb{D}}
\providecommand{\abs}[1]{\lvert#1\rvert}
\providecommand{\Absbig}[1]{\bigl|#1\bigr|}
\providecommand{\Absbigg}[1]{\biggl|#1\biggr|}
\providecommand{\norm}[1]{\|#1\|}
\providecommand{\Normbig}[1]{\bigl\|#1\bigr\|}
\renewcommand{\:}{\colon}
\newcommand{\mesh}{\operatorname{mesh}}
\newcommand{\CCC}{C}
\newcommand{\BBB}{B}
\newcommand{\PPP}{\mathcal{P}}
\newcommand{\MMM}{\mathcal{M}}
\newcommand{\SAA}{\mathscr{M}}
\newcommand{\SBB}{\mathscr{B}}
\newcommand{\vertiii}[1]{{\left\vert\kern-0.25ex\left\vert\kern-0.25ex\left\vert #1
		\right\vert\kern-0.25ex\right\vert\kern-0.25ex\right\vert}}
\newcommand{\wt}[1]{\widetilde{#1}}
\renewcommand{\=}{\coloneqq}
\renewcommand{\O}{\mathcal{O}}
\newcommand{\wh}{\widehat}
\newcommand{\cA}{\mathcal{A}}
\newcommand{\cC}{\mathcal{C}}
\renewcommand{\cD}{\mathcal{D}}
\newcommand{\cF}{\mathcal{F}}
\newcommand{\cG}{\mathcal{G}}
\newcommand{\cK}{\mathcal{K}}
\renewcommand{\cL}{\mathcal{L}}
\newcommand{\cO}{\mathcal{O}}
\newcommand{\cP}{\mathcal{P}}
\newcommand{\cQ}{\mathcal{Q}}
\renewcommand{\cR}{\mathcal{R}}
\newcommand{\cS}{\mathcal{S}}
\newcommand{\cU}{\mathcal{U}}
\newcommand{\cV}{\mathcal{V}}
\newcommand{\fA}{\mathfrak{A}}
\newcommand{\fC}{\mathfrak{C}}
\newcommand{\fK}{\mathfrak{K}}
\newcommand{\fT}{\mathfrak{T}}
\newcommand{\sB}{\mathscr{B}}
\newcommand{\hF}{\widehat{F}}
\newcommand{\hN}{\widehat{N}}
\newcommand{\hP}{\widehat{P}}
\newcommand{\hf}{\widehat{f}}
\newcommand{\hg}{\widehat{g}}
\newcommand{\hmu}{\widehat{\mu}}
\newcommand{\hnu}{\widehat{\nu}}
\newcommand{\hvarphi}{\widehat{\varphi}}
\newcommand{\ox}{\overline{x}}
\newcommand{\oz}{\overline{z}}
\newcommand{\ophi}{\overline{\phi}}
\newcommand{\udU}{\underline{U}}
\newcommand{\udx}{\underline{x}}
\newcommand{\udy}{\underline{y}}
\newcommand{\udz}{\underline{z}}
\newcommand{\tE}{\widetilde{E}}
\newcommand{\tF}{\widetilde{F}}
\newcommand{\tu}{\widetilde{u}}
\newcommand{\tv}{\widetilde{v}}
\newcommand{\tw}{\widetilde{w}}
\newcommand{\tepsilon}{\tilde{\epsilon}}
\newcommand{\tpi}{\widetilde{\pi}}
\newcommand{\tphi}{\widetilde{\phi}}
\newcommand{\tvarphi}{\widetilde{\varphi}}
\newcommand{\diff}{\,\mathrm{d}}
\providecommand{\zeroto}[1]{[\![#1]\!]}  	%
\providecommand{\oneto}[1]{]\!]#1]\!]}  	%
\providecommand{\vect}[3]{#1|_{#2}^{#3}}
\renewcommand{\limsup}{\varlimsup}
\newcommand{\tpk}{\fK}
\newif\ifFirstInitial
\begin{document}
	\title[Thermodynamic formalism for correspondences]{Thermodynamic formalism for correspondences}
	\author{Xiaoran~Li \and Zhiqiang~Li \and Yiwei~Zhang}
	\thanks{Z.~Li and X.~Li were partially supported by NSFC Nos.~12471083, 12101017, 12090010, 12090015, and BJNSF No.~1214021. X.~Li was also supported by Peking University Funding Nos.~7101303303 and~6201001891. Y.~Zhang was partially supported by NSFC Nos.~12161141002 and 12271432, and USTC--AUST Math Basic Discipline Research Center.}
	\address{Xiaoran~Li, School of Mathematical Sciences, Peking University, Beijing 100871, China}
	\email{2000010758@alumni.pku.edu.cn}
	\address{Zhiqiang~Li, School of Mathematical Sciences \& Beijing International Center for Mathematical Research, Peking University, Beijing 100871, China}
	\email{zli@math.pku.edu.cn}
	\address{Yiwei~Zhang, Department of Mathematics \& SUSTech International Center for Mathematics, Southern University of Science and Technology, Shenzhen, Guangdong 518055, China}
	\email{zhangyw@sustech.edu.cn}

	\subjclass[2020]{Primary: 37F05; Secondary: 54C60, 37D35, 37C30, 37D20}
	
	\keywords{thermodynamic formalism, correspondence, variational principle, equilibrium state, Ruelle operator, anti-holomorphic, transition probability kernel.}

	\begin{abstract}
		In this article, we investigate the Variational Principle and develop thermodynamic formalism for correspondences. We define the measure-theoretic entropy for transition probability kernels and topological pressure for correspondences. Based on these two notions, we establish the following results: 
		
		The Variational Principle holds and equilibrium states exist for continuous potential functions, provided that the correspondence satisfies some expansion property called forward expansiveness. If, in addition, the correspondence satisfies the specification property and the potential function is Bowen summable, then the equilibrium state is unique. On the other hand, for a distance-expanding, open, strongly transitive correspondence and a H\"{o}lder continuous potential function, there exists a unique equilibrium state, and the backward orbits are equidistributed. Furthermore, we investigate the Variational Principle for general correspondences.
		
		In complex dynamics, we establish the Variational Principle for the Lee--Lyubich--Markorov--Mukherjee anti-holomorphic correspondences, which are matings of some anti-ho\-lo\-mor\-phic rational maps with anti-Hecke groups and are not forward expansive. We also show a Ruelle--Perron--Frobenius theorem for a family of hyperbolic holomorphic correspondences of the form $\boldsymbol{f}_c (z)= z^{q/p}+c$.
	\end{abstract}
	
	\maketitle
	
	\setcounter{tocdepth}{1}
	\tableofcontents
	
	\section{Introduction}     \label{sct_Introduction}
		
	A \emph{correspondence} $T$ on a compact metric space $X$ is a map from $X$ to the set consisting of all non-empty closed subsets of $X$ with the property that the set $\bigl \{ (x_1 ,  x_2)\in X^2  :  x_2 \in T(x_1) \bigr\}$ is closed in $X^2$. As a natural generalization of (single-valued) continuous maps, correspondences appear abundantly in control theory \cite{Po21}, differential games \cite{Pe93}, mathematical economics and game theory \cite{CPMP08}, qualitative physics and viability \cite{Fo88}, and continuous selections \cite{Mic561, Mic562, Mic57}. To quote from the monograph of {\ifFirstInitial J.~P.~\fi}Aubin and {\ifFirstInitial H.~\fi}Frankowska \cite{AF90}: ``Who needs set-valued analysis? Everyone, we are tempted to say.'' Indeed, they listed eight famous examples of correspondences studied by {\ifFirstInitial J.\fi}~Hadamard, {\ifFirstInitial J.~\fi}von~Neumann, {\ifFirstInitial K.~\fi}Kuratowski, {\ifFirstInitial E.~\fi}Michael, {\ifFirstInitial T.~\fi}Wa\.{z}ewski, {\ifFirstInitial V.~V.~\fi}Filippov, and many other mathematicians, ranging from eight different mathematical subjects mentioned in \cite[Introduction]{AF90}.
	
	Studies on correspondences\footnote{There are notions in the literature related to correspondences, such as upper semi-continuous set-valued functions in \cite{KT17}, set-valued maps in \cite{RT18}, and closed relations in \cite{MA99}. Our notion of correspondence coincides with the first one but differs slightly from the other two.} date back to investigations on the ill-posed problems (for partial differential equations) in the sense of {\ifFirstInitial J.~\fi}Hadamard \cite{Ha02}. Here, by ill-posed problems, we mean that the existence of a solution or the uniqueness of the solution fails for some choice of data. This was indeed noticed during the first three decades of the 20th century by founders of ``Functional Calculus'', such as {\ifFirstInitial P.~\fi}Painlev\'{e}, {\ifFirstInitial F.~\fi}Hausdorff, {\ifFirstInitial G.~\fi}Bouligand, and {\ifFirstInitial K.~\fi}Kuratowski to quote only a few. In his remarkable book \emph{Topologie} \cite{Ku58}, {\ifFirstInitial K.~\fi}Kuratowski gave set-valued maps their proper status. Since then, the study of correspondences (known as set-valued analysis) has been increasing rapidly. Many fundamental concepts of single-valued analysis, such as limits, differentiation, integral, and fixed point theorems have been adapted to the set-valued realm; see the monograph \cite{AF90} and references therein.
	
	Other than general correspondences, the study of holomorphic and anti-holomorphic correspondences attracts its independent interests in complex dynamics. %

	The study of (anti-)holomorphic correspondences dates back at least to {\ifFirstInitial P.~\fi}Fatou \cite{Fa29}. Indeed, {\ifFirstInitial P.~\fi}Fatou observed similarities between limit sets of Kleinian groups and Julia sets of rational maps in the 1920s, and proposed the following question \cite{Fa29}:
	
	``L'analogie remarqu\'{e}e entre les ensembles de points limites des groupes Klein\'{e}ens-et ceux qui sont constitu\'{e}s par les fronti\`{e}res des r\'{e}gions de convergence des it\'{e}r\'{e}es d'une fonction rationnelle ne para\^{i}t d'ailleurs pas fortuite et il serait probablement possible d'en faire la synt\`{e}se dans une th\'{e}orie g\'{e}n\'{e}rale des groupes discontinus des substitutions algr\'{e}briques.''
	
	About the analogy between Kleinian groups and rational maps, {\ifFirstInitial D.~P.~\fi}Sullivan discovered deep connections between the iteration theory of rational maps and the theory of Kleinian groups (see \cite{Su85}), which became known as \defn{Sullivan's dictionary}\index{Sullivan's dictionary}. Since then, there have been considerable efforts to draw direct connections between these two branches of holomorphic dynamics. See e.g., the works of {\ifFirstInitial S.~\fi}Bullett and {\ifFirstInitial C.~\fi}Penrose \cite{BP94}, {\ifFirstInitial C.~T.~\fi}McMullen \cite{Mc95, Mc96}, {\ifFirstInitial M.~Yu.~\fi}Lyubich and {\ifFirstInitial Y.~\fi}Minsky \cite{LM97}, {\ifFirstInitial P.~\fi}Ha\"{\i}ssinsky and {\ifFirstInitial K.~M.~\fi}Pilgrim \cite{HP09}, {\ifFirstInitial M.~\fi}Bonk and {\ifFirstInitial D.~\fi}Meyer \cite{BM10, BM17}, {\ifFirstInitial M.~\fi}Mj and {\ifFirstInitial S.~\fi}Mukherjee \cite{MM23}, and references therein.
	
	Our current article is partially motivated by the interest of the community, including {\ifFirstInitial M.~\fi}Bonk, {\ifFirstInitial D.~\fi}Meyer, {\ifFirstInitial S.~\fi}Rohde, etc., to extend Sullivan's dictionary to some fractals arising from probability theory, hoping to transplant key analytic tools and techniques to such settings, and partially motivated by the recent works of {\ifFirstInitial S.~\fi}Lee, {\ifFirstInitial M.~Yu.~\fi}Lyubich, {\ifFirstInitial N.~G.~\fi}Makarov, {\ifFirstInitial S.~\fi}Mukherjee, etc., on certain anti-holomorphic correspondences which will be discussed below.
	
	Apart from the analogy from Sullivan's dictionary, to answer Fatou's question, we need to ``naturally'' combine the dynamics of a rational map with that of a Kleinian group. Matings between Kleinian groups and rational maps developed by {\ifFirstInitial S.~\fi}Bullett, {\ifFirstInitial C.~\fi}Penrose, {\ifFirstInitial L.~\fi}Lomonaco, {\ifFirstInitial P.~\fi}Ha\"{i}ssinsky, and {\ifFirstInitial M.~\fi}Freiberger could combine some Kleinian groups and some rational maps in the category of holomorphic correspondences, see \cite{Bu00, BF05, BH07, BL20, BL22, BL24, BP94}. %
	
	Recently, motivated by the study of the dynamics of Schwarz reflection maps associated with quadrature domains, {\ifFirstInitial S.~\fi}Lee, {\ifFirstInitial M.~Yu.~\fi}Lyubich, {\ifFirstInitial N.~G.~\fi}Makarov, and {\ifFirstInitial S.~\fi}Mukherjee investigated matings between such reflection maps and a discrete group abstractly isomorphic to the modular group, see \cite{LLMM21}. Such matings are anti-holomorphic correspondences. Later, {\ifFirstInitial M.~Yu.~\fi}Lyubich, {\ifFirstInitial J.~\fi}Mazor, and {\ifFirstInitial S.~\fi}Mukherjee constructed a family of anti-holomorphic correspondences for Schwarz reflection maps associated with quadrature domains and gave two criteria that ensure that such anti-holomorphic correspondences are matings between Schwarz reflections and anti-Hecke groups, see \cite{LMM24}.
	
	%
	
	%
	
	\begin{comment} %
	Apart from matings, holomorphic correspondences, uniting rational maps, Kleinian groups, and matings, also attracted the attention of {\ifFirstInitial S.~\fi}Bullett and {\ifFirstInitial C.~\fi}Penrose. To extend Sullivan's dictionary to include holomorphic correspondences, they studied the general theory of holomorphic correspondences in \cite{BP01}. Specifically, they formulated a formal definition for holomorphic correspondences and generalized the notions of regular sets, limit sets, and Julia sets for holomorphic correspondences.
	\end{comment}

	In order to study the dynamics of holomorphic correspondences, as well as to investigate the density of hyperbolicity and structural stability in the category of holomorphic correspondences, {\ifFirstInitial C.~\fi}Siqueira and {\ifFirstInitial D.~\fi}Smania studied a family of holomorphic correspondences $\boldsymbol{f}_c (z) =z^{q/p} +c$ in \cite{Siq15, SS17, Siq22, Siq23}. In these papers, {\ifFirstInitial C.~\fi}Siqueira and {\ifFirstInitial D.~\fi}Smania generalized the notion of Julia sets\footnote{{\ifFirstInitial C.~\fi}Siqueira and {\ifFirstInitial D.~\fi}Smania's version of Julia sets for holomorphic correspondences is different from {\ifFirstInitial S.~\fi}Bullett and {\ifFirstInitial C.~\fi}Penrose's version in \cite[Section~3.2]{BP01}.} for $\boldsymbol{f}_c$, discussed the hyperbolicity for such holomorphic correspondences, established some geometric rigidity results for the Julia sets, and gave an upper bound of the Hausdorff dimension of the Julia sets.
	
    In the works cited above about holomorphic and anti-holomorphic correspondences, direct considerations on ergodic theory for correspondences have yet to be extensively carried out.
	
	On the other hand, ergodic theory for general correspondences has also attracted interest recently. %
	For example, Poincar\'{e}'s recurrence theorem was investigated by {\ifFirstInitial J.~P.~\fi}Aubin, {\ifFirstInitial H.~\fi}Frankowska, and {\ifFirstInitial A.~\fi}Lasota in \cite{AFL91}; various notions of topological entropy, and their upper and lower bounds were established in \cite{KT17}; some version of expansiveness was discussed in \cite{Wi70, PV17}; several characterizations of invariant measures are systematically investigated in \cite{MA99}; Perron--Frobenius operators and approximations of invariant measures are studied in \cite{Mil95}. Moreover, in the setting of holomorphic correspondences, {\ifFirstInitial T.~C.~\fi}Dinh, {\ifFirstInitial L.~\fi}Kaufmann, and {\ifFirstInitial H.~\fi}Wu \cite{Wu20, DKW20} studied some canonical probability measures under the dynamics of some holomorphic correspondences on Riemann surfaces. {\ifFirstInitial V.~\fi}Matus~de~la~Parra \cite{Ma23a, Ma23b} studied the measures towards which the backward or forward orbits of the matings discussed in \cite{BP94} equidistributes and proved that a version of entropy (see \cite{VS22}) of the measures coincides with the topological entropy (see \cite{KT17}) of the matings. However, systematic studies on invariant measures are still under development, which motivates us to study thermodynamic formalism for correspondences.

	\subsubsection*{Thermodynamic formalism for (single-valued) maps}

	Thermodynamic formalism, inspired by statistical mechanics and created by {\ifFirstInitial Ya.~G.~\fi}Sinai, {\ifFirstInitial R.~\fi}Bowen, {\ifFirstInitial D.~\fi}Ruelle, and others around the early 1970s \cite{Do68, Sin72, Bow75, Ru78}, is a mechanism to produce invariant measures with nice properties and prescribed Jacobian functions. 	
	To be more precise, for a continuous (single-valued) map $f\: X\to X$ on a compact metric space $(X,d)$, and a continuous function $\varphi \: X \to \mathbb{R}$ (called a potential), we can consider the associated topological pressure $P(f,\varphi)$ as a weighted version of the topological entropy $h_{\operatorname{top}}(f)$. The \emph{Variational Principle} identifies $P(f,\varphi)$ with the supremum of its measure-theoretic counterpart, the measure-theoretic pressure $P_\mu(f,\varphi) \= h_{\mu} (f)+\int_{X} \! \varphi   \diff \mu$, %
	(where $h_{\mu} (f)$ is the measure-theoretic entropy), over all invariant Borel probability measures $\mu$ \cite{Bow75,Wa76}. A measure that attains the supremum is called an \defn{equilibrium state}\index{equilibrium state} for the given map and potential. In particular, when the potential $\varphi$ is (cohomologous to) a constant function, the equilibrium state is called a \defn{measure of maximal entropy}\index{measure of maximal entropy}. The studies on the existence and uniqueness of equilibrium states (or measures of maximal entropy), as well as their ergodic and statistical properties such as supporting sets and equidistributions, have been the main motivation for much research in ergodic theory.
	
	The theory of thermodynamic formalism for $f$ with strong forms of hyperbolicity has been systematically studied. For example, it is well-known that if $f$ is forward expansive, then an equilibrium state exists. Moreover, we have the \emph{Ruelle--Perron--Frobenius theorems},
	which describe the equilibrium states for more regular potentials; see e.g., \cite[Theorem~2.1]{RT18} and \cite[Chapter~5]{PU10}. 
	
	\begin{comment} %
	Briefly speaking, if $f$ is forward expansive and has the specification property and $\varphi$ is Bowen summable, or if $f$ is open, topologically transitive, and distance-expanding and $\varphi$ is H\"{o}lder continuous, then the equilibrium state exists and is unique, and also has some Gibbs property.
	\end{comment}
	
	%
	
	%
	%
	%
	%
	%
	
	%
	%
	%
	
	%
	%
	%
	%
	
	One active direction for investigation in thermodynamic formalism nowadays is to extend the Ruelle--Perron--Frobenius theorem beyond the scope of uniform hyperbolicity. Our Theorem~\ref{theo_matings} on the Lee--Lyubich--Markorov--Mukherjee anti-holomorphic correspondences can be seen as such an attempt in complex dynamics. %

\subsubsection*{Thermodynamic formalism for correspondences}

	In the present article, we systematically develop thermodynamic formalism for correspondences. We will address the following aims:
	\begin{enumerate}
		\smallskip
		\item[(i)] Formulate definitions of \emph{measure-theoretic entropy of transition probability kernels} and \emph{topological pressure} for correspondences;
		\smallskip
		\item[(ii)] Establish a Variational Principle for correspondences with some expansion property;
		\smallskip
		\item[(iii)] Establish the existence of equilibrium states and obtain a \emph{Ruelle--Perron--Frobenius theorem} for such correspondences and potentials with certain regularity.
	\end{enumerate}

\subsection*{Statement of main results}

	Our main results consist of four parts: a Variational Principle and existence of equilibrium states, thermodynamic formalism for equilibrium states, a lower bound for the topological pressure, and applications to holomorphic and anti-holomorphic correspondences.

	\subsubsection*{Variational Principle}
	In this article, we will introduce the measure-theoretic entropy for transition probability kernels and the topological pressure for correspondences.
	
	Roughly speaking, a transition probability kernel $\cQ$ on a compact metric space $(X,  d)$ assigns each $x\in X$ a Borel probability measure $\cQ_x$ on $X$. Denote by $\MMM(X,\cQ)$ the set of $\cQ$-invariant Borel probability measures\footnote{Note that $\cQ$-invariant measures are also known as $\cQ$-stationary measures in probability.} (see Definition~\ref{invariant measure}).  The measure-theoretic entropy $h_\mu (\cQ)$ for a transition probability kernel $\cQ$ and a measure $\mu \in \MMM(X, \cQ)$ will be introduced (see Definition~\ref{measure-theoretic entropy of transition probability kernels}). The potentials are defined on the set $\cO_2 (T) \= \bigl\{ (x_1 ,  x_2)\in X^2  :  x_2 \in T(x_1) \bigr\}$ equipped with the metric $d_2$ given by $d_2 ((x_1 ,  x_2) ,  (y_1 ,  y_2)) \= \max\{ d(x_1 ,  y_1) ,\, d(x_2 ,  y_2) \}$ for all $(x_1 ,  x_2) ,\, (y_1 ,  y_2) \in \cO_2 (T)$. %

	We say that a transition probability kernel $\cQ$ on $X$ is \defn{supported by} a correspondence $T$ if the measure $\cQ_x$ is supported on the closed set $T(x)$ for every $x\in X$. Denote by $\tpk(X;T)$ the set of  transition probability kernels on $X$ supported by $T$.

	We conjecture the following Variational Principle to hold. 
	
	\begin{conj*}[Variational Principle for correspondences]
		Let $T$ be in a suitable class of correspondences on a compact metric space $(X,  d)$ and $\phi\: \cO_2 (T) \to \R$ be a sufficiently regular function, then
		\begin{equation}\label{P(T,phi)=sup_Q,mu(h_mu(Q)+int_Xphidmu)}
			P(T,  \phi)= \sup_{\cQ\in \tpk(X;T), \, \mu\in \MMM(X,\cQ) } \bigg\{ h_\mu (\cQ)+ \int_X \! \int_{T(x_1)} \! \phi (x_1 ,  x_2)   \diff \cQ_{x_1} (x_2)   \diff \mu (x_1) \bigg\} .
		\end{equation}
	\end{conj*}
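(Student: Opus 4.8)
The plan is to realize the multivalued dynamics of $T$ as a genuine single-valued system by passing to its space of forward orbits, and then to reduce the asserted identity to the classical Variational Principle of Walters for continuous maps. Set
\[
\Sigma_T \= \bigl\{ \underline{x} = (x_0, x_1, x_2, \dots) \in X^{\N} : x_{n+1} \in T(x_n) \text{ for all } n \geq 0 \bigr\},
\]
equipped with the subspace topology from the product $X^{\N}$. Since the graph $\cO_2(T)$ is closed, each constraint $x_{n+1} \in T(x_n)$ is closed, so $\Sigma_T$ is a closed, hence compact, subset of $X^{\N}$, and the left shift $\sigma \: \Sigma_T \to \Sigma_T$, $(\sigma \underline{x})_n \= x_{n+1}$, is continuous. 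The potential lifts to a continuous function $\tphi \: \Sigma_T \to \R$ by $\tphi(\underline{x}) \= \phi(x_0, x_1)$. The classical Variational Principle then applies verbatim to the single-valued system $(\Sigma_T, \sigma)$ and the potential $\tphi$, giving
\[
P(\sigma, \tphi) = \sup_{\nu} \Bigl\{ h_\nu(\sigma) + \int_{\Sigma_T} \! \tphi \diff \nu \Bigr\},
\]
where $\nu$ ranges over all $\sigma$-invariant Borel probability measures on $\Sigma_T$.

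Two dictionaries must then be set up to transport this back to $T$. First, at the topological level, one shows $P(T, \phi) = P(\sigma, \tphi)$: this compares $(n, \ve)$-separated sets in the length-$n$ orbit-segment spaces $\cO_n(T)$ used to define $P(T, \phi)$ with $(n, \ve)$-separated sets for $\sigma$ in the Bowen metric on $\Sigma_T$. Second, at the measure-theoretic level, one identifies the pairs $(\cQ, \mu)$ appearing in the conjecture with the $\sigma$-invariant \emph{Markov} measures on $\Sigma_T$: given $(\cQ, \mu)$ one builds the stationary Markov measure $\tnu$ with one-dimensional marginal $\mu$ and one-step transition $\cQ$, and conversely, given any $\sigma$-invariant $\nu$ one disintegrates its two-dimensional marginal $\nu_{01}$ (supported on $\cO_2(T)$) as $\nu_{01}(\diff x_0, \diff x_1) = \diff \mu(x_0) \diff \cQ_{x_0}(x_1)$, recovering a kernel supported by $T$. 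Under this correspondence one must verify the two matchings
\[
h_\mu(\cQ) = h_{\tnu}(\sigma), \qquad \int_X \! \int_{T(x_1)} \! \phi \diff \cQ_{x_1} \diff \mu(x_1) = \int_{\Sigma_T} \! \tphi \diff \tnu,
\]
the second being immediate since $\tphi$ depends only on the first two coordinates.

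The reduction from all invariant measures to Markov measures is then furnished by a \emph{Markovization} argument. For an arbitrary $\sigma$-invariant $\nu$, let $\tnu$ be the Markov measure with the same one-dimensional marginal $\mu$ and the same transition kernel $\cQ$ obtained from $\nu_{01}$. Because $\tphi$ sees only the pair $(x_0, x_1)$ and $\tnu$ has the same two-dimensional marginal as $\nu$, the integrals of $\tphi$ agree; and since conditioning reduces entropy and the process is stationary, the entropy rate obeys $h_\nu(\sigma) \leq H_\nu(x_1 \mid x_0) = h_{\tnu}(\sigma)$, so passing to $\tnu$ never decreases $h_\nu(\sigma) + \int \tphi \diff \nu$. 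Hence the supremum over all $\sigma$-invariant measures is attained along Markov measures, and combining this with the two dictionaries yields
\[
P(T, \phi) = P(\sigma, \tphi) = \sup_{\text{Markov } \nu} \Bigl\{ h_\nu(\sigma) + \int \! \tphi \diff \nu \Bigr\} = \sup_{\cQ, \mu} \Bigl\{ h_\mu(\cQ) + \int_X \! \int_{T(x_1)} \! \phi \diff \cQ_{x_1} \diff \mu(x_1) \Bigr\}.
\]

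The main obstacle — and the reason this is stated only as a conjecture in full generality — is the topological identity $P(T, \phi) = P(\sigma, \tphi)$ together with the entropy matching $h_\mu(\cQ) = h_{\tnu}(\sigma)$. The Bowen $n$-metric on $\Sigma_T$ controls coordinates beyond the $n$-th only up to a fixed product-metric resolution, so without an expansiveness hypothesis the separated-set counts on $\Sigma_T$ need not coincide with those on the intrinsic orbit spaces $\cO_n(T)$, and the intrinsically defined $h_\mu(\cQ)$ need not reduce to the one-step conditional entropy $H_\nu(x_1 \mid x_0)$ of the coded chain. Under \emph{forward expansiveness} the coding by $\Sigma_T$ becomes faithful at a fixed scale and both identifications go through, which is precisely the regime in which the paper establishes the Variational Principle; a further technical point, resolvable by the Kuratowski--Ryll-Nardzewski measurable selection theorem, is ensuring that the disintegrated kernel can be redefined on the $\mu$-null exceptional set so as to be supported by $T$ at every point of $X$.
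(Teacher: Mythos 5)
Your route---lift $T$ to the shift $\sigma$ on the orbit space, invoke the classical Variational Principle there, translate pressure and entropy through two dictionaries, and reduce from arbitrary $\sigma$-invariant measures to Markov measures---is precisely the paper's strategy for the regime where it proves the conjecture (Theorem~\ref{t_VP_forward_exp}). But your argument has a genuine gap at the single step that carries all the mathematical weight: the Markovization inequality. You justify $h_\nu (\sigma) \leqslant H_\nu (x_1 \mid x_0) = h_{\tnu} (\sigma)$ by ``conditioning reduces entropy,'' which is a finite-alphabet identity. Over a continuum state space it breaks down at both ends. First, $h_{\tnu} (\sigma)$ is a supremum of entropies $h_{\tnu} (\sigma ,  \cA \times \{ X^\omega \})$ over finite partitions $\cA$ of $X$, and for each fixed $\cA$ the coded process is a \emph{hidden} Markov process: its entropy rate is in general strictly smaller than the one-step conditional entropy $H_{\tnu} \bigl( \cA \times \{ X^\omega \} \big| \sigma^{-1} (\cA \times \{ X^\omega \}) \bigr)$, since functions of Markov chains are not Markov. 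So the identity $h_{\tnu} (\sigma) = H_\nu (x_1 \mid x_0)$ is unavailable. Second, the quantity $H_\nu (x_1 \mid x_0)$ itself, read as a supremum over partitions, is typically infinite when the conditional measures are non-atomic. All that ``conditioning reduces entropy'' yields is the pair of one-sided bounds $h_\nu (\sigma ,  \cA \times \{ X^\omega \}) \leqslant H_\nu \bigl( \cA \times \{ X^\omega \} \big| \sigma^{-1} (\cA \times \{ X^\omega \}) \bigr)$ and the same bound for $\tnu$, with equal right-hand sides; this does not give $h_\nu (\sigma) \leqslant h_{\tnu} (\sigma)$. This is exactly why the statement is a conjecture: the paper can prove the Markovization inequality only under forward expansiveness (Proposition~\ref{measure-theoretic entropy less than the corresponding Markov measure-theoretic entropy}), via two Rokhlin-type formulas, $h_\nu (\sigma) = \int H(\cP_{\udx})   \diff \nu$ (Proposition~\ref{h_nu(sigma) when T is expansive}) and $h_\mu (\cQ) = \int H(\cR_x)   \diff \mu$ (Theorem~\ref{t_h_mu(Q) when T is expansive}), followed by Jensen's inequality. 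Note also that those formulas condition \emph{backward}: the kernels $\cP$, $\cR$ live on the fibers $T^{-1} (x)$, which are finite precisely by forward expansiveness (Remark~\ref{preimage of forward expansive correspondence is finite}), so the entropies $H(\cP_{\udx})$, $H(\cR_x)$ are finite atomic entropies; your conditioning $H_\nu (x_1 \mid x_0)$ goes in the forward direction, where $\cQ_x$ on $T(x)$ can be non-atomic even for expansive $T$.

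Relatedly, you misplace where expansiveness is actually needed. The two dictionaries you single out as ``the main obstacle'' are proved in the paper in full generality, with no expansiveness hypothesis whatsoever: $P(T ,  \phi) = P\bigl( \sigma ,  \tphi \bigr)$ is Theorem~\ref{topological pressure coincide with the lift}, and $h_\mu (\cQ) = h_{\mu \cQ^\omega} (\sigma)$ is Theorem~\ref{measure-theoretic entropy coincide with its lift}; together with the classical Variational Principle these already give the unconditional inequality $\geqslant$ in the conjecture, which is the paper's Theorem~\ref{t_HVP}. The sole place forward expansiveness enters---and hence the sole open content of the conjecture---is the Markovization step that your proposal treats as routine.
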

	
	This conjecture naturally generalizes the classical Variational Principle for (single-valued) maps. Specifically, the topological pressure $P (T,  \phi)$ for the correspondence $T$ generalizes the classical topological pressure for continuous maps, the measure-theoretic entropy $h_\mu (\cQ)$ of the transition probability kernel $\cQ$ for the $\cQ$-invariant measure $\mu$ generalizes the classical measure-theoretic entropy of a measure-preserving endomorphism, and the integral in (\ref{P(T,phi)=sup_Q,mu(h_mu(Q)+int_Xphidmu)}) corresponds to the potential energy in the classical Variational Principle.
	
	To the best of our knowledge, no version of Variational Principle for correspondences has been established. In this article, we establish a version of Variational Principle for correspondences with some expansion properties (see Theorem~\ref{t_VP_forward_exp}). We have not found any counterexample to our conjecture.

	If a transition probability kernel $\cQ \in \tpk(X;T)$ and a measure $\mu \in \MMM(X, \cQ)$ attain the supremum in (\ref{P(T,phi)=sup_Q,mu(h_mu(Q)+int_Xphidmu)}), then we call the pair $(\mu ,  \cQ)$ an \defn{equilibrium state}\index{equilibrium state} for the correspondence $T$ and potential function $\phi$. Moreover, if $\phi \equiv 0$, we call $(\mu ,  \cQ)$ a \defn{measure of maximal entropy}\index{measure of maximal entropy} for $T$.
	
	\subsubsection*{Variational Principle and the existence of equilibrium states}

	A correspondence is \emph{forward expansive}, if, roughly speaking, every pair of distinct forward orbits $(x_1,  x_2,  \dots),  (y_1,  y_2,  \dots)$ consists of a pair of corresponding entries $x_k$ and $y_k$ with at least a specific distance apart (see Definition~\ref{forward expansiveness}).

	\begin{thmx}\label{t_VP_forward_exp}
		Let $(X,  d)$ be a compact metric space, $T$ be a forward expansive correspondence on $X$, and $\phi \: \cO_2 (T) \to \R$ be a continuous function. Then the following statements are true:
		\begin{enumerate}
			\smallskip
			\item[(i)] The Variational Principle holds:
			\begin{equation*}
				P(T,  \phi)= \sup_{\cQ\in \tpk(X;T), \, \mu\in \MMM(X,\cQ) } \biggl\{ h_\mu (\cQ)+ \int_X \! \int_{T(x_1)} \! \phi (x_1 ,  x_2)   \diff \cQ_{x_1} (x_2)   \diff \mu (x_1) \biggr\} \in \R.
			\end{equation*}
			\smallskip
			\item[(ii)]%
			There exists an equilibrium state $(\mu,  \cQ)$ for the correspondence $T$ and potential $\phi$.
		\end{enumerate}
	\end{thmx}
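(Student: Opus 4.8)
The plan is to reduce the statement to the classical Variational Principle for a single continuous map by passing to the space of forward orbits. Set
\[
\cO_\infty(T) \= \bigl\{ (x_k)_{k\geq 1} \in X^{\N} : x_{k+1}\in T(x_k) \text{ for all } k \geq 1 \bigr\},
\]
which is a closed, hence compact, subset of $X^{\N}$ on which the shift $\sigma\colon (x_1,x_2,\dots)\mapsto (x_2,x_3,\dots)$ acts continuously. Lift the potential to the continuous function $\tphi\colon \cO_\infty(T)\to\R$, $\tphi((x_k)_{k\geq1})\=\phi(x_1,x_2)$. First I would check, directly from the definition of the topological pressure of a correspondence, that $P(T,\phi)=P(\sigma,\tphi)$, using that the Birkhoff sum $S_n\tphi$ along an orbit is exactly $\sum_{k=1}^{n}\phi(x_k,x_{k+1})$ and that the metric $d_2$ on $\cO_2(T)$ is compatible with the product metric on $\cO_\infty(T)$. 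Since forward expansiveness of $T$ is precisely positive expansiveness of $\sigma$, the map $\sigma$ has finite topological entropy, so $P(T,\phi)=P(\sigma,\tphi)\in\R$.

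Next I would set up the dictionary between pairs $(\mu,\cQ)$ and $\sigma$-invariant measures. A transition probability kernel $\cQ$ supported by $T$ together with a $\cQ$-invariant measure $\mu$ determines a stationary Markov measure $\nu_{\mu,\cQ}$ on $\cO_\infty(T)$ whose one-dimensional marginal is $\mu$ and whose one-step transitions are the $\cQ_{x}$; conversely every $\sigma$-invariant $\nu$ yields such a pair by taking $\mu=(\pi_1)_*\nu$ and letting $\cQ_x$ be a disintegration of the law of $(x_1,x_2)$ over the first coordinate (which is supported by $T$ for $\mu$-almost every $x$, and may be redefined arbitrarily inside the nonempty set $T(x)$ on the exceptional null set). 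From the definition of $h_\mu(\cQ)$ (Definition~\ref{measure-theoretic entropy of transition probability kernels}) one has $h_\mu(\cQ)=h_{\nu_{\mu,\cQ}}(\sigma)$, and since $\tphi$ depends only on the first two coordinates, $\int_X\!\int_{T(x_1)}\phi\,\diff\cQ_{x_1}\,\diff\mu=\int \tphi \,\diff\nu_{\mu,\cQ}$. Because Markov measures form a subclass of all $\sigma$-invariant measures, the classical Variational Principle for $(\sigma,\tphi)$ immediately gives the inequality ``$\leq$'' in (i): every pair $(\mu,\cQ)$ satisfies $h_\mu(\cQ)+\int\!\int\phi\,\diff\cQ\,\diff\mu\leq P(\sigma,\tphi)=P(T,\phi)$.

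For the reverse inequality together with (ii), I would first produce an equilibrium state for the single-valued system. Forward expansiveness makes the entropy map $\nu\mapsto h_\nu(\sigma)$ upper semicontinuous, so $\nu\mapsto h_\nu(\sigma)+\int\tphi\,\diff\nu$ is upper semicontinuous on the weak-$*$ compact set of $\sigma$-invariant measures and attains its supremum at some $\nu_0$, which by the classical Variational Principle is an equilibrium state: $h_{\nu_0}(\sigma)+\int\tphi\,\diff\nu_0=P(\sigma,\tphi)$. Now replace $\nu_0$ by its Markovization $\nu_0^{M}=\nu_{\mu_0,\cQ_0}$, where $(\mu_0,\cQ_0)$ is the pair read off from $\nu_0$ as above. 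Since $\nu_0^{M}$ has the same two-dimensional marginal as $\nu_0$, the integral of $\tphi$ is unchanged, while the maximal-entropy property of Markov measures gives $h_{\nu_0^{M}}(\sigma)\geq h_{\nu_0}(\sigma)$. Hence $(\mu_0,\cQ_0)$ realizes the pressure, proving both that the supremum in (i) equals $P(T,\phi)$ and that an equilibrium state exists.

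The main obstacle is the maximal-entropy property of Markov measures over a general compact metric state space: among all $\sigma$-invariant measures with a prescribed two-dimensional marginal, the Markov measure maximizes $h_\nu(\sigma)$. Over a finite alphabet this is the elementary computation $h_\nu(\sigma)\leq H_\nu(X_2\mid X_1)=H_{\nu^{M}}(X_2\mid X_1)=h_{\nu^{M}}(\sigma)$, combining the chain rule with the fact that conditioning on a longer past decreases entropy. The difficulty here is that $X$ is not finite, so the first-coordinate partition is not a finite partition and coding by a finite partition $\mathcal{P}$ of $X$ does not commute with Markovization (the coded process $(\mathcal{P}(x_k))_k$ is no longer Markov under $\nu^{M}$). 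I would handle this by fixing an increasing sequence of finite partitions of $X$ that generates the Borel $\sigma$-algebra, proving the inequality at each finite level through the conditional-entropy estimate above, and passing to the limit via the Kolmogorov--Sinai theorem and the martingale convergence of the relevant conditional entropies; verifying that this limiting argument is compatible with the paper's definition of $h_\mu(\cQ)$ is the one genuinely technical point.
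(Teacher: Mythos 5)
Your proposal follows the same architecture as the paper's proof of Theorem~\ref{t_VP_forward_exp}: lift to the shift $\sigma$ on the orbit space and identify $P(T,\phi)=P\bigl(\sigma,\tphi\bigr)$ (the paper's Theorem~\ref{topological pressure coincide with the lift}) and $h_\mu(\cQ)=h_{\mu\cQ^\omega|_T}(\sigma)$ (Theorem~\ref{measure-theoretic entropy coincide with its lift}); deduce the inequality $\sup\leqslant P(T,\phi)$ from the classical Variational Principle because Markov measures form a subclass of the $\sigma$-invariant ones (this is the paper's Theorem~\ref{t_HVP}); and obtain the reverse inequality together with existence by taking an equilibrium state $\nu_0$ for $\bigl(\sigma,\tphi\bigr)$, disintegrating its two-dimensional marginal into a pair $(\mu_0,\cQ_0)$ (Proposition~\ref{decompose a two-dimentional measure with support}), and invoking the claim that Markovization does not decrease entropy (the paper's Proposition~\ref{measure-theoretic entropy less than the corresponding Markov measure-theoretic entropy}). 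All of this is sound and matches the paper, so the entire weight of the theorem rests on that last claim — and that is where your argument has a genuine gap.

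The finite-level inequality you propose as the base of your limiting argument, namely $h_\nu\bigl(\sigma,\wt{\cA}^1_T\bigr)\leqslant h_{\nu^M}\bigl(\sigma,\wt{\cA}^1_T\bigr)$ for a finite partition $\cA$ of $X$ (with $\nu^M=\mu_0\cQ_0^\omega|_T$ the Markovization), is not delivered by the conditional-entropy estimate you cite, and for exactly the reason you flag yourself. That estimate gives $h_\nu\bigl(\sigma,\wt{\cA}^1_T\bigr)\leqslant H_\nu\bigl(\wt{\cA}^1_T\mid\sigma^{-1}\wt{\cA}^1_T\bigr)=H_{\nu^M}\bigl(\wt{\cA}^1_T\mid\sigma^{-1}\wt{\cA}^1_T\bigr)$ and, separately, $h_{\nu^M}\bigl(\sigma,\wt{\cA}^1_T\bigr)\leqslant H_{\nu^M}\bigl(\wt{\cA}^1_T\mid\sigma^{-1}\wt{\cA}^1_T\bigr)$: both entropies are dominated by the same quantity computed from the common two-dimensional marginal, but they are never compared with each other, and the second inequality can be strict because the coded process under $\nu^M$ is only a function of a Markov chain, not Markov. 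So there is no finite-level inequality from which to pass to the limit; the coding/Markovization non-commutativity you mention is not a technical nuisance but the collapse of your base case. The repair must condition on the exact second coordinate rather than its partition cell: by forward expansiveness a partition of mesh below the expansive constant makes $\wt{\cA}^1_T$ a one-sided generator, so by martingale convergence $h_\nu(\sigma)$ equals the conditional entropy of $\wt{\cA}^1_T$ given the Borel $\sigma$-algebra of $(x_2,x_3,\dots)$, which is at most the conditional entropy given $x_2$ alone — a quantity depending only on the two-dimensional marginal; and for $\nu^M$ this last inequality becomes an equality because of the Markov property, i.e., the conditional independence of $x_1$ and $(x_3,x_4,\dots)$ given $x_2$. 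This conditional-independence step at the level of exact coordinates is missing from your sketch, and it is precisely what the paper encodes via backward conditional kernels, the Shannon--McMillan--Breiman theorem, and Jensen's inequality in Proposition~\ref{h_nu(sigma) when T is expansive}, Theorem~\ref{t_h_mu(Q) when T is expansive}, and Proposition~\ref{measure-theoretic entropy less than the corresponding Markov measure-theoretic entropy}.
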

	
	The proof occupies Section~\ref{sct_Variational_principle_for_positively_RW-expansive_correspondences} and is the most technical part of this article.

	\subsubsection*{Thermodynamic formalism and equidistribution}

	We introduce various properties for correspondences and potential functions and then give two versions of thermodynamic formalism.

	Let $T$ be a correspondence on a compact metric space $X$. Its orbit space $\cO_\omega (T)$ equipped with metric $d_\omega$ is given in (\ref{O_omega(T)=}) and~(\ref{d_omega=}). If $\cQ\in \tpk(X;T)$ and $\mu\in \MMM(X,\cQ)$, then we denote by $\mu \cQ^\omega |_T$ a probability measure on $\cO_\omega (T)$ given in Remark~\ref{muQ|_T}.
	
	In the first version, we assume that $T$ has the specification property (Definition~\ref{specification correspondence}) and $\phi$ is Bowen summable (Definition~\ref{Bowen summable potential}), then consequently the Variational Principle holds, the equilibrium state exists and is unique in an appropriate sense, and the unique equilibrium state can be obtained by investigating the (classical) Ruelle operator $\cL_{\tphi}$ and $\cL_{\tphi}^*$ (see (\ref{aovewyacbwco8ey0fawe}) and~(\ref{int_YPhidL_psi*(nu)=int_YL_psi(Phi)dnu})).%
	
	\begin{thmx}\label{equilibrium state 1}
		Let $(X,  d)$ be a compact metric space, $T$ be a forward expansive correspondence with the specification property, and $\phi\: \cO_2 (T) \to \R$ be a Bowen summable continuous function. Then the Variational Principle (\ref{P(T,phi)=sup_Q,mu(h_mu(Q)+int_Xphidmu)}) holds and there exists an equilibrium state $(\mu_\phi ,  \cQ)$ for the correspondence $T$ and potential $\phi$, i.e., there exist $\cQ\in \tpk(X;T)$ and $\mu\in \MMM(X,\cQ)$ such that
		\begin{equation}\label{P(T,phi)=h_mu(Q)+int_Xphidmu intro}
			P(T,  \phi)= h_{\mu_\phi} (\cQ) +\int_X \! \int_{T(x_1)} \! \phi (x_1 ,  x_2)   \diff \cQ_{x_1} (x_2)   \diff \mu_\phi (x_1).
		\end{equation}
		Moreover, the equilibrium state $(\mu_\phi ,  \cQ)$ is unique in the sense that the measure $\mu_\phi$ is unique and that if there are two equilibrium states $(\mu_\phi ,  \cQ)$ and $(\mu_\phi ,  \cQ ')$, then $\cQ_x (A)=\cQ_x '(A)$ for $\mu_\phi$-almost every $x\in X$ and all Borel measurable $A\subseteq X$.
		
		Furthermore, the equilibrium state $(\mu_\phi ,  \cQ)$ can be obtained in the following way:		
		\begin{enumerate}
			\smallskip
			\item[(i)] There is a Borel probability measure $m_\phi$ on $X$ and a transition probability kernel $\cQ$ on $X$ supported by $T$ such that $m_\phi \cQ^\omega |_T$ is an eigenvector of $\cL_{\tphi}^*$.
			\smallskip
			\item[(ii)] There is a Borel measurable function $u_\phi \in L^1 (m_\phi)$ such that $\cL_{\tphi} (\tu_\phi)= \lambda \tu_\phi$, where $\lambda =\exp \bigl(  P\bigl( \sigma ,  \tphi \bigr)  \bigr) =\exp ( P(T ,  \phi) )$ and $\tu_\phi \: \cO_\omega (T)\to \R$ is the bounded Borel measurable function induced by $u_\phi$ given by
			$\tu_\phi (x_1 ,  x_2 ,  \dots)\= u_\phi (x_1)$.
			\smallskip
			\item[(iii)] Set $\mu_\phi \= u_\phi m_\phi$, then $(\mu_\phi ,  \cQ)$ is the equilibrium state for $T$ and $\phi$.
		\end{enumerate}
	\end{thmx}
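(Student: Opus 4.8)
The plan is to lift the entire problem to the single-valued dynamical system on the orbit space and reduce it to the classical Ruelle--Perron--Frobenius Theorem. Concretely, I would work with the shift map $\sigma$ on $(\cO_\omega (T),  d_\omega)$ together with the lifted potential $\tphi (x_1 ,  x_2 ,  \dots) \= \phi (x_1 ,  x_2)$, which depends only on the first two coordinates. The first assertion, that the Variational Principle holds, is immediate from Theorem~\ref{t_VP_forward_exp}, since a forward expansive correspondence with the specification property is in particular forward expansive. The remaining work is to produce the eigendata and to pass it back through the dictionary between orbit-space measures and pairs $(\mu ,  \cQ)$.

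First I would record the transfer lemmas developed in the earlier analysis of $\sigma$: forward expansiveness of $T$ gives forward expansiveness of $\sigma$, the specification property of $T$ gives specification for $\sigma$, Bowen summability of $\phi$ gives Bowen summability of $\tphi$, and $P(\sigma ,  \tphi) = P(T ,  \phi)$. With these in hand I would apply the classical Ruelle--Perron--Frobenius Theorem (Proposition~\ref{equilibrium state for single-valued maps}) to $(\sigma ,  \tphi)$. This yields the scalar $\lambda = \exp (P(\sigma ,  \tphi)) = \exp (P(T ,  \phi))$, an eigenmeasure $\rho$ of $\cL_{\tphi}^*$ with $\cL_{\tphi}^* \rho = \lambda \rho$, an eigenfunction $w$ of $\cL_{\tphi}$ with $\cL_{\tphi} w = \lambda w$, and the unique equilibrium state $\nu_\phi = w \rho$ for $(\sigma ,  \tphi)$, normalized so that $\int w \diff \rho = 1$.

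The heart of the argument is to exploit the two-coordinate structure of $\tphi$ to extract the correspondence data. For the eigenmeasure (part~(i)), I would set $m_\phi \= (\pi_1)_* \rho$ to be the first marginal and disintegrate $\rho$ along the coordinate projections; feeding this into the eigenmeasure equation $\cL_{\tphi}^* \rho = \lambda \rho$ and using that $\tphi$ records only the first transition, one identifies the conditional measures as a transition probability kernel $\cQ$ supported by $T$ and obtains $\rho = m_\phi \cQ^\omega |_T$. For the eigenfunction (part~(ii)), I would observe that $\cL_{\tphi}$ maps the space of functions depending only on the first coordinate into itself (this is precisely where the two-coordinate form of $\tphi$ enters), so every iterate $\cL_{\tphi}^n \boldsymbol{1}$ depends only on the first coordinate; since the eigenfunction is approximated by the normalized iterates of $\boldsymbol{1}$, it lies in this $\cL_{\tphi}$-invariant subspace, and hence $w = \tu_\phi$ for a Borel function $u_\phi \in L^1 (m_\phi)$ with $\cL_{\tphi} \tu_\phi = \lambda \tu_\phi$.

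Finally I would assemble the equilibrium state and establish uniqueness. Reweighting the Markov measure $m_\phi \cQ^\omega |_T$ by the first-coordinate function $\tu_\phi$ alters only the initial distribution and not the transitions, so $\nu_\phi = \tu_\phi \cdot (m_\phi \cQ^\omega |_T) = \mu_\phi \cQ^\omega |_T$ with $\mu_\phi \= u_\phi m_\phi$; as $\nu_\phi$ is $\sigma$-invariant, $\mu_\phi$ is $\cQ$-invariant. Using the entropy identity $h_{\mu_\phi} (\cQ) = h_{\nu_\phi} (\sigma)$ between the entropy of the kernel and the Kolmogorov--Sinai entropy of the shift, and $\int \tphi \diff \nu_\phi = \int_X \! \int_{T(x_1)} \! \phi \diff \cQ_{x_1} \diff \mu_\phi$, the classical identity $h_{\nu_\phi} (\sigma) + \int \tphi \diff \nu_\phi = P(\sigma ,  \tphi)$ transfers to (\ref{P(T,phi)=h_mu(Q)+int_Xphidmu intro}), giving (iii) and existence. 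For uniqueness, the same two identities show that $(\mu ,  \cQ) \mapsto \mu \cQ^\omega |_T$ carries any equilibrium state of $(T ,  \phi)$ to an equilibrium state of $(\sigma ,  \tphi)$; the uniqueness of $\nu_\phi$ then forces its first marginal $\mu_\phi$ to be unique, and for fixed $\mu_\phi$ two kernels $\cQ ,  \cQ'$ with $\mu_\phi \cQ^\omega |_T = \nu_\phi = \mu_\phi (\cQ')^\omega |_T$ must agree $\mu_\phi$-almost everywhere by uniqueness of disintegration. I expect the main obstacle to be the rigorous construction of the kernel $\cQ$ from the disintegration of the eigenmeasure --- verifying joint measurability in the base point, that each $\cQ_x$ is a probability measure supported on $T(x)$, and that $m_\phi \cQ^\omega |_T$ genuinely reproduces $\rho$ --- carried out while the eigenfunction $u_\phi$ is merely $L^1$ rather than continuous.
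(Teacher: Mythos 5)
Your proposal is correct and follows essentially the same route as the paper: lift to $(\cO_\omega (T) ,  \sigma ,  \tphi)$ via the transfer lemmas, apply the classical Ruelle--Perron--Frobenius theorem, build the kernel $\cQ$ by disintegrating the two-coordinate marginal of the eigenmeasure (the paper's Proposition~\ref{decompose a two-dimentional measure with support}), use the fact that $\cL_{\tphi}$ sends every function to one depending only on the first coordinate to get $\tu_\phi$, and pull existence and uniqueness back through $(\mu ,  \cQ) \mapsto \mu \cQ^\omega |_T$. The only organizational difference is that the paper does not prove $\rho = m_\phi \cQ^\omega |_T$ by disintegration; instead it verifies directly, by induction on cylinder sets (using the local inverse branch $J$ furnished by forward expansiveness), that the Markov measure $m_\phi \cQ^\omega |_T$ built from the two-marginal is itself an eigenvector of $\cL_{\tphi}^*$ --- which is precisely the technical obstacle you flag at the end, resolved in the same way.
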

	
	In the second version, we assume that $T$ is distance-expanding (Definition~\ref{distance-expanding correspondence}), open (Definition~\ref{open correspondence}), and strongly transitive (Definition~\ref{strongly transitive correspondence}) and $\phi$ is H\"{o}lder continuous, then the Variational Principle holds, the equilibrium state exists and is unique in an appropriate sense, and the unique equilibrium state can be obtained from the Ruelle operator and has the equidistribution property.
	
	\begin{thmx}\label{phwfq9}
		Let $T$ be a open, strongly transitive, distance-expanding correspondence on $X$ and $\phi\: \cO_2 (T) \to \R$ be a H\"{o}lder continuous function. Then the following statements are true:
		\begin{enumerate}
			\smallskip
			\item[(1)]	The Variational Principle (\ref{P(T,phi)=sup_Q,mu(h_mu(Q)+int_Xphidmu)}) holds and there exists an equilibrium state $(\mu_\phi ,  \cQ)$ for the correspondence $T$ and potential $\phi$.	Moreover, the equilibrium state $(\mu_\phi ,  \cQ)$ is unique in the sense that the measure $\mu_\phi$ is unique and if there are two equilibrium states $(\mu_\phi ,  \cQ)$ and $(\mu_\phi ,  \cQ ')$, then $\cQ_x (A)=\cQ_x '(A)$ for $\mu_\phi$-almost every $x\in X$ and all Borel measurable $A\subseteq X$. 
			
			\smallskip
			\item[(2)] The equilibrium state $(\mu_\phi ,  \cQ)$ can be obtained in the same way as (i)--(iii) in Theorem~\ref{equilibrium state 1}. If, moreover, $T$ is continuous (Definition~\ref{continuity}), then $u_\phi$ is continuous.
			
			\smallskip
			\item[(3)]	The backward orbits under $T$ are equidistributed with respect to the measure $\mu_\phi$. More precisely, if we denote 
		\begin{align*}
			\cO_{-n} (x)&\= \bigl\{ (y_0 ,  y_1 ,  \dots ,  y_n)\in X^{n+1}  :  y_n =x ,\, y_k \in T(y_{k-1}) \text{ for each } k\in \{ 1 ,\, \dots ,\, n \} \bigr\}, \\
			Z_n(x) &\= \sum\limits_{(y_0 ,  \dots ,  y_n) \in \cO_{-n} (x)} \exp \biggl( \sum\limits_{i=0}^{n-1}   \phi (y_i ,  y_{i+1})  \biggr) ,
		\end{align*}
		then the following statements are true:
		\begin{enumerate}
			\smallskip
			\item[(a)] For each $x\in X$, the following sequence of Borel probability measures on $X$
			\begin{equation*}
				\frac{1}{ Z_n(x) } \sum_{(y_0 ,  \dots ,  y_n) \in \cO_{-n} (x)} \frac{\sum_{j=0}^n    \delta_{y_j}  }{n+1}  \exp  \biggl( \sum_{i=0}^{n-1}  \phi (y_i ,  y_{i+1}) \biggr)   ,\, n\in \N,
			\end{equation*}
			converges to $\mu_\phi$ in the weak* topology as $n \to +\infty$.
			\smallskip
			\item[(b)] If, moreover, $T$ is topologically exact (Definition~\ref{topologically exact}), then for each $x\in X$, the following sequence of Borel probability measures on $X$
			\begin{equation*}
				\frac{1}{ Z_n(x)  }\sum_{(y_0 ,  \dots ,  y_n) \in \cO_{-n} (x)} \delta_{y_0} \exp \biggl( \sum_{i=0}^{n-1}  \phi (y_i ,  y_{i+1}) \biggr)  ,\, n\in \N,
			\end{equation*}
			converges to $m_\phi$ in the weak* topology as $n \to +\infty$.
		\end{enumerate}
		\end{enumerate}
	\end{thmx}

	For general correspondences and continuous potential functions, we establish the following result.
	
	\begin{thmx}\label{t_HVP}
		Let $T$ be a correspondence on a compact metric space $(X,  d)$ and $\phi\: \cO_2 (T) \to \R$ be a continuous function. Then the following statements are true:
		\begin{enumerate}
			\smallskip
			\item[(i)] There exists $\cQ\in \tpk(X;T)$ and $\mu\in \MMM(X,\cQ)$.
			\smallskip
			\item[(ii)] $P(T,  \phi)\geq \sup\limits_{\cQ\in \tpk(X;T), \, \mu\in \MMM(X,\cQ) } \big\{ h_\mu (\cQ)+ \int_X \! \int_{T(x_1)} \! \phi (x_1 ,  x_2)   \diff \cQ_{x_1} (x_2)   \diff \mu (x_1) \big\}$.
		\end{enumerate}
	\end{thmx}

\subsection*{Applications to holomorphic and anti-holomorphic correspondences}

	Our primary motivation for the investigation in this article comes from the following two classes of correspondences in complex dynamics. For more details, see Section~\ref{sct_Applications to holomorphic or anti-holomorphic correspondences}.
	
	\subsubsection*{Lee--Lyubich--Markorov--Mukherjee anti-holomorphic correspondences.}

	Let $d \in \N$ and $f\: \CDach \to \CDach$ be a rational map of degree $d+1$ that is univalent on the open unit disk $\D$. Set $\eta (z) \= 1 / \oz$, the reflection map on the unit circle.
	
	\defn{The Lee--Lyubich--Markorov--Mukherjee anti-holomorphic correspondence}\index{Lee--Lyubich--Markorov--Mukherjee anti-holomorphic correspondence} $\fC^*$ is defined as follows:
	\begin{equation}\label{equ:Lee--Lyubich--Markorov--Mukherjee anti-holomorphic correspondence}
		\fC^* (z)\= \biggl\{ w\in \CDach  :  \frac{f(w)-f(\eta (z))}{w-\eta (z)} =0 \biggr\}
	\end{equation}
	for all $z\in \CDach$. See \cite[Section~2]{LMM24} for more details. We note that $\fC^*$ is not forward expansive. %
	
	%
	%
	%
	
	%
	
	%
	%
	%
	%
	%
	
	%
	
	\begin{comment} %
	The correspondence $\fC^*$ can be divided into two independent parts in the sense of Proposition~\ref{wiodpeac}. The dynamics of $\fC^*$ is equivalent to the action of an abstract anti-Hecke group on one part in some cases (see \cite[Propositions~2.15 and~2.19]{LMM24}), and is related to an anti-holomorphic map on another part, see Proposition~\ref{dynamics of LMM correspondence} for details.
	\end{comment}
	
	%
	
	\begin{thmx}\label{theo_matings}
		Let $\fC^*$ be the correspondence given above and $\phi \: \cO_2 (\fC^*) \to \R$ be a continuous function. 
		Then the Variational Principle (\ref{P(T,phi)=sup_Q,mu(h_mu(Q)+int_Xphidmu)}) holds for $T\=\fC^*$ and $X\=\CDach$.
	\end{thmx}

	\subsubsection*{A family of hyperbolic holomorphic correspondences}

	Fix $p ,\, q\in \N$ satisfying $p<q$. Let $c\in \C$.
	Denote by $\boldsymbol{f}_c (z)=z^{q/p} +c$ the correspondence\footnote{The superscript $q/p$ in $z^{q/p} +c$ is merely a notation, not a fraction. The two correspondences $z^{2/1} +c$ and $z^{4/2} +c$ are different by this definition.} on $\CDach$ given by
	\begin{equation*}
		\boldsymbol{f}_c (z) \= \bigl\{ w\in \CDach  :  (w-c)^p =z^q \bigr\}  \qquad \text{for all } z\in \CDach.
	\end{equation*}

	A version of \emph{Julia set} $J(\boldsymbol{f}_c)$ is defined as the closure of the union of all repelling periodic orbits of $\boldsymbol{f}_c$, see e.g., \cite[Definition~6.31]{Siq15} or \cite[Section~2.1]{SS17}. Denote by $\boldsymbol{f}_c |_J$ a map given by $\boldsymbol{f}_c |_J (z) \= J(\boldsymbol{f}_c) \cap \boldsymbol{f}_c (z)$ for all $z\in J(\boldsymbol{f}_c)$.
	
	Set $P_c \= \overline{\bigcup_{n\in \N} \boldsymbol{f}_c^n (0)}$ and
	\begin{equation*}
		M_{q/p ,  0} \= \{ c\in \C  :  \exists (x_1 ,  x_2 ,  \dots) \in \cO_\omega (\boldsymbol{f}_c)
		\text{ such that } x_1 =0 \text{ and } \{ x_n \}_{n\in \N} \text{ is bounded} \}.
	\end{equation*}
	
	A number $c\in \C$ is called a \defn{simple center}\index{simple center} if $c \neq 0$ and there is only one bounded orbit $(x_1,  x_2,  \dots) \in \cO_\omega (\boldsymbol{f}_c)$ with $x_1 =0$ and such a bounded orbit is a cycle, see e.g., \cite[Section~2.2]{Siq22}.
	
	\begin{thmx}\label{theo_hyperboliccorrespondence}
		There is an open set $H_{q/p}$ containing both $\C \smallsetminus M_{q/p ,  0}$ and every simple center such that for every $c \in H_{q/p}$, the following statements are true:
		\begin{enumerate}
			\smallskip
			\item[(i)] The set $\C \smallsetminus P_c$ is a hyperbolic Riemann surface.
			\smallskip
			\item[(ii)] Statements~(1)--(3) in Theorem~\ref{phwfq9} hold for the correspondence $\boldsymbol{f}_c |_J$ on the compact metric space $(J (\boldsymbol{f}_c),  d_c)$, where $d_c$ refers to the hyperbolic metric on $\C \smallsetminus P_c$.
		\end{enumerate}
	\end{thmx}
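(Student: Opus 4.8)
The plan is to deduce Theorem~\ref{theo_hyperboliccorrespondence} from Theorem~\ref{phwfq9}, so that the real work is to produce the parameter region $H_{q/p}$ and to verify, for each $c$ in it, the three hypotheses of Theorem~\ref{phwfq9} for the restricted correspondence $\boldsymbol{f}_c |_J$ on $(J(\boldsymbol{f}_c), d_c)$: openness, strong transitivity, and the distance-expanding property. First I would take $H_{q/p}$ to be the hyperbolicity locus of the family $\boldsymbol{f}_c$ in the sense of C.~Siqueira and D.~Smania \cite{Siq15, Dynamics of hyperbolic correspondences}. That $H_{q/p}$ is open, and that it contains every escaping parameter $c\in \C \setminus M_{q/p , 0}$ as well as every simple center, is exactly what is recorded in \cite{Dynamics of hyperbolic correspondences}; this gives the stated containments. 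For such $c$ the postcritical set $P_c$ is compact, forward invariant under $\boldsymbol{f}_c$, and separated from the Julia set, so $\dist (J(\boldsymbol{f}_c) , P_c)>0$. Since $\boldsymbol{f}_c (0)=\{c\}$, we have $\{0 , c\}\subseteq P_c$, and for $c\neq 0$ this forces $P_c$ to contain at least two finite points; hence $\C \setminus P_c$ is a hyperbolic Riemann surface, which is statement~(i) and simultaneously puts the hyperbolic metric $d_c$ at our disposal on a neighborhood of $J(\boldsymbol{f}_c)$. Because $J(\boldsymbol{f}_c)$ is a compact Euclidean-closed subset of $\C \setminus P_c$ lying at positive distance from $P_c$, the space $(J(\boldsymbol{f}_c) , d_c)$ is a compact metric space with $d_c$ topologically equivalent to the ambient metric on $J(\boldsymbol{f}_c)$.

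For statement~(ii) the crucial hypothesis is the distance-expanding property (Definition~\ref{distance-expanding correspondence}). Since $P_c$ is forward invariant, its complement $\C \setminus P_c$ is backward invariant: if $g$ is a local inverse branch of $\boldsymbol{f}_c$ and $g(z)\in P_c$, then $z\in \boldsymbol{f}_c (P_c)\subseteq P_c$. Thus every inverse branch is a single-valued holomorphic self-map of $\C \setminus P_c$ on a simply connected neighborhood of a point of $J(\boldsymbol{f}_c)$. By the Schwarz--Pick lemma each such branch is non-expanding for $d_c$, and hyperbolicity—the uniform separation of the critical orbit from $J(\boldsymbol{f}_c)$—upgrades this to a uniform strict contraction over the compact set $J(\boldsymbol{f}_c)$; reading off the contraction factor from the expansion estimates of \cite{Dynamics of hyperbolic correspondences} and passing to the forward correspondence yields the distance-expanding property with constant matched to $d_c$. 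Openness (Definition~\ref{open correspondence}) is immediate from the open mapping theorem applied to the holomorphic branches, and continuity (Definition~\ref{continuity}) follows since those branches depend holomorphically, hence continuously, on the base point, which will in addition give the continuity refinement of $u_\phi$ in part~(ii) of Theorem~\ref{phwfq9}.

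It remains to verify strong transitivity (Definition~\ref{strongly transitive correspondence}), for which I would use that $J(\boldsymbol{f}_c)$ is by definition the closure of the repelling periodic orbits together with the uniform expansion just established: expansion spreads any small ball over all of $J(\boldsymbol{f}_c)$ under finitely many applications of $\boldsymbol{f}_c$, so backward orbits are dense and the strong transitivity condition holds. The same mixing, where applicable, furnishes topological exactness (Definition~\ref{topologically exact}) on $J(\boldsymbol{f}_c)$, which is precisely the extra hypothesis invoked in part~(b) of Theorem~\ref{phwfq9}. With openness, strong transitivity, and the distance-expanding property verified, Theorem~\ref{phwfq9} applies verbatim to $\boldsymbol{f}_c |_J$ and the H\"older potential $\phi$, delivering statements (i), (ii), (iii), (a), and (b) of that theorem and completing the proof.

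I expect the main obstacle to be the distance-expanding step. One must check that the branch-wise Schwarz--Pick contraction is genuinely uniform over $J(\boldsymbol{f}_c)$ and over all inverse branches at once—rather than merely pointwise—and that the multivalued notion of distance-expanding for a correspondence is correctly matched to the single-branch holomorphic estimates. In particular the expansion must be established for the hyperbolic metric $d_c$ named in the statement, so I would need the comparison between $d_c$ restricted to $J(\boldsymbol{f}_c)$ and the metric used to define hyperbolicity in \cite{Dynamics of hyperbolic correspondences} to be bi-Lipschitz on the compact set $J(\boldsymbol{f}_c)$, which is where the separation $\dist(J(\boldsymbol{f}_c) , P_c)>0$ does the essential work.
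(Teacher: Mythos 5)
Your overall route is the paper's route: take $H_{q/p}$ to be the Siqueira--Smania hyperbolicity locus and reduce to Theorem~\ref{phwfq9} by verifying openness, transitivity, and distance-expansion for $\boldsymbol{f}_c|_J$ on $(J(\boldsymbol{f}_c), d_c)$. But your verification has a genuine gap at the transitivity step. You assert that uniform expansion together with density of repelling periodic orbits ``spreads any small ball over all of $J(\boldsymbol{f}_c)$,'' yielding strong transitivity and topological exactness. This does not follow: expansion only forces a small ball to grow to a definite scale $\eta$ under iteration; that sets of scale $\eta$ eventually cover all of $J(\boldsymbol{f}_c)$ is exactly the locally-eventually-onto property (Definition~\ref{topologically exact}), a mixing statement not implied by expansion plus dense periodic orbits (a disjoint union of two expanding repellers satisfies both hypotheses and is not even transitive). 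The paper does not prove this either --- it quotes it as item~(iv) of Proposition~\ref{dynamics of z^q/p+c for cinH_q/p}, formulated from \cite[Theorems~4.4,~5.1, and~5.8]{Dynamics of hyperbolic correspondences}, and then uses that topological exactness implies strong transitivity. Your gap is repairable by the same citation, but as written the argument for this step is circular.

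There are secondary problems as well. First, your proof of statement~(i) rests on $\{0, c\} \subseteq P_c$, which is false: $P_c = \overline{\bigcup_{n\in\N} \boldsymbol{f}_c^n(0)}$ begins with $\boldsymbol{f}_c(0) = \{c\}$, so in general $0 \notin P_c$. (The conclusion survives, since for $c\neq 0$ the equality $\boldsymbol{f}_c(c) = \{c\}$ would force $c^q = 0$, so $P_c$ has at least two points; but the paper simply quotes hyperbolicity of $\C\setminus P_c$ and the inclusion $J(\boldsymbol{f}_c) \subseteq \C\setminus P_c$ from Siqueira.) Second, openness of $\boldsymbol{f}_c|_J$ is not ``immediate from the open mapping theorem'': given $w'\in J(\boldsymbol{f}_c)$ near a point $w\in\boldsymbol{f}_c|_J(V)$, the inverse branch produces $z'$ near $z\in V$ with $w'\in\boldsymbol{f}_c(z')$, but one must know $z'\in J(\boldsymbol{f}_c)$ to conclude $w'\in\boldsymbol{f}_c|_J(V)$; this requires the backward invariance $\boldsymbol{f}_c^{-1}(J(\boldsymbol{f}_c)) = J(\boldsymbol{f}_c)$ of Proposition~\ref{dynamics of z^q/p+c for cinH_q/p}~(i), which is also what guarantees $\boldsymbol{f}_c|_J(z)\neq\emptyset$, i.e., that $\boldsymbol{f}_c|_J$ is a correspondence at all --- a point your proposal never addresses. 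Finally, the Hausdorff-continuity of $\boldsymbol{f}_c|_J$ (and resulting continuity of $u_\phi$) that you claim in passing is not part of the theorem and is not established by your remark on holomorphic dependence of branches.
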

	
	\begin{comment} %
	\begin{remark}
		{\ifFirstInitial C.~\fi}Siqueira pointed out in \cite[Section~1.2]{Siq15} that one of the motivations to study the correspondences $z^{q/p} +c$ is the density of hyperbolicity. Specifically, Fatou conjectured in 1920 that hyperbolic maps are dense within the space of rational maps with fixed degrees. {\ifFirstInitial C.~\fi}Siqueira introduced a version of hyperbolicity for this family of holomorphic correspondences in \cite[Definition~5.6]{Siq22} and proved that there is an open set $H_{q/p}$ containing both $\C \smallsetminus M_{q/p,  0}$ and every simple center with the property that $\boldsymbol{f}_c$ is hyperbolic for all $c \in H_{q/p}$, see \cite[Corollary~5.7.1]{Siq22}. But now we do not know further relations between the two different $H_{q/p}$ on which Theorem~\ref{theo_hyperboliccorrespondence} holds and on which $\boldsymbol{f}_c$ is hyperbolic, respectively.
	\end{remark}
\end{comment}

	Note that 0 is not a simple center, so Theorem~\ref{theo_hyperboliccorrespondence} does not work when $c$ is close to 0. For $c$ in a neighborhood of 0, we have the following result.
	
	\begin{thmx}\label{theo_hyperboliccorrespondence2}
		There is an open neighborhood $U_{q/p}$ of $0$ with the property that for every $c\in U_{q/p}$, statements (1)--(3) in Theorem~\ref{phwfq9} hold for the correspondence $\boldsymbol{f}_c |_J$ on the compact space $J (\boldsymbol{f}_c)$ equipped with the Euclidian metric on $\C$.
	\end{thmx}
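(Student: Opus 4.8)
The plan is to deduce Theorem~\ref{theo_hyperboliccorrespondence2} from Theorem~\ref{phwfq9} by checking, for every $c$ in a sufficiently small open neighborhood $U_{q/p}$ of $0$, that $\boldsymbol{f}_c |_J$ is an open, strongly transitive, distance-expanding correspondence on $(J(\boldsymbol{f}_c),  d)$, where $d$ is the Euclidean metric; for the equidistribution statement~(b) and the continuity of $u_\phi$ in~(ii), I would additionally verify that $\boldsymbol{f}_c |_J$ is topologically exact and continuous. The reason the hyperbolic metric $d_c$ of Theorem~\ref{theo_hyperboliccorrespondence} cannot be used here is that the post-critical set degenerates at the base parameter: since $\boldsymbol{f}_0 (0) = \{ 0 \}$ we have $P_0 = \{ 0 \}$, so $\C \setminus P_0 = \C \setminus \{ 0 \}$ is not hyperbolic and the metrics $d_c$ collapse as $c \to 0$. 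Working with the Euclidean metric is exactly what circumvents this degeneration.

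First I would analyze the base parameter $c = 0$ directly. As $\boldsymbol{f}_0 (z) = \{ w : w^p = z^q \}$ and $|w| = |z|^{q/p}$ with $q/p > 1$, the fixed points $0$ and $\infty$ are superattracting, while the repelling periodic points of the induced angle-multiplying dynamics are dense on the unit circle; hence $J(\boldsymbol{f}_0) = S^1$. On $S^1$ the correspondence is given explicitly by the $p$ branches $\theta \mapsto (q\theta + k)/p$, $k \in \{ 0 ,  \dots ,  p-1 \}$, each expanding arclength by the factor $q/p > 1$, so that its $q$ inverse branches are uniform Euclidean contractions. From this explicit model one reads off that $\boldsymbol{f}_0 |_{S^1}$ is open, continuous, distance-expanding for the Euclidean metric, and topologically exact, hence in particular strongly transitive; thus Theorem~\ref{phwfq9} applies at $c = 0$.

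Next I would propagate these properties to nearby parameters. For $c$ close to $0$ the critical orbit stays bounded and within the basin of the attracting fixed point near $0$, so $\boldsymbol{f}_c$ remains hyperbolic in the dynamical sense, and by the holomorphic-motion results of Siqueira and Smania \cite{Holomorphic motions for unicritical correspondences, Dynamics of hyperbolic correspondences} the Julia set $J(\boldsymbol{f}_c)$ moves continuously with $c$, staying inside a fixed annulus $A$ around $S^1$ on which $\boldsymbol{f}_c$ carries no critical values. Since the inverse branches of $\boldsymbol{f}_0$ are strict Euclidean contractions on $A$ and depend continuously on $c$, a compactness argument shows that for $c$ in a small enough $U_{q/p}$ the inverse branches of $\boldsymbol{f}_c$ are still uniform Euclidean contractions on $A$; restricting to $J(\boldsymbol{f}_c) \subset A$ gives the distance-expanding property. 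The branch count is locally constant (no critical points on $J(\boldsymbol{f}_c)$), so openness and continuity persist, and topological exactness---being governed by the uniform expansion---persists as well, whence so does strong transitivity. Applying Theorem~\ref{phwfq9} then yields statements~(i)--(iii),~(a), and~(b).

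I expect the main obstacle to be the distance-expanding estimate in the Euclidean metric. Unlike in Theorem~\ref{theo_hyperboliccorrespondence}, the Schwarz--Pick lemma on $\C \setminus P_c$ is unavailable, so the contraction of the inverse branches must be obtained from a direct perturbative estimate that is uniform both in the branch index and in the parameter $c \in U_{q/p}$, while simultaneously confining $J(\boldsymbol{f}_c)$ to the annulus $A$ where these bounds hold. The delicate point is to guarantee that the resulting $U_{q/p}$ is a genuine (full) neighborhood of $0$ rather than a punctured one: all quantitative bounds must survive the collapse of the post-critical set at $c = 0$, which is feasible precisely because the unperturbed model $\boldsymbol{f}_0 |_{S^1}$ is already uniformly expanding in the Euclidean metric.
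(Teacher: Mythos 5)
Your reduction to Theorem~\ref{phwfq9} is the same as the paper's, and your analysis of the model parameter is sound: $J(\boldsymbol{f}_0)=S^1$, and the explicit circle correspondence is indeed open, continuous, Euclidean distance-expanding (including the needed separation between images coming from \emph{distinct} branches, as required by the infimum in Definition~\ref{distance-expanding correspondence}), and topologically exact, hence strongly transitive. The genuine gap is in the perturbation step: for $c\neq 0$ every topological property you need rests on the complete invariance $\boldsymbol{f}_c^{-1}(J(\boldsymbol{f}_c))=J(\boldsymbol{f}_c)$, and you never establish or invoke it. Concretely, to prove openness of $\boldsymbol{f}_c|_J$ one takes $z\in J(\boldsymbol{f}_c)$, an open $V\ni z$ in $J(\boldsymbol{f}_c)$, $w\in \boldsymbol{f}_c|_J(z)$, and $w'\in J(\boldsymbol{f}_c)$ close to $w$; the local inverse branch sending $w$ to $z$ produces $z'$ near $z$ with $w'\in\boldsymbol{f}_c(z')$, but without backward invariance of the Julia set you cannot conclude $z'\in J(\boldsymbol{f}_c)$, hence cannot conclude $w'\in \boldsymbol{f}_c|_J(V)$. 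Your substitute --- ``the branch count is locally constant, so openness and continuity persist'' --- does not address this: absence of critical points on $J(\boldsymbol{f}_c)$ gives you local inverse branches, but says nothing about whether their values lie in the Julia set. The same invariance is what converts exactness of the global correspondence into exactness of the restriction, via $(\boldsymbol{f}_c|_J)^n(W\cap J(\boldsymbol{f}_c))=\boldsymbol{f}_c^n(W)\cap J(\boldsymbol{f}_c)$; ``persists under uniform expansion'' is an assertion, not an argument. This is precisely where the paper leans on Siqueira's thesis: Proposition~\ref{dynamics of z^q/p+c for cinU_q/p}~(i), i.e.\ $\boldsymbol{f}_c^{-1}(J(\boldsymbol{f}_c))=J(\boldsymbol{f}_c)\neq\emptyset$ from \cite{Siq15}, is used for all three of: $\boldsymbol{f}_c|_J$ being a correspondence, openness, and topological exactness.

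The gap is fillable, but not with the ingredients you actually use. From the holomorphic-motion results of \cite{Holomorphic motions for unicritical correspondences} you invoke only the continuity of $c\mapsto J(\boldsymbol{f}_c)$, which confines $J(\boldsymbol{f}_c)$ to an annulus and does suffice for your uniform expansion estimate; it does not give the topological properties. You need either (i) the invariance statement itself, imported as the paper does through Proposition~\ref{dynamics of z^q/p+c for cinU_q/p}, or (ii) the stronger, dynamical content of the motion --- that it conjugates $\boldsymbol{f}_c|_{J(\boldsymbol{f}_c)}$ to the model $\boldsymbol{f}_0|_{S^1}$ as correspondences --- under which openness, exactness, strong transitivity, and continuity all transfer from your explicit model and only the metric estimate needs a direct proof. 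With either fix the citation load is essentially the paper's; your detour through the $c=0$ model is then a legitimate alternative to the paper's uniform treatment (which verifies the hypotheses of Theorem~\ref{phwfq9} for all $c\in U_{q/p}$ at once, never singling out $c=0$), but as written the proposal asserts rather than proves exactly the two properties that require the invariance theorem.
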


\subsection*{Strategies of this article}

	We discuss below the difficulties and novelties in this article.
	
	Correspondences assign each point a set, which means they are multi-valued, while the continuous maps are single-valued. Despite this difference, we define the topological pressure for correspondences in a natural way using $(n,  \epsilon)$-separated sets and $(n,  \epsilon)$-spanning sets (see Section~\ref{subsct_Definition of topological pressure for correspondences}).

	In contrast, measure-theoretic entropy is more delicate to define for correspondences, because, given a subset of a space, there are no canonical distributions on it. We overcome this difficulty by assigning a distribution on the image of each point under a correspondence, i.e., we consider a transition probability kernel. Recall a \emph{transition probability kernel} on a space $X$ assigns each point a probability measure on $X$. We introduce the measure-theoretic entropy of a transition probability kernel using the entropy of partitions (see Section~\ref{subsct_Definition of measure-theoretic entropy for transition probability kernels}).
	
	To establish our Variational Principle, we need a relation between correspondences and transition probability kernels. The most natural relation is \emph{support}. Recall that a correspondence $T$ on $X$ assigns each point $x\in X$ a closed subset $T(x)$ of $X$, and a transition probability kernel on $X$ assigns each point a probability measure on $X$. Recall that a transition probability kernel is \emph{supported by} a correspondence $T$ if, for each point $x\in X$, the probability measure is supported on the closed subset $T(x)$. We conjecture a version of the Variational Principle in terms of transition probability kernels and their invariant measures, see (\ref{P(T,phi)=sup_Q,mu(h_mu(Q)+int_Xphidmu)}).

	Now, we sketch the main results and their proofs.
	
	We first establish the characterizations of both the topological pressure of correspondences and the measure-theoretic entropy of transition probability kernels in terms of the shift map on the orbit space. We briefly explain the dynamics of such a shift map now.
	
	Let $T$ be a correspondence on a compact metric space $X$, $\cQ$ be a transition probability kernel on $X$, and $\mu$ be a $\cQ$-invariant probability measure on $X$, i.e., the pushforward of $\mu$ under $\cQ$ is still $\mu$ (Definition~\ref{invariant measure}). Let $\sigma$ be the shift map on $X^\omega \= \{ (x_1 ,  x_2 ,  \dots )  :  x_k \in X$ for all $k\in \N \}$. It turns out that $\cO_\omega (T) \= \{ (x_1 ,  x_2 ,  \dots)\in X^\omega  :  x_{k+1} \in T(x_k) \}$ is an invariant set of $\sigma$. Moreover, for the potential function $\phi \: \cO_2 (T) \to \R$, we also lift it to the orbit space $\cO_\omega (T)$: denote by $\tphi$ the function on $\cO_\omega (T)$ given by $\tphi (x_1 ,  x_2 ,  \dots) \= \phi (x_1)$. Now, we state the following characterizations:
	
	First, the (classical) topological pressure of $\sigma$ restricted on $\cO_\omega (T)$ with respect to the potential $\tphi$ is equal to the topological pressure of $T$ with respect to the potential $\phi$ (see Theorem~\ref{topological pressure coincide with the lift}).
	
	Second, we consider the Markov process with the initial distribution $\mu$ and the transition probability kernel $\cQ$. Denote by $\mu \cQ^\omega$ the distribution of forward infinite orbits of the Markov process. It is a probability measure on $X^\omega$. The assumption that $\mu$ is $\cQ$-invariant implies that $\mu \cQ^\omega$ is $\sigma$-invariant (see Subsection~\ref{subsct_Measure-theoretic entropy of transition probability kernels and the shift map are equal}). We prove that the (classical) measure-theoretic entropy of $\sigma$ for the $\sigma$-invariant measure $\mu \cQ^\omega$ is equal to the measure-theoretic entropy of $\cQ$ for the $\cQ$-invariant measure $\mu$. %
	
	Thereby, the following conjecture about $\sigma$ is equivalent to our (conjectured) Variational Principle:
	\begin{equation}\label{q938f-qy239yq3ueq3}
		P\bigl( \sigma |_{\cO_\omega (T)} ,  \tphi \bigr) =\sup_{\nu} \biggl\{ h_\nu (\sigma) +\int \! \tphi   \diff \nu \biggr\},
	\end{equation}
	where $\nu$ ranges over all Borel probability measures on $\cO_\omega (T)$ induced by Markov processes and invariant under $\sigma$ (see Subsection~\ref{subsct_Thermodynamic formalism for forward expansive correspondences with the specification property} for details).
	
	Note that if the supremum in (\ref{q938f-qy239yq3ueq3}) is obtained by letting $\nu$ range over all Borel probability measures on $\cO_\omega (T)$ invariant under $\sigma$, then (\ref{q938f-qy239yq3ueq3}) holds, ensured by the (classical) Variational Principle. From this perspective, we establish Theorem~\ref{t_HVP}. But some $\sigma$-invariant measures on $\cO_\omega (T)$ are not induced by Markov processes, which is the difficulty of establishing the Variational Principle for correspondences. For the Variational Principle and the corresponding thermodynamic formalism, we use two kinds of methods in Sections~\ref{sct_Variational_principle_for_positively_RW-expansive_correspondences} and~\ref{sct_Thermodynamic_formalism_of_expansive_correspondences}, respectively.%
	
	In Section~\ref{sct_Variational_principle_for_positively_RW-expansive_correspondences}, we introduce the forward expansiveness for correspondences and establish the Variational Principle for forward expansive correspondences (Theorem~\ref{t_VP_forward_exp}). %
	We overcome the difficulty that a probability measure invariant under the shift map may not be induced by a Markov process in this section. Specifically, for an arbitrary Borel probability measure $\nu$ on the orbit space $\cO_\omega (T)$ which is invariant under the shift map, the projection of $\nu$ onto the first two coordinates can induce a measure $\mu$ and a conditional transition probability kernel $\cQ$. It turns out that $\cQ$ is supported by $T$ and that $\mu$ is $\cQ$-invariant. Moreover, if $T$ is forward expansive, by formulating a Rokhlin formula for measure-theoretic entropy of transition probability kernels, we show that the measure-theoretic entropy of $\cQ$ for $\mu$ is at least that of $\sigma$ for $\nu$, from which our Variational Principle follows.
	
	In Section~\ref{sct_Thermodynamic_formalism_of_expansive_correspondences}, we first introduce various properties for correspondences or potential functions, including specification property, Bowen summability, distance-expanding property, openness, and strong transitivity, and recall the topologically exact property for correspondences. Then we give two versions of thermodynamic formalism for a forward expansive correspondence $T$ on a compact metric space $X$ with a continuous potential $\phi \: \cO_2 (T) \to \R$ satisfying some of the properties above. Since the Ruelle--Perron--Frobenius theorem describes the equilibrium state explicitly, we can verify that the equilibrium state is indeed induced by a Markov process.

	\subsection*{Structures}

	Section~\ref{sct_notation} collects some notations used throughout the article.
	In Section~\ref{sct_Topological_pressure_of_correspondences}, we introduce the topological pressure of a correspondence $T$ with respect to a continuous potential function. Then we give a characterization of our topological pressure.
	In Section~\ref{sct_Measure-theoretic_entropy_of_transition_probability_kernels}, we introduce the measure-theoretic entropy $h_\mu (\cQ)$ of a transition probability kernel $\cQ$ for a $\cQ$-invariant probability measure $\mu$. Then we we give a characterization of our measure-theoretic entropy of a transition probability kernel.
	In Section~\ref{sct_Variational_principle_for_positively_RW-expansive_correspondences}, we introduce the forward expansiveness for correspondences and establish the Variational Principle for forward expansive correspondences (Theorem~\ref{t_VP_forward_exp}). We also investigate general correspondences and establish Theorem~\ref{t_HVP}.
	In Section~\ref{sct_Thermodynamic_formalism_of_expansive_correspondences}, we first introduce various properties for correspondences and potential functions. Then we give two versions of thermodynamic formalism for a forward expansive correspondence and some continuous potential.
	Apart from the above, in Section~\ref{sct_Applications to holomorphic or anti-holomorphic correspondences}, we apply our theory to (i) two examples: the Lee--Lyubich--Markorov--Mukherjee anti-holomorphic correspondences and a family of hyperbolic holomorphic correspondences of the form $z^{q/p} +c$; and (ii) two degenerate cases: transition matrices and (single-valued) maps.
	
\subsection*{Acknowledgments} 

	The authors want to thank Wenyuan Yang for interesting discussions on random walks. X.~Li wants to thank Xianghui Shi for some useful comments. %

\section{Notation}\label{sct_notation}

We follow the convention $\N \= \{1,\, 2,\, 3,\, \dots\}$, $\N_0 \= \N\cup \{ 0\}$, and $\wh{\N}\= \N \cup \{\omega \}$. Here $\omega$ is the least infinite ordinal. For each $n\in \N_0$, write $\zeroto{n}\= \{ 0 ,\, 1 ,\, \dots ,\, n\}$ and $\oneto{n} \= \zeroto{n} \smallsetminus \{0\}$. Let $\Z$ be the set of integers, $\C$ be the set of all complex numbers, $\CDach \= \C \cup \{\infty\}$, $\D \= \{ z\in \C  :   \abs{z}<1 \}$, and $\D^* \= \{ z\in \CDach  :   \abs{z}>1 \}$.

When we use the notation $A \cap B \times C$ or $B \times C \cap A$ for sets $A$, $B$, and $C$, it should be interpreted as first performing the multiplication operation between sets $B$ and $C$ and subsequently finding the intersection of the result with $A$.

Let $Y$ be a set and $n\in \N$. Define the reversal $\gamma_n \: Y^n \to Y^n$ by
\begin{equation*}
	\gamma_n (x_1 ,  x_2 ,  \dots ,  x_n) \= (x_n ,  x_{n-1} ,  \dots ,  x_1) \quad\text{ for all } (x_1 ,  \dots ,  x_n) \in Y^n.
\end{equation*}
For a function $\phi \: Y\to \R$, write $ \norm{\phi}_\infty \= \sup \{  \abs{\phi (x)}  :  x\in Y \}$. Let $\SAA(Y)$ be a $\sigma$-algebra on $Y$. Denote by $\BBB(Y,\R)$ the set of real-valued bounded measurable functions on $Y$, by $\PPP(Y)$ the set of probability measures on $Y$, and by $\tpk(Y,X)$ the set of transition probability kernels from $Y$ to $X$.

For a subset $A\subseteq Y$, denote by $\mathbbold{1}_A \: Y\to \R$ the function that assigns $1$ to each point $y\in Y$ and $0$ elsewhere.

Let $\psi\: M \to \C$ be a function defined on a subset $M$ of $Y^2$. For each $n\in \N$, we write
\begin{equation}\label{e:S_nphi=}
	S_n \psi (\udx) = S_n \psi (x_1 ,  \dots ,  x_{n+1})\= \sum_{i=1}^n   \psi (x_i ,  x_{i+1})
\end{equation}
for $\udx=(x_1 ,  \dots ,  x_{n+1}) \in Y^{n+1}$, whenever the right-hand side of (\ref{e:S_nphi=}) makes sense.

We write, for $i,\, j\in \Z$ with $i\leq j$,
\begin{equation}    \label{e:vect}
	\vect{x}{i}{j} \= (x_i, x_{i+1}, \dots, x_j) \qquad \text{and} \qquad
	\vect{x}{i}{\infty} \= (x_i, x_{i+1}, \dots).
\end{equation}

Let $(X,d)$ be a compact metric space. Denote by $\SBB (X)$ the Borel $\sigma$-algebra on $X$, by $\cF (X)$ the set of all non-empty closed subsets of $X$, by $\CCC(X,\R)$ the set of real-valued continuous functions on $X$, and by $\MMM(X,g)$ the set of $g$-invariant Borel probability measures for a continuous map $g\: X \to X$. In this setting, $\PPP(X)$ denotes the set of Borel probability measures on $X$.

For a map $f\: X\to X$, denote by $\cC_f \: X\to \cF (X)$ the map given by
\begin{equation}  \label{e:Def_C_f}
	\cC_f (x) \= \{ f(x) \}  \qquad \text{for } x \in X.
\end{equation}

Let $T \: X \to \cF (X)$ be a map. %
For each subset $A \subseteq X$, set $T(A) \= \bigcup_{x\in A} T(x) \subseteq X$. For $n \in \N$, define $T^n (A) \subseteq X$ recursively on $n$ with $T^1 (A) \= T(A)$ and $T^{n+1} (A) \= T (T^n (A)) \text{ for all } n\in \N$. Define $T^{-1} (A) \= \{ x \in X  :  A \cap T (x) \neq \emptyset \} \subseteq X$. For $n\in \N$, define $T^{-n} (A) \subseteq X$ recursively on $n$ with $T^{-(n+1)} (A) \= T^{-1} (T^{-n} (A))$ for all $n\in \N$. Write $T^n (x) \= T^n (\{ x \})$ for $n\in \Z \smallsetminus \{ 0 \}$ and $x\in X$.

For a subset $A \subseteq X$ and each $x\in X$, define $T|_A (x) \= T(x) \cap A$.

For each $n\in \N$, equip $X^n$ with the metric $d_n$ given by
\begin{equation}\label{d_n=}
	d_n ( \vect{x}{1}{n} ,  \vect{y}{1}{n})\= \max \{ d(x_i ,  y_i)  :  i \in \oneto{n}\}   \qquad \text{for } \vect{x}{1}{n},  \vect{y}{1}{n}\in X^n. 
\end{equation}
Similarly, equip $X^\omega \= \{ \vect{x}{1}{\infty}  :  x_k \in X$ for all $k\in \N \}$ with the metric $d_\omega$ given by
\begin{equation}\label{d_omega=}
    d_\omega (\vect{x}{1}{\infty},  \vect{y}{1}{\infty})\= \sum_{k=1}^{+\infty}   \frac{d(x_k ,  y_k)}{2^k (1+d(x_k ,  y_k))}    \qquad \text{for } \vect{x}{1}{\infty},  \vect{y}{1}{\infty}\in X^\omega. 
\end{equation}

For each $n\in \N$, write
\begin{equation*}
	\cO_n (T)\= \{\vect{x}{1}{n} \in X^n  :  x_{k+1}\in T(x_k) \text{ for each } k\in \oneto{n-1} \} \subseteq X^n.
\end{equation*}
Then the \defn{orbit space}\index{orbit space} $\cO_\omega (T)$ induced by $T$ is given by
\begin{equation}\label{O_omega(T)=}
    \cO_\omega (T)\= \{ \vect{x}{1}{\infty} \in X^\omega  :  x_{k+1}\in T(x_k) \text{ for each } k\in \N\} \subseteq X^\omega.
\end{equation}
For each $n\in \wh{\N}$, We call an element in $\cO_n (T)$ an \defn{orbit}\index{orbit}.

Suppose $\varphi \in \CCC( X, \R)$ and $\phi \in \CCC(\cO_2 (T), \R)$. Denote by $\tvarphi ,\, \tphi \: \cO_\omega (T) \to \R$ and $\hvarphi \: \cO_2 (T) \to \R$ the functions given by
\begin{equation}\label{lifts of potential}
    \tvarphi ( \vect{x}{1}{\infty} ) \= \varphi (x_1), \quad \tphi ( \vect{x}{1}{\infty} ) \= \phi (x_1 ,  x_2), \quad \text{and } \hvarphi (x_1 ,  x_2) \= \varphi (x_1).
\end{equation}

Denote by $\tpi_1 ,\, \tpi_2 \: \bigcup_{n\in \wh{\N} \smallsetminus \{1\}} X^n \to X$ and $\tpi_{12} \: \bigcup_{n\in \wh{\N} \smallsetminus \{1\}} X^n \to X^2$ the projection maps given by
\begin{equation}\label{tpi=}
	\tpi_{12} \: \vect{x}{1}{n} \mapsto (x_1 ,  x_2), \quad \tpi_1 \: \vect{x}{1}{n}   \mapsto x_1 ,\quad \tpi_2 \: \vect{x}{1}{n}  \mapsto x_2.
\end{equation}
If $\mu$ is a Borel probability measure on a subset $A$ of $X^n$ for some $n\in \wh{\N} \smallsetminus \{1\}$, then $\mu \circ \tpi_{12}^{-1}$ refers to a Borel probability measure on $X^2$ given by $\bigl( \mu \circ \tpi_{12}^{-1} \bigr) (B) \=\mu \bigl( A \cap \tpi_{12}^{-1} (B) \bigr)$ for all $B\in \SBB \bigl(X^2\bigr)$, and $\mu \circ \tpi_i^{-1}$ refers to a Borel probability measure on $X$ given by $\bigl( \mu \circ \tpi_i^{-1} \bigr) (B) \=\mu \bigl( A \cap \tpi_i^{-1} (B) \bigr)$ for all $B\in \SBB (X)$, where $i=1$ or $i=2$.

\section{Holomorphic and anti-holomorphic correspondences}\label{sct_Applications to holomorphic or anti-holomorphic correspondences}

In this section, we focus on our primary motivation for the investigation in this article: two examples in complex dynamics. We also discuss the degenerate cases of transition matrices and single-valued maps. Since we will apply our theory developed throughout this article to them, the reader can skip this section in the first read.%

\subsection{Lee--Lyubich--Markorov--Mukherjee correspondences}\label{subsct_Lee--Lyubich--Markorov--Mukherjee anti-holomorphic correspondences}

We aim to prove Theorem~\ref{theo_matings}.

Recall $d \in \N$ and that $f\: \CDach \to \CDach$ is a rational map of degree $d+1$ that is univalent on the open unit disk $\D$. Recall $\eta (z)=  1 / \oz$ and the correspondence $\fC^*(z)= \bigl\{ w\in \CDach : \frac{f(w)-f(\eta (z))}{w-\eta (z)} =0 \bigr\}$ for all $z \in \CDach$. See \cite{LMM24} for a more detailed study of such correspondences.

Denote $\Omega \= f(\D)$. An anti-holomorphic map $\tau \: \Omega \to \CDach$ is given by
\begin{equation}\label{equ:anti-holomorphic map}
	\tau \= f\circ \eta \circ (f|_{\D})^{-1}.
\end{equation}
Let $T(\tau) \= \CDach \smallsetminus \Omega$ and $S(\tau)$ be the singular set (consisting of all cusps and double points) of $\partial T(\tau) = \partial \Omega$. 
Denote $\overline{\Omega}^c  \= \CDach \smallsetminus \overline{\Omega}$. 
Set $S'(\tau) \= \partial \Omega \smallsetminus S(\tau)$, $T^0 (\tau) \= T(\tau) \smallsetminus S(\tau)$, and $T^\infty (\tau) \= \bigcup_{n=0}^{+\infty} \tau^{-n} \bigl( T^0 (\tau) \bigr)$. Write $K(\tau) \= \CDach \smallsetminus T^\infty (\tau)$, which is called the \emph{non-escaping set of $\tau$}. Note that $K(\tau)$ is a closed subset of $\CDach$ by \cite[Proposition~2.2]{LMM24}. Write
\begin{equation*}\label{equ:non-escaping set}
	\widetilde{K(\tau)} \= f^{-1} (K(\tau)) \quad \text{and}\quad
	 \widetilde{T^\infty (\tau)} \= f^{-1} (T^\infty (\tau)) =\CDach \smallsetminus \widetilde{K(\tau)}.
\end{equation*}
The subset $\widetilde{K(\tau)}$ is closed in $\CDach$, and the subset $\widetilde{T^\infty (\tau)}$ is open.

\begin{prop}[{\cite[Proposition~2.4]{LMM24}}]\label{wiodpeac}
	For all $z ,\, w\in \CDach$, if $w \in \fC^* (z)$, then $z\in \widetilde{K(\tau)}$ if and only if $w\in \widetilde{K(\tau)}$, and $z\in \widetilde{T^\infty (\tau)}$ if and only if $w\in \widetilde{T^\infty (\tau)}$.
\end{prop}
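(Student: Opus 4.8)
The plan is to reduce the entire statement to a single symmetry: the set $\wt{K(\tau)}=f^{-1}(K(\tau))$ is invariant under the reflection $\eta$. First I would isolate the only feature of the correspondence that is actually needed, namely that $w\in\fC^*(z)$ forces $f(w)=f(\eta(z))$; this is immediate from the defining equation $\frac{f(w)-f(\eta(z))}{w-\eta(z)}=0$. Consequently $w$ and $\eta(z)$ have the same image under $f$, so
\[
w\in\wt{K(\tau)}\iff f(w)\in K(\tau)\iff f(\eta(z))\in K(\tau)\iff \eta(z)\in\wt{K(\tau)}.
\]
Thus the first assertion $z\in\wt{K(\tau)}\iff w\in\wt{K(\tau)}$ is equivalent to $z\in\wt{K(\tau)}\iff\eta(z)\in\wt{K(\tau)}$, i.e.\ to the $\eta$-invariance of $\wt{K(\tau)}$. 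The second assertion comes for free: since $\wt{T^\infty(\tau)}=\CDach\setminus\wt{K(\tau)}$ by \eqref{equ:non-escaping set}, $\eta$-invariance of $\wt{K(\tau)}$ immediately yields $\eta$-invariance of its complement.

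To prove the $\eta$-invariance, i.e.\ $f(z)\in K(\tau)\iff f(\eta(z))\in K(\tau)$ for every $z\in\CDach$, the key identity is the Schwarz-reflection relation
\[
\tau(f(z))=f(\eta(z))\qquad\text{for all }z\in\D,
\]
which I would obtain by unwinding $\tau=f\circ\eta\circ(f|_\D)^{-1}$ and using that $f$ is univalent on $\D$, so $(f|_\D)^{-1}(f(z))=z$ and $f(z)\in\Omega$ lies in the domain of $\tau$. I would then show that $K(\tau)$ is fully invariant under the partial map $\tau$: for every $a\in\Omega$ one has $a\in T^\infty(\tau)\iff\tau(a)\in T^\infty(\tau)$, hence $a\in K(\tau)\iff\tau(a)\in K(\tau)$. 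This is a bookkeeping argument from $T^\infty(\tau)=\bigcup_{n\in\N_0}\tau^{-n}(T^0(\tau))$: since $a\in\Omega$ and $T^0(\tau)\subseteq\CDach\setminus\Omega$, the index $n=0$ cannot occur for $a$, so membership $a\in\tau^{-n}(T^0(\tau))$ with $n\geqslant1$ is equivalent to $\tau(a)\in\tau^{-(n-1)}(T^0(\tau))$, which gives the claimed equivalence.

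Combining the two facts, for $z\in\D$ we get $f(z)\in K(\tau)\iff\tau(f(z))=f(\eta(z))\in K(\tau)$, which is exactly the desired equivalence. The remaining points are dispatched using the involution property $\eta\circ\eta=\id$: if $z\in\D^*$ then $\eta(z)\in\D$, and applying the case just proved to the point $\eta(z)$ yields $f(\eta(z))\in K(\tau)\iff f(\eta(\eta(z)))=f(z)\in K(\tau)$; and if $\abs{z}=1$ then $\eta(z)=z$, so the equivalence is trivial. This exhausts $\CDach$ and completes the $\eta$-invariance.

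I expect the only genuinely delicate step to be the full invariance of $K(\tau)$, since $\tau$ is merely a partial map on $\Omega$ and one must track domains of definition carefully when passing between $\tau^{-n}$ and $\tau^{-(n-1)}$; once that equivalence is in place, the Schwarz-reflection identity and the three-case analysis are routine. (Since the statement is recorded as \cite[Proposition~2.4]{anti-holomorphic mating1}, one may alternatively just cite it, but the argument above exposes precisely why the two halves of the correspondence respect $\wt{K(\tau)}$ and $\wt{T^\infty(\tau)}$.)
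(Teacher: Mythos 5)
Your proof is correct, but there is nothing in the paper to compare it against: the paper does not prove Proposition~\ref{wiodpeac} at all, it simply imports it as \cite[Proposition~2.4]{anti-holomorphic mating1}. Your argument is a sound, self-contained substitute, and it is built from ingredients the paper does develop for neighbouring results: the observation that $w\in\fC^*(z)$ forces $f(w)=f(\eta(z))$ is exactly how the paper opens its proof of Proposition~\ref{C^*^-1 on V_n}, and your identity $\tau(f(z))=f(\eta(z))$ for $z\in\D$ is the computation stated just before (\ref{8374fgcqb8o37gd3yibced}). The logical skeleton is clean: since $\wt{K(\tau)}=f^{-1}(K(\tau))$ is a union of fibers of $f$, the relation $f(w)=f(\eta(z))$ collapses the whole proposition to the $\eta$-invariance of $\wt{K(\tau)}$, which you then verify via the full invariance of $K(\tau)$ under the partial map $\tau$ (handled correctly: a point of $\Omega$ can lie in $T^\infty(\tau)=\bigcup_{n\geqslant 0}\tau^{-n}(T^0(\tau))$ only with index $n\geqslant 1$ because $T^0(\tau)\subseteq\CDach\setminus\Omega$, so one iterate can legitimately be peeled off), together with the involution $\eta\circ\eta=\id$ to transfer the statement from $\D$ to $\D^*$, the unit circle being fixed by $\eta$; this exhausts $\CDach$, including $z=0$ and $z=\infty$. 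Two minor remarks: your claim that the first assertion is \emph{equivalent} to $\eta$-invariance tacitly uses $\fC^*(z)\neq\emptyset$ (true, since $\deg f=d+1\geqslant 2$, but only the implication from $\eta$-invariance to the proposition is ever needed); and the second assertion requires no separate argument, being the contrapositive of the first in view of $\wt{T^\infty(\tau)}=\CDach\setminus\wt{K(\tau)}$ from (\ref{equ:non-escaping set}) --- which is in effect what you say.
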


Set $V_0 \= f^{-1} \bigl(\overline{\Omega}^c\bigr)$, $U_0 \=f^{-1} (S'(\tau)) \cap \partial \D$, $V_n \= f^{-1} \bigr(\tau^{-n} \bigl( \overline{\Omega}^c \bigr)\bigr) \cap \D^*$, $U_n \=f^{-1} (\tau^{-(n-1)} (S' (\tau))) \cap \D^*$, $V_{-n} \= f^{-1} \bigr(\tau^{-n} \bigl( \overline{\Omega}^c \bigr)\bigr) \cap \D$, and $U_{-n} \=f^{-1} (\tau^{-n} (S' (\tau))) \cap \D$ for all $n\in \N$. %

\begin{lemma}\label{V_n and U_n are partition}
	The collection $\{V_n\}_{n\in \Z} \cup \{U_n\}_{n\in \Z}$ is a partition of $\widetilde{T^\infty (\tau)}$.
\end{lemma}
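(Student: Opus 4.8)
The plan is to realize $\widetilde{T^\infty(\tau)} = f^{-1}(T^\infty(\tau))$ as a disjoint union of $f$-preimages of the individual ``escape-level'' pieces of $T^\infty(\tau)$, and then to sort these preimages according to the trichotomy $\CDach = \D \sqcup \partial\D \sqcup \D^*$. First I would note that $T^0(\tau) = (\CDach\setminus\overline{\Omega}) \sqcup (\partial\Omega\setminus S(\tau))$, and that the blocks $\tau^{-n}(\CDach\setminus\overline{\Omega})$ and $\tau^{-n}(\partial\Omega\setminus S(\tau))$, $n\in\N_0$, are pairwise disjoint. The underlying reason is an escape argument: $\tau$ is defined only on $\Omega$, so if $\tau^{n}(w)\in T^0(\tau)\subseteq\CDach\setminus\Omega$ then $\tau^{m}(w)$ is undefined for $m>n$; hence each point can land in $T^0(\tau)$ under at most one iterate. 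This yields $T^\infty(\tau) = \bigsqcup_{n\in\N_0}\bigl(\tau^{-n}(\CDach\setminus\overline{\Omega})\sqcup\tau^{-n}(\partial\Omega\setminus S(\tau))\bigr)$, and since taking $f^{-1}$ preserves disjointness, all the corresponding preimages are pairwise disjoint.

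Next I would intersect $\widetilde{T^\infty(\tau)}$ with each of $\D$, $\partial\D$, and $\D^*$, using the standing properties of $f$ (that $f|_{\D}$ is a biholomorphism onto $\Omega$, that $f(\overline{\D})\subseteq\overline{\Omega}$, and that $f(\partial\D)\subseteq\partial\Omega$; cf.\ \cite[Section~2]{anti-holomorphic mating1}). On $\D$: the level-$0$ blocks lie outside $\Omega$ and contribute nothing, while $T^\infty(\tau)\cap\Omega = \bigsqcup_{n\geqslant 1}\bigl(\tau^{-n}(\CDach\setminus\overline{\Omega})\sqcup\tau^{-n}(\partial\Omega\setminus S(\tau))\bigr)$, so applying $(f|_{\D})^{-1} = f^{-1}(\,\cdot\,)\cap\D$ reproduces exactly $\bigsqcup_{n\geqslant 1}(V_{-n}\sqcup U_{-n})$. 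On $\partial\D$: since $f(\partial\D)\subseteq\partial\Omega$ and $T^\infty(\tau)\cap\partial\Omega = \partial\Omega\setminus S(\tau)$, the intersection is $f^{-1}(\partial\Omega\setminus S(\tau))\cap\partial\D = U_0$. On $\D^*$: collecting the exterior blocks over all $n\geqslant 0$ gives $V_0$ (the case $n=0$ being automatically contained in $\D^*$ because $f(\overline{\D})\subseteq\overline{\Omega}$) together with $V_n$ for $n\geqslant 1$, and collecting the boundary blocks over all $m\geqslant 0$ gives $U_n$ for $n\geqslant 1$ after the substitution $m = n-1$.

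The step requiring the most care, and the only genuine subtlety, is this last reindexing. Because $f$ has degree $d+1$, a single boundary block $\tau^{-m}(\partial\Omega\setminus S(\tau))$ has preimages spread across several sheets; for the level-$0$ block $\partial\Omega\setminus S(\tau)$ exactly one preimage sheet meets $\partial\D$ (yielding $U_0$) and the remaining sheets lie in $\D^*$ (yielding $U_1$), which is why the $\D^*$-indexing of the $U$'s is shifted by one relative to that of the $V$'s, whereas the exterior block $\CDach\setminus\overline{\Omega}$ has no preimage in $\overline{\D}$ at all and so needs no shift. Once the three region-by-region identifications are assembled, their union is all of $\widetilde{T^\infty(\tau)}$ and the listed sets are mutually disjoint, since sets attached to different regions are trivially disjoint and sets attached to the same region are $f$-preimages of disjoint blocks; this exhibits $\{V_n\}_{n\in\Z}\cup\{U_n\}_{n\in\Z}$ as a partition of $\widetilde{T^\infty(\tau)}$.
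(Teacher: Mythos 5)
Your proof is correct and takes essentially the same route as the paper's: the same level decomposition of $T^\infty (\tau)$ (with disjointness coming from $\tau^{-1}$ landing in $\Omega$ while $T^0 (\tau)$ avoids $\Omega$), pulled back under $f$ and sorted using $f(\D) =\Omega$, $f(\partial \D) =\partial \Omega$, and $f\bigl(\overline{\D}\bigr) =\overline{\Omega}$. The only difference is organizational — you collect terms region by region over $\D$, $\partial \D$, $\D^*$, while the paper collects them block by block — which is the same verification in transposed order.
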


\begin{proof}
	Since $\tau^{-1} \bigl(T^0 (\tau)\bigr) \subseteq \Omega$ and $T^0 (\tau) \subseteq \CDach \smallsetminus \Omega$, we can see that $\bigl\{\tau^{-n} \bigl(T^0 (\tau)\bigr)\bigr\}_{n\in \N_0}$ is a partition of $T^\infty (\tau)$. Note that $T^0 (\tau) =\bigl( \CDach \smallsetminus \Omega\bigr) \smallsetminus S (\tau)$ is the disjoint union of $\overline{\Omega}^c$ and $S'(\tau)$, so $\bigr\{f^{-1} \bigl(\tau^{-n} \bigl( \overline{\Omega}^c \bigr)\bigr)\bigr\}_{n\in \N_0} \cup \bigl\{f^{-1}(\tau^{-n} ( S'(\tau))) \bigr\}_{n\in \N_0}$ is a partition of $\widetilde{T^\infty (\tau)} =f^{-1} (T^\infty (\tau))$.
	
	Recall that $\Omega$ is the univalent image of $\D$ under $f$. For each $n\in \N$, since $\tau^{-n} \bigl( \overline{\Omega}^c\bigr) \subseteq \Omega$ and $\tau^{-n} (S'(\tau)) \subseteq \Omega$, we have $f^{-1} \bigl(\tau^{-n} \bigl(\overline{\Omega}^c\bigr)\bigr) \subseteq \CDach \smallsetminus \partial \D =\D \cup \D^*$ and $f^{-1}(\tau^{-n} ( S'(\tau) )) \subseteq \D \cup \D^*$. Moreover, since $f(\D) =\Omega$, whose intersection with $S'(\tau)$ is empty, we have $f^{-1} ( S'(\tau) ) \subseteq \partial \D \cup \D^*$. Therefore, %
	the collection $\{V_n\}_{n\in \Z} \cup \{U_n\}_{n\in \Z}$ is a partition of $\widetilde{T^\infty (\tau)}$.
\end{proof}

By (\ref{equ:anti-holomorphic map}), we have $(\tau \circ f) (z) =\bigl( f \circ \eta \circ (f|_{\D})^{-1} \circ f \bigr) (z) =(f \circ \eta) (z)$ for all $z\in \D$, and $(\tau \circ f \circ \eta) (z) =f(z)$ for all $z \in \D^*$. This implies that for each $A \subseteq \CDach$,
\begin{equation}\label{8374fgcqb8o37gd3yibced}
	f^{-1} \bigl( \tau^{-1} (A) \bigr) \cap \D =\eta \bigl(f^{-1} (A) \bigr) \cap \D \quad \text{ and } \quad
	 \eta\bigl(f^{-1} \bigl(\tau^{-1} (A)\bigr)\bigr) \cap \D^* =f^{-1} (A) \cap \D^*.
\end{equation}

\begin{prop}\label{C^*^-1 on V_n}
	Let $n\in \N_0$ be arbitrary, then 
	\begin{enumerate}
		\smallskip
		\item[(i)] $(\fC^*)^{-1} (V_n) \subseteq V_{n-1} \cup V_{-n-1}$, 
		\smallskip
		\item[(ii)] $(\fC^*)^{-1} (V_{-n}) \subseteq V_{-n-1}$, 
		\smallskip
		\item[(iii)] $(\fC^*)^{-1} (U_n) \subseteq U_{n-1} \cup U_{-n}$, and 
		\smallskip
		\item[(iv)] $(\fC^*)^{-1} (U_{-n}) \subseteq U_{-n-1}$.
	\end{enumerate}
\end{prop}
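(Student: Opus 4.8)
The plan is to reduce each of the four inclusions to elementary bookkeeping with the maps $f$, $\eta$, and $\tau$, using the two transport identities in \eqref{8374fgcqb8o37gd3yibced} together with the definition \eqref{equ:Lee--Lyubich--Markorov--Mazor--Mukherjee anti-holomorphic correspondence} of $\fC^*$. The starting observation is that if $w \in \fC^* (z)$, then $f(w) = f(\eta (z))$ and $w \neq \eta (z)$; set $p \= f(w) = f(\eta (z))$. Thus the hypothesis $z \in (\fC^*)^{-1}(E)$ for one of $E \in \{ V_n,\, V_{-n},\, U_n,\, U_{-n} \}$ says exactly that $p$ lies in $\tau^{-m}(\CDach \setminus \overline{\Omega})$ or in $\tau^{-m}(\partial \Omega \setminus S(\tau))$ for the appropriate level $m$, while $\eta(z)$ is an $f$-preimage of $p$. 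The whole argument is then a case split on the position of $z$ relative to the unit circle.

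Before the case split I would record three facts that govern it: (a) $\eta$ interchanges $\D$ and $\D^*$ and fixes $\partial \D$ pointwise, so $\eta(z) = z$ when $z \in \partial \D$; (b) $f$ is univalent on $\D$, hence $p$ has at most one preimage in $\D$, while $f(\partial \D) = \partial \Omega$ and $f|_{\partial \D}$ is injective over $\partial \Omega \setminus S(\tau)$ because no such point is a double point; and (c) the openness/disjointness relations $\Omega \cap \partial \Omega = \emptyset$, $\overline{\Omega} \cap (\CDach \setminus \overline{\Omega}) = \emptyset$, and $\tau^{-j}(\,\cdot\,) \subseteq \Omega$ for every $j \geq 1$. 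Fact (c) is what forbids $\eta(z)$ from sitting in the wrong region: if the target set forces $p \in \CDach \setminus \overline{\Omega}$ resp.\ $p \in \partial \Omega$, then $\eta(z) \notin \D$ resp.\ $\eta(z) \notin \D \cup \D^*$, which prunes one of the branches.

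With this in place, each inclusion follows by applying \eqref{8374fgcqb8o37gd3yibced} to move $z = \eta(\eta(z))$ across $\eta$: if $z \in \D$ then $\eta(z) \in \D^*$ and the first identity raises the $\tau$-level by one, landing $z$ in a $V_{-n-1}$- or $U_{-n}$-type set; if $z \in \D^*$ then $\eta(z) \in \D$ and the second identity lowers the level, landing $z$ in a $V_{n-1}$- or $U_{n-1}$-type set; the residual possibility $z \in \partial \D$ (relevant only for the $U$-families, by fact (c)) is handled through $\eta(z) = z$. I would treat the generic levels $n \geq 2$ first, where all $\tau$-iterates lie in $\Omega$, and then dispatch the boundary indices $n = 0, 1$ by hand. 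For the two inclusions (ii) and (iv) the single-term conclusion is forced by fact (b): the chosen $w$ lies in $\D$ and is the unique $\D$-preimage of $p$, so $w \neq \eta(z)$ rules out $\eta(z) \in \D$, i.e.\ rules out $z \in \D^*$, leaving only the $z \in \D$ branch.

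The main obstacle I anticipate is the clean treatment of the boundary circle at the lowest indices, which is exactly where (iii) and (iv) diverge. In (iii) with $n = 1$ the branch $z \in \partial \D$ genuinely occurs and produces the $U_0$ term: here $w \in U_1 \subseteq \D^*$, so $z$ and $w$ are an honest pair of distinct preimages of a boundary value. By contrast, in (iv) with $n = 0$ the same branch must be excluded to obtain the sharper conclusion $(\fC^*)^{-1}(U_0) \subseteq U_{-1}$, and the exclusion uses precisely the injectivity of $f|_{\partial \D}$ over $\partial \Omega \setminus S(\tau)$ from fact (b), since otherwise $z$ and the chosen $w$ would be distinct points of $\partial \D$ with the same non-singular image $p$. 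Checking that the level arithmetic matches the index shifts in the definitions of the $U_n$ (which use $\tau^{-(n-1)}$) against the $U_{-n}$ (which use $\tau^{-n}$) is the other place requiring care, but it becomes routine once the three facts above are in hand.
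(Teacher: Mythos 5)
Your overall route coincides with the paper's: exploit the fact that $w\in\fC^*(z)$ forces $f(\eta(z))=f(w)$, move $z$ across $\eta$ with the transport identities (\ref{8374fgcqb8o37gd3yibced}), and split on the position of $z$ relative to $\partial\D$, using univalence of $f$ on $\D$, the behaviour of $f|_{\partial\D}$ over $\partial\Omega\setminus S(\tau)$, and the disjointness relations to prune branches; your identification of the delicate low indices ((iii) at $n=1$ versus (iv) at $n=0$) also matches the paper's proof.

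There is, however, a genuine gap in your starting observation, and it is not cosmetic. From the definition (\ref{equ:Lee--Lyubich--Markorov--Mazor--Mukherjee anti-holomorphic correspondence}), the quotient $\frac{f(w)-f(\eta(z))}{w-\eta(z)}$ has a removable singularity at $w=\eta(z)$ with value $f'(\eta(z))$; consequently $w\in\fC^*(z)$ means $f(w)=f(\eta(z))$ together with ``$w\neq\eta(z)$ \emph{or} $f'(w)=0$'' --- this is exactly how the paper phrases it --- not the unconditional $w\neq\eta(z)$ you assert. Since $f$ is a rational map of degree $d+1\geqslant 2$ that is univalent on $\D$, all of its $2d$ critical points lie in $\overline{\D^*}$, so the exceptional case $w=\eta(z)$ with $f'(w)=0$ genuinely occurs. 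For (i) and (iii) this is harmless, since there you only use $\eta(z)\in f^{-1}(f(w))$. For (ii) and (iv) with $w\in\D$ it is also harmless, but only because univalence gives $f'(w)\neq 0$, which via the \emph{corrected} dichotomy yields $w\neq\eta(z)$; that is the justification your argument actually needs.

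The place where the proof as written fails is (iv) at $n=0$. There $w\in U_0\subseteq\partial\D$, and if $w$ were a critical point of $f$, then $z=\eta(w)=w$ itself would belong to $(\fC^*)^{-1}(w)$; but $z=w\in\partial\D$ is certainly not in $U_{-1}\subseteq\D$, so the inclusion $(\fC^*)^{-1}(U_0)\subseteq U_{-1}$ would be violated at such a point. Excluding this scenario requires the \emph{cusp} half of the definition of $S(\tau)$: if $f'(w)=0$ for $w\in\partial\D$, then $f(w)$ is a cusp of $\partial\Omega$, contradicting $f(w)\in\partial\Omega\setminus S(\tau)$. Your fact (b) invokes only the double-point half (injectivity of $f|_{\partial\D}$), so the critical-point-on-the-circle case is not ruled out by the facts you assemble, and the word ``distinct'' in your treatment of (iv), $n=0$, rests on the false unconditional claim. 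The repair is local --- state the correct dichotomy and add the cusp argument at $U_0$, exactly as the paper does --- but without it the single-branch conclusions are not established.
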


\begin{proof}
	For $z ,\, w \in \CDach$, recall $w \in \fC^*(z)$ if and only if $\frac{f(w)-f(\eta (z))}{w-\eta (z)} =0$. It follows that $(\fC^*)^{-1} (w) \subseteq \eta \bigl( f^{-1} (f(w) ) \bigr)$. If $z \in (\fC^*)^{-1} (w)$, then $f' (w)=0$ or $\eta (z) \neq w$, and $f(\eta (z))= f(w)$. Recall $\Omega$ is the univalent image of $\D$ under $f$, and thus $f\bigl(\overline{\D}\bigr) =\overline{\Omega}$ and $f(\partial \D) =\partial \Omega$. If $w \in \D$ and $z \in (\fC^*)^{-1} (w)$, since $f$ is injective on $\D$ and thus $f' (w) \neq 0$, we have $\eta (z) \notin \D$. It follows that $\eta (z) \in \D^*$, i.e., $z \in \D$ because $f(\partial \D) =\partial \Omega$ and $f(\D) =\Omega$. As a result, $(\fC^*)^{-1} (\D) \subseteq \D$.
	
	First, $(\fC^*)^{-1} (V_0) \subseteq \eta \bigl(f^{-1} \bigl(f \bigl(f^{-1} \bigl(\overline{\Omega}^c\bigr)\bigr)\bigr)\bigr) =\eta \bigl(f^{-1} \bigl(\overline{\Omega}^c\bigr)\bigr)$. Recall $f(\overline{\D}) =\overline{\Omega}$, so $f^{-1} \bigl(\overline{\Omega}^c\bigr) \subseteq \D^*$. By (\ref{8374fgcqb8o37gd3yibced}), 
	$\eta \bigl(f^{-1} \bigl(\overline{\Omega}^c\bigr)\bigr) 
		=\eta \bigl(f^{-1} \bigl(\overline{\Omega}^c\bigr)\bigr) \cap \D 
		=f^{-1} \bigl(\tau^{-1} \bigl(\overline{\Omega}^c\bigr)\bigr) \cap \D =V_{-1}$.
	Hence, $(\fC^*)^{-1} (V_0) \subseteq V_{-1}$. %
	
	For each $w \in U_0 =f^{-1} ( S'(\tau) ) \cap \partial \D$, if $z \in (\fC^*)^{-1} (w)$, i.e., $\frac{f(w)- f(\eta (z))}{(w- \eta (z))} =0$, then $f(\eta (z))= f(w) \in S'(\tau) $, and thus $\eta (z) \in \CDach \smallsetminus \D =\D^* \cup \partial \D$, i.e., $z\in \D \cup \partial \D$. Moreover, we have $\eta (z) \neq w$ or $f' (w) =0$. We argue by contradiction and assume $z\in \partial \D$, then $\eta (z) \neq w$ indicates that $f(w) =f(\eta(z))$ is a double point on $\partial \Omega$, and $f' (w)=0$ indicates that $f(w)$ is a cusp on $\partial \Omega$. This contradicts $f(w) \in  S'(\tau)$ and we conclude $z\in \D$, so $(\fC^*)^{-1} (U_0) \subseteq \D$. Consequently, by (\ref{8374fgcqb8o37gd3yibced}), 
	\begin{equation*}
		(\fC^*)^{-1} (U_0) 
		\subseteq \eta \bigl(f^{-1} ( S'(\tau) ) \bigr) \cap \D 
		=f^{-1} \bigl(\tau^{-1} ( S'(\tau) ) \bigr) \cap \D 
		=U_{-1}.
	\end{equation*} %
	
	Fix an arbitrary $n\in \N$.
	
	Recall $(\fC^*)^{-1} (\D) \subseteq \D$ and $V_{-n} =f^{-1} \bigl(\tau^{-n} \bigl(\overline{\Omega}^c\bigr)\bigr) \cap \D$. By (\ref{8374fgcqb8o37gd3yibced}), 
	\begin{equation*}
		(\fC^*)^{-1} (V_{-n}) 
		\subseteq \eta \bigl(f^{-1} \bigl(\tau^{-n} \bigl(\overline{\Omega}^c\bigr)\bigr)\bigr) \cap \D
		= f^{-1} \bigl(\tau^{-(n+1)} \bigl(\overline{\Omega}^c\bigr)\bigr) \cap \D 
		=V_{-n-1}.
	\end{equation*}
	
	Recall $f(\partial \D) =\partial \Omega$, so $f^{-1} \bigl(\tau^{-n} \bigl( \overline{\Omega}^c \bigr)\bigr) \subseteq f^{-1} (\Omega) \subseteq \D \cup \D^*$. %
	By (\ref{8374fgcqb8o37gd3yibced}) we have 
	\begin{equation*}
		\begin{aligned}
			 (\fC^*)^{-1} (V_n) 
			&\subseteq \eta \bigl(f^{-1} \bigl(\tau^{-n} \bigl(\overline{\Omega}^c\bigr)\bigr)\bigr) 
			=\bigl(\eta \bigl(f^{-1} \bigl(\tau^{-n} \bigl(\overline{\Omega}^c\bigr)\bigr)\bigr) \cap \D\bigr) \cup \bigl(\eta \bigl(f^{-1} \bigl(\tau^{-n} \bigl(\overline{\Omega}^c\bigr)\bigr)\bigr) \cap \D^*\bigr) \\
			&=\bigl(f^{-1} \bigl( \tau^{-n-1} \bigl( \overline{\Omega}^c\bigr)\bigr) \cap \D\bigr) \cup \bigl(f^{-1} \bigl( \tau^{-n+1} \bigl( \overline{\Omega}^c\bigr)\bigr) \cap \D^*\bigr) 
			=V_{-n-1} \cup V_{n-1}.
		\end{aligned}
	\end{equation*}
	
	Recall $(\fC^*)^{-1} (\D) \subseteq \D$ and $U_{-n} =f^{-1} (\tau^{-n} ( S'(\tau) )) \cap \D$. By (\ref{8374fgcqb8o37gd3yibced}), 
	\begin{equation*}
		(\fC^*)^{-1} (U_{-n}) 
		\subseteq \eta \bigl(f^{-1} (\tau^{-n} ( S'(\tau) )) \bigr) \cap \D 
		=f^{-1} \bigl(\tau^{-(n+1)} ( S'(\tau) ) \bigr) \cap \D 
		=U_{-n-1}.
	\end{equation*}
	
	Now we show $(\fC^*)^{-1} (U_n) \subseteq U_{n-1} \cup U_{-n}$. Recall $U_n =f^{-1} \bigl(\tau^{-(n-1)} ( S'(\tau) ) \bigr) \cap \D^*$.
	
	If $n \geq 2$, then $f^{-1} \bigl(\tau^{-n+1} ( S'(\tau) ) \bigr) \subseteq f^{-1} (\Omega) \subseteq \D \cup \D^*$. By (\ref{8374fgcqb8o37gd3yibced}) we have 
	\begin{equation*}
		\begin{aligned}
			 (\fC^*)^{-1} (U_n) 
			&\subseteq \eta \bigl(f^{-1} \bigl(\tau^{-(n-1)} ( S'(\tau) )\bigr)\bigr) 
			=\bigl(\eta \bigl(f^{-1} \bigl(\tau^{-(n-1)} ( S'(\tau) ) \bigr)\bigr) \cap \D \bigr) \cup \bigl( \eta \bigl(f^{-1} \bigl(\tau^{-(n-1)} ( S'(\tau) )\bigr)\bigr) \cap \D^*\bigr) \\
			&=\bigl(f^{-1} (\tau^{-n} ( S'(\tau) )) \cap \D \bigr) \cup \bigl(f^{-1} \bigl(\tau^{-(n-2)} ( S'(\tau) )\bigr) \cap \D^*\bigr) 
			=U_{-n} \cup U_{n-1}.
		\end{aligned}
	\end{equation*}

	Assume $n=1$. if $w \in U_1$ and $z \in (\fC^*)^{-1} (w)$, then $f(\eta (z)) =f(w) \in  S'(\tau) $ implies $z\in \D \cup \partial \D$. If $z \in \partial \D$, then $f(z) =f(\eta (z)) \in  S'(\tau) $, and thus $z \in f^{-1} ( S'(\tau) ) \cap \partial \D =U_0$. %
	If $z \in \D$, then $\tau (f(z)) =f (\eta (z)) \in  S'(\tau)$, and thereby, $z \in \tau^{-1} \bigl(f^{-1} ( S'(\tau) )\bigr) \cap \D =U_{-1}$. Therefore, we have $(\fC^*)^{-1} (U_1) \subseteq U_0 \cup U_{-1}$.
\end{proof}

\begin{prop}\label{ug97o8vyutic7fuyghbnjmk}
	If $\cQ \in \tpk \bigl(\CDach,\CDach ; \fC^* \bigr)$ and $\mu \in \MMM \bigl(\CDach, \cQ\bigr)$, then $\mu \bigl(\widetilde{K(\tau)}\bigr) =1$.
\end{prop}

\begin{proof}
	By \cite[Theorem~3.1]{MA99}, if $A_1 ,\, A_2 \in \SBB \bigl(\CDach\bigr)$ satisfy $(\fC^*)^{-1} (A_1) \subseteq A_2$, then $\mu (A_2) \geq \mu (A_1)$. 
	
	As a result, the statements in Proposition~\ref{C^*^-1 on V_n} indicate $\mu (V_{-n-1}) \geq \mu (V_{-n})$ and $\mu (U_{-n-1}) \geq \mu (U_{-n})$ for all $n\in \N_0$. For all $n\in \N_0$ and $k\in \N$, we have 
	$k\mu (V_{-n}) \leq \sum_{j=0}^{k-1} \mu (V_{-n-j}) \leq \mu \bigl(\widetilde{T^\infty (\tau)}\bigr)$
	by Lemma~\ref{V_n and U_n are partition}. Hence, $\mu (V_{-n}) \leq \frac{1}{k}$, and thus $\mu (V_{-n}) =0$. Similarly, $\mu (U_{-n}) =0$ for each $n\in \N_0$.
	
	Again, by Proposition~\ref{C^*^-1 on V_n}, for each $n\in \N$ we have $\mu (V_n) \leq \mu (V_{n-1}) +\mu (V_{-n-1}) =\mu (V_{n-1})$ and $\mu (U_n) \leq \mu (U_{n-1}) +\mu (U_{-n}) =\mu (U_{n-1})$. This implies $\mu (V_n) \leq \mu (V_{n-1}) \leq \cdots \leq \mu (V_0) =0$, and thus $\mu (V_n) =0$ for every $n\in \N$. Similarly, $\mu (U_n) =0$ for every $n\in \N$.
	
	By Lemma~\ref{V_n and U_n are partition}, we conclude $\mu \bigl(\widetilde{T^\infty (\tau)}\bigr) =0$, and therefore, $\mu \bigl(\widetilde{K(\tau)}\bigr)=1$.
\end{proof}

Propositions~\ref{wiodpeac},~\ref{ug97o8vyutic7fuyghbnjmk}, and~\ref{aiubcw092d9ua3wwc} imply the following corollary.

\begin{cor}\label{hhhhh09huh}
	If $\phi \in C(\cO_2 (\fC^*) , \R)$, then $\fC^*|_{\widetilde{K(\tau)}}$ is a correspondence on $\widetilde{K(\tau)}$ and $P(\fC^* , \phi) =P\bigl(\fC^*|_{\widetilde{K(\tau)}} , \phi\bigr)$.
\end{cor}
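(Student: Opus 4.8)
The statement has two parts, and I would handle them in turn: first that $\fC^*|_{\widetilde{K(\tau)}}$ is a genuine correspondence, and then the pressure identity, which I would prove by transporting everything to the shift map on the orbit space and exploiting that all invariant measures avoid the escaping set.

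For the first part I would argue directly. Since $\widetilde{K(\tau)}$ is closed in $\CDach$, it is compact. Proposition~\ref{wiodpeac} shows that $z \in \widetilde{K(\tau)}$ and $w \in \fC^*(z)$ force $w \in \widetilde{K(\tau)}$, so $\fC^*|_{\widetilde{K(\tau)}}(z) = \fC^*(z) \cap \widetilde{K(\tau)} = \fC^*(z)$ is a non-empty closed subset of $\widetilde{K(\tau)}$ for every $z \in \widetilde{K(\tau)}$. As the graph of $\fC^*$ is closed in $\CDach^2$, the graph of $\fC^*|_{\widetilde{K(\tau)}}$, being its intersection with the closed set $\widetilde{K(\tau)} \times \widetilde{K(\tau)}$, is closed; this is exactly the defining condition for a correspondence.

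For the pressure identity, the plan is to use Theorem~\ref{topological pressure coincide with the lift} to rewrite both sides as pressures of the shift $\sigma$: $P(\fC^*, \phi) = P(\sigma|_{\cO_\omega(\fC^*)}, \tphi)$ and $P(\fC^*|_{\widetilde{K(\tau)}}, \phi_K) = P(\sigma|_{\cO_\omega(\fC^*|_{\widetilde{K(\tau)}})}, \wt{\phi_K})$. Proposition~\ref{wiodpeac} identifies $\cO_\omega(\fC^*|_{\widetilde{K(\tau)}})$ with the closed, $\sigma$-invariant subset $\{(x_1, x_2, \dots) \in \cO_\omega(\fC^*) : x_1 \in \widetilde{K(\tau)}\}$ of the compact space $\cO_\omega(\fC^*)$, on which $\tphi$ restricts to $\wt{\phi_K}$. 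The decisive step is then to show that every $\sigma$-invariant Borel probability measure $\nu$ on $\cO_\omega(\fC^*)$ is concentrated on this subset. To see this I would feed $\nu$ into Proposition~\ref{aiubcw092d9ua3wwc}, obtaining a transition probability kernel $\cQ$ supported by $\fC^*$ and a $\cQ$-invariant measure $\mu = \nu \circ \tpi_1^{-1}$; Proposition~\ref{ug97o8vyutic7fuyghbnjmk} then gives $\mu(\widetilde{K(\tau)}) = 1$, so that $\nu(\cO_\omega(\fC^*|_{\widetilde{K(\tau)}})) = \mu(\widetilde{K(\tau)}) = 1$.

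Finally, I would invoke the classical variational principle for the continuous shift on each of the two compact orbit spaces. Because every $\sigma$-invariant measure on $\cO_\omega(\fC^*)$ is supported on the closed invariant subset $\cO_\omega(\fC^*|_{\widetilde{K(\tau)}})$, and because both the measure-theoretic entropy and the integral $\int \tphi \diff \nu$ are unchanged under passing to the restricted system, the two suprema agree, and hence so do the two shift pressures. Chaining the three equalities yields $P(\fC^*, \phi) = P(\fC^*|_{\widetilde{K(\tau)}}, \phi_K)$. The main obstacle is the concentration step of the previous paragraph: it is precisely there that the disintegration of Proposition~\ref{aiubcw092d9ua3wwc} and the mass-monotonicity estimate behind Proposition~\ref{ug97o8vyutic7fuyghbnjmk} are essential, and once the invariant measures are confined to the non-escaping part everything else is a routine application of the variational principle together with Theorem~\ref{topological pressure coincide with the lift}.
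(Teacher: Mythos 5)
Your proof is correct and follows essentially the same route as the paper: the paper obtains this corollary by combining Proposition~\ref{wiodpeac} (full invariance of $\widetilde{K(\tau)}$, which gives the correspondence structure and your first-coordinate description of the restricted orbit space), Proposition~\ref{ug97o8vyutic7fuyghbnjmk} (concentration of invariant measures on $\widetilde{K(\tau)}$), and Proposition~\ref{aiubcw092d9ua3wwc} (the general restriction-of-pressure result), whose proof is exactly the lift-to-the-shift and classical-Variational-Principle argument you unfold. One small correction: the disintegration producing $\cQ$ and $\mu = \nu \circ \tpi_1^{-1}$ from a $\sigma$-invariant measure $\nu$ is Lemma~\ref{387efwgyuc83w2}, not Proposition~\ref{aiubcw092d9ua3wwc} itself, which is the pressure identity you are re-deriving rather than a disintegration statement.
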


\begin{prop}[{\cite[Proposition~2.5]{LMM24}}]\label{dynamics of LMM correspondence}
	 The following statements are true:
	\begin{enumerate}
		\smallskip
		\item[(i)] For all $z\in \widetilde{K(\tau)}$, we have $\# \fC^*|_{\widetilde{K(\tau)}} (z)= \# \bigl(\fC^*|_{\widetilde{K(\tau)}}\bigr)^{-1} (z)= d$.
		\smallskip
		\item[(ii)] If $w \in \fC^*|_{\widetilde{K(\tau)}} (z)$, then $z\in \widetilde{K(\tau)} \cap \overline{\D^*}$ implies that $w\in \widetilde{K(\tau)} \cap \overline{\D^*}$, moreover, $w\in \widetilde{K(\tau)} \cap \overline{\D}$ implies that $z\in \widetilde{K(\tau)} \cap \overline{\D}$.
		\smallskip
		\item[(iii)] The correspondence $\fC^*|_{\widetilde{K(\tau)}}$ has one forward branch carrying $\widetilde{K(\tau)}\cap \overline{\D}$ onto itself with degree $d$, which is topologically conjugate to $\tau \: K(\tau) \to K(\tau)$, and the remaining forward branches carry $\widetilde{K(\tau)} \cap \overline{\D}$ onto $\widetilde{K(\tau)} \cap \overline{\D^*}$.
		\smallskip
		\item[(iv)] The correspondence $\fC^*|_{\widetilde{K(\tau)}}$ has a backward branch carrying $\widetilde{K(\tau)} \cap \overline{\D^*}$ onto itself with degree $d$, which is topologically conjugate to $\tau \: K(\tau) \to K(\tau)$, and the remaining backward branches carry $\widetilde{K(\tau)} \cap \overline{\D^*}$ onto $\widetilde{K(\tau)} \cap \overline{\D}$.
	\end{enumerate}
\end{prop}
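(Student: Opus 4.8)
The plan is to translate the whole statement into the defining equation of $\fC^*$ together with the univalence of $f$ on $\D$. The key observation is that, for $z \in \CDach$, a point $w$ lies in $\fC^* (z)$ precisely when $f(w) = f(\eta (z))$ with $w \neq \eta (z)$ (or, in the tangential case, $f' (w) = 0$ at $w = \eta (z)$); equivalently $\fC^* (z) = f^{-1} (f(\eta (z))) \setminus \{\eta (z)\}$ as a divisor of degree $d$ cut out by $g_z (w) \= (f(w) - f(\eta (z)))/(w - \eta (z))$. To prove (i), I would note that since $\deg f = d+1$ the rational function $g_z$ has degree $d$, so $g_z (w) = 0$ has exactly $d$ roots counted with multiplicity; by Proposition~\ref{wiodpeac} each root lies in $\widetilde{K(\tau)}$ whenever $z$ does, and multiple roots are excluded over $\widetilde{K(\tau)}$ because they would force a critical value or a double point of $\partial \Omega$, all of which lie in $S(\tau) \subseteq \widetilde{T^\infty (\tau)} = \CDach \setminus \widetilde{K(\tau)}$. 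The backward count is symmetric, using $(\fC^*)^{-1} (w) = \eta (f^{-1} (f(w)) \setminus \{w\})$ and that $\eta$ is an involution.

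Next I would establish (ii) by locating the solutions relative to $\D$, $\partial \D$, and $\D^*$. For the forward inclusion, if $z \in \overline{\D^*}$ then $\eta (z) \in \overline{\D}$, so $f(\eta (z)) \in \overline{\Omega}$; since $f$ is univalent on $\D$ and carries $\partial \D$ onto $\partial \Omega$, the only preimage of $f(\eta (z))$ lying in $\D$ is $\eta (z)$ itself when $\eta (z) \in \D$, and none lies in $\D$ when $\eta (z) \in \partial \D$. As $w \neq \eta (z)$, this forces $w \in \CDach \setminus \D = \overline{\D^*}$. The backward inclusion $w \in \overline{\D} \Rightarrow z \in \overline{\D}$ is the closure of the containment $(\fC^*)^{-1} (\D) \subseteq \D$ already obtained in the proof of Proposition~\ref{C^*^-1 on V_n}. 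Membership in $\widetilde{K(\tau)}$ is again carried along by Proposition~\ref{wiodpeac}.

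Then (iii) and (iv) follow by isolating a distinguished branch. For $z \in \D$ with $f(\eta (z)) \in \Omega$, exactly one of the $d+1$ preimages of $f(\eta (z))$ lies in $\D$, namely $w_0 \= (f|_{\D})^{-1} (f(\eta (z)))$; since $\eta (z) \in \D^*$ we have $w_0 \neq \eta (z)$, so $w_0 \in \fC^* (z)$, while the remaining $d-1$ solutions lie in $\overline{\D^*}$ by the argument of (ii). By the definition (\ref{equ:anti-holomorphic map}) of $\tau$ we have $f(w_0) = f(\eta (z)) = \tau (f(z))$, so $f \circ (z \mapsto w_0) = \tau \circ f$ on $\D$; hence $f|_{\D}$ conjugates this branch to $\tau$, and restricting to $\widetilde{K(\tau)} \cap \overline{\D}$ yields the topological conjugacy with $\tau \: K(\tau) \to K(\tau)$, the other $d-1$ branches carrying $\overline{\D}$ onto $\overline{\D^*}$. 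Part (iv) follows from the same analysis after applying the involution $\eta$ and exchanging the roles of $\D$ and $\D^*$, using the second identity in (\ref{8374fgcqb8o37gd3yibced}).

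The step I expect to be the main obstacle is making (iii) precise across the boundary $\partial \D$ and the singular set: one has to verify that the distinguished forward branch extends continuously and single-valuedly to $\widetilde{K(\tau)} \cap \overline{\D}$, and that $f$ restricts to a homeomorphism $\widetilde{K(\tau)} \cap \overline{\D} \to K(\tau)$ intertwining this branch with $\tau$. This requires controlling the behaviour of $f$ at the cusps and double points of $\partial \Omega$ — precisely the points that the passage from $\widetilde{T^\infty (\tau)}$ to $\widetilde{K(\tau)}$ removes — so that the degree-$d$ count of (i) never degenerates and the conjugacy is genuine.
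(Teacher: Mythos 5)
You should first be aware that the paper itself contains no proof of this proposition: it is imported verbatim from \cite[Proposition~2.5]{anti-holomorphic mating1}, so your proposal has to stand entirely on its own. It does not, because its central mechanism rests on an inclusion that is false --- in fact backwards. You exclude multiple roots of $g_z (w) = (f(w)-f(\eta (z)))/(w-\eta (z))$ over $\widetilde{K(\tau)}$ on the grounds that critical values and double points ``all lie in $S(\tau) \subseteq \widetilde{T^\infty (\tau)} = \CDach \setminus \widetilde{K(\tau)}$''. By the paper's own definitions, $T^0 (\tau) = T(\tau) \setminus S(\tau)$ removes the singular set, and every further preimage $\tau^{-n} (T^0 (\tau))$, $n \geqslant 1$, lies in $\Omega$, which is disjoint from $S(\tau) \subseteq \partial \Omega$; hence $S(\tau) \cap T^\infty (\tau) = \emptyset$, i.e.\ $S(\tau) \subseteq K(\tau)$, the exact opposite of what you assert. (Your sentence also conflates the two spheres: $S(\tau)$ lives in the range of $f$, while $\widetilde{T^\infty (\tau)}$ lives in its domain.) Consequently the points you need to dismiss are precisely the ones that are present: a critical point $w_0 \in \partial \D$ of $f$ maps to a cusp, which lies in $S(\tau) \subseteq K(\tau)$, so $w_0 \in \widetilde{K(\tau)}$; likewise both $\partial \D$-preimages of a double point lie in $\widetilde{K(\tau)}$. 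Moreover, critical values coming from critical points of $f$ in $\D^*$ need not belong to $S(\tau)$ at all, so even the first half of your claim is unsupported.

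This is not a repairable slip inside your framework. If $w_0 \in \partial \D$ is a simple critical point of $f$ whose cusp value has a second preimage $\zeta \neq w_0$, then $\zeta \in f^{-1} (S(\tau)) \subseteq \widetilde{K(\tau)}$, the point $z \= \eta (\zeta)$ lies in $\widetilde{K(\tau)}$ by Proposition~\ref{wiodpeac} (since $w_0 \in \fC^* (z)$ and $w_0 \in \widetilde{K(\tau)}$), and $g_z$ has a double root at $w_0$, so the set count $\# \fC^*|_{\widetilde{K(\tau)}} (z)$ drops below $d$. Thus the bare hypotheses of this paper (rational of degree $d+1$, univalent on $\D$) plus Proposition~\ref{wiodpeac} cannot yield (i), (iii), or (iv) by a B\'ezout-type count; the whole content of \cite[Proposition~2.5]{anti-holomorphic mating1} is the analysis of the correspondence at the $f$-preimages of cusps and double points, under that paper's standing assumptions on $f$, and this is also what is needed to make your conjugacy $f \circ (\text{branch}) = \tau \circ f$ a genuine homeomorphism statement on $\widetilde{K(\tau)} \cap \overline{\D}$ (where $f$ fails to be injective exactly at double-point preimage pairs). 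Your parts (ii) and the interior identity $f(w_0) = \tau (f(z))$ are correct, but they are the easy half; your closing paragraph names the boundary behaviour as the main obstacle and then leaves it, together with the same issue in (i), unresolved.
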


Write $G\= \fC^*|_{\widetilde{K(\tau)}}$. Proposition~\ref{dynamics of LMM correspondence}~(iii) indicates that $\fC^*|_{\widetilde{K(\tau)} \cap \overline{\D}}$ is induced by a single-valued continuous map, so we suppose it is induced by $g \: \widetilde{K(\tau)} \cap \overline{\D} \to \widetilde{K(\tau)} \cap \overline{\D}$, i.e., $\fC^*|_{\widetilde{K(\tau)} \cap \overline{\D}} =\cC_g$ (recall (\ref{e:Def_C_f})). Similarly, Proposition~\ref{dynamics of LMM correspondence}~(iv) indicates that $\bigl(\fC^*|_{\widetilde{K(\tau)} \cap \overline{\D^*}} \bigr)^{-1} =\cC_{g^*}$, where $g^* \: \widetilde{K(\tau)} \cap \overline{\D^*} \to \widetilde{K(\tau)} \cap \overline{\D^*}$ is a single-valued continuous map.

Let $\phi \in C(\cO_2 (\fC^*) , \R)$. Note that each $(x,y) \in \cO_2 (\cC_g)$ is of the form $(x, g(x))$, so functions on $\cO_2 \bigl( \fC^*|_{\widetilde{K(\tau)} \cap \overline{\D}} \bigr) =\cO_2 (\cC_g)$ actually only depend on the first coordinate. As a result, we can choose $\varphi \in C\bigl(\widetilde{K(\tau)} \cap \overline{\D} , \R\bigr)$ such that $\varphi (x) =\phi (x,y)$ for all $(x,y) \in \cO_2 \bigl( \fC^*|_{\widetilde{K(\tau)} \cap \overline{\D}} \bigr)$. Similarly, we can choose $\varphi^* \in C\bigl(\widetilde{K(\tau)} \cap \overline{\D^*} , \R\bigr)$ such that $\varphi^* (y) =\phi (x,y)$ for all $(x,y) \in \cO_2 \bigl( \fC^*|_{\widetilde{K(\tau)} \cap \overline{\D^*}} \bigr)$.

With the dynamics of $G=\fC^*|_{\widetilde{K(\tau)}}$ given by Proposition~\ref{dynamics of LMM correspondence}, we estimate the topological pressure $P(G, \phi)$ (see Subsection~\ref{subsct_Definition of topological pressure for correspondences}).%

\begin{prop}\label{23egw78cdvhy827dgiuywcvjhds}
	Let $\phi \in C(\cO_2 (\fC^*) , \R)$ and $G$, $g$, $g^*$, $\varphi$, and $\varphi^*$ be given above. We have $P(G, \phi) =\max \{P(g, \varphi),\, P(g^* ,\varphi^*)\}$.
\end{prop}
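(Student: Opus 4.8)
The plan is to reduce everything to the shift map on the orbit space and then to split that space into the two invariant pieces carried by $g$ and $g^*$. By Theorem~\ref{topological pressure coincide with the lift} (applied to $G$ and to its restrictions) we have $P(G,\phi_K)=P\bigl(\sigma|_{\cO_\omega(G)},\tphi_K\bigr)$, so I work with $\sigma$ on $Y\=\cO_\omega(G)$. Put $A\=\widetilde{K(\tau)}\cap\overline{\D}$ and $B\=\widetilde{K(\tau)}\cap\overline{\D^*}$, so that $A\cup B=\widetilde{K(\tau)}$ and $A\cap B=\widetilde{K(\tau)}\cap\partial\D$. Proposition~\ref{dynamics of LMM correspondence}~(iii) forces any $G$-orbit lying entirely in $A$ to be a forward orbit of $g$; hence $Y_A\=\{\bar x\in Y:x_k\in A\text{ for all }k\}=\cO_\omega(\cC_g)$ is a closed $\sigma$-invariant subsystem, topologically conjugate to $(A,g)$ through the first coordinate, and since $\tphi_K(x_1,x_2,\dots)=\phi_K(x_1,g(x_1))=\varphi(x_1)$ on $Y_A$ we get $P\bigl(\sigma|_{Y_A},\tphi_K\bigr)=P(g,\varphi)$. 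Similarly $Y_B\=\{\bar x\in Y:x_k\in B\text{ for all }k\}=\cO_\omega(G|_B)$ is closed and $\sigma$-invariant; by Proposition~\ref{dynamics of LMM correspondence}~(iv) the reversal isometries $\gamma_n$ map $\cO_n(G|_B)$ bijectively onto $\cO_n(\cC_{g^*})$ while matching the potential sums term by term, so $P\bigl(\sigma|_{Y_B},\tphi_K\bigr)=P(G|_B,\phi_K|_B)=P(g^*,\varphi^*)$ by the single-valued computation of Appendix~\ref{sct_Example: single-valued map}. As $Y_A$ and $Y_B$ are closed invariant subsystems of $Y$, monotonicity of the topological pressure yields at once the lower bound $P(G,\phi_K)\geqslant\max\{P(g,\varphi),P(g^*,\varphi^*)\}$.

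For the reverse inequality I would invoke the classical variational principle $P\bigl(\sigma|_Y,\tphi_K\bigr)=\sup_\nu\bigl(h_\nu(\sigma)+\int\tphi_K\diff\nu\bigr)$ (valid since $Y$ is compact and $\sigma|_Y$ continuous), and then show that no $\sigma$-invariant measure charges the transition orbits. By Proposition~\ref{dynamics of LMM correspondence}~(ii), $\D^*$ is forward invariant and $\D$ is backward invariant along $G$-orbits; thus for each $\bar x\in Y$ the set $\{k:x_k\in\D\}$ is an initial segment and $\{k:x_k\in\D^*\}$ a terminal segment, and every $\bar x\in Y\setminus(Y_A\cup Y_B)$ has $x_1\in\D$ and leaves $A$ after finitely many steps. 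Setting $R\=\{\bar x\in Y:x_1\in\D\}$, such an orbit returns to $R$ only finitely often under $\sigma$, so Poincaré recurrence forces $\nu\bigl(Y\setminus(Y_A\cup Y_B)\bigr)=0$ for every $\sigma$-invariant Borel probability measure $\nu$. Backward invariance of $A$ gives $\sigma^{-1}(Y_A)=Y_A$, so by ergodic decomposition it suffices to bound $h_\nu(\sigma)+\int\tphi_K\diff\nu$ for ergodic $\nu$; such a $\nu$ has $\nu(Y_A)=1$ or $\nu(Y_B)=1$, whence the quantity is $\leqslant P(g,\varphi)$ or $\leqslant P(g^*,\varphi^*)$ respectively. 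Taking the supremum gives $P(G,\phi_K)\leqslant\max\{P(g,\varphi),P(g^*,\varphi^*)\}$, completing the proof.

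The crux is this upper bound, and the point I would stress is why the most natural route fails. Each finite $G$-orbit decomposes as a $g$-orbit prefix in $A$, one transition step, and a $g^*$-orbit suffix in $B$, which tempts one to bound the weighted number of $(n,\epsilon)$-separated $G$-orbits by a product of the weighted separated counts for $g$ and $g^*$, summed over the transition time (the summation costing only a polynomial factor because the prefix and suffix lengths add to $n$). But such a product estimate for separated sets is simply false: the relevant conflict graph is a strong product of the two ``$\epsilon$-closeness'' graphs, and independence numbers are only supermultiplicative under strong products (Shannon's $C_5$ example gives $5>2\cdot2$). This is precisely why I route the upper bound through the variational principle and Poincaré recurrence rather than through counting. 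The only remaining care is the bookkeeping on the overlap $A\cap B=\widetilde{K(\tau)}\cap\partial\D$: orbits that remain on $\partial\D$ lie in $Y_A\cap Y_B$ and are automatically covered by both subsystem bounds, so they cause no trouble, and the reversal identification $P(\sigma|_{Y_B},\tphi_K)=P(g^*,\varphi^*)$ is exact because, after reversing, the two potential sums coincide coordinate by coordinate.
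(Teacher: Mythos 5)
Your proof is correct, and the lower bound and the identifications $P\bigl(\sigma|_{Y_A},\tphi_K\bigr)=P(g,\varphi)$, $P\bigl(\sigma|_{Y_B},\tphi_K\bigr)=P(g^*,\varphi^*)$ run exactly as in the paper (via Theorem~\ref{topological pressure coincide with the lift} together with Propositions~\ref{P(f,phi)=P(C_f,phi)} and~\ref{q3f33333333333333333f}); but your upper bound --- the crux --- takes a genuinely different route. You work measure-theoretically: Poincar\'e recurrence applied to $R=\{\udx : x_1\in\D\}$ shows that every $\sigma$-invariant measure gives zero mass to the transition orbits, and then ergodic decomposition plus the classical Variational Principle on the two closed invariant subsystems $Y_A$, $Y_B$ finishes the argument; this is sound, including your careful handling of $\partial\D$ via Proposition~\ref{dynamics of LMM correspondence}~(ii). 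The paper, by contrast, proves the upper bound by precisely the counting route you declare unworkable: it splits an $\epsilon$-separated subset $W_n(\epsilon)$ of $\cO_{n+1}(G)$ by the transition time $k$, covers the length-$(k+1)$ prefixes by a \emph{maximal} $\frac{\epsilon}{2}$-separated subset of $\cO_{k+1}(\cC_g)$, and observes that inside each covering cell the $\epsilon$-separation is forced into the suffixes, which therefore form an $\epsilon$-separated subset of $\cO_{n-k}\bigl(\cC_{g^*}^{-1}\bigr)$; this gives
\begin{equation*}
	\sum_{\udy\in W_n(\epsilon)} e^{S_n\phi(\udy)} \leqslant \alpha(n,\epsilon)+\beta(n,\epsilon)+e^{n\Delta(\phi,\epsilon/2)+\norm{\phi}_\infty}\sum_{k=1}^{n-1}\alpha\Bigl(k,\frac{\epsilon}{2}\Bigr)\beta(n-k-1,\epsilon),
\end{equation*}
and the error factor $e^{n\Delta(\phi,\epsilon/2)}$ disappears as $\epsilon\to0^+$ by uniform continuity of $\phi$. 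So your graph-theoretic objection (supermultiplicativity of independence numbers under strong products) rules out only the naive single-scale product bound; the standard two-scale trick --- separated at $\epsilon$, covered at $\epsilon/2$ --- is exactly what makes the counting argument work, and your stated reason for routing through the variational principle is therefore a mischaracterization rather than an obstruction. As for what each approach buys: yours is conceptual, avoids all quantitative estimates, and generalizes to any correspondence whose transition structure forces orbits to visit the ``transient'' region only finitely often, whereas the paper's argument is elementary (no invariant measures, no ergodic decomposition) and yields quantitative control of the weighted separated-set counts themselves.
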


\begin{proof}
	We recall the definition of topological pressure for correspondences from Definition~\ref{topological pressure}.

	Let $d$ be the spherical metric on $\CDach$ and $d_n$ be the metrics given by (\ref{d_n=}) and~(\ref{d_omega=}). %

	For all $n\in \N$ and $\epsilon >0$, write  
	\begin{equation}\label{alpha(n,epsilon)}
		\alpha (n, \epsilon) \= \sup_{E_n (\epsilon)} \sum_{\udx \in E_n (\epsilon)} \exp (S_n \phi (\udx)) \quad \text{and}\quad
         \beta (n, \epsilon) \= \sup_{F_n (\epsilon)} \sum_{\udx \in F_n (\epsilon)} \exp (S_n \phi (\udx)),
	\end{equation}
	where $E_n (\epsilon)$ (resp.\ $F_n (\epsilon)$) ranges over all $\epsilon$-separated subsets of $(\cO_{n+1} (\cC_g) ,d_{n+1})$ (resp.\ $\bigl(\cO_{n+1} \bigl(\cC_{g^*}^{-1}\bigr), d_{n+1}\bigr)$).
	
	Recall $\fC^*|_{\widetilde{K(\tau)} \cap \overline{\D}} =\cC_g$, $\fC^*|_{\widetilde{K(\tau)} \cap \overline{\D^*}} =\cC_{g^*}^{-1}$, $\varphi (x) =\phi (x,y)$ for all $(x,y) \in \cO_2 (\cC_g)$, and $\varphi^* (y) =\phi (x,y)$ for all $(x,y) \in \cO_2 \bigl(\cC_{g^*}^{-1}\bigr)$. By Propositions~\ref{P(f,phi)=P(C_f,phi)} and~\ref{q3f33333333333333333f}, we have $P(g , \varphi) =P\bigl(\fC^*|_{\widetilde{K(\tau)} \cap \overline{\D}} , \phi \bigr)$ and $P(g^* , \varphi^*) =P\bigl(\fC^*|_{\widetilde{K(\tau)} \cap \overline{\D^*}} , \phi \bigr)$. By (\ref{alpha(n,epsilon)}) and Definition~\ref{topological pressure}, we have
	\begin{equation}\label{39hf4qf0qddddddddddddddddddddddq3h9qw}
		\begin{aligned}
			P(g , \varphi) =\lim_{\epsilon \to 0^+} \limsup_{n\to +\infty} \frac{1}{n} \log (\alpha (n, \epsilon))   
			\quad \text{and} \quad
			P(g^* , \varphi^*) =\lim_{\epsilon \to 0^+} \limsup_{n\to +\infty} \frac{1}{n} \log (\beta (n, \epsilon)).			
		\end{aligned}		
	\end{equation}
	
	Fix arbitrary $\epsilon >0$, $n\in \N$, and $\epsilon$-separated subset $W_n (\epsilon)$ of $\cO_{n+1}  (G  )$. By Proposition~\ref{dynamics of LMM correspondence}~(ii), we can write $W_n (\epsilon) = \bigcup_{k=-1}^n W_{n ,  k} (\epsilon)$, where 
	\begin{equation*}
		W_{n ,  k} (\epsilon) \=\{ \vect{x}{0}{n}  \in W_n (\epsilon)  :  x_i \in \D \text{ for } i \leq k \text{ and } x_i \in \D^* \text{ for } i>k \}.
	\end{equation*}
	Fix an arbitrary maximal $\frac{\epsilon}{2}$-separated subset $E_k (\epsilon /2)$ of $\cO_{k+1} (\cC_g)$. For each $k\in \zeroto{n}$ and each $\vect{x}{0}{k}  \in E_k ( \epsilon / 2)$, set 
	\begin{equation*}
		W_{n ,  k ,  \vect{x}{0}{k} } (\epsilon) \=\{ \vect{y}{0}{n}  \in W_{n ,  k} (\epsilon)  :  d(x_i ,  y_i)<  \epsilon / 2  \text{ for all } i \in \zeroto{k}  \},
	\end{equation*}
	Then for each $\vect{y}{0}{n}  \in W_{n ,  k} (\epsilon)$, the maximality ensures that there must be some $\vect{x}{0}{k}\in E_k (\epsilon / 2)$ such that $d(x_i ,  y_i)< \epsilon / 2$ for all $i \in \zeroto{k}$, so
	\begin{equation}\label{w4837gcbx04b37qynqfrfe}
		\bigcup_{\vect{x}{0}{k} \in E_k (\frac{\epsilon}{2})} W_{n ,  k ,  \vect{x}{0}{k}} (\epsilon) =W_{n ,  k} (\epsilon).
	\end{equation}
	
	Fix arbitrary $k\in \zeroto{n-1}$ and $\vect{x}{0}{k} \in E_k (\epsilon / 2)$. For each $\udy =\vect{y}{0}{n} \in W_{n,k,\vect{x}{0}{k}} (\epsilon)$, $d(x_i ,y_i) <\epsilon /2$ for all $i\in \zeroto{k}$ implies that 
	\begin{equation*}
		\sum_{j=0}^{k-1} \phi (y_j , y_{j+1}) \leq \sum_{j=0}^{k-1} \phi (x_j , x_{j+1}) +k \Delta \Bigl(\phi , \frac{\epsilon}{2}\Bigr),
	\end{equation*}
	where $\Delta (\phi , \delta) \= \sup \{\abs{\phi (x_1 ,x_2) -\phi (y_1 ,y_2)} : d(x_1 ,y_1) < \delta$ and $d(x_2 ,y_2)<\delta \}$ for all $\delta >0$. So
	\begin{equation}\label{w9e7guidsbh}
		S_n \phi (\udy) \leq \sum_{j=0}^{k-1} \phi (x_j , x_{j+1}) +k \Delta \Bigl(\phi , \frac{\epsilon}{2} \Bigr) +\norm{\phi}_\infty +\sum_{j=k+1}^{n-1} \phi (y_j , y_{j+1}).
	\end{equation}
	
	Because $W_{n ,  k ,  \vect{x}{0}{k}} (\epsilon)$ is contained in $W_n (\epsilon)$, an $\epsilon$-separated subset of $\cO_{n+1} (G)$, for each pair of distinct orbits $\vect{y}{0}{n} , \,\vect{z}{0}{n} \in W_{n ,  k ,  \vect{x}{0}{k}} (\epsilon)$, there exists $l \in  \zeroto{n}$ such that $d(y_l ,  z_l)\geq \epsilon$. Such an integer $l$ must be greater than $k$, because for each $j\in \zeroto{k}$, we have $d(y_j ,  z_j)\leq d(x_j ,  y_j)+ d(x_j ,  z_j)< \frac{\epsilon}{2} +\frac{\epsilon}{2} =\epsilon$. So $\bigl\{ \vect{y}{k+1}{n}   :  \vect{y}{1}{n}   \in W_{n ,  k ,  \vect{x}{0}{k}} (\epsilon) \bigr\}$ is an $\epsilon$-separated subset of $\cO_{n-k} \bigl( \cC_{g^*}^{-1} \bigr)$. Thus, by (\ref{w9e7guidsbh}) and~(\ref{alpha(n,epsilon)}),
	\begin{equation*}
		\sum_{\udy \in W_{n,k,\vect{x}{0}{k}} (\epsilon)} e^{S_n \phi (\udy)} \leq \beta (n-k-1 , \epsilon) \exp \biggl(\sum_{j=0}^{k-1} \phi (x_j , x_{j+1}) +k \Delta \Bigl(\phi , \frac{\epsilon}{2} \Bigr) +\norm{\phi}_\infty\biggr).
	\end{equation*}
	
	Consequently, by (\ref{w4837gcbx04b37qynqfrfe}) and~(\ref{alpha(n,epsilon)}) we have
	\begin{equation}\label{wev0h38uced}
		\sum_{\udy \in W_{n,k} (\epsilon)} e^{S_n \phi (\udy)} \leq \alpha \Bigl(k, \frac{\epsilon}{2}\Bigr) \beta (n-k-1 ,\epsilon)\exp \Bigl(n \Delta \Bigl(\phi , \frac{\epsilon}{2}\Bigr) +\norm{\phi}_\infty\Bigr).
	\end{equation}
	
	Note that (\ref{wev0h38uced}) holds for $k\in \zeroto{n-1}$, so we need to consider $k=-1$ and $k=n$.
	Indeed, $W_{n ,  -1} (\epsilon) =\{ \vect{x}{0}{n}  \in W_n (\epsilon)  : x_i \in \D^* \text{ for all } i\in \zeroto{n} \}$ is $\epsilon$-separated in $\cO_{n+1} \bigl(\cC_{g^*}^{-1} \bigr)$ and $W_{n ,  n} (\epsilon) =\{ \vect{x}{0}{n}  \in W_n (\epsilon)  : x_i \in \D \text{ for all } i\in \zeroto{n} \}$ is $\epsilon$-separated in $\cO_{n+1} (\cC_g)$. By (\ref{alpha(n,epsilon)}), we have $\sum_{\udy \in W_{n ,-1} (\epsilon)} \exp (S_n \phi (\udy)) \leq \beta (n ,\epsilon)$ and $\sum_{\udy \in W_{n ,n} (\epsilon)} \exp (S_n \phi (\udy)) \leq \alpha (n ,\epsilon)$. By (\ref{wev0h38uced}) and since $W_n (\epsilon) = \bigcup_{k=-1}^n W_{n ,  k} (\epsilon)$, we have
	\begin{equation*}
		\begin{aligned}
			\sum_{\udy \in W_n (\epsilon)} e^{S_n \phi (\udy)} &\leq \alpha (n ,\epsilon) +\beta (n ,\epsilon)+ e^{n \Delta (\phi , \frac{\epsilon}{2}) +\norm{\phi}_\infty} \sum_{k=1}^{n-1} \alpha \Bigl(k, \frac{\epsilon}{2}\Bigr) \beta (n-k-1 ,\epsilon).
		\end{aligned}
	\end{equation*}

	Since $\lim_{\epsilon \to 0^+} \Delta \bigl(\phi , \frac{\epsilon}{2}\bigr) =0$ due to the uniform continuity of $\phi$, applying (\ref{39hf4qf0qddddddddddddddddddddddq3h9qw}) we conclude
	\begin{equation}\label{o2ewhin98oi}
		\begin{aligned}
				P (G ,\phi) =\lim_{\epsilon \to 0^+} \limsup_{n \to +\infty} \frac{1}{n} \log \Bigl( \sup_{W_n (\epsilon)} \sum_{\udy \in W_n (\epsilon)} e^{S_n \phi (\udy)} \Bigr) 
				\leq \max \{P(g, \varphi),\, P(g^* ,\varphi^*)\}.
		\end{aligned}
	\end{equation}
	
	Additionally, for arbitrary $n\in \N$ and $\epsilon >0$, every $\epsilon$-separated subset of $\cO_{n+1} (\cC_g)$ is also an $\epsilon$-separated subset of $\cO_{n+1} (G )$. Thus, by Definition~\ref{topological pressure} we have $P (G , \phi) \geq P (\cC_g , \phi  )$. Similarly, we have $P (G , \phi) \geq P\bigl(\cC_{g^*}^{-1} , \phi \bigr)$. Recall $P (\cC_g , \phi  ) =P(g, \varphi)$ and $P\bigl(\cC_{g^*}^{-1} , \phi \bigr) =P(g^* ,\varphi^*)$. Hence, $P (G ,\phi) \geq \max \{P(g, \varphi),\, P(g^* ,\varphi^*)\}$. Therefore, by (\ref{o2ewhin98oi}) we conclude $P (G ,\phi) =\max \{P(g, \varphi),\, P(g^* ,\varphi^*)\}$.
\end{proof}

Now, we prove Theorem~\ref{theo_matings}.

\begin{proof}[Proof of Theorem~\ref{theo_matings}]
	In this proof, if $\mu$ is a Borel probability measure on some Borel subset $K$ of $\CDach$, then $\hmu$ will refer to the Borel probability measure on $\CDach$ given by $\hmu (A) \= \mu (K \cap A)$ for all $A \in \SBB \bigl( \CDach \bigr)$. Corollary~\ref{hhhhh09huh} and Proposition~\ref{23egw78cdvhy827dgiuywcvjhds} imply that $P(\fC^* ,\phi) =\max \{P(g, \varphi),\, P(g^* ,\varphi^*)\}$. We establish Theorem~\ref{theo_matings} by discussing the following two cases:
	
	\smallskip
	
	\emph{Case~1.} $P(\fC^* ,\phi) =P(g, \varphi)$.
	
	\smallskip
	
	By the classical Variational Principle, we have
	\begin{equation}\label{VP for g}
		P(g ,\varphi) = \sup_{\mu \in \MMM(\widetilde{K(\tau)} \cap \overline{\D} , g)} \bigg\{h_\mu (g) +\int_{\widetilde{K(\tau)} \cap \overline{\D}} \! \varphi \, \mathrm{d} \mu\bigg\} .
	\end{equation}

	Fix an arbitrary $\mu \in \MMM \bigl( \widetilde{K(\tau)} \cap \overline{\D}, g \bigr)$. By Lemma~\ref{2ex invariant measure (1)} and Proposition~\ref{h_mu(F)=h_mu(hF)}, $\mu$ is $\hg$-invariant and $h_\mu (g) =h_\mu (\hg)$, where $\hg$ is the transition probability kernel on $\widetilde{K(\tau)} \cap \overline{\D}$ induced by $g$ given in Definition~\ref{2ex transition probability kernel (1)}. We choose a Borel measurable branch $a_0$ of $\fC^*$, where the existence of $a_0$ is ensured by \cite[Lemma~1.1]{MA99}. Let $\cS \in \tpk \bigl(  \CDach, \CDach \bigr)$ be given by 
	\begin{equation*}
		\cS (z, A) \= \begin{cases}
		\mathbbold{1}_A (g(z)) &\text{if } z\in \widetilde{K(\tau)} \cap \overline{\D};\\
		\mathbbold{1}_A (a_0(z)) &\text{if } z\notin \widetilde{K(\tau)} \cap \overline{\D}
		\end{cases}
	\end{equation*} 
	for all $z\in \CDach$ and $A \in \SBB \bigl(\CDach\bigr)$. It follows that $\cS_z =\delta_{g(z)}$ for all $z\in \widetilde{K(\tau)} \cap \overline{\D}$ and that $\cS$ is supported by $\fC^*$. By Corollary~\ref{restrict measure-theoretic entropy}, $\hmu$ is $\cS$-invariant and $h_{\hmu} (\cS) =h_\mu (\hg) =h_\mu (g)$. Recall $\varphi (z) =\phi (z, g(z))$ for all $z \in \widetilde{K(\tau)} \cap \overline{\D}$. We have
	\begin{equation}\label{72w827d723}
	\begin{aligned}
		\int_{\CDach} \! \int_{\fC^* (z)} \! \phi (z,w) \diff \cS_z (w) \diff \hmu (z) 
		&=\int_{\widetilde{K(\tau)} \cap \overline{\D}} \! \int_{\fC^* (z)} \! \phi (z,w) \diff \delta_{g(z)} (w) \diff \mu (z)\\
		& =\int_{\widetilde{K(\tau)} \cap \overline{\D}} \! \phi (z ,g(z)) \diff \mu (z)  %
		=\int_{\widetilde{K(\tau)} \cap \overline{\D}} \! \varphi \diff \mu.
	\end{aligned}
	\end{equation}
	Recall that $\cS$ is supported on $\fC^*$. By (\ref{VP for g}), (\ref{72w827d723}), $h_{\hmu} (\cS) =h_\mu (g)$, and $P(\fC^* ,\phi) =P(g, \varphi)$, we have
	\begin{equation}\label{P(T,phi)=sup_Q,mu(h_a)}
		P(\fC^* ,\phi)\leq \sup_{\cQ \in \tpk( \CDach,\CDach;\fC^* ), \, \mu \in \MMM( \CDach, \cQ)} \biggl\{ h_\mu (\cQ) +\int_{\CDach} \! \int_{\fC^* (x_1)} \! \phi (x_1 ,  x_2)   \diff \cQ_{x_1} (x_2)   \diff \mu (x_1) \biggr\}.
	\end{equation}
	Therefore, (\ref{P(T,phi)=sup_Q,mu(h_mu(Q)+int_Xphidmu)}) in this setting follows by Theorem~\ref{t_HVP}.

	\smallskip
	
	\emph{Case~2.} $P(\fC^* ,\phi) =P(g^* ,\varphi^*)$.
	
	\smallskip
	
	By the classical Variational Principle, we have
	\begin{equation}\label{VP for g^*}
		P(g^* ,\varphi^*) = \sup_{\mu \in \MMM( \widetilde{K(\tau)} \cap \overline{\D^*},  g^*  )} 
		      \bigg\{h_\mu (g^*) +\int_{\widetilde{K(\tau)} \cap \overline{\D^*}} \! \varphi^* \, \mathrm{d} \mu\bigg\}.
	\end{equation}

	We proceed in a manner similar to the previous case. Fix an arbitrary $\mu \in \MMM \bigl( \widetilde{K(\tau)} \cap \overline{\D^*}, g^*\bigr)$. We choose a Borel measurable branch $a_1$ of $(\fC^*)^{-1}$. Let $\cS \in \tpk \bigl( \CDach, \CDach  \bigr)$ be given by 
	\begin{equation*}
		\cS (z, A) \= 
		\begin{cases}
		\mathbbold{1}_A (g^*(z)) &\text{if } z\in \widetilde{K(\tau)} \cap \overline{\D^*};\\
		\mathbbold{1}_A (a_1(z)) &\text{if } z\notin \widetilde{K(\tau)} \cap \overline{\D^*}
		\end{cases}
	\end{equation*}
	for all $z\in \CDach$ and $A \in \SBB \bigl(\CDach\bigr)$. It is supported by $(\fC^*)^{-1}$, and thus the measure $\hmu \cS^{[1]}$ is supported on $\cO_2 \bigl((\fC^*)^{-1} \bigr)$. By Proposition~\ref{h_mu(F)=h_mu(hF)}, Lemma~\ref{2ex invariant measure (1)}, and Corollary~\ref{restrict measure-theoretic entropy}, $\hmu$ is $\cS$-invariant and $h_{\hmu} (\cS) =h_\mu (g^*)$. Recall $\varphi^* (z) =\phi (g^* (z), z)$ for all $z \in \widetilde{K(\tau)} \cap \overline{\D^*}$. By (\ref{int_O_+infty(T)tphidmuQ^N|_T=intphidmu}) in Lemma~\ref{l:int_O_+infty(T)tphidmuQ^N|_T=intphidmu}, we have
	\begin{equation}\label{728f3e222222}
		\int_{\CDach^2} \! \phi (w ,z) \diff \bigl(\hmu \cS^{[1]}\bigr) (z, w) =\int_{\CDach} \! \int_{\CDach} \! \phi (w, z) \diff \cS_z (w) \diff \hmu (z) =\int_{\widetilde{K(\tau)} \cap \overline{\D^*}} \! \varphi^* \diff \mu,
	\end{equation}
	which corresponds to (\ref{72w827d723}) in the previous case.
	By (\ref{muQ^[1]circhpi_2^-1=muQ}), we have $\bigl(\hmu \cS^{[1]} \bigr) \circ \tpi_2^{-1} =\hmu \cS =\hmu$. Choose a backward conditional transition probability kernel $\cR$ of $\hmu \cS^{[1]}$ from $\CDach$ to $\CDach$ supported by $\cO_2 ((\fC^*)^{-1})$. Definition~\ref{backward conditional transition probability kernel}~(a) and the fact that $\mu \cS^{[1]}$ is supported by $(\fC^*)^{-1}$ indicate that $\cR$ is supported on $\fC^*$. By Remark~\ref{nu=muQ^[1]}, Definition~\ref{backward conditional transition probability kernel}~(b) leads to $\bigl( \hmu \cS^{[1]}\bigr) \circ \gamma_2^{-1} =\hmu \cR^{[1]}$,  where $\gamma_2 (z ,w) =(w, z)$ for all $z ,\, w\in \CDach$. By Proposition~\ref{measure-theoretic entropy is inversely invariant}, $\hmu$ is $\cR$-invariant and $h_{\hmu} (\cR) =h_{\hmu} (\cS) =h_\mu (g^*)$. By (\ref{int_O_+infty(T)tphidmuQ^N|_T=intphidmu}), (\ref{728f3e222222}), and $\bigl( \hmu \cS^{[1]}\bigr) \circ \gamma_2^{-1} =\hmu \cR^{[1]}$, we have
	\begin{equation}\label{28yw121w}
		\begin{aligned}
			\int_{\CDach} \! \int_{\fC^* (z)} \! \phi (z,w) \diff \cR_z (w) \diff \hmu (z) 
			=\int_{\CDach^2} \! \phi \diff \bigl(\hmu \cR^{[1]}\bigr) 
			=\int_{\CDach^2} \! \phi\circ \gamma_2 \diff \bigl(\hmu \cS^{[1]}\bigr) 
			=\int_{\widetilde{K(\tau)} \cap \overline{\D^*}} \! \varphi^* \diff \mu.			
		\end{aligned}
	\end{equation}
	Recall $\cR$ is supported by $\fC^*$. By (\ref{VP for g^*}), (\ref{28yw121w}), $h_{\hmu} (\cR) =h_\mu (g^*)$, and $P(\fC^* ,\phi) =P(g^*, \varphi^*)$, we get that (\ref{P(T,phi)=sup_Q,mu(h_a)}) holds, and therefore, (\ref{P(T,phi)=sup_Q,mu(h_mu(Q)+int_Xphidmu)}) in this setting follows by Theorem~\ref{t_HVP}.
\end{proof}

\subsection{A family of hyperbolic holomorphic correspondences}

In this subsection, we aim to establish Theorems~\ref{theo_hyperboliccorrespondence} and~\ref{theo_hyperboliccorrespondence2}. To begin with, we explain in detail the definition of Julia set of $\boldsymbol{f}_c =z^{q/p} +c$ mentioned in Section~\ref{sct_Introduction}, see e.g., \cite[Definition~6.31]{Siq15} or \cite[Section~2.1]{SS17}.

A \defn{periodic orbit}\index{periodic orbit} is a sequence $\{ z_k \}_{k=0}^n$, where $n\in \N$, $z_k \in \CDach$ for each $k\in \zeroto{n}$, satisfying $z_k \in \boldsymbol{f}_c (z_{k-1})$ for each $k\in \oneto{n}$ and $z_n =z_0$. For each $k\in \oneto{n}$, if $z_{k-1}$ does not belong to $\{ 0 ,\, \infty \}$, we can choose a branch of the holomorphic function $\phi_k \: z\mapsto \exp \bigl( \frac{1}{q} \log (z^p) \bigr) +c$ in a neighborhood of $z_{k-1}$ which maps $z_{k-1}$ to $z_k$. We say that a periodic orbit $\{ z_k \}_{k=0}^n$ is \defn{repelling}\index{repelling} if none of elements in that orbit belong to $\{ 0 ,\, \infty \}$ and $ \abs{( \phi_n \circ \cdots \circ \phi_2 \circ \phi_1 )' (z_0)} >1$. The \defn{Julia set}\index{Julia set} $J(\boldsymbol{f}_c)$ is defined as the closure of the union of all repelling periodic orbits of $\boldsymbol{f}_c$.

Recall $P_c \= \overline{\bigcup_{n\in \N} \boldsymbol{f}_c^n (0)}$. %
The following proposition is formulated from \cite[Theorems~4.4,~5.1, and~5.8]{Siq22}.
    
\begin{prop}\label{dynamics of z^q/p+c for cinH_q/p}
	There is an open set $H_{q/p}$ containing both $\C \smallsetminus M_{q/p ,  0}$ and every simple center with the property that for each $c \in H_{q/p}$, the following statements are true:
	\begin{enumerate}
		\smallskip
		\item[(i)] $\boldsymbol{f}_c^{-1} (J (\boldsymbol{f}_c)) =J (\boldsymbol{f}_c) \neq \emptyset$.
		\smallskip
		\item[(ii)] The set $\C \smallsetminus P_c$ is a hyperbolic Riemann surface and $J (\boldsymbol{f}_c) \subseteq \C \smallsetminus P_c$.
		\smallskip
		\item[(iii)] If we denote by $d_c$ the hyperbolic metric on $\C \smallsetminus P_c$, then there exist constants $\lambda >1$ and $\delta >0$ depending on $p$, $q$, and $c$ with the following property:
		
		\smallskip
		
		If a pair of distinct points $z_1 ,\, z_2 \in J(\boldsymbol{f}_c)$ satisfy $d_c (z_1 ,  z_2) <\delta$, then for each $w_1 \in J (\boldsymbol{f}_c) \cap \boldsymbol{f}_c (z_1)$ and each $w_2 \in J (\boldsymbol{f}_c) \cap \boldsymbol{f}_c (z_2)$, we have $d_c (w_1 ,  w_2) >\lambda d_c (z_1 ,  z_2)$.
		\smallskip
		\item[(iv)] For every open set $V$ in $\C$ that intersects $J (\boldsymbol{f}_c)$, there exists $n \in \N$ with $\boldsymbol{f}_c^n (V \cap J (\boldsymbol{f}_c)) =J (\boldsymbol{f}_c)$.
	\end{enumerate}
\end{prop}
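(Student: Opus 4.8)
The plan is to obtain Proposition~\ref{dynamics of z^q/p+c for cinH_q/p} as a repackaging of the hyperbolicity theory for the correspondences $\boldsymbol{f}_c = z^{q/p}+c$ developed by Siqueira and Smania, so that (iii) and (iv) become exactly the distance-expanding and topological-exactness ingredients needed to apply Theorem~\ref{phwfq9} to $\boldsymbol{f}_c|_J$. First I would take $H_{q/p}$ to be precisely the open set furnished by \cite[Corollary~5.7.1]{Dynamics of hyperbolic correspondences}, so that it contains $\C \setminus M_{q/p,0}$ and every simple center and, crucially, so that $\boldsymbol{f}_c$ is hyperbolic in the sense of \cite[Definition~5.6]{Dynamics of hyperbolic correspondences} for every $c \in H_{q/p}$. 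With hyperbolicity in hand, each of (i)--(iv) is extracted from one of \cite[Theorems~4.4,~5.1, and~5.8]{Dynamics of hyperbolic correspondences}; the real work is to match the formulations there to the clean statements required here. Statement (i), complete invariance $\boldsymbol{f}_c^{-1}(J(\boldsymbol{f}_c)) = J(\boldsymbol{f}_c)$ together with non-emptiness, is a structural property of the Julia set, and I would read it off the relevant cited theorem, using that $J(\boldsymbol{f}_c)$ is by definition the closure of the repelling periodic orbits, a non-empty set under hyperbolicity.

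For (ii) I would argue in two steps. First, hyperbolicity is exactly the condition that keeps the post-critical set away from the Julia set, so $J(\boldsymbol{f}_c) \subseteq \C \setminus P_c$ is essentially the characterization of hyperbolicity (as for rational maps), and I would cite it rather than reprove it. Second, $\C \setminus P_c$ is a hyperbolic Riemann surface by uniformization: its complement in $\CDach$ is $P_c \cup \{\infty\}$, which contains at least three points, so the domain is covered by $\D$ and carries the hyperbolic metric $d_c$ used throughout. For the expansion estimate (iii), I would pass to inverse branches: when $z_1, z_2 \in J(\boldsymbol{f}_c)$ satisfy $d_c(z_1,z_2) < \delta$, any forward branch of $\boldsymbol{f}_c$ carrying $z_i$ to $w_i \in J(\boldsymbol{f}_c)$ is locally the inverse of a univalent branch $g$ of $\boldsymbol{f}_c^{-1}$ mapping $\C \setminus P_c$ into a relatively compact subset of itself; by the strict Schwarz--Pick inequality and compactness of $J(\boldsymbol{f}_c)$ these inverse branches contract $d_c$ by a uniform factor, so the forward branches expand by a uniform $\lambda > 1$. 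This covers the regime where $w_1, w_2$ lie in a common branch; when they come from distinct branches they are separated by a definite $d_c$-distance, and shrinking $\delta$ makes $d_c(w_1,w_2) > \lambda d_c(z_1,z_2)$ automatic. This two-regime argument is the content of the cited expansion theorem.

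Statement (iv) is the blow-up, or topological exactness, property of the Julia set: any open $V$ meeting $J(\boldsymbol{f}_c)$ has some forward image $\boldsymbol{f}_c^n(V \cap J(\boldsymbol{f}_c))$ covering all of $J(\boldsymbol{f}_c)$, which I would extract from the corresponding cited theorem using invariance from (i) and density of repelling cycles. The main obstacle I anticipate is not a single hard estimate but the faithful translation between Siqueira--Smania's framework and the present normalization: one must verify that their notion of hyperbolicity, stated for branches on neighbourhoods of $J(\boldsymbol{f}_c)$ and possibly in an auxiliary metric, yields the \emph{uniform} constants $\lambda$ and $\delta$ in the two-point form (iii) with respect to $d_c$, and that their covering statement matches (iv) verbatim. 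Getting the quantifiers in (iii) exactly right --- uniformity of $\lambda$ and $\delta$ over all of $J(\boldsymbol{f}_c)$ and over all admissible pairs of branches --- is the point that requires the most care.
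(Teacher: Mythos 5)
The paper offers no argument of its own here: Proposition~\ref{dynamics of z^q/p+c for cinH_q/p} is presented as a direct reformulation of \cite[Theorems~4.4,~5.1, and~5.8]{Dynamics of hyperbolic correspondences}, with $H_{q/p}$ taken to be the open set for which those three theorems are stated. Your proposal diverges at its very first step, and the divergence is a genuine gap: you define $H_{q/p}$ to be the hyperbolicity locus furnished by \cite[Corollary~5.7.1]{Dynamics of hyperbolic correspondences} and then assert that hyperbolicity in the sense of \cite[Definition~5.6]{Dynamics of hyperbolic correspondences} lets you ``extract'' (i)--(iv) from the three cited theorems. That implication is precisely what is not available. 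The authors flag this explicitly in the remark following Theorem~\ref{theo_hyperboliccorrespondence}: the open set on which Theorems~4.4, 5.1, and~5.8 hold (hence on which the present proposition and Theorem~\ref{theo_hyperboliccorrespondence} hold) and the open set on which $\boldsymbol{f}_c$ is hyperbolic are, as far as the authors know, two \emph{different} sets whose relationship is unknown. Unless you verify in Siqueira's paper that Theorems~4.4, 5.1, and~5.8 are proved under the hypothesis of Definition~5.6 hyperbolicity (rather than for their own explicitly constructed parameter region containing $\C \setminus M_{q/p,0}$ and the simple centers), the proposition simply does not follow for your choice of $H_{q/p}$.

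The supplementary arguments you sketch do not close this gap, and they are weaker than they look. For (ii), the uniformization step is fine, but the substantive half is the inclusion $J(\boldsymbol{f}_c) \subseteq \C \setminus P_c$, which you cite rather than prove. For (iii), the Schwarz--Pick argument requires ingredients you do not establish: forward invariance of $P_c$ (so that inverse branches map $\C \setminus P_c$ into itself), a strictness/compactness argument to make the contraction \emph{uniform} over $J(\boldsymbol{f}_c)$, and, in your two-branch regime, the fact that $J(\boldsymbol{f}_c)$ stays away from the branch point $0$ (distinct branches of $\boldsymbol{f}_c$ at $z$ differ by a rotation of $(z^q)^{1/p}$ about $c$, so they are \emph{not} uniformly separated if $z$ can approach $0$). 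All of these are consequences of the cited structure theory, so your sketch reproduces the content of the reference rather than replacing it. The correct and economical route is the paper's: quote \cite[Theorems~4.4,~5.1, and~5.8]{Dynamics of hyperbolic correspondences} for their own $H_{q/p}$, and do not route the statement through hyperbolicity at all.
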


The following proposition about $\boldsymbol{f}_c$ when $c$ is closed to $0$ is formulated from \cite[Corollaries~4.6, 4.8, Theorems~3.5, and~2.7]{Siq15}.

\begin{prop}\label{dynamics of z^q/p+c for cinU_q/p}
	There is an open neighborhood $U_{q/p}$ of $0$ such that for every $c \in U_{q/p}$, the following statements are true:
	\begin{enumerate}
		\smallskip
		\item[(i)] $\boldsymbol{f}_c^{-1} (J (\boldsymbol{f}_c)) =J (\boldsymbol{f}_c) \neq \emptyset$.
		\smallskip
		\item[(ii)] There exist constants $\lambda >1$ and $\delta >0$ depending on $p$, $q$, and $c$ with the following property:
		
		\smallskip
		
		If a pair of distinct points $z_1 ,\, z_2 \in J(\boldsymbol{f}_c)$ satisfy $ \abs{z_1 -z_2} <\delta$, then for each $w_1 \in J (\boldsymbol{f}_c) \cap \boldsymbol{f}_c (z_1)$ and each $w_2 \in J (\boldsymbol{f}_c) \cap \boldsymbol{f}_c (z_2)$, we have $ \abs{w_1 -w_2} >\lambda  \abs{z_1 -z_2}$.
		\smallskip
		\item[(iii)] For every open set $V$ in $\C$ that intersects $J (\boldsymbol{f}_c)$, there exists $n \in \N$ with $\boldsymbol{f}_c^n (V \cap J (\boldsymbol{f}_c)) =J (\boldsymbol{f}_c)$.
	\end{enumerate}
\end{prop}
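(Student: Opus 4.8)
The plan is to read off the three assertions from the cited results of C.~Siqueira \cite{Siq15}, in the same spirit in which Proposition~\ref{dynamics of z^q/p+c for cinH_q/p} was assembled from \cite{Dynamics of hyperbolic correspondences}; the one structural difference is that near $c=0$ the Julia set lies in a region on which the \emph{Euclidean} metric is already expanding, so the passage to the hyperbolic metric $d_c$ used in Proposition~\ref{dynamics of z^q/p+c for cinH_q/p} is unnecessary. The guiding observation is that at $c=0$ the correspondence $\boldsymbol{f}_0 (z) =z^{q/p}$, i.e.\ $w^p =z^q$, is the unicritical model whose Julia set is the unit circle $\partial \D$, and on a fixed annular neighborhood of $\partial \D$ every local branch carrying $z$ to a point of $\boldsymbol{f}_0 (z)$ is strictly expanding in the Euclidean metric with derivative of modulus $q/p >1$. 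Since the defining relation $(w-c)^p =z^q$ depends holomorphically on $c$, for $c$ in a sufficiently small neighborhood $U_{q/p}$ of $0$ the correspondence $\boldsymbol{f}_c$ is a small perturbation of $\boldsymbol{f}_0$, so the invariance, the expansion, and the transitivity all persist and the constants $\lambda$, $\delta$ can be chosen for each fixed $c$. This is exactly the content of \cite[Corollaries~4.6,~4.8, Theorems~3.5, and~2.7]{Siq15}.

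For statement (i) I would invoke the cited complete invariance $\boldsymbol{f}_c^{-1} (J(\boldsymbol{f}_c)) =J(\boldsymbol{f}_c)$ together with $J(\boldsymbol{f}_c) \neq \emptyset$, the latter holding because $\boldsymbol{f}_0$ has repelling cycles on $\partial \D$ that survive the holomorphic perturbation. The only bookkeeping is to confirm that the source statement is phrased uniformly for all $c \in U_{q/p}$ rather than at a single parameter.

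Statement (ii) is the heart of the matter. The infinitesimal input is the derivative bound for the local forward branches $\phi_k$ of $\boldsymbol{f}_c$ from the definition of repelling orbits: on a neighborhood of $J(\boldsymbol{f}_c)$ each such branch expands the Euclidean metric by a factor bounded below by some $\lambda >1$, a property inherited from the model bound $q/p >1$ at $c=0$. I would then upgrade this to the two-point form demanded by the statement. After shrinking $\delta >0$ below the injectivity radius of the relevant branches, two points $z_1 ,\, z_2 \in J(\boldsymbol{f}_c)$ with $\abs{z_1 -z_2} <\delta$ and corresponding images $w_1 \in \boldsymbol{f}_c (z_1)$, $w_2 \in \boldsymbol{f}_c (z_2)$ on the Julia set are related by a single expanding branch defined near both points; a mean value estimate along the segment joining them, together with a Koebe-type distortion bound on a slightly larger disk to control the nonlinearity, then yields $\abs{w_1 -w_2} >\lambda \abs{z_1 -z_2}$ after a further shrinking of $U_{q/p}$ and $\delta$.

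Finally, statement (iii) is the topological exactness of $\boldsymbol{f}_c$ on $J(\boldsymbol{f}_c)$, which is precisely the hypothesis feeding Theorem~\ref{phwfq9}~(b) in the application Theorem~\ref{theo_hyperboliccorrespondence2}. It follows from the cited blowing-up result: the uniform expansion of (ii) forces the Euclidean diameters of the forward images $\boldsymbol{f}_c^k (V \cap J(\boldsymbol{f}_c))$ to grow until, for some $n$, the image engulfs all of $J(\boldsymbol{f}_c)$. The step I expect to be the main obstacle is (ii): one must verify that the Euclidean derivative estimate for $z^{q/p}$ near $\partial \D$ genuinely survives the perturbation to $c \neq 0$ with a constant valid on all of $J(\boldsymbol{f}_c)$, not merely locally, and that the two-point inequality stays strict even when the segment from $z_1$ to $z_2$ leaves the Julia set. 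Koebe distortion on an enlarged disk is the natural device; once $U_{q/p}$ and $\delta$ are taken small enough that this distortion is negligible, the stated inequality follows.
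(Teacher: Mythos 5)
Your proposal takes essentially the same route as the paper: the paper offers no argument for this proposition at all, stating only that it is ``formulated from'' \cite[Corollaries~4.6, 4.8, Theorems~3.5, and~2.7]{Siq15}, which is precisely the citation your proof rests on, so the two approaches coincide. Your supplementary perturbative sketch of why those results hold is sound in outline; the only point worth noting is that in (ii) the images $w_1 ,\, w_2$ need not be related by a single branch of $(w-c)^p =z^q$ as you assume --- but when they arise from distinct branches they are uniformly separated, so for small $\delta$ the inequality $\abs{w_1 -w_2} >\lambda \abs{z_1 -z_2}$ holds trivially in that case.
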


Recall $\boldsymbol{f}_c |_J (z) =J(\boldsymbol{f}_c) \cap \boldsymbol{f}_c (z)$ for all $z \in J (\boldsymbol{f}_c)$. 

\begin{proof}[Proof of Theorems~\ref{theo_hyperboliccorrespondence} and~\ref{theo_hyperboliccorrespondence2}]
	Note that $0\in M_{q/p}$ is not a simple center. We choose the open sets $H_{q/p}$ and $U_{q/p}$ as in Propositions~\ref{dynamics of z^q/p+c for cinH_q/p} and~\ref{dynamics of z^q/p+c for cinU_q/p}, respectively, such that $H_{q/p} \cap U_{q/p} =\emptyset$. For every $c\in H_{q/p}$, we denote by $d_c$ the hyperbolic metric on the hyperbolic Riemann surface $\C \smallsetminus P_c$, where the hyperbolicity of $\C \smallsetminus P_c$ is ensured by Proposition~\ref{dynamics of z^q/p+c for cinH_q/p}~(ii). For every $c \in U_{q/p}$, we denote by $d_c$ the Euclidian metric on $\C$. By Theorem~\ref{phwfq9}, it suffices to show that $\boldsymbol{f}_c |_J$ is an open, distance-expanding, topologically exact correspondence on the compact metric space $(J (\boldsymbol{f}_c),  d_c)$ for all $c \in H_{q/p} \cup U_{q/p}$.
	
	Fix an arbitrary $c \in H_{q/p} \cup U_{q/p}$.
	
	First, we show that $\boldsymbol{f}_c |_J$ is a correspondence on $J (\boldsymbol{f}_c)$. Indeed, for every $z \in J (\boldsymbol{f}_c)$, by Propositions~\ref{dynamics of z^q/p+c for cinH_q/p}~(i) and~\ref{dynamics of z^q/p+c for cinU_q/p}~(i), there is $w \in J(\boldsymbol{f}_c)$ with $z \in \boldsymbol{f}_c^{-1} (w)$, i.e., $w \in \boldsymbol{f}_c (z)$. Consequently, $\boldsymbol{f}_c |_J (z) =\boldsymbol{f}_c (z) \cap J(\boldsymbol{f}_c)$ is non-empty and closed for all $z \in J(\boldsymbol{f}_c)$. Moreover, the set $\cO_2 (\boldsymbol{f}_c |_J) =\cO_2 (\boldsymbol{f}_c) \cap J(\boldsymbol{f}_c)^2$ is closed in $J(\boldsymbol{f}_c)^2$. Hence, it follows that $\boldsymbol{f}_c |_J$ is a correspondence on $J (\boldsymbol{f}_c)$.
	
	Second, the openness of $\boldsymbol{f}_c |_J$ follows from $\boldsymbol{f}_c^{-1} (J (\boldsymbol{f}_c)) =J (\boldsymbol{f}_c)$, i.e., Propositions~\ref{dynamics of z^q/p+c for cinH_q/p}~(i) and~\ref{dynamics of z^q/p+c for cinU_q/p}~(i). Specifically, we fix arbitrary $z \in J (\boldsymbol{f}_c)$, an open neighborhood $V$ of $z$ in $J (\boldsymbol{f}_c)$, and $w \in \boldsymbol{f}_c |_J (z)$. For every point $w' \in J (\boldsymbol{f}_c)$ which is sufficiently close to $w$, a branch of $\boldsymbol{f}_c^{-1}$ gives a point $z' \in V$ such that $w' \in \boldsymbol{f}_c (z')$. This implies $z' \in \boldsymbol{f}_c^{-1} (J (\boldsymbol{f}_c)) =J (\boldsymbol{f}_c)$, so $w' \in \boldsymbol{f}_c |_J (z') \subseteq \boldsymbol{f}_c |_J (V)$. The argument above shows that $\boldsymbol{f}_c |_J (V)$ contains a neighborhood of $w$ in $J (\boldsymbol{f}_c)$. Hence, we conclude that $\boldsymbol{f}_c |_J$ is open.

    Third, Propositions~\ref{dynamics of z^q/p+c for cinH_q/p}~(iii) and~\ref{dynamics of z^q/p+c for cinU_q/p}~(ii) indicate that $\boldsymbol{f}_c |_J$, as correspondence on the compact metric space $(J (\boldsymbol{f}_c) ,  d_c)$, is distance-expanding.
    
    Fourth, by $\boldsymbol{f}_c^{-1} (J (\boldsymbol{f}_c)) =J (\boldsymbol{f}_c)$, for arbitrary $W \subseteq \CDach$ and $n\in \N$, we have $(\boldsymbol{f}_c |_J)^n (W \cap J (\boldsymbol{f}_c)) =\boldsymbol{f}_c^n (W) \cap J (\boldsymbol{f}_c)$. Thus, Propositions~\ref{dynamics of z^q/p+c for cinH_q/p}~(vi) and~\ref{dynamics of z^q/p+c for cinU_q/p}~(iii) imply that $\boldsymbol{f}_c |_J$ is topologically exact.
    
    Hence, for all $c \in H_{q/p} \cup U_{q/p}$, the correspondence $\boldsymbol{f}_c |_J$ satisfies all the hypotheses in Theorem~\ref{phwfq9}, and therefore Theorem~\ref{phwfq9} directly yields Theorems~\ref{theo_hyperboliccorrespondence} and~\ref{theo_hyperboliccorrespondence2}.
\end{proof}

\subsection{Finite cases: $(0,1)$-matrices and transition matrices}

Here we focus on the case where $X$ is a finite set.

\subsubsection{$(0,1)$-matrices and topological pressure}

Let $d\in \N$, $X=\oneto{d}$ be a finite space equipped with the discrete topology, and $A=(a_{ij})_{1\leq i,  j \leq d}$ be a $(0,1)$-matrix with at least one entry $1$ in each row. Denote by $\cC_A \: X\to \cF (X)$ the correspondence on $X$ that assigns each point $i\in X$ the subset $\{ j\in X  :  a_{ij} =1 \}$ of $X$. We have $\cC_A (i)\neq \emptyset$ for each $i\in X$, which ensures $\cC_A (i) \in \cF (X)$. %

We first compute the topological pressure of $\cC_A$. Let $\phi \: \cO_2 (\cC_A) \to \R$ be a function and $A_\phi \= \bigl( a_{ij}\cdot e^{\phi (i ,  j)} \bigr)_{1\leq i,  j\leq d}$ be a $d\times d$ matrix (if $(i ,  j) \notin \cO_2 (\cC_A)$, then $a_{ij}=0$, so in this case we do not need to define $\phi (i ,  j)$).

\begin{lemma}\label{P(C_A,phi)=logrho(A_phi)}
		If we denote by $\rho (A_\phi)$ the spectral radius of $A_\phi$, then $P(\cC_A , \phi) =\log (\rho(A_\phi))$.
\end{lemma}

\begin{proof}
	Let $n\in \N$. By definition of the metric $d_{n+1}$, the only $\epsilon$-spanning subset of $(\cO_{n+1} (\cC_A) ,  d_{n+1})$ is $\cO_{n+1} (\cC_A)$ for $\epsilon >0$ small enough. As a result, by (\ref{P(T,phi)=lim_epsilonto0^+limsup_nto+infty1/nlogsup_E_n(epsilon)sum_xinE_n(epsilon)expS_nphi(x)}) we get
	\begin{equation*}%
		\begin{aligned}
			P(\cC_A ,  \phi) &=\limsup_{n\to \infty} \frac{1}{n} \log \biggl( \sum_{\udx \in \cO_{n+1} (\cC_A)}   \exp (S_n \phi (\udx))\biggr)\\
			&= \limsup_{n\to +\infty} \frac{1}{n} \log \biggl( \sum_{i_1 ,  \dots ,  i_{n+1} =1}^d    \prod_{j=1}^{n-1}   \Bigl( a_{i_j i_{j+1}} \cdot e^{\phi (i_j ,  i_{j+1})} \Bigr) \biggr)  
			= \limsup_{n\to +\infty} \frac{ \log  (  \norm{A_\phi^n}_1  )}{n} ,
		\end{aligned}
	\end{equation*}
	where the norm $ \norm{\cdot}_1$ is given by $ \norm{B}_1 \= \sum_{i,  j=1}^d    \abs{b_{ij}}$ for every $d\times d$ matrix $B=(b_{ij})_{1\leq i,  j\leq d}$.

	By Gelfand's formula, 
	$P(\cC_A ,  \phi)
		= \limsup\limits_{n\to +\infty} \frac{1}{n} \log \bigl(   \Normbig{A_\phi^n}_1  \bigr) 
		=\log \Bigl( \limsup\limits_{n\to +\infty}  \Normbig{A_\phi^n}_1^{1/n} \Bigr) 
		= \log (\rho (A_\phi))$.
\end{proof}

Note that $(\cO_\omega (\cC_A),  \sigma)$ is the one-sided subshift of finite type defined by $A$. By Theorem~\ref{topological pressure coincide with the lift} and Lemma~\ref{P(C_A,phi)=logrho(A_phi)}, the topological entropy of $(\cO_\omega (\cC_A),  \sigma)$ is $\log( \rho (A))$ (see e.g., \cite[Theorem~7]{Pa64}).

\subsubsection{Transition matrices and measure-theoretic entropy}

Now we discuss the transition probability kernels on a finite space and compute its measure-theoretic entropy. We use transition matrices to represent transition probability kernels, where a matrix $(p_{ij})_{1\leq i,  j\leq d}$ is called a \emph{transition matrix} if $p_{ij}\geq 0$ for all $1\leq i,  j\leq d$ and $\sum_{j=1}^d  p_{ij}=1$ for all $1\leq i\leq d$

\begin{definition}\label{2ex transition probability kernel (2)}
	Let $d\in \N$, $X=Y=\oneto{d}$, $\SBB (X)=\SBB (Y)= 2^X$, the set of all subsets of $X$, and $P=(p_{ij})_{1\leq i,  j\leq d}$ be a transition matrix. The transition probability kernel $\hP$ induced by $P$ is defined as
	\begin{equation*}
		\hP (i,  B)\= \sum_{j\in B}  p_{ij}
	\end{equation*}
	for all $i\in \oneto{d}$ and $A\subseteq \oneto{d}$. In particular, for arbitrary $i,\, j\in X$, we have $\hP (i,  \{ j\})= p_{ij}$.
\end{definition}

Let $d\in \N$ and $P=(p_{ij})_{1\leq i,  j\leq d}$ be a transition matrix. Set $X\=\oneto{d}$. We use a column vector $v_f \=(f(1),  f(2),  \dots ,  f(d))^T$ to denote a function $f\: X\to \R$. Additionally, for a distribution $p$ on $X$, we write $p=(p_1 ,  p_2 ,  \dots ,  p_d)$, where $p_j =p (\{ j \})$ for each $j\in X$. For a function $f\: X\to \R$, we have $\hP f(i)=\int_X \! f(j)  \diff\hP_i(j)= \sum_{j=1}^d   f(j)p_{ij}$, and thus $v_{\hP f}=Pv_f$. Let $p=(p_1 ,  p_2 ,  \dots ,  p_d)$ be a distribution on $X$. For each $i\in X$, we have $p\hP (\{ i\})= \sum_{j=1}^d   \hP (j,  \{i \}) p(\{j \})= \sum_{j=1}^d   p_j p_{ji}$, so
\begin{equation}\label{2ex transition probability kernel act on measures (2)}
	p\hP =pP.
\end{equation}

This leads to the following lemma.

\begin{lemma}\label{2ex invariant measure (2)}
	A distribution $p$ on $X$ is $\hP$-invariant if and only if $pP=p$.
\end{lemma}

Note that a transition matrix $P=(p_{ij})_{1\leq i,  j\leq d}$ and an initial distribution $p=(p_1 ,\dots ,p_d)$ on $X =\oneto{d}$ can form a Markov chain. 
We have
\begin{equation}\label{muhP^[n](j_0,dots,j_n)=mu(j_0)prod_k=0^n-1hP(j_k,j_k+1)}
	p \hP^{\zeroto{n}} (\{ j_0 ,\, j_1 ,\, \dots ,\, j_n\})= p_{j_0} p_{j_0 j_1} p_{j_1 j_2} \dots p_{j_{n-1} j_n}.
\end{equation}
Suppose $p$ is $\hP$-invariant. This yields $pP=p$ by Lemma~\ref{2ex invariant measure (2)}, so (\ref{muhP^[n](j_0,dots,j_n)=mu(j_0)prod_k=0^n-1hP(j_k,j_k+1)}) and~(\ref{muQ^N(A*X^infty)=muQ^[n](A)}) reveal that the measure-preserving system $\bigl(X^\omega ,  \SBB (X^\omega) ,  \mu \hP^\omega ,  \sigma \bigr)$ is a one-sided $(p ,  P)$-Markov shift. %
About the Markov shift, we have $h_{p \hP^\omega} (\sigma)= - \sum_{i,  j=1}^d   p_i p_{ij} \log (p_{ij})$ (see e.g., \cite[Theorem~4.27]{Wa82}), 
where we take $0 \log 0 \=0$. Thus, by Theorem~\ref{measure-theoretic entropy coincide with its lift}, we get the following lemma:

\begin{lemma}\label{h_mu(hP)=sum_i,j=1^d-p_ip_ijlogp_ij}
		Let $d\in \N$, $P= (p_{ij})_{1\leq i, j\leq d}$ be a transition matrix, and $p=(p_1 ,\dots ,p_d)$ be a $\hP$-invariant distribution on $X=\oneto{d}$. Then
		$h_p \bigl( \hP \bigr)= - \sum_{i,  j=1}^d   p_i p_{ij} \log (p_{ij})$.
\end{lemma}

\subsubsection{Variational Principle}
By the discreteness of the finite space, all correspondences on $X= \oneto{d}$ are forward expansive, so by Theorem~\ref{t_VP_forward_exp}, the Variational Principle always holds and equilibrium states always exist in this case. Specifically, Lemmas~\ref{P(C_A,phi)=logrho(A_phi)} and~\ref{h_mu(hP)=sum_i,j=1^d-p_ip_ijlogp_ij} yield the following result:
	
	\begin{prop}
		Let $d\in \N$ and $A=(a_{ij})_{1\leq i,  j \leq d}$ be a $(0,1)$-matrix with at least one entry $1$ in each row. Set $\Gamma_A \= \{(i,j) \in \oneto{d} \times \oneto{d} : a_{ij} =1 \}$. Suppose $\phi \: \Gamma_A \to \R$ is a function. Write $A_\phi \= \bigl( a_{ij}\cdot e^{\phi (i ,  j)} \bigr)_{1\leq i,  j\leq d}$. Then
		\begin{equation*}
			\log (\rho(A_\phi)) =\sup_{P,\, p} \biggl\{ \sum_{(i,j)\in \Gamma_A} p_i p_{ij} \phi(i,j) -\sum_{i, j=1}^d p_i p_{ij} \log (p_{ij}) \biggr\},
		\end{equation*}
		where $\rho (A_\phi)$ is the spectral radius of $A_\phi$, $P =(p_{ij})_{1\leq i,j \leq d}$ ranges over all $d \times d$ transition matrices satisfying $p_{ij} =0$ for all $(i,j) \in \oneto{d} \times \oneto{d} \smallsetminus \Gamma_A$, and $p =(p_1 ,\dots ,p_d)$ ranges over all probability vectors satisfying $pP=p$. Moreover, there exists such a pair $(p, P)$ that attains the supremum.
	\end{prop}

\subsection{Single-valued maps}\label{sct_Example: single-valued map}

Here we focus on a degenerate case where the correspondence is induced by a single-valued map, and show that our theory is compatible with the classical ergodic theory for single-valued maps. In particular, we will explain why the conjectured (\ref{P(T,phi)=sup_Q,mu(h_mu(Q)+int_Xphidmu)}) coincides with the classical Variational Principle for single-valued maps.

\subsubsection{Correspondences and topological pressure}\label{subsct_ex1_Correspondences and topological pressure}

For a continuous map $f\: X\to X$ on a compact metric space $(X ,  d)$ and $\varphi \in \CCC( X, \R)$. Recall topological pressure $P(f,  \varphi)$ from \cite[Section~3.3]{PU10}:
\begin{equation}\label{P(T,phi) single-valued}
	\begin{aligned}
		P(f,  \varphi)\= {}& \lim_{\epsilon\to 0^+} \limsup_{n\to +\infty} \frac{1}{n} \log \biggl( \sup_{E_n (\epsilon)} \sum_{x\in E_n (\epsilon)}  \exp \biggl( \sum_{j=0}^{n-1}  \varphi (f^j (x)) \biggr) \biggr)  \\
		= {}&\lim_{\epsilon\to 0^+} \limsup_{n\to +\infty} \frac{1}{n} \log \biggl(  \inf_{F_n (\epsilon)} \sum_{x\in F_n (\epsilon)}  \exp \biggl( \sum_{j=0}^{n-1}  \varphi (f^j (x))  \biggr) \biggr)   ,
	\end{aligned}
\end{equation}
where $E_n (\epsilon)$ (resp.\ $F_n (\epsilon)$) ranges over all $(n,  \epsilon)$-separated (resp.\ $(n,  \epsilon)$-spanning) subsets of $X$.

Recall the associated correspondence $\cC_F$ from Section~\ref{sct_notation}. We will show that $P(f,  \varphi)$ and $P(\cC_f,  \hvarphi)$ are equal, where $\hvarphi \: \cO_2 (\cC_f) \to \R$ is a function induced by $\varphi$ (see (\ref{lifts of potential})), and thus the topological pressure of correspondences generalizes the topological pressure of single-valued continuous maps.

\begin{prop}\label{P(f,phi)=P(C_f,phi)}
	Let $f \: X \to X$ be a continuous map on a compact metric space $(X,  d)$ and $\varphi \in \CCC( X , \R)$. Then $P(f ,  \phi)= P(\cC_f ,  \hvarphi)$.
\end{prop}

\begin{proof}
	Fix an arbitrary $n\in \N$. Since for each $x\in X$, $\cC_f (x)= \{ f(x)\}$ is a singleton, we can see that $\vect{x}{1}{n+1} \in \cO_{n+1} (\cC_f)$ depends on $x_1$ in the way that $x_i =f^{i-1} (x_1)$ for every $i\in \{ 2,\, \dots ,\, n+1\}$. Thus, the map $\Phi_{n+1}$ that assigns each point $x\in X$ the orbit $(x,  f(x),  \dots ,  f^n (x))\in \cO_{n+1} (\cC_f)$ is a bijection from $X$ to $\cO_{n+1} (\cC_f)$. Recall that a subset $E\subseteq X$ is $(n+1,  \epsilon)$-separated in $(X,  d)$ if and only if $\Phi_{n+1} (E)= \{(x,  f(x),  \dots ,  f^n (x)) :  x\in E\}$ is $\epsilon$-separated in $(\cO_{n+1} (\cC_f) ,  d_{n+1})$, so
	\begin{equation*}
		\begin{aligned}
			\sup_{E_{n+1} (\epsilon)} \sum_{x\in E_{n+1} (\epsilon)}  \exp \biggl( \sum_{j=0}^n   \varphi (f^j (x)) \biggr) 
			& = \sup_{E_{n+1} (\epsilon)} \sum_{\vect{x}{1}{n+1} \in \Phi_{n+1} (E_{n+1} (\epsilon))}  \exp \bigl( S_n \hvarphi \bigl( \vect{x}{1}{n+1} \bigr) +\varphi (x_{n+1})\bigr)\\
			& = \sup_E \sum_{\udx =\vect{x}{1}{n+1} \in E}  \exp (S_n \hvarphi (\udx) +\varphi (x_{n+1})),
		\end{aligned}
	\end{equation*}
	where $E_{n+1} (\epsilon)$ ranges over all $(n+1 ,  \epsilon)$-separated subset of $X$ and $E$ ranges over all $\epsilon$-separated subset of $\cO_n (\cC_f)$. Since $s_n (\cC_f ,  \hvarphi ,  \epsilon)= \sup_E \sum_{\udx \in E}   \exp (S_n \hvarphi (\udx))$, we have
	\begin{equation*}
		e^{- \norm{\varphi}_\infty} s_n (\cC_f ,  \hvarphi ,  \epsilon) 
		\leq \sup_{E_{n+1} (\epsilon)} \sum_{x\in E_{n+1} (\epsilon)}  \exp \biggl(  \sum_{j=0}^n   \varphi \bigl(f^j (x)\bigr) \biggr)
		\leq e^{ \norm{\varphi}_\infty} s_n (\cC_f ,  \hvarphi ,  \epsilon).
	\end{equation*}
	
	Therefore, by (\ref{P(T,phi) single-valued}) and Definition~\ref{topological pressure}, we have 
	\begin{equation*}
		P(f,  \phi)=\lim\limits_{\epsilon\to 0^+} \limsup\limits_{n\to +\infty} \frac{1}{n} \log (s_n (\cC_f ,  \phi ,  \epsilon))= \lim\limits_{\epsilon\to 0^+} s (\cC_f ,  \phi ,\epsilon)= P (\cC_f ,  \phi). \qedhere
	\end{equation*}
\end{proof}

\subsubsection{Transition probability kernels and measure-theoretic entropy}

Let $(X,  \SAA (X))$ and $(Y,  \SAA (Y))$ be measurable spaces.

\begin{definition}\label{2ex transition probability kernel (1)}
	Let $F\: Y\to X$ be a measurable map. The transition probability kernel $\hF$ induced by $F$ is defined as
	\begin{equation*}
		\hF(y,  A)\= \mathbbold{1}_{F^{-1}(A)} (y)=
		\begin{cases}
			1 & \text{if } F(y)\in A; \\
			0 & \text{if } F(y)\notin A
		\end{cases}
	\end{equation*}
	for all $y\in Y$ and $A\in \SAA(X)$.%
\end{definition}

\begin{remark}
	In this case, $\hF_y=\delta_{F(y)}$ for each $y\in Y$, where $\delta_{F(y)}$ is the Dirac measure at the point $F(y)$.
\end{remark}

Let $F\: Y\to X$ be a measurable map and $f\: X\to \R$ be a measurable function. For each $y\in Y$, we have $\hF f(y)=\int_X \! f(x)  \diff\hF_y(x)=\int_X \! f(x)  \diff\delta_{F(y)} (x)=f(F(y))$, so $\hF f=f\circ F.$

Suppose that $\mu$ is a probability measure on $(Y ,  \SAA (Y))$. For each $A\in \SAA (X)$, we have $\bigl(\mu \hF\bigr) (A)=\int_Y \! \hF (y,  A)  \diff\mu (y)= \int_Y \! \mathbbold{1}_{F^{-1}(A)} (y)  \diff\mu (y)= \mu \bigl(F^{-1}(A)\bigr)$, so
\begin{equation}\label{2ex transition probability kernel act on measures (1)}
	\mu \hF =\mu \circ F^{-1}.
\end{equation}

This leads to the following lemma.

\begin{lemma}\label{2ex invariant measure (1)}
	Let $F\: X\to X$ be a measurable map on $(X ,  \SAA (X))$. A probability measure on $X$ is $\hF$-invariant if and only if it is $F$-invariant.
\end{lemma}
Let $F\: X\to X$ be a measurable map, $\mu \in \PPP(X)$, $n\in \N_0$, and $\vect{B}{0}{n}   \in (\SAA (X))^{n+1}$. Then
\begin{equation}\label{muT^[n](B_0*...*B_n)=mu(B_0cap...capT^-n(B_n))}
	\bigl(\mu \hF^{\zeroto{n}}\bigr) (B_0 \times B_1 \times \cdots \times B_n)
	= \mu \bigl(B_0 \cap F^{-1} (B_1) \cap \dots \cap F^{-n} (B_n) \bigr),
\end{equation}
which can be verified by induction on $n$ based on (\ref{muQ^{n}(A_0**A_n)}) in Lemma~\ref{l_Qun}.
Now suppose that $\mu$ is $F$-invariant. By Lemma~\ref{2ex invariant measure (1)}, it is also $\hF$-invatiant.

We recall some conventions from \cite[Chapter~2]{PU10}:

Let $\cA$ be a finite measurable partition of $(X , \SAA (X))$ and $n\in \N$. The finite measurable partition $F^{-n} (\cA)$ is given by $F^{-n} (\cA)\=  \{ F^{-n} (A) :  A\in \cA  \}$.
The entropy $h_\mu (F,  \cA)$ is given by
\begin{equation}\label{h_mu(T,A)=}
	h_\mu (F,  \cA)\= \lim_{n\to +\infty} \frac{1}{n} H_\mu \bigl(\cA \vee F^{-1} (\cA) \vee \cdots \vee F^{-(n-1)} (\cA)\bigr).
\end{equation}

By (\ref{muT^[n](B_0*...*B_n)=mu(B_0cap...capT^-n(B_n))}), $H_{\mu \hF^{\zeroto{n-1}}} (\cA^n)= H_\mu \bigl(\cA \vee F^{-1} (\cA) \vee \cdots \vee F^{-(n-1)} (\cA)\bigr)$. Hence, by Definition~\ref{h_mu (Q,A)} we get
\begin{equation}\label{h_mu(T,A)=h_mu(hT,A)}
	h_\mu \bigl(\hF ,  \cA \bigr)= h_\mu (F,  \cA).
\end{equation}

Recall $h_\mu (F)\= \sup_\cA h_\mu (F,  \cA)$ from \cite[Chapter~2]{PU10}, where $\cA$ ranges over all finite measurable partitions of $X$. By (\ref{h_mu(T,A)=h_mu(hT,A)}) and Definition~\ref{measure-theoretic entropy of transition probability kernels}, we conclude the following:

\begin{prop}\label{h_mu(F)=h_mu(hF)}
		Let $F\: X\to X$ be a measurable map on a measurable space $(X, \SAA (X))$ and $\mu \in \MMM(X,F)$. Then $h_\mu (F)= h_\mu \bigl(\hF \bigr)$.
\end{prop}

\subsubsection{Variational Principle}\label{subsubsct_VP}
The only transition probability kernel supported by $\cC_f$ is $\hf$, defined in Definition~\ref{2ex transition probability kernel (1)}, and what we shall consider is the Borel probability measure $\mu$ which is $\hf$-invariant, or equivalently, $f$-invariant, where the equivalence has been shown in Lemma~\ref{2ex invariant measure (1)}.

By applying the classical Variational Principle to $\varphi$ and the dynamical system $(X,  f)$, we have
\begin{equation}\label{VP for f and varphi}
	P(f,  \varphi)= \sup \biggl\{ h_\mu (f)+ \int_X \! \varphi   \diff \mu  :  \mu \text{ is } f \text{-invariant} \biggr\}.
\end{equation}

Recall $\cC_f (x_1) =\{f (x_1)\}$ and $\hf_{x_1} =\delta_{f(x_1)}$ for all $x_1 \in X$. By (\ref{lifts of potential}) we have
\begin{equation}\label{aaaaaaaaaaaoo}
	\int_X \! \int_{\cC_f (x_1)} \! \hvarphi (x_1 ,  x_2)   \diff \hf_{x_1} (x_2)   \diff \mu (x_2)
	= \int_X \! \int_{\{f(x_1)\}} \! \varphi (x_1)   \diff \delta_{f(x_1)} (x_2)   \diff \mu (x_2)
	= \int_X \! \varphi   \diff \mu.
\end{equation}

By (\ref{VP for f and varphi}), Propositions~\ref{P(f,phi)=P(C_f,phi)},~\ref{h_mu(F)=h_mu(hF)}, and~(\ref{aaaaaaaaaaaoo}), we get
\begin{equation*}
	P\bigl(\cC_f ,  \hvarphi \bigr)
	= \sup \biggl\{ h_\mu \bigl(\hf \bigr)+ \int_X \! \int_{\cC_f (x_1)} \! \hvarphi (x_1 ,  x_2)   \diff \hf_{x_1} (x_2)   \diff \mu (x_2)  :  \mu \text{ is } \hf \text{-invariant} \biggr\}.
\end{equation*}

Therefore, the corresponding Variational Principle for the correspondence $\cC_f$ holds.

\subsubsection{Several properties for $\cC_f$}\label{subsct_Several properties for correspondences}

Here we point out some relations between properties for the correspondence $\cC_f$ and for the map $f$, all of which are not difficult to check from their definitions.

\begin{rem}   \label{r:C_f_properties}
	Let $f\: X\to X$ be a single-valued continuous map on a compact metric space $(X,  d)$.
\begin{enumerate}
	\smallskip
	\item[(i)] $\cC_f$ is forward expansive in the sense of Definition~\ref{forward expansiveness} with an expansive constant $\epsilon >0$ if and only if $f$ is forward expansive with an expansive constant $\epsilon$.%
	\smallskip
	\item[(ii)] $\cC_f$ has the specification property in the sense of Definition~\ref{specification correspondence} if and only if $f$ has the specification property in the sense of Definition~\ref{specification single-valued map}.
	\smallskip
	\item[(iii)] $\cC_f$ is distance-expanding in the sense of Definition~\ref{distance-expanding correspondence} if and only if $f$ is distance-expanding.
	\smallskip
	\item[(iv)] $\cC_f$ is open in the sense of Definition~\ref{open correspondence} if and only if $f$ is open.
	\smallskip
	\item[(v)] If $\cC_f$ is strongly transitive in the sense of Definition~\ref{strongly transitive correspondence}, then $f$ is topologically transitive.
	\smallskip
	\item[(vi)] $\cC_f$ is topologically exact in the sense of Definition~\ref{topologically exact} if and only if $f$ is topologically exact.
	\smallskip
	\item[(vii)] $\cC_f$ is continuous in the sense of Definition~\ref{continuity}.
\end{enumerate}
\end{rem}

\section{Topological pressure of correspondences}\label{sct_Topological_pressure_of_correspondences}

In this section, we introduce and discuss the topological pressure of correspondences. First, we recall the definition of correspondences in Subsection~\ref{subsct_Definition of correspondences}. Then in Subsection~\ref{subsct_Definition of topological pressure for correspondences}, we introduce the topological pressure of a correspondence with respect to a continuous potential function. Finally, in Subsection~\ref{subsct_A characterization of the topological pressure}, we define a shift map for a correspondence and relate the topological pressure of this shift map to that of the correspondence (Theorem~\ref{topological pressure coincide with the lift}).

\subsection{Definition of correspondences}\label{subsct_Definition of correspondences}

Here we provide our definition of correspondences on compact metric spaces. Recall from Section~\ref{sct_notation} that for a compact metric space $X$, the set $\cF (X)$ consists of all non-empty closed subsets of $X$.
The following lemma is established in \cite[Theorems~1--3]{IM06}.

\begin{lemma}\label{upper-semicontinuity equivalent to closedness of graph}
	Let $(X,  d)$ be a compact metric space. For a map $T\: X\to \cF (X)$, the following statements are equivalent:
	\begin{enumerate}
		\smallskip
		\item[(i)] {\rm(Upper-semicontinuity)} For every $x\in X$ and an arbitrary open neighborhood $\cU$ of $T(x)$, there exists an open neighborhood $\cV$ of $x$ such that $T(y)\subseteq \cU$ for each $y\in \cV$.
		\smallskip
		\item[(ii)] $\cO_2 (T)=\bigl\{(x_1 ,  x_2)\in X^2  :  x_2 \in T(x_1) \bigr\}$ is closed in $X^2$.
		\smallskip
		\item[(iii)] $\cO_n (T)$ is closed in $X^n$ for each $n\in \wh{\N}$.
	\end{enumerate}
\end{lemma}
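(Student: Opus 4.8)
The plan is to establish the two equivalences (i) $\Leftrightarrow$ (ii) and (ii) $\Leftrightarrow$ (iii) separately; combined with the trivial observation that (iii) specializes to (ii) at $n=2$, this yields the full statement. The implication (ii) $\Rightarrow$ (iii) is the soft one and I would dispatch it first. For each $n\in \wh{\N}$ and each admissible index $k$, let $p_k \: X^n \rightarrow X^2$ denote the projection $(x_1 ,  x_2 ,  \dots) \mapsto (x_k ,  x_{k+1})$ onto the $k$th and $(k+1)$th factors. Since the Notation section records that the topology on $X^n$ induced by $d_n$ is the product topology, each $p_k$ is continuous. One then writes $\cO_n (T) = \bigcap_k p_k^{-1} (\cO_2 (T))$, the intersection ranging over $k\in \oneto{n-1}$ for finite $n$ and over all $k\in \N$ for $n=\omega$. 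As the preimage of the closed set $\cO_2 (T)$ under a continuous map, each $p_k^{-1} (\cO_2 (T))$ is closed, and an arbitrary intersection of closed sets is closed; hence $\cO_n (T)$ is closed. No compactness is needed here, and (iii) $\Rightarrow$ (ii) is immediate since $2\in \N \subseteq \wh{\N}$.

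For (i) $\Rightarrow$ (ii) I would argue sequentially. Suppose $T$ is upper-semicontinuous and let $(x_1^{(j)} ,  x_2^{(j)}) \in \cO_2 (T)$ converge to $(x_1 ,  x_2)$ in $X^2$; the goal is to show $x_2 \in T(x_1)$. Arguing by contradiction, assume $x_2 \notin T(x_1)$. Because $T(x_1)$ is closed and $X$ is a metric space, $r \= \dist (x_2 ,  T(x_1)) > 0$. Then $\cU \= \{ y\in X : \dist (y ,  T(x_1)) < r/2 \}$ is an open neighborhood of $T(x_1)$ whose closure (contained in $\{ y : \dist (y ,  T(x_1)) \leqslant r/2 \}$) avoids $x_2$. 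Upper-semicontinuity furnishes an open neighborhood $\cV$ of $x_1$ with $T(y) \subseteq \cU$ for all $y\in \cV$; as $x_1^{(j)} \to x_1$, we have $x_1^{(j)} \in \cV$ for all large $j$, whence $x_2^{(j)} \in T(x_1^{(j)}) \subseteq \cU$. Passing to the limit gives $x_2 \in \overline{\cU}$, contradicting $x_2 \notin \overline{\cU}$.

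For (ii) $\Rightarrow$ (i) I would again argue by contradiction, and this is where compactness enters, which I expect to be the only genuine (if still mild) obstacle. If upper-semicontinuity fails, there are a point $x\in X$ and an open neighborhood $\cU$ of $T(x)$ such that every ball $\{ y : d(x,y) < 1/j \}$ contains a point $y_j$ with $T(y_j) \not\subseteq \cU$; choosing $z_j \in T(y_j) \setminus \cU$ produces $(y_j ,  z_j) \in \cO_2 (T)$ with $y_j \to x$ and $z_j \in X \setminus \cU$. Since $X$ is compact, after passing to a subsequence $z_j \to z$ for some $z$, and $z\in X \setminus \cU$ because $X \setminus \cU$ is closed. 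Then $(y_j ,  z_j) \to (x,z)$, so closedness of $\cO_2 (T)$ forces $z\in T(x) \subseteq \cU$, contradicting $z\notin \cU$. Thus the compactness of $X$ is used precisely to extract the convergent subsequence of the witnesses $z_j$; everywhere else the closedness of $T(x)$ and of $\cO_2 (T)$ suffices.
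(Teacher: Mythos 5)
Your proof is correct in all three steps: the projection argument for (ii) $\Rightarrow$ (iii), the distance-function argument for (i) $\Rightarrow$ (ii), and the subsequence-extraction argument for (ii) $\Rightarrow$ (i), where compactness of $X$ is indeed used exactly once and exactly where you say it is. Note, however, that the paper does not prove this lemma at all --- it is quoted directly from Ingram--Mahavier \cite[Theorems~1,~2, and~3]{inverse limit} --- so there is no internal proof to compare against; your argument is the standard one and is essentially what appears in that cited reference, with the minor cosmetic remark that for $n=1$ your intersection over $k\in\oneto{0}$ is empty and one should just observe $\cO_1(T)=X$ directly.
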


\begin{definition}[Correspondence]\label{correspondence}
	Let $(X,  d)$ be a compact metric space. A map $T\: X\to \cF (X)$ is a \defn{correspondence on $X$}\index{correspondence} if one of the equivalent statements~(i), (ii), and~(iii) in Lemma~\ref{upper-semicontinuity equivalent to closedness of graph}.
\end{definition}

Let us recall the notion of continuity for correspondences on compact metric spaces from \cite[Section~9.4.1,~footnote~6]{AF90} \footnote{{\ifFirstInitial J.~\fi}Aubin and {\ifFirstInitial H.~\fi}Frankowska discussed upper semi-continuity (see Lemma~\ref{upper-semicontinuity equivalent to closedness of graph}~(i)), lower-semicontinuity, and continuity for what they called ``set-valued maps'' in \cite[Chapter~1]{AF90}.}.

\begin{definition}[Continuity]\label{continuity}
	Let $(X, d)$ be a compact metric space and $T\: X \to \cF (X)$ be a correspondence on $X$. If $T$ is continuous with respect to the metric $d$ on $X$ and the Hausdorff distance on $\cF (X)$, then we say that $T$ is a \defn{continuous}\index{continuous} correspondence.
\end{definition}

Recall $T^{-1} (x)= \{ y\in X  :  x\in T(y) \}$ for all $x\in X$. The lemma below follows easily from the definition.

\begin{lemma}  \label{l:inverse_correspondence}
If $T$ is a correspondence on a compact metric space $X$ with $T(X) =X$, then so is $T^{-1}$. 
\end{lemma}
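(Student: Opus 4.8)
The plan is to verify directly the two requirements in Definition~\ref{correspondence} for the assignment $x \mapsto T^{-1}(x) = \{y \in X : x \in T(y)\}$: first that $T^{-1}(x)$ is a non-empty closed subset of $X$ for every $x \in X$, so that $T^{-1}$ is genuinely a map $X \rightarrow \cF(X)$; and second that $\cO_2(T^{-1})$ is closed in $X^2$. The organizing observation is that everything reduces to the reversal map $\gamma_2 \: X^2 \rightarrow X^2$, $\gamma_2(x_1, x_2) = (x_2, x_1)$, which is an isometry (hence a homeomorphism) of $X^2$, together with the identity $\cO_2(T^{-1}) = \gamma_2(\cO_2(T))$. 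Indeed, $(x_1, x_2) \in \cO_2(T^{-1})$ means $x_2 \in T^{-1}(x_1)$, i.e.\ $x_1 \in T(x_2)$, i.e.\ $(x_2, x_1) \in \cO_2(T)$. Throughout, I would use that $\cO_2(T)$ is closed, which holds because $T$ is a correspondence, by Lemma~\ref{upper-semicontinuity equivalent to closedness of graph}.

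For the fiberwise requirement I would treat non-emptiness and closedness separately. Non-emptiness is precisely where the hypothesis $T(X) = X$ enters: since $T(X) = \bigcup_{y \in X} T(y)$, the assumption $T(X) = X$ says that every $x \in X$ lies in $T(y)$ for some $y \in X$, which is exactly the statement that $T^{-1}(x) \neq \emptyset$. Closedness of the fiber $T^{-1}(x) = \{y \in X : (y, x) \in \cO_2(T)\}$ follows by writing it as the preimage $\iota_x^{-1}(\cO_2(T))$ of the closed set $\cO_2(T)$ under the continuous map $\iota_x \: y \mapsto (y, x)$; equivalently, it is a coordinate slice of the closed set $\cO_2(T)$ and hence closed.

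Finally, closedness of $\cO_2(T^{-1})$ is immediate from the identity noted above: since $\gamma_2$ is a homeomorphism and $\cO_2(T)$ is closed, its image $\gamma_2(\cO_2(T)) = \cO_2(T^{-1})$ is closed, and then condition~(ii) of Lemma~\ref{upper-semicontinuity equivalent to closedness of graph} certifies that $T^{-1}$ is a correspondence. I do not anticipate a genuine obstacle here; the only point requiring attention is bookkeeping on where the hypothesis $T(X) = X$ is used, namely solely to guarantee that each fiber $T^{-1}(x)$ is non-empty. Without that hypothesis, $T^{-1}$ could fail to be $\cF(X)$-valued, but the two closedness statements would still hold verbatim.
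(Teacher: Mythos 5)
Your proof is correct and follows essentially the same route as the paper's: non-emptiness of each fiber $T^{-1}(x)$ from the hypothesis $T(X)=X$, closedness of the fibers from closedness of $\cO_2(T)$, and the identity $\cO_2\bigl(T^{-1}\bigr)=\gamma_2(\cO_2(T))$ with $\gamma_2$ an isometry. The only cosmetic difference is in the fiber step, where the paper writes $T^{-1}(x)=\tpi_1(X\times\{x\}\cap\cO_2(T))$ and uses compactness of $\cO_2(T)$, while you take the preimage of the closed set $\cO_2(T)$ under the continuous slice map $y\mapsto (y,x)$; both are fine (the paper also records, in one line, the trivial fact that $T^{-1}(X)=X$, so that $T^{-1}$ satisfies the same hypothesis as $T$).
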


\begin{comment} %
	
\begin{proof}
The property $T(X) =X$ implies $T^{-1} (x) \neq \emptyset$ for all $x\in X$. It follows that $T^{-1}(X) = X$. Since $\cO_2 (T)$ is compact, we have $T^{-1} (x)= \{ y\in X  :  x\in T(y) \} =\tpi_1 (X\times \{ x \} \cap \cO_2 (T))$ is compact for all $x\in X$, and thus is closed in $X$, where $\tpi_1$ is the projection map given in (\ref{tpi=}). Consequently, $T^{-1} (x) \in \cF (X)$ for all $x\in X$. Moreover, 
\begin{equation*}
	\cO_2 \bigl(T^{-1}\bigr) 
	=\bigl\{ (x ,  y)\in X^2  :  y\in T^{-1} (x) \bigr\} 
	=\bigl\{ (x ,  y)\in X^2  :  x\in T(y) \bigr\} 
	=\gamma_2 (\cO_2 (T)),
\end{equation*}
where $\gamma_2 \: X^2 \to X^2$ is the isometry given by $\gamma_2 (x ,  y)= (y ,  x)$ in Section~\ref{sct_notation}. Thus, $\cO_2 \bigl( T^{-1} \bigr)$ is closed in $X_2$ and therefore $T^{-1}$ is a correspondence on $X$.
\end{proof}
\end{comment}

\subsection{Definition of topological pressure for correspondences}\label{subsct_Definition of topological pressure for correspondences}

We introduce a new version of topological pressure of a correspondence through the $(n,  \epsilon)$-separated sets and $(n,  \epsilon)$-spanning sets. This naturally generalizes the topological pressure of a single-valued continuous map.

For $\epsilon >0$ and a metric space $(Y,  \rho)$, $E\subseteq Y$ is \defn{$\epsilon$-separated}\index{$\epsilon$-separated} if for each pair of distinct points $x,\, y \in E$, we have $\rho (x,  y)\geq \epsilon$; $F\subseteq Y$ is \defn{$\epsilon$-spanning}\index{$\epsilon$-spanning} if for each $y\in Y$ there exists $x\in F$ such that $\rho (x,  y)<\epsilon$. For each $\delta >0$ and each $g\in \CCC( Y, \R)$, set $\Delta (g,  \delta)\= \sup \{ \abs{g(x)-g(y)} :    x,\, y\in Y \text{ and } \rho (x,  y)\leq \delta\}$.

Let $T$ be a correspondence on a compact metric space $(X,  d)$ and $\phi \in \CCC(\cO_2 (T), \R)$. Write
\begin{align*}
	s_n (T,  \phi ,  \epsilon) &\= \sup\Bigl\{ \sum_{\udx \in E}   \exp (S_n \phi (\udx)) :  E \text{ is an } \epsilon \text{-separated subset of } \cO_{n+1} (T)\Bigr\},\\
	r_n (T,  \phi ,  \epsilon)  &\= \inf\Bigl\{ \sum_{\udx \in F}   \exp (S_n \phi (\udx)) :  F \text{ is an } \epsilon \text{-spanning subset of } \cO_{n+1} (T)\Bigr\},\\
	s(T,  \phi ,  \epsilon)      &\= \limsup_{n\to +\infty} \frac{1}{n} \log (s_n (T,  \phi ,  \epsilon)), \text{ and } \\
	r(T,  \phi ,  \epsilon)      &\= \limsup_{n\to +\infty} \frac{1}{n} \log (r_n (T,  \phi ,  \epsilon)),
\end{align*}
for each $n\in \N$ and each $\epsilon >0$. We now establish some estimates for these quantities.

By choosing an orbit $\udx_0 \in \cO_{n+1} (T)$ and focusing on the $\epsilon$-separated subset $\{ \udx_0 \}$ of $\cO_{n+1} (T)$, we have
$
	s_n (T,\, \phi ,\, \epsilon)
	\geq \exp (S_n \phi (\udx_0))
	\geq \exp (-n  \norm{\phi }_\infty)
$
and 
\begin{equation*}
	s(T,  \phi ,  \epsilon)\geq \limsup_{n\to +\infty} \frac{1}{n} \log (\exp (-n \norm{\phi }_\infty))= - \norm{\phi }_\infty.
\end{equation*}

For an arbitrary $\epsilon$-spanning set $F \subseteq \cO_{n+1} (T)$, we can choose an orbit $\udx_0 \in F$, and thus we have $\sum_{\udx \in F}  \exp (S_n \phi (\udx)) \geq \exp (S_n \phi (\udx_0)) \geq \exp (-n  \norm{\phi }_\infty)$ and %
\begin{equation}\label{r(T,phi,epsilon)geqslant-||phi||_infty}
\begin{aligned}
	r(T,  \phi ,  \epsilon)
	= \limsup_{n\to +\infty} \frac{1}{n} \log (r_n (T ,  \phi ,  \epsilon))
	\geq \limsup_{n\to +\infty} \frac{1}{n} \log (\exp(-n  \norm{\phi}_\infty) )
	= - \norm{\phi }_\infty.
\end{aligned}
\end{equation}

On the other hand, both $r(T,  \phi ,  \epsilon)$ and $s(T,  \phi ,  \epsilon)$ may be $+\infty$.

Since for arbitrary $\epsilon_2 > \epsilon_1 >0$, an $\epsilon_2$-separated set is also $\epsilon_1$-separated, we have $s_n (T,  \phi ,  \epsilon_2)\leq s_n (T,  \phi ,  \epsilon_1)$, i.e., $s_n (T,  \phi ,  \epsilon)$ is decreasing in $\epsilon$, and thus $s(T,  \phi ,  \epsilon)$ is decreasing in $\epsilon$. Similarly, an $\epsilon_1$-spanning set is also $\epsilon_2$-spanning, so we have $r_n (T,  \phi ,  \epsilon_2)\leq r_n (T,  \phi ,  \epsilon_1)$, i.e., $r_n (T,  \phi ,  \epsilon)$ is decreasing in $\epsilon$, and thus $r(T,  \phi ,  \epsilon)$ is decreasing in $\epsilon$. As a result, the following limits exist:
\begin{equation*}
	P_s (T,  \phi)\= \lim_{\epsilon \to 0^+} s(T,  \phi ,  \epsilon) \quad \text{and} \quad
	P_r (T,  \phi)\= \lim_{\epsilon \to 0^+} r(T,  \phi ,  \epsilon).
\end{equation*}

\begin{prop}\label{separated entropy coincide with spanning entropy}
	Let $T$ be a correspondence on a compact metric space $(X,  d)$ and $\phi \in \CCC(\cO_2 (T), \R)$. Then
	$P_s (T,  \phi)= P_r (T,  \phi)$.
\end{prop}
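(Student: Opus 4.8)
The plan is to establish the two inequalities $P_r(T,\phi) \le P_s(T,\phi)$ and $P_s(T,\phi) \le P_r(T,\phi)$ separately, following the classical comparison between spanning and separated sets adapted to the orbit spaces $\cO_{n+1}(T)$. Throughout I would use that $\cO_{n+1}(T)$ is a closed, hence compact, subset of $X^{n+1}$ by Lemma~\ref{upper-semicontinuity equivalent to closedness of graph}, and that $\phi$ is uniformly continuous on the compact space $(\cO_2(T), d_2)$, so that $\Delta(\phi, \epsilon) \to 0$ as $\epsilon \to 0^+$.

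First I would prove $P_r \le P_s$. Fix $n\in\N$ and $\epsilon>0$. By compactness of $\cO_{n+1}(T)$, there is a maximal (finite) $\epsilon$-separated subset $E^\ast \subseteq \cO_{n+1}(T)$. By maximality $E^\ast$ is automatically $\epsilon$-spanning, since any point at distance $\ge \epsilon$ from all of $E^\ast$ could be adjoined. Consequently $r_n(T,\phi,\epsilon) \le \sum_{\udx\in E^\ast} \exp(S_n\phi(\udx)) \le s_n(T,\phi,\epsilon)$. Applying $\tfrac1n\log(\cdot)$, taking $\limsup_{n\to+\infty}$, and then letting $\epsilon \to 0^+$ gives $P_r(T,\phi) \le P_s(T,\phi)$.

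Next I would prove $P_s \le P_r$ by a standard injection argument. Fix $n$ and $\epsilon$, let $E$ be any $2\epsilon$-separated subset and $F$ any $\epsilon$-spanning subset of $\cO_{n+1}(T)$. For each $\udx=(x_1,\dots,x_{n+1})\in E$ choose $\psi(\udx)=(y_1,\dots,y_{n+1})\in F$ with $d_{n+1}(\udx,\psi(\udx))<\epsilon$. The map $\psi$ is injective, since $\psi(\udx)=\psi(\udx')$ would force $d_{n+1}(\udx,\udx')<2\epsilon$ and hence $\udx=\udx'$. From $d_{n+1}(\udx,\psi(\udx))<\epsilon$ we get $d(x_i,y_i)<\epsilon$ for all $i$, so each pair $(x_i,x_{i+1})$ and $(y_i,y_{i+1})$ lies within $d_2$-distance $\epsilon$, giving $\Absbig{\phi(x_i,x_{i+1})-\phi(y_i,y_{i+1})}\le \Delta(\phi,\epsilon)$ and therefore $\Absbig{S_n\phi(\udx)-S_n\phi(\psi(\udx))}\le n\Delta(\phi,\epsilon)$. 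Using injectivity of $\psi$,
\[
	\sum_{\udx\in E}\exp(S_n\phi(\udx)) \le e^{n\Delta(\phi,\epsilon)}\sum_{\udx\in E}\exp(S_n\phi(\psi(\udx))) \le e^{n\Delta(\phi,\epsilon)}\sum_{\udy\in F}\exp(S_n\phi(\udy)).
\]
Taking the supremum over all $2\epsilon$-separated $E$ and then the infimum over all $\epsilon$-spanning $F$ yields $s_n(T,\phi,2\epsilon)\le e^{n\Delta(\phi,\epsilon)}r_n(T,\phi,\epsilon)$, whence $s(T,\phi,2\epsilon)\le \Delta(\phi,\epsilon)+r(T,\phi,\epsilon)$ after applying $\tfrac1n\log(\cdot)$ and $\limsup_{n\to+\infty}$. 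Letting $\epsilon\to 0^+$, the uniform continuity of $\phi$ forces $\Delta(\phi,\epsilon)\to 0$, so $P_s(T,\phi)\le P_r(T,\phi)$. Combining the two inequalities gives $P_s(T,\phi)=P_r(T,\phi)$.

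The argument is essentially routine; the only point requiring care is the bookkeeping mismatch between the metric governing separation and spanning, namely $d_{n+1}$ on $\cO_{n+1}(T)$, and the metric $d_2$ on $\cO_2(T)$ controlling the modulus of continuity $\Delta(\phi,\epsilon)$. One must check that $d_{n+1}(\udx,\psi(\udx))<\epsilon$ indeed controls each coordinate pair $(x_i,x_{i+1})$ in the $d_2$-metric uniformly, so that the Birkhoff-type sum $S_n\phi$ picks up at most $n\Delta(\phi,\epsilon)$; the factor of $n$ is harmless after dividing by $n$ and sending $\epsilon\to 0^+$.
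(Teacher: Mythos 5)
Your proposal is correct and follows essentially the same route as the paper: one inequality via the observation that a maximal $\epsilon$-separated subset of $\cO_{n+1}(T)$ is $\epsilon$-spanning, and the reverse inequality via an injection from a separated set into a spanning set, with the error $n\Delta(\phi,\cdot)$ controlled by uniform continuity of $\phi$ on the compact space $\cO_2(T)$. The only cosmetic difference is your $(2\epsilon,\epsilon)$ parametrization versus the paper's $(\epsilon,\epsilon/2)$, which is an immaterial rescaling.
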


\begin{proof}
	For each $n\in \N$ and each $\epsilon >0$, choose a maximal $\epsilon$-separated subset $E\subseteq \cO_{n+1} (T)$. For each $\udy \in \cO_{n+1} (T)$, since $E\cup \{ \udy \}$ is not $\epsilon$-separated, there exists $\udx \in E$ such that $d_{n+1} (\udx ,  \udy)< \epsilon$. Thus, $E$ is $\epsilon$-spanning in $\cO_{n+1} (T)$. Thereby, we have $s_n (T,  \phi ,  \epsilon)\geq \sum_{\udx \in E}  \exp (S_n \phi (\udx)) \geq r_n (T,  \phi ,  \epsilon)$. This implies $s(T,  \phi ,  \epsilon)\geq r(T,  \phi ,  \epsilon)$, and hence we get $P_s (T,  \phi)\geq P_r (T,  \phi)$.
	
	For each $n\in \N$ and each $\epsilon >0$, choose an arbitrary $\epsilon$-separated set $E\subseteq \cO_{n+1} (T)$ and an arbitrary $\frac{\epsilon}{2}$-spanning set $F\subseteq \cO_{n+1} (T)$. For each orbit $\udx \in E$, since $F$ is $\frac{\epsilon}{2}$-spanning, there exists $\gamma (\udx)\in F$ with $d_{n+1} (\udx,  \gamma (\udx))< \epsilon / 2$. For distinct $\udx ,\, \udy \in E$, since $d_{n+1} (\udx ,  \udy)\geq \epsilon$, $d_{n+1} (\udx ,  \gamma (\udx))<  \epsilon / 2 $, and $d_{n+1} (\udy ,  \gamma (\udy))<  \epsilon / 2 $, we have $\gamma (\udx)\neq \gamma (\udy)$. Thereby, $\gamma \: E\to F$ is injective, and thus
	\begin{equation*}
			\sum_{\udy \in F}  e^{S_n \phi (\udy)}
			\geq \sum_{\udx \in E}  e^{S_n \phi (\gamma (\udx))}   %
			\geq \sum_{\udx \in E}  e^{S_n \phi (\udx)- \Delta (S_n \phi ,   \epsilon / 2 )}
			= e^{-\Delta (S_n \phi ,   \epsilon / 2  )} \sum_{\udx \in E}  e^{ S_n \phi (\udx)},
	\end{equation*}
	where 
	$\Delta (S_n \phi ,   \epsilon / 2 ) \=\sup \bigl\{ \Absbig{S_n \phi \bigl( \udy_1 \bigr)- S_n \phi \bigl( \udy_2 \bigr)} :    \udy_1 ,\, \udy_2 \in \cO_{n+1} (T), \, d_{n+1} \bigl(\udy_1 ,  \udy_2 \bigr) \leq  \epsilon / 2 \bigr\}$.

	Recall $d_{n+1} \bigl( \vect{x}{1}{n+1} , \vect{y}{1}{n+1} \bigr)= \max \{ d(x_i ,  y_i)  :  1\leq i\leq n+1 \}$. 
	If $\vect{x}{1}{n+1} , \,  \vect{y}{1}{n+1}  \in \cO_n (T)$ and $d_{n+1} \bigl( \vect{x}{1}{n+1} , \vect{y}{1}{n+1} \bigr)  \leq  \epsilon / 2$,
	then $d(x_i ,  x_i ')\leq \epsilon / 2$ for all $i\in \oneto{n+1}$, and thereby, we have
	\begin{equation*}
			\Absbig{S_n \phi  \bigl( \vect{x}{1}{n+1} \bigr)- S_n \phi \bigl( \vect{y}{1}{n+1} \bigr) }
			\leq \sum_{i=1}^n    \abs{\phi (x_i ,  x_{i+1})- \phi (x_i ' ,  x_{i+1} ')}
			\leq n\Delta (\phi ,   \epsilon / 2  ).
	\end{equation*}
	This implies that $\Delta (S_n \phi ,   \epsilon / 2  )\leq n\Delta (\phi ,   \epsilon / 2)$. As a result,
	$\sum_{\udy \in F}  e^{S_n \phi (\udy)}
	\geq e^{-n\Delta (\phi ,   \epsilon / 2  )} \sum_{\udx \in E}  e^{S_n \phi (\udx)}$.

	Since $E$ and $F$ are chosen arbitrarily, we have 
	$r_n (T,  \phi ,   \epsilon / 2  )\geq e^{-n\Delta (\phi ,  \epsilon / 2 )} s_n (T,  \phi ,  \epsilon)$.
	Thus,
	\begin{equation}\label{r>s-Delta}
		r (T,  \phi ,   \epsilon / 2  )\geq s(T,  \phi ,  \epsilon)- \Delta (\phi ,  \epsilon / 2  ).
	\end{equation}
	
	Since $X$ is compact and $\phi$ is continuous, $\phi$ is uniformly continuous, i.e., for an arbitrary $\delta >0$, there exists $\lambda >0$ such that $\Delta (\phi ,  \lambda)<\delta$. Thus, we have $\lim_{\epsilon\to 0^+} \Delta (\phi , \epsilon / 2)=0$. Consequently, by taking $\epsilon\to 0^+$ in (\ref{r>s-Delta}), we get $P_r (T,  \phi)\geq P_s (T,  \phi)$.
	
	Therefore, we conclude that $P_r (T,  \phi)=P_s (T,  \phi)$.
\end{proof}

\begin{definition}[Topological pressure]\label{topological pressure}
	Let $T$ be a correspondence on a compact metric space $(X,  d)$ and $\phi \in \CCC (\cO_2 (T), \R)$. The \defn{topological pressure}\index{topological pressure} $P(T,  \phi)$ is defined as
	\begin{equation}  \label{P(T,phi)=lim_epsilonto0^+limsup_nto+infty1/nlogsup_E_n(epsilon)sum_xinE_n(epsilon)expS_nphi(x)}
		P(T,  \phi)\= P_s (T,  \phi)= P_r (T,  \phi).
	\end{equation}
	In particular, if $\phi \equiv 0$, we call $P (T ,  0)$ the \defn{topological entropy}\index{topological entropy} of $T$ and denote it by $h(T)$.\footnote{Our notion of topological entropy coincides with that in \cite[Definition~2.5]{KT17}.}
\end{definition}

\begin{rem}\label{P(T,phi)>-infty}
	Recall from (\ref{r(T,phi,epsilon)geqslant-||phi||_infty}) that $r(T,  \phi ,  \epsilon)\geq - \norm{\phi }_\infty$. This implies
	\begin{equation}\label{P(T,phi)geqslant-||phi||_infty}
		P(T,  \phi) =P_r (T ,  \phi) =\lim_{\epsilon\to 0^+} r(T ,  \phi ,  \epsilon) \geq - \norm{\phi}_\infty >-\infty.
	\end{equation}
\end{rem}

For each $\phi \in \CCC (\cO_2 (T) , \R)$, denote by $\ophi \in \CCC \bigl( \cO_2  \bigl(T^{-1}\bigr) , \R \bigr)$ the conjugate function given by $\ophi (x ,  y)\= \phi (y ,  x)$ for all $(x ,  y) \in \cO_2 \bigl( T^{-1} \bigr)$.

\begin{prop}\label{q3f33333333333333333f}
	Let $T$ be a correspondence on a compact metric space $X$ satisfying $T(X) =X$ and $\phi \in \CCC ( \cO_2 (T) , \R)$. Then
	$P(T ,  \phi) =P\bigl(T^{-1} ,  \ophi \bigr)$.
\end{prop}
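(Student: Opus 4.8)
The plan is to exploit the reversal isometry $\gamma_{n+1}$ introduced in Section~\ref{sct_notation} to set up an exact, weight-preserving bijection between the separated sets used to compute $P(T,\phi)$ and those used to compute $P\bigl(T^{-1},\ophi\bigr)$. Since everything in the definition of topological pressure is built from $\cO_{n+1}$, the Birkhoff sums $S_n$, and the metrics $d_{n+1}$, it suffices to match these three ingredients under reversal.

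First, because $T(X)=X$, Lemma~\ref{l:inverse_correspondence} guarantees that $T^{-1}$ is itself a correspondence on $X$, so $P\bigl(T^{-1},\ophi\bigr)$ is well defined. Next I would record the key structural fact, generalizing the computation $\cO_2\bigl(T^{-1}\bigr)=\gamma_2(\cO_2(T))$ from the proof of Lemma~\ref{l:inverse_correspondence}: for each $n\in\N$, the reversal $\gamma_{n+1}(x_1,\dots,x_{n+1})=(x_{n+1},\dots,x_1)$ restricts to a bijection from $\cO_{n+1}(T)$ onto $\cO_{n+1}\bigl(T^{-1}\bigr)$. Indeed, $(x_1,\dots,x_{n+1})\in\cO_{n+1}(T)$ means $x_{i+1}\in T(x_i)$ for $i\in\oneto{n}$, which is equivalent to $x_i\in T^{-1}(x_{i+1})$, i.e.\ the reversed tuple lies in $\cO_{n+1}\bigl(T^{-1}\bigr)$; applying $\gamma_{n+1}$ once more returns the original tuple, so the map is an involutive bijection.

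The second ingredient is that $\gamma_{n+1}$ transports the Birkhoff sum exactly. Writing $\udy=\gamma_{n+1}(\udx)$ with $\udx=(x_1,\dots,x_{n+1})$, so that $y_j=x_{n+2-j}$, the reindexing $i=n+1-j$ gives
\[
	S_n \ophi (\udy) = \sum_{j=1}^n \ophi (y_j , y_{j+1}) = \sum_{j=1}^n \phi (y_{j+1} , y_j) = \sum_{i=1}^n \phi (x_i , x_{i+1}) = S_n \phi (\udx).
\]
Combined with the fact, already noted in Section~\ref{sct_notation}, that $\gamma_{n+1}$ is an isometry of $\bigl(X^{n+1}, d_{n+1}\bigr)$, this yields that $\gamma_{n+1}$ carries every $\epsilon$-separated subset $E$ of $\cO_{n+1}(T)$ bijectively onto an $\epsilon$-separated subset of $\cO_{n+1}\bigl(T^{-1}\bigr)$ of the same total weight $\sum_{\udx\in E}\exp(S_n\phi(\udx))$, and conversely.

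Consequently $s_n(T,\phi,\epsilon)=s_n\bigl(T^{-1},\ophi,\epsilon\bigr)$ for all $n$ and all $\epsilon>0$, and passing to the limits defining $P_s$ gives $P(T,\phi)=P\bigl(T^{-1},\ophi\bigr)$ via Definition~\ref{topological pressure}. I do not expect a genuine obstacle here: the only point requiring care is the index bookkeeping in the two reindexings above—checking that reversal sends $T$-orbits precisely to $T^{-1}$-orbits and that it matches $S_n\phi$ with $S_n\ophi$ (rather than producing an off-by-one or otherwise reflected sum). Working with separated sets keeps the argument symmetric in $T$ and $T^{-1}$ and avoids having to invoke Proposition~\ref{separated entropy coincide with spanning entropy}; the spanning-set version would follow identically, or simply from $P_s=P_r$.
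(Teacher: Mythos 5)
Your proposal is correct and follows essentially the same route as the paper's proof: both use the reversal isometry $\gamma_{n+1}$ as a bijection from $\cO_{n+1}(T)$ onto $\cO_{n+1}\bigl(T^{-1}\bigr)$ that preserves the Birkhoff sums ($S_n\phi(\udx)=S_n\ophi(\gamma_{n+1}(\udx))$) and the $\epsilon$-separation property, then conclude via the separated-set formula in Definition~\ref{topological pressure}. Your index bookkeeping is accurate, and the extra remark invoking Lemma~\ref{l:inverse_correspondence} to justify that $T^{-1}$ is a correspondence is a harmless (indeed welcome) addition that the paper leaves implicit.
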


\begin{proof}
	For each $n\in \N$ and $\udx =\vect{x}{1}{n}  \in X^n$, $\udx  \in \cO_n (T)$ if and only if $\gamma_n (\udx) \in \cO_n \bigl( T^{-1} \bigr)$. Consequently, the isometry $\gamma_n$ sends $\cO_n (T)$ onto $\cO_n \bigl( T^{-1} \bigr)$. From (\ref{e:S_nphi=}) we can see that $S_n \phi (\udx) =S_n \ophi (\gamma_{n+1} (\udx))$ holds for all $\udx \in \cO_{n+1} (T)$. Since $\gamma_{n+1}$ is an isometry, for each $\epsilon >0$, $E_n (\epsilon) \subseteq \cO_{n+1} (T)$ is $\epsilon$-separated if and only if $\gamma_{n+1} (E_n (\epsilon)) \subseteq \cO_{n+1} \bigl( T^{-1} \bigr)$ is $\epsilon$-separated, so by (\ref{P(T,phi)=lim_epsilonto0^+limsup_nto+infty1/nlogsup_E_n(epsilon)sum_xinE_n(epsilon)expS_nphi(x)}) we conclude $P(T ,  \phi) =P\bigl(T^{-1} ,  \ophi \bigr)$.
\end{proof}

\subsection{A characterization of the topological pressure}\label{subsct_A characterization of the topological pressure}

We will prove in this subsection that our topological pressure of a correspondence $T$ with respect to a continuous potential function $\phi$ is equal to $P\bigl( \sigma,  \tphi \bigr)$, the topological pressure of the shift map $\sigma$ on the orbit space $\cO_\omega (T)$ with respect to the potential function $\tphi$ (given in (\ref{lifts of potential})) induced by $\phi$ (see Theorem~\ref{topological pressure coincide with the lift} for the precise statement).

Let $T$ be a correspondence on a compact metric space $(X,  d)$. We consider a dynamical system $(\cO_\omega (T),  \sigma)$, where $\cO_\omega (T)$ is equipped with the metric $d_\omega$ and $\sigma\: \cO_\omega (T)\to \cO_\omega (T)$ is the shift map given by
$\sigma (\vect{x}{1}{\infty})\= \vect{x}{2}{\infty}$
for all $\vect{x}{1}{\infty} \in X^\omega$.

\begin{theorem}\label{topological pressure coincide with the lift}
	Let $T$ be a correspondence on a compact metric space $(X ,  d)$, $\phi\in \CCC ( \cO_2 (T), \R)$, and $\sigma$ be the shift map on $\cO_\omega (T)$. Then we have
	\begin{equation*}
		P(T,  \phi)= P\bigl(\sigma ,  \tphi\bigr),
	\end{equation*}
	where $P\bigl(\sigma ,  \tphi\bigr)$ refers to the topological pressure of $(\cO_\omega (T),  \sigma)$ with the potential $\tphi$ given in (\ref{lifts of potential}).
\end{theorem}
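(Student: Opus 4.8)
The plan is to compare the separated-set quantities that define $P(T,\phi)$ directly with the classical separated-set quantities defining the pressure of the continuous self-map $\sigma|_{\cO_\omega(T)}$ of the compact metric space $(\cO_\omega(T), d_\omega)$. Writing $S_n\tphi(\udx) \= \sum_{j=0}^{n-1}\tphi(\sigma^j \udx)$ for the Birkhoff sum and $\pi_m \: \cO_\omega(T) \to \cO_m(T)$, $(x_k)_k \mapsto (x_1,\dots,x_m)$, for the truncation map, the bookkeeping identity at the heart of the argument is
\begin{equation*}
	S_n\tphi(x_1, x_2, \dots) = \sum_{j=0}^{n-1} \phi(x_{j+1}, x_{j+2}) = S_n\phi(\pi_{n+1}(x_1, x_2, \dots)),
\end{equation*}
so the weight of an infinite orbit over a window of length $n$ depends only on its first $n+1$ coordinates and coincides with the weight $S_n\phi$ used for correspondences. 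Two further structural facts I would record first: since $T(x)\neq\emptyset$ for every $x$, every finite orbit extends to an infinite one, so $\pi_{n+1}$ is surjective; and since $\cO_\omega(T)$ is closed in the compact space $X^\omega$, the pressure $P(\sigma,\tphi)$ is the usual topological pressure, computed via $\epsilon$-separated sets for the Bowen metrics $d_{\omega,n}(\udx,\udy)\=\max_{0\le j<n} d_\omega(\sigma^j\udx, \sigma^j\udy)$.

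The technical core is two comparisons between $d_{n+1}$ on $\cO_{n+1}(T)$ and the Bowen metrics, obtained by pushing everything onto leading coordinates. Keeping only the $k=1$ term of $d_\omega$ gives the lower bound $d_{\omega,n+1}(\udx,\udy)\ge \tfrac12\max_{1\le i\le n+1}\tfrac{d(x_i,y_i)}{1+d(x_i,y_i)}$, so $d_{n+1}(\pi_{n+1}\udx,\pi_{n+1}\udy)\ge\epsilon$ forces $d_{\omega,n+1}(\udx,\udy)\ge \tfrac{\epsilon}{2(1+\epsilon)}$. For the upper bound, if $d(x_i,y_i)\le\delta$ for all $i\le n+M$, then splitting each $d_\omega(\sigma^j\udx,\sigma^j\udy)$ with $j<n$ into the terms $k\le M$ and the geometric tail yields $d_{\omega,n}(\udx,\udy)\le \delta+2^{-M}$. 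This tail is exactly the point I expect to be the main obstacle: the Bowen metric at time $n$ sees \emph{all} coordinates of an orbit, not just the first $n+1$, and only absorbing the far coordinates into an error $2^{-M}$ (later killed by letting $M\to\infty$) allows me to move between the finite-orbit and infinite-orbit pictures.

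For $P(T,\phi)\le P(\sigma,\tphi)$ I would take a near-optimal $\epsilon$-separated set $E\subseteq\cO_{n+1}(T)$ for $s_n(T,\phi,\epsilon)$ and lift each orbit through the surjection $\pi_{n+1}$ to an infinite orbit. By the lower comparison the lifted set is $(n+1,\tfrac{\epsilon}{2(1+\epsilon)})$-separated for $\sigma$, and the identity above together with the bounded discrepancy $\abs{S_{n+1}\tphi-S_n\tphi}\le\norm{\phi}_\infty$ shows the corresponding shift sum dominates $e^{-\norm{\phi}_\infty}s_n(T,\phi,\epsilon)$. Taking $\tfrac1n\log$, then $\limsup_{n}$, then $\epsilon\to0^+$ gives the inequality; the shift of the time index from $n$ to $n+1$ and the one-term Birkhoff discrepancy are bounded perturbations and do not affect the exponential rate.

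For the reverse inequality $P(\sigma,\tphi)\le P(T,\phi)$ I would take a near-optimal $(n,\epsilon)$-separated set $\hat E\subseteq\cO_\omega(T)$ for $\sigma$, fix $M$ with $2^{-M}<\epsilon/2$, and apply the contrapositive of the upper comparison with $\delta=\epsilon/2$: any two $(n,\epsilon)$-separated orbits differ by more than $\epsilon/2$ in some coordinate of index at most $n+M$. Hence $\pi_{n+M}$ is injective on $\hat E$ and carries it to an $\tfrac\epsilon2$-separated subset of $\cO_{n+M}(T)$. Comparing weights through the identity, with the bounded error $\abs{S_{n+M-1}\phi-S_n\phi}\le(M-1)\norm{\phi}_\infty$ (a constant once $\epsilon$, hence $M$, is fixed), shows $s_{n+M-1}(T,\phi,\epsilon/2)$ dominates $e^{-(M-1)\norm{\phi}_\infty}$ times the shift sum; taking $\tfrac1n\log$, $\limsup_{n}$ (the index shift $n+M-1\mapsto n$ again being harmless), and finally $\epsilon\to0^+$ yields $P(\sigma,\tphi)\le P_s(T,\phi)=P(T,\phi)$. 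The two inequalities together give the asserted equality.
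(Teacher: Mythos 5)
Your proposal is correct. The first half (lifting an $\epsilon$-separated set of finite orbits to an $\bigl(n+1, \frac{\epsilon}{2(1+\epsilon)}\bigr)$-separated set for $\sigma$ via the one-coordinate lower bound on $d_\omega$, and absorbing the one-term Birkhoff discrepancy) is essentially identical to the paper's Step 1. Where you genuinely diverge is the reverse inequality $P(\sigma, \tphi) \leqslant P(T, \phi)$: the paper works with \emph{spanning} sets, lifting an $\epsilon$-spanning subset of $\cO_{n+1}(T)$ to an $\bigl(n - \lfloor\sqrt{n}\rfloor, \tepsilon + 2^{-\sqrt{n}}\bigr)$-spanning set for the shift, and kills both the tail error $2^{-\sqrt{n}}$ and the weight discrepancy $\sqrt{n}\,\norm{\phi}_\infty$ by letting the window $\lfloor\sqrt{n}\rfloor$ grow with $n$; you instead stay entirely with \emph{separated} sets, fix a window $M = M(\epsilon)$ with $2^{-M} < \epsilon/2$ once and for all, and use the contrapositive of your upper comparison to show that truncation $\pi_{n+M}$ is injective on any $(n,\epsilon)$-separated set and sends it to an $\frac{\epsilon}{2}$-separated subset of $\cO_{n+M}(T)$, with a weight error $(M-1)\norm{\phi}_\infty$ that is constant in $n$. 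Both arguments are sound. Yours is arguably more economical: it needs only the separated-set characterization $P_s(T,\phi)$ of the pressure (the paper's Step 2 leans on the spanning-set quantity $r_n$ and hence implicitly on the equivalence $P_s = P_r$ of Proposition~\ref{separated entropy coincide with spanning entropy}), and the fixed window makes the error bookkeeping cleaner. What the paper's spanning-set route buys is that it never needs an injectivity/separation-preservation argument for the truncation map — spanning sets push forward trivially — at the cost of the growing-window device. The two minor points you flag as "harmless" (the time-index shifts $n \mapsto n+1$ and $n \mapsto n+M-1$ inside $\limsup \frac{1}{n}\log(\cdot)$) are indeed harmless here because the quantities $\frac{1}{m}\log s_m$ are bounded below by $-\norm{\phi}_\infty$, so rescaling by $\frac{n+M-1}{n} \to 1$ and reindexing over a cofinite set do not change the $\limsup$; it would be worth one line in a final write-up, but it is not a gap.
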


\begin{proof}
	We divide this proof into two steps. Let $\epsilon>0$ be arbitrary and denote $\tepsilon \= \epsilon/ (1+\epsilon)$.
	
	\smallskip
	
	Step 1. We show $P(T ,  \phi) \leq P\bigl(\sigma ,  \tphi \bigr)$.
	
	Let $n\in \N$. For every $\vect{x}{1}{n+1} \in \cO_{n+1} (T)$, we choose $x_{n+2} ,\, x_{n+3} ,\, \dots \in X$ such that $\vect{x}{1}{\infty}\in \cO_\omega (T)$. Denote by $\tau\: E_n (\epsilon)\to \cO_\omega (T)$ the map that extends each $\vect{x}{1}{n+1}  \in E_n (\epsilon)$ to the orbit $\vect{x}{1}{\infty} \in \cO_\omega (T)$. The map $\tau$ is injective.
	
	Fix arbitrary $n\in \N$ and $\epsilon$-separated subset $E_n (\epsilon)$ of $(\cO_{n+1} (T),  d_{n+1} )$.
	
	For an arbitrary pair of distinct orbits $\vect{x}{1}{n+1} ,\,  \vect{y}{1}{n+1}  \in E_n (\epsilon)$, we have
	\begin{equation*}
		\epsilon \leq d_{n+1} \bigl( \vect{x}{1}{n+1} ,  \vect{y}{1}{n+1}  \bigr)= \max \{ d(x_i ,  y_i)  :  1\leq i\leq n+1 \}.
	\end{equation*}
	
	Choose $k\in \oneto{n+1}$ such that $d(x_k ,  y_k)\geq \epsilon$, then
	\begin{equation*}
			 d_\omega \bigl(\sigma^{k-1} \bigl( \tau \bigl( \vect{x}{1}{n+1} \bigr) \bigr),  \sigma^{k-1} \bigl( \tau \bigl( \vect{y}{1}{n+1} \bigr) \bigr) \bigr)
			= d_\omega (\vect{x}{k}{\infty},  \vect{y}{k}{\infty})
			\geq \frac{1}{2} \cdot \frac{d(x_k ,  y_k)}{1+d(x_k ,  y_k)}\geq \frac{\tepsilon}{2}.
	\end{equation*}
	This implies that $\tau (E_n (\epsilon))$ is $(n+1,   \tepsilon / 2)$-separated for the dynamical system $(\cO_\omega (T),  \sigma)$, i.e., for each pair of distinct orbits $\udx ,\, \udy \in \tau (E_n (\epsilon))$, there exists $k\in [n]$ such that $d_\omega \bigl( \sigma^k (\udx) , \sigma^k (\udy) \bigr) \geq \tepsilon /2$.
	
	Since
	\begin{equation*}
				\sum_{\udx \in E_n (\epsilon)}  e^{S_n \phi (\udx)}  
		= \sum_{\vect{x}{1}{\infty}\in \tau (E_n (\epsilon))}  e^  {\sum_{j=1}^n   \phi (x_j ,  x_{j+1})}  
		= \sum_{\udx \in \tau (E_n (\epsilon))}  e^{ \sum_{j=1}^n   \tphi (\sigma^{j-1} (\udx) )  },
	\end{equation*}
	and the $\epsilon$-separated set $E_n (\epsilon)$ is chosen arbitrarily, we have
	\begin{equation*}
		\begin{aligned}
			& \sup \Bigl\{ \sum_{\udx \in E_n (\epsilon)}  e^{ S_n \phi (\udx) } :  E_n (\epsilon) \text{ is } \epsilon \text{-separated in } \cO_{n+1} (T)\Bigr\}\\
			&\qquad \leq \sup \biggl\{ \sum_{\udx \in \tE_n ( \frac{\tepsilon}{2} )}  e^{  \sum_{j=0}^{n-1}  \tphi (\sigma^j (\udx)) }    :  
			\tE_n  ( \tepsilon / 2 )  \text{ is } (n+1,   \tepsilon / 2) \text{-separated in } (\cO_\omega (T),  \sigma) \biggr\}.
		\end{aligned}
	\end{equation*}
	This implies that
	\begin{equation*}
		\begin{aligned}
			P(T,  \phi)&= \lim_{\epsilon\to 0^+} \limsup_{n\to +\infty} \frac{1}{n} \log \biggl(  \sup_{E_n (\epsilon)} \sum_{\udx \in E_n (\epsilon)}  \exp (S_n \phi (\udx)) \biggr) \\
			&\leq \lim_{\epsilon\to 0^+} \limsup_{n\to +\infty} \frac{1}{n} \log \biggl( \sup_{\tE_n ( \tepsilon / 2 )} \sum_{\udx \in \tE_n (\tepsilon / 2)}  \exp \biggl( \sum_{j=0}^{n-1}  \tphi (\sigma^j (\udx)) \biggr) \biggr)  %
			=P\bigl(\sigma ,  \tphi\bigr),
		\end{aligned}
	\end{equation*}
	where $E_n (\epsilon)$ ranges over all $\epsilon$-separated subsets of $\cO_{n+1} (T)$, $\tE_n (\tepsilon / 2)$ ranges over all $(n+1,  \tepsilon / 2 )$-separated sets of the dynamical system $(\cO_\omega (T),  \sigma)$, and the last equality holds because $\lim_{\epsilon\to 0^+} \tepsilon / 2 =0$. Hence, we conclude $P(T,  \phi)\leq P\bigl(\sigma ,  \tphi \bigr)$.
	
	\smallskip
	
	Step 2. We show $P(T,  \phi)\geq P\bigl(\sigma ,  \tphi \bigr)$.
	
	Fix arbitrary $n\in \N$ and $\epsilon$-spanning subset $F_n (\epsilon)$ of $(\cO_{n+1} (T) ,  d_{n+1} )$.

	For every $\vect{y}{1}{\infty}\in \cO_\omega (T)$, since $\vect{y}{1}{n+1} $ is in $\cO_{n+1} (T)$, we can choose an orbit $\vect{x}{1}{n+1} \in F_n (\epsilon)$ such that $d_{n+1} \bigl( \vect{y}{1}{n+1} ,  \vect{x}{1}{n+1} \bigr) < \epsilon$, i.e., $d(x_k ,  y_k )<\epsilon$ for all $k\in \oneto{n+1}$. For each $k\in \zeroto{n-\lfloor \sqrt{n} \rfloor-1}$, we have
	\begin{equation*}
		\begin{aligned}
			 d_\omega \bigl( \sigma^k ( \vect{y}{1}{\infty} ),  \sigma^k \bigl(\tau \bigl(\vect{x}{1}{n+1} \bigr) \bigr) \bigr)
			= d_\omega (\vect{y}{k+1}{\infty},  \vect{x}{k+1}{\infty} )
			&= \sum_{j=1}^{+\infty}   \frac{1}{2^j} \frac{d(x_{k+j} ' ,  x_{k+j})}{1+d(x_{k+j} ' ,  x_{k+j})}  
			< \sum_{j=1}^{n-k}  \frac{1}{2^j} \tepsilon  +\sum_{j=n-k+1}^{+\infty}   \frac{1}{2^j}  \\
			&<  \tepsilon  +2^{-(n-k)} 
			 \leq \tepsilon + 2^{- \lfloor \sqrt{n} \rfloor - 1}  
			 \leq \tepsilon  +2^{-\sqrt{n}}.
		\end{aligned}
	\end{equation*}
	Hence, $\tau (F_n (\epsilon))$ is an $\bigl(n-\lfloor \sqrt{n} \rfloor,  \tepsilon  +2^{-\sqrt{n}}\bigr)$-spanning set for $(\cO_\omega (T) ,  \sigma )$, i.e., for each orbit $\udx \in \cO_\omega (T)$, there exists $\udy \in \tau (F_n (\epsilon))$ such that $d_\omega \bigl( \sigma^k (\udx) ,\sigma^k (\udy) \bigr) <\tepsilon +2^{-\sqrt{n}}$ holds for all $k\in \zeroto{n- \lfloor \sqrt{n} \rfloor -1}$.
	
	Recall $ \norm{\phi}_\infty =\sup \{  \abs{\phi (x_1 ,  x_2)}  :  (x_1 ,  x_2)\in \cO_2 (T) \}$%
	. We have
	
	\begin{align}\label{F_n(epsilon)>tF_n(epsilon)}
			\sum_{\udx \in F_n (\epsilon)}  e^{ S_n \phi (\udx)}
			&=\sum_{\vect{x}{1}{n+1} \in F_n (\epsilon)}  e^{  \sum_{j=1}^n  \phi(x_j ,  x_{j+1}) }  
			 = \sum_{\udx\in \tau (F_n (\epsilon))}  e^{ \sum_{j=0}^{n-1}   \tphi (\sigma^j (\udx)) } \notag \\
			&\geq \sum_{\udx \in \tau (F_n (\epsilon))}  e^{  \sum_{j=0}^{n-\lfloor \sqrt{n} \rfloor-1}   \tphi (\sigma^j (\udx))- \lfloor \sqrt{n} \rfloor \norm{\phi}_\infty }
			\geq e^{-\sqrt{n} \norm{\phi}_\infty} \sum_{\udx \in \tau (F_n (\epsilon))}  e^{  \sum_{j=0}^{n-\lfloor \sqrt{n} \rfloor-1}   \tphi (\sigma^j (\udx)) }   .
	\end{align}
	
	For each $\delta >0$ and each $m\in \N$, write
	\begin{equation*}
		\alpha (m ,  \delta)\= \inf \bigg\{\sum_{\udx \in \tF_m (\delta)} e^{  \sum_{j=0}^{m-1}   \tphi (\sigma^{j}(\udx))}   :  \tF_m (\delta) \text{ is } (m,  \delta) \text{-spanning in } (\cO_\omega (T),  \sigma)\bigg\}.
	\end{equation*}
	Then we have $P\bigl( \sigma ,  \tphi \bigr)=\lim_{\delta\to 0^+} \limsup_{m\to +\infty} \frac{1}{m}\log (\alpha (m,  \delta))$.
	
	Since for $\delta_2> \delta_1> 0$, an $(m,  \delta_1)$-spanning set is also $(m,  \delta_2)$-spanning, we can see that $\alpha (m,  \delta)$ is decreasing in $\delta$ for each $m\in\N$.
	
	Since $\tau (F_n (\epsilon))$ is an $\bigl(n-\lfloor \sqrt{n} \rfloor,   \tepsilon +2^{-\sqrt{n}}\bigr)$-spanning set in $(\cO_\omega (T) ,  \sigma )$, (\ref{F_n(epsilon)>tF_n(epsilon)}) implies
	\begin{equation*}
		\sum_{\udx \in F_n (\epsilon)}  e^{S_n \phi (\udx) }
		\geq e^{-\sqrt{n} \norm{\phi}_\infty} \alpha \bigl(n-\lfloor \sqrt{n} \rfloor, \tepsilon +2^{-\sqrt{n}}\bigr).
	\end{equation*}
	
	Let $F_n (\epsilon)$ range over all $\epsilon$-spanning subsets of $(\cO_{n+1} (T),  d_{n+1})$ and take an infimum in the inequality above, we get
	$r_n (T,  \phi ,  \epsilon)
		= \inf_{F_n (\epsilon)} \sum_{\udx \in F_n (\epsilon)}  e^{ S_n \phi (\udx) }
		\geq e^{-\sqrt{n} \norm{\phi}_\infty} \alpha \bigl(n-\lfloor \sqrt{n} \rfloor,   \tepsilon  +2^{-\sqrt{n}}\bigr)$.
	This implies
	\begin{equation*}
		\begin{aligned}
			P(T,  \phi )= \lim_{\epsilon\to 0^+} \limsup_{n\to +\infty} \frac{ \log (r_n (T,  \phi ,  \epsilon)) }{n} 
			&\geq \lim_{\epsilon\to 0^+} \limsup_{n\to +\infty} \bigl( n^{-1} \log \bigl(  \alpha \bigl(n-\lfloor \sqrt{n} \rfloor,    \tepsilon  +2^{-\sqrt{n}}\bigr)  \bigr) -  \norm{\phi}_\infty n^{-1/2}\bigr)\\
			&\geq \lim_{\epsilon\to 0^+} \limsup_{n\to +\infty} (n-\lfloor \sqrt{n} \rfloor)^{-1} \log \bigl(  \alpha \bigl(n-\lfloor \sqrt{n} \rfloor,   \tepsilon +\epsilon \bigr)  \bigr) %
			= P\bigl(\sigma ,  \tphi \bigr),
		\end{aligned}
	\end{equation*}
	where the last equality holds because $n-\lfloor \sqrt{n} \rfloor$ ranges over all positive integers as $n$ ranges over all positive integers and $\lim_{\epsilon\to 0^+}  (\tepsilon  +\epsilon) =0$.
\end{proof}

\section{Measure-theoretic entropy of transition probability kernels}\label{sct_Measure-theoretic_entropy_of_transition_probability_kernels}

This section is devoted to introducing and discussing the measure-theoretic entropy of transition probability kernels. We discuss some basic notions and properties about transition probability kernels and introduce the measure-theoretic entropy of transition probability kernels. Finally, we define another shift map for a correspondence and relate the measure-theoretic entropy of this shift map to that of the correspondence, see Theorem~\ref{measure-theoretic entropy coincide with its lift}.

\subsection{Basic properties of transition probability kernels}\label{subsct_Basic properties of transition probability kernels}

Here we recall the notion of transition probability kernels (see e.g., \cite[Section~3.4.1]{MT12}), which are also called Markovian transition kernels (see e.g., \cite[Section~6.1]{Le16}). %
Moreover, we recall how a transition probability kernel pushes a function forward and pulls a measure back.%

\begin{definition}[Transition probability kernels]  \label{transition probability kernel}
    Let $(X,  \SAA (X))$ and $(Y,  \SAA (Y))$ be measurable spaces, where $X$ and $Y$ are sets and $\SAA (X)$ and $\SAA (Y)$ are $\sigma$-algebras on $X$ and  $Y$, respectively. A \defn{transition probability kernel from $Y$ to $X$}\index{transition probability kernel} is a map $\cQ \: Y\times \SAA(X) \to [0,1]$ satisfying the following two properties:
    \begin{enumerate}
		\smallskip
        \item[(i)] For every $y\in Y$, the map $\SAA(X) \ni A \mapsto \cQ(y,  A)$ is a probability measure on the measurable space $(X,  \SAA(X))$.
		\smallskip
        \item[(ii)] For every $A\in \SAA(X)$, the map $Y\ni y\mapsto \cQ(y,  A)$ is $\SAA (Y)$-measurable.
    \end{enumerate}
    The set of transition probability kernels from $Y$ to $X$ is denoted by $\tpk(Y,X)$.

    With the notations above, for every $y\in Y$, denote by $\cQ_y$ the probability measure that assigns each measurable set $A\in \SAA (X)$ the value $\cQ (y,  A)$. In other words,
	\begin{equation*}
		\cQ_y (A)\= \cQ (y,  A).
	\end{equation*}

    Moreover, if $Y=X$, we also call a transition probability kernel from $Y$ to $X$ a \emph{transition probability kernel on $X$}. The set of transition probability kernel on $X$ is also denoted by $\tpk(X)$.
\end{definition}

%

%

\begin{comment} %
\begin{rem}\label{intuitive Q}
    As an interpretation of Definition~\ref{transition probability kernel}, we can think of a transition probability kernel $\cQ$ from $Y$ to $X$ as a stochastic process that transmits each point $y\in Y$ ``randomly'' to a point $x\in X$ with distribution $\cQ_y$.
\end{rem}
\end{comment}

\begin{definition}\label{support}
	Let $T$ be a correspondence on a compact metric space $X$ and $\cQ$ be a transition probability kernel on $(X, \SBB (X))$. We say that $\cQ$ is \defn{supported by}\index{support} $T$ if the measure $\cQ_x$ is supported on the closed set $T(x)$ (i.e., $\cQ_x (T(x))=1$) for every $x\in X$. Denote by $\tpk(X;T)$ be the set of all $\cQ \in \tpk(X)$ supported by $T$.
\end{definition}

In the remaining of this section, let $(X,  \SAA (X))$, $(Y,  \SAA (Y))$, and $(Z,  \SAA (Z))$ be measurable spaces. If $F\: Y\to X$ is a measurable map, we can pull back a function $f\: X\to \R$ (with the resulting pullback $F^* \: f\mapsto f\circ F$) and push forward a probability measure $\mu$ on $Y$ (with the resulting pushforward $F_* \: \mu \mapsto \mu\circ F^{-1}$). For a Markov chain with the state space $X=\oneto{d}$ and a transition matrix $P$, a distribution $p=(p_1,  \dots   p_d)$ on $X$ becomes $pP$ after one step of the Markov process. By Definitions~\ref{2ex transition probability kernel (1)} and~\ref{2ex transition probability kernel (2)}, transition probability kernels generalize measurable maps and transition matrices. Their actions on functions and measures are standard; see e.g., \cite[Section~6.1]{Le16} and \cite[Section~3.4.2]{MT12}. We recall them below.

\begin{definition}  \label{transition probability kernel act on functions}
   For $f \in \BBB(X,\R)$ and $\cQ \in \tpk( Y, X  )$,
    the \defn{pullback}\index{pullback} function $\cQ f\: Y\to \R$ of $f$ by $\cQ$ is given by
    \begin{equation*}
    	\cQ f(y)\= \int_X\! f(x)  \diff\cQ_y (x)    \qquad \text{for all } y\in Y.
    \end{equation*}
\end{definition}

As an operator acting on $\BBB(X,\R)$, $\cQ$ is linear and continuous (see Lemma~\ref{continuity of Q}).

\begin{definition}  \label{transition probability kernel act on measures}
    For $\mu \in \PPP(Y)$ and $\cQ \in \tpk(Y,X)$, the \defn{pushforward}\index{pushforward} probability measure $\mu \cQ$ on $X$ of $\mu$ by $\cQ$ is given by
    \begin{equation*}
    	(\mu \cQ) (A)\= \int_Y \! \cQ (y,  A)   \diff\mu(y)   \qquad \text{for all } A\in \SAA(X).
    \end{equation*}
\end{definition}

It is straightforward to check that $\mu \cQ \in \PPP(X)$.

\begin{definition}\label{invariant measure}
	For $\cQ \in \tpk( X )$, we say that $\mu \in \PPP(X)$ is \defn{$\cQ$-invariant}\index{invariant probability measure} if $\mu \cQ =\mu$. Denote by $\MMM(X,  \cQ)$ the set of all $\cQ$-invariant probability measures on $X$.
\end{definition}

\begin{definition}\label{compose of transition probability kernels}
	For $\cQ\in \tpk( Y,X )$ and $\cQ ' \in \tpk( Z, Y  )$, the transition probability kernel $\cQ ' \cQ \in \tpk(  Z, X)$ is given by
	\begin{equation*}
		(\cQ ' \cQ )(z,  A)\= (\cQ_z '\cQ ) (A)
	\end{equation*}
	for all $z\in Z$ and $A\in \SAA (X)$, where $\cQ_z '\cQ$ is a probability measure on $X$ defined in Definition~\ref{transition probability kernel act on measures}.
\end{definition}

It is straightforward to check from the definition that $\cQ ' \cQ$ is indeed a transition probability kernel from $Z$ to $X$ (cf.~\cite[Lemma~3.3~(i)]{Ka21}). The next lemma ensures that we can write $\mu \cQ \cQ '$ without parentheses. Its proof follows easily from the definition (cf.~\cite[Lemma~3.3~(iii)]{Ka21}).

\begin{comment} %
We check in Appendix~\ref{subsct_Proofs of basic properties} that $\cQ ' \cQ$ is indeed a transition probability kernel from $Z$ to $X$, see Lemma~\ref{QQ' is transition probability kernel}.

%
%
%

\begin{rem}
    Recall from Remark~\ref{intuitive Q} that $\cQ$ and $\cQ '$ can be considered as stochastic processes. Under this perspective, $\cQ ' \cQ$ refers to the composition process of the two processes, i.e., the stochastic process that transmits each point $z\in Z$ randomly to an intermediate point $y\in Y$ through the process $\cQ '$, and next transmits $y$ randomly to a point $z\in Z$ through the process $\cQ$.
\end{rem}

%

%
%
%
%
%
%

%
%

%

%
%

%
%

It is straightforward to verify the following lemma from definition (cf.~\cite[Lemma~3.3~(iii)]{Ka21}).
\end{comment}

\begin{lemma}\label{law of association for measures}
	For $\cQ \in \tpk( Y,X )$, $\cQ ' \in \tpk( Z, X )$, and $\mu \in \PPP(Z)$, we have the law of association:
$\mu (\cQ ' \cQ)= (\mu \cQ ')\cQ$.
\end{lemma}

\begin{comment}
We include a proof of Lemma~\ref{law of association for measures} in Appendix~\ref{subsct_Proofs of basic properties}.

%
%
%
%

%
%
%
%
%

%
%

%
%

%
%

%

%
Lemma~\ref{law of association for measures} ensures that we can write $\mu \cQ \cQ '$ without parentheses.
\end{comment}
%
%
%

\subsection{Transition probability kernels $\cQ^{\zeroto{n}}$ and $\cQ^\omega$}\label{subsct_Transition probability kernel Q^[n] and Q^omega}

Here we give the definition of transition probability kernels $\cQ^{\zeroto{n}}$ ($n\in \N_0$) and $\cQ^\omega$, which are repeatedly used in the sequel.

Consider $m \in \N \smallsetminus \{ 1 \}$, $\vect{n}{1}{m-1} \in \N^{m-1}$, $n_m \in \wh{\N}$, and a subset $B_i \subseteq X^{n_i}$ for each $i\in \oneto{m}$. Set $N_0 \=0$ and $N_i \= \sum_{j=1}^i   n_j \in \wh{\N}$ for each $i\in \oneto{m}$. The set $B_1 \times \cdots \times B_m$ is defined as
\begin{equation*}
\begin{aligned}
	B_1 \times \cdots \times B_m \= \bigl\{\vect{x}{1}{\infty}\in X^{N_m}  :  \vect{x}{N_{k-1} +1}{N_{k-1} +n_k}  \in B_k \text{ for each } k \in \oneto{m} \bigr\},
\end{aligned}
\end{equation*}
where $\vect{x}{N_{m-1} +1}{N_{m-1} +n_m}$ means $\vect{x}{N_{m-1} +1}{\infty}$ if $n_m = \omega$.

For each $n\in \N$, denote by $\SAA (X^n)$ the $\sigma$-algebra on $X^n$ generated by $\bigcup_{i=0}^{n-1} \bigl\{ X^i \times A \times X^{n-1-i}  :  A\in \SAA (X) \bigr\}$. Denote by $\SAA (X^\omega)$ the $\sigma$-algebra on $X^\omega$ generated by $\bigcup_{i=0}^{+\infty} \bigl\{ X^i \times A \times X^\omega  :  A\in \SAA (X) \bigr\}$.

For each $A_{n+1}\subseteq X^{n+1}$ and each $\vect{x}{1}{n} \in X^n$, write
\begin{equation}\label{pi_n+1(x_1,...,x_n;A_n+1)=}
    \pi_{n+1} (\vect{x}{1}{n}  ;  A_{n+1})\= \bigl\{x_{n+1}\in X :  \vect{x}{1}{n+1}  \in A_{n+1}\bigr\}.
\end{equation}

\begin{definition}\label{Q^[n]}
	For $\cQ \in \tpk( X )$, define the transition probability kernel $\cQ^{\zeroto{n}}$ from $X$ to $X^{n+1}$ recursively on $n\in \N_0$ as follows:

	First, $\cQ^{\zeroto{0}}\= \wh{\operatorname{id}_X}$, where $\wh{\operatorname{id}_X}$ is a transition probability kernel given by $\wh{\operatorname{id}_X} (x,  A)\= \mathbbold{1}_A (x)$ for all $x\in X$ and $A\in \SAA (X)$. This means $\cQ^{\zeroto{0}}_x =\delta_x$, the Dirac measure at $x\in X$, for all $x\in X$. If $\cQ^{\zeroto{n-1}}$ has been defined for some $n\in \N$, we define $\cQ^{\zeroto{n}}$ as:
	\begin{equation}\label{cQ^n+1(x,A_n+1)=}
		\cQ^{\zeroto{n}} (x,  A_{n+1})\= \int_{X^n}\! \cQ (x_n,  \pi_{n+1} (\vect{x}{1}{n}  ;  A_{n+1}))   \diff\cQ_x^{\zeroto{n-1}} (\vect{x}{1}{n} )
	\end{equation}
	for all $x\in X$ and $A_{n+1}\in \SAA \bigl(X^{n+1} \bigr)$.
\end{definition}

It is standard and straightforward to check from the definition inductively that $\cQ^{\zeroto{n}}$ defined above is indeed a transition probability kernel for each $n\in \N$ (cf.~\cite[Lemma~3.3~(i)]{Ka21}).
Applying the Kolmogorov extension theorem, we get the following definition (cf.~\cite[Sections~14.3--14.4]{Ka21}).

\begin{comment} %

\begin{rem}
	Recall from Section~\ref{sct_notation} that $\zeroto{n}=\{0,\,1,\,\dots,n\}$. Our notation $\cQ^{\zeroto{n}}$ is compatible with the notation in the proof of \cite[Corollary~6.4]{Le16}, in which its author uses a finite subset $U =\{ t_1,\, \dots,\, t_k \}$ of $\R_+$ as a superscript to indicate the distribution of following orbits of a continuous-time Markov process: $(x_{t_1},  \dots,  x_{t_k})$, where $x_t$ denotes the random variable at the time $t$ in the Markov process. In addition, see \cite[Section~3.4.1]{MT12} for a related notion $P_x^n$. %
\end{rem}

We check in Appendix~\ref{subsct_Proofs of basic properties} that $\cQ^{\zeroto{n}}$ defined above is indeed a transition probability kernel for each $n\in \N$, see Lemma~\ref{Q^[n] is transition probability kernel}.

%

\begin{rem}\label{intuition of muQ^[n]}
     Recall from Remark~\ref{intuitive Q} that $\cQ$ can be considered as a stochastic process and we can iterate it $n$ times. If we start at a point $x\in X$, then $\cQ^{\zeroto{n}}_x$ is the distribution of orbits of the $n$-step stochastic process, containing $n+1$ entries from the $0$-th step to the $n$-th step. Moreover, if we start with an initial distribution $\mu$ on $X$, then the distribution of orbits of the $n$-step stochastic process is $\mu \cQ^{[n]}$.
\end{rem}

\end{comment}

\begin{definition}\label{Q^omega}
	For $\cQ \in \tpk( X )$, define $\cQ^\omega\in \tpk( X,X^\omega )$ as the unique transition probability kernel from $X$ to $X^\omega$ with the property that for all $x\in X$, $n\in \N_0$, and $A\in \SAA (X^{n+1})$, 
    \begin{equation}\label{Q^N(x,A*X^infty)=Q^[n](x,A)}
		\cQ^\omega (x,  A\times X^\omega)= \cQ^{\zeroto{n}} (x,  A).
    \end{equation}
\end{definition}

\begin{rem}
	For each $\mu \in \PPP(X)$, Definition~\ref{transition probability kernel act on measures} and~(\ref{Q^N(x,A*X^infty)=Q^[n](x,A)}) imply that
	\begin{equation}\label{muQ^N(A*X^infty)=muQ^[n](A)}
		(\mu \cQ^\omega) (A\times X^\omega)= \bigl(\mu \cQ^{\zeroto{n}} \bigr) (A).
	\end{equation}
\end{rem}

\subsection{Definition of measure-theoretic entropy for transition probability kernels}\label{subsct_Definition of measure-theoretic entropy for transition probability kernels}

In this subsection, we introduce the measure-theoretic entropy for transition probability kernels (Definition~\ref{measure-theoretic entropy of transition probability kernels}). As we can see in Definition~\ref{2ex transition probability kernel (1)}, transition probability kernels generalize measurable maps. Our definition of the measure-theoretic entropy of transition probability kernels uses the entropy of partitions and generalizes naturally the definition of the measure-theoretic entropy of measurable maps.

A \defn{finite measurable partition}\index{finite measurable partition} $\cA$ of a measurable space $(Y,  \SAA (Y))$ is a finite collection of mutually disjoint measurable subsets $\{A_1 ,\, \dots ,\, A_n\}$ satisfying $\bigcup_{i=1}^n A_i =Y$, where $n \in \N$.

Let $m \in \N \smallsetminus \{1\}$, $\vect{n}{1}{m-1}   \in \N^{m-1}$, and $n_m \in \hN$. For arbitrary finite measurable partitions $\cA_1 ,\, \dots ,\, \cA_m$ of measurable spaces $(X^{n_1} ,  \SAA (X^{n_1}))$, $\dots$, $( X_{n_m} ,  \SAA (X^{n_m}))$, respectively, their product is given by
\begin{equation*}
	\cA_1 \times \cdots \times \cA_m 
	\= \{A_1 \times \cdots \times A_m  :  A_i \in \cA_i \text{ for every } i\in \oneto{m} \} \subseteq \SAA (X^{n_1 +\dots +n_m}).
\end{equation*}
It is a finite measurable partition of $(X^{n_1 +\dots +n_m} ,  \SAA (X^{n_1 +\dots +n_m}))$.

For a finite measurable partition $\cA$ of $(X ,\SAA (X))$ and $n\in \N$, write
\begin{equation*}
	\cA^n \= \underbrace{\cA \times \cA \times \cdots \times \cA}_{n \text{ copies of } \cA}.
\end{equation*}

Recall that for a finite measurable partition $\cA$ of $(X ,\SAA (X))$ and $\mu \in \PPP(X)$, the entropy of $\cA$ is given by
\begin{equation}\label{H_mu(A)=}
	H_\mu (\cA)\= -\sum_{A\in \cA}   \mu (A) \log (\mu (A)).
\end{equation}

We refer the reader to \cite[Chapter~2]{PU10} for basic properties of the entropy $H_\nu(\cA_1)$ of a finite measurable partition $\cA_1$ and the \emph{conditional entropy} $H_{\nu}(\cA_1 | \cA_2)$ of a finite measurable partition $\cA_1$ given another finite measurable partition $\cA_2$.

\begin{prop}\label{q9pg237pq}
	Let $\cQ \in \tpk( X  )$, $\mu \in \MMM(X, \cQ)$, and $\cA$ be a finite measurable partition of $X$. Then
	\begin{equation*}%
		H_{\mu \cQ^{\zeroto{n-1}}} (\cA^n)= \sum_{k=0}^{n-1}   H_{\mu \cQ^\omega} \bigl( \cA \times \{ X^\omega\}\big| \{ X\} \times \cA^k \times \{ X^\omega \} \bigr),
	\end{equation*}
	where $\{ X^\omega \}$ (resp.\ $\{ X \}$) is the partition of $X^\omega$ (resp.\ $X$) into only one subset.

	Moreover, the limit $\lim_{n\to +\infty} \frac{1}{n} H_{\mu \cQ^{\zeroto{n-1}}} (\cA^n)$ exists.
\end{prop}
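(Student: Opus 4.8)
The plan is to realize both sides through the stationary process on the orbit space generated by $\mu$ and $\cQ$. Write $\nu \= \mu\cQ^\omega \in \PPP(X^\omega)$, and for each $j\in\N$ let $\xi_j$ be the finite measurable partition of $X^\omega$ that reads off the $j$-th coordinate through $\cA$, namely $\xi_j\=\{X^{j-1}\times A\times X^\omega:A\in\cA\}$. In this notation $\cA\times\{X^\omega\}=\xi_1$ and $\{X\}\times\cA^k\times\{X^\omega\}=\bigvee_{j=2}^{k+1}\xi_j$, so the $k$-th summand on the right-hand side is $H_\nu\bigl(\xi_1\mid\bigvee_{j=2}^{k+1}\xi_j\bigr)$. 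First I would use (\ref{muQ^N(A*X^infty)=muQ^[n](A)}) to see that the pushforward of $\nu$ under projection to the first $n$ coordinates is $\mu\cQ^{\zeroto{n-1}}$, and that $\cA^n$ pulls back to $\bigvee_{j=1}^n\xi_j$ under this projection; since the partition elements then correspond bijectively and have equal $\nu$- and $\mu\cQ^{\zeroto{n-1}}$-measures, $H_{\mu\cQ^{\zeroto{n-1}}}(\cA^n)=H_\nu\bigl(\bigvee_{j=1}^n\xi_j\bigr)$.

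The heart of the first assertion is the chain rule for the entropy of finite partitions (see \cite[Chapter~2]{conformal fractals}):
\begin{equation*}
	H_\nu\Bigl(\bigvee_{j=1}^n\xi_j\Bigr)=\sum_{k=0}^{n-1}H_\nu\Bigl(\xi_{k+1}\,\Big|\,\bigvee_{j=1}^{k}\xi_j\Bigr).
\end{equation*}
It then remains to match $H_\nu\bigl(\xi_{k+1}\mid\bigvee_{j=1}^k\xi_j\bigr)$ with $H_\nu\bigl(\xi_1\mid\bigvee_{j=2}^{k+1}\xi_j\bigr)$. Writing each conditional entropy as a difference $H_\nu(\alpha\mid\beta)=H_\nu(\alpha\vee\beta)-H_\nu(\beta)$, and noting that both carry the common term $H_\nu\bigl(\bigvee_{j=1}^{k+1}\xi_j\bigr)$, this comes down to the single \emph{stationarity identity} $H_\nu\bigl(\bigvee_{j=2}^{k+1}\xi_j\bigr)=H_\nu\bigl(\bigvee_{j=1}^{k}\xi_j\bigr)$, equivalently the statement that the law of $(x_2,\dots,x_{k+1})$ under $\nu$ coincides with $\mu\cQ^{\zeroto{k-1}}$.

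This marginal-shift identity is the step I expect to be the main obstacle, and it is precisely where the hypothesis $\mu\in\MMM(X,\cQ)$ enters. By (\ref{muQ^N(A*X^infty)=muQ^[n](A)}) and Definition~\ref{transition probability kernel act on measures}, for $A\in\SAA(X^k)$ one has
\begin{equation*}
	\nu(X\times A\times X^\omega)=\bigl(\mu\cQ^{\zeroto{k}}\bigr)(X\times A)=\int_X\!\cQ^{\zeroto{k}}(x,X\times A)\diff\mu(x),
\end{equation*}
and the task is to show that this equals $\bigl(\mu\cQ^{\zeroto{k-1}}\bigr)(A)$. I would verify the structural forward-shift relation $\cQ^{\zeroto{k}}(x,X\times A)=\bigl(\cQ_x\cQ^{\zeroto{k-1}}\bigr)(A)$ (dropping the initial coordinate) directly from the recursion (\ref{cQ^n+1(x,A_n+1)=}), and then conclude, using $\cQ_x=\delta_x\cQ$ (Remark~\ref{delta_yQ=Q_y}), Definition~\ref{compose of transition probability kernels}, and the association law (Lemma~\ref{law of association for measures}), via $\int_X\bigl(\cQ_x\cQ^{\zeroto{k-1}}\bigr)(A)\diff\mu(x)=\bigl((\mu\cQ)\cQ^{\zeroto{k-1}}\bigr)(A)=\bigl(\mu\cQ^{\zeroto{k-1}}\bigr)(A)$, where the last equality is exactly $\mu\cQ=\mu$. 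The bookkeeping interlacing the kernel recursion with the invariance of $\mu$ is the delicate part; everything else is formal. Combining this identity with the chain rule proves the first displayed equality.

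For the existence of the limit I would set $b_k\=H_\nu\bigl(\xi_1\mid\bigvee_{j=2}^{k+1}\xi_j\bigr)\geqslant 0$, so that the identity just proved reads $\frac1n H_{\mu\cQ^{\zeroto{n-1}}}(\cA^n)=\frac1n\sum_{k=0}^{n-1}b_k$. Because conditional entropy does not increase when the conditioning partition is refined and $\bigvee_{j=2}^{k+2}\xi_j$ refines $\bigvee_{j=2}^{k+1}\xi_j$, the sequence $(b_k)_{k\geqslant 0}$ is non-increasing; being bounded below by $0$, it converges to $\inf_k b_k$. Since the Ces\`{a}ro averages of a convergent sequence share its limit, $\frac1n H_{\mu\cQ^{\zeroto{n-1}}}(\cA^n)\to\inf_k b_k$, which is exactly the asserted convergence.
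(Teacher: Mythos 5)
Your proof is correct and follows essentially the same route as the paper's: the identification $H_{\mu\cQ^{\zeroto{n-1}}}(\cA^n)=H_{\mu\cQ^\omega}\bigl(\cA^n\times\{X^\omega\}\bigr)$, a telescoping/chain-rule decomposition into conditional entropies, the shift-invariance identity $(\mu\cQ^\omega)(X\times A\times X^\omega)=\bigl(\mu\cQ^{\zeroto{k-1}}\bigr)(A)$ coming from $\mu\cQ=\mu$, and then monotonicity of the conditional entropies plus Ces\`{a}ro convergence for the limit. The only difference is organizational: the stationarity identity you propose to re-derive from the kernel recursion and the association law is exactly what the paper has already established in its appendix as Lemma~\ref{l_QQnw} and Corollary~\ref{c:muQ}, which its proof simply cites.
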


\begin{proof}    
    By (\ref{muQ^N(A*X^infty)=muQ^[n](A)}) %
    and~(\ref{H_mu(A)=}), we have $H_{\mu \cQ^{\zeroto{n-1}}} (\cA^n)= H_{\mu \cQ^\omega} (\cA^n \times \{ X^\omega \})$. By (\ref{muQ^omega(B)=muQ^omega(X*B)}) in Corollary~\ref{c:muQ} and~(\ref{H_mu(A)=}), we have $H_{\mu \cQ^\omega} \bigl(\{ X\} \times \cA^k \times \{ X^\omega \} \bigr)= H_{\mu \cQ^\omega} \bigl(\cA^k \times \{ X^\omega \} \bigr)$. Thus,
	\begin{alignat*}{4}
		H_{\mu \cQ^{\zeroto{n-1}}} (\cA^n)
		= H_{\mu \cQ^\omega} (\cA^n \times \{ X^\omega \})  
		&= \sum_{k=0}^{n-1}  \bigl(H_{\mu \cQ^\omega} \bigl(\cA^{k+1} \times \{ X^\omega \}\bigr)- H_{\mu \cQ^\omega} \bigl(\cA^k \times \{ X^\omega \}\bigr)\bigr)\\
		&= \sum_{k=0}^{n-1}  \bigl(H_{\mu \cQ^\omega} \bigl(\cA^{k+1} \times \{ X^\omega \} \bigr)- H_{\mu \cQ^\omega} \bigl(\{ X\} \times \cA^k \times \{ X^\omega \}\bigr)\bigr)\\
		&= \sum_{k=0}^{n-1}   H_{\mu \cQ^\omega} \bigl(\cA \times \{ X^\omega\} \big| \{ X\} \times \cA^k \times \{ X^\omega \} \bigr).
	\end{alignat*}
   Here $\cA^0 \times \{ X^\omega \}$ is $\{ X^\omega \}$, whose entropy is $0$.

	Since $H_{\mu \cQ^\omega} \bigl(\cA \times \{ X^\omega\} \big| \{ X\} \times \cA^k \times \{ X^\omega \}\bigr)$ is non-negative and it decreases as $k$ increases, we get
	\begin{equation*}
		\lim_{n\to +\infty} \frac{1}{n} H_{\mu \cQ^{\zeroto{n-1}}} (\cA^n) = \lim_{k\to +\infty} H_{\mu \cQ^\omega} \bigl(\cA \times \{ X^\omega\} \big| \{ X\} \times \cA^k \times \{ X^\omega \}\bigr).
	\end{equation*}

	Therefore, the limit $\lim_{n\to +\infty}\frac{1}{n} H_{\mu \cQ^{\zeroto{n-1}}} (\cA^n)$ exists.
\end{proof}

This proposition guarantees that $h_\mu (\cQ,  \cA)$ in the following definition is well-defined.

\begin{definition}\label{h_mu (Q,A)}
	Let $\cQ \in \tpk( X )$, $\mu \in \MMM(X,\cQ)$, and $\cA$ be a finite measurable partition of $X$. Then $h_\mu (\cQ ,  \cA)$, the \defn{measure-theoretic entropy of $\cQ$ with respect to the partition}\index{measure-theoretic entropy with respect to partition} $\cA$, is defined as
	\begin{equation*}
		h_\mu (\cQ ,  \cA)\= \lim_{n\to +\infty} \frac{1}{n} H_{\mu \cQ^{\zeroto{n-1}}} (\cA^n).
	\end{equation*}
\end{definition}

We now formulate our definition of the measure-theoretic entropy of a transition probability kernel.

\begin{definition}\label{measure-theoretic entropy of transition probability kernels}
	For $\cQ \in \tpk( X )$ and $\mu \in \MMM(X,\cQ)$, the \defn{measure-theoretic entropy}\index{measure-theoretic entropy} $h_\mu (\cQ)$ (of $\cQ$ for $\mu$) is given by
	\begin{equation*}
		h_\mu (\cQ)\= \sup_{\cA} h_\mu (\cQ ,  \cA),
	\end{equation*}
	where $\cA$ ranges over all finite measurable partitions of $X$.
\end{definition}

Recall that $\gamma_2$ is the reversal map on $X^2$ given by $\gamma_2 (x,y) =(y,x)$.

\begin{prop}\label{measure-theoretic entropy is inversely invariant}
	For $\cQ ,\, \cR \in \tpk( X )$, and $\mu \in \cP (X)$, if $\mu \cQ^{[1]} =\bigl(\mu \cR^{[1]}\bigr) \circ \gamma_2^{-1}$, then $\mu \in \MMM (X , \cQ) \cap \MMM (X ,\cR)$ and $h_\mu (\cQ) =h_\mu (\cR)$.
\end{prop}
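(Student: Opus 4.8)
The plan is to reduce both assertions to a single \emph{reversal identity} for the finite-dimensional distributions of the two Markov processes determined by $\cQ$ and $\cR$, and then to read off the two conclusions from it.

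First I would recast the hypothesis as an adjointness relation between $\cQ$ and $\cR$ on $L^\infty(\mu)$. From Definitions~\ref{Q^[n]} and~\ref{transition probability kernel act on measures}, the measure $\mu\cQ^{\zeroto{1}}$ on $X^2$ satisfies $\int_{X^2} h \diff(\mu\cQ^{\zeroto{1}}) = \int_X\int_X h(x,y)\diff\cQ_x(y)\diff\mu(x)$ for every $h\in\BBB(X^2,\R)$ — this is checked on product sets $A\times B$ and extended by a monotone class argument — and similarly for $\cR$. Applying the hypothesis $\mu\cQ^{\zeroto{1}}=(\mu\cR^{\zeroto{1}})\circ\gamma_2^{-1}$ to $h(x,y)=f(x)g(y)$ and using $\gamma_2(x,y)=(y,x)$, I obtain
\[
\int_X f\cdot(\cQ g)\diff\mu = \int_X g\cdot(\cR f)\diff\mu \qquad\text{for all } f,\,g\in\BBB(X,\R),
\]
where $\cQ g$ and $\cR f$ are the pullbacks of Definition~\ref{transition probability kernel act on functions}. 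Taking $g\equiv 1$ (so that $\cQ g\equiv 1$) and invoking Proposition~\ref{3w94g8r0q7gwd0awedcbx} gives $\int f\diff\mu=\int f\diff(\mu\cR)$ for all $f$, hence $\mu\cR=\mu$; taking $f\equiv 1$ gives $\mu\cQ=\mu$. This already proves $\mu\in\cM(X,\cQ)\cap\cM(X,\cR)$.

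Next, the heart of the argument: I would establish that $\mu\cQ^{\zeroto{n}}=(\mu\cR^{\zeroto{n}})\circ\gamma_{n+1}^{-1}$ for every $n\in\N_0$, i.e.\ that the forward $\cQ$-chain started from $\mu$ is the coordinate-reversal of the forward $\cR$-chain started from $\mu$. Since product sets form a $\pi$-system generating $\SAA(X^{n+1})$, it suffices to test against product functions. Using the recursion that $\cQ^{\zeroto{n}}_x$ extends $\cQ^{\zeroto{n-1}}_x$ by appending one $\cQ$-step to the last coordinate (Definition~\ref{Q^[n]}), a short induction yields the transfer-operator formula
\[
\int_{X^{n+1}}\! \prod_{k=0}^{n} g_k(x_k)\diff\bigl(\mu\cQ^{\zeroto{n}}\bigr) = \int_X\! g_0\cdot\cQ\bigl(g_1\cdot\cQ\bigl(\cdots\cQ(g_{n-1}\cdot\cQ g_n)\cdots\bigr)\bigr)\diff\mu,
\]
and the analogue for $\cR$. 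The desired identity is then equivalent to $\int\prod_{k=0}^{n}g_k(x_k)\diff(\mu\cQ^{\zeroto{n}})=\int\prod_{k=0}^{n}g_k(x_{n-k})\diff(\mu\cR^{\zeroto{n}})$, which I would prove by induction on $n$: the base case $n=0$ is $\mu\cQ^{\zeroto{0}}=\mu=\mu\cR^{\zeroto{0}}$, and in the inductive step I apply the adjointness relation to peel off the outermost factor $g_0\cdot\cQ(\,\cdot\,)$, converting it into a factor $\cR g_0$ absorbed into the next slot, after which the induction hypothesis for $\cR$ rewrites the remaining $(n-1)$-fold expression into exactly the $\cR$-transfer-operator expression for $\int\prod_k g_k(x_{n-k})\diff(\mu\cR^{\zeroto{n}})$.

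Finally I would deduce the equality of entropies. Because $\gamma_n$ merely reverses the order of coordinates, it induces a bijection of the product partition $\cA^n$ of $X^n$ onto itself, so $\gamma_n^{-1}(\cA^n)=\cA^n$. Combined with the reversal identity $\mu\cQ^{\zeroto{n-1}}=(\mu\cR^{\zeroto{n-1}})\circ\gamma_n^{-1}$, this shows that the multiset $\{(\mu\cQ^{\zeroto{n-1}})(A):A\in\cA^n\}$ coincides with $\{(\mu\cR^{\zeroto{n-1}})(A):A\in\cA^n\}$; since the entropy in (\ref{H_mu(A)=}) depends only on this multiset of values, $H_{\mu\cQ^{\zeroto{n-1}}}(\cA^n)=H_{\mu\cR^{\zeroto{n-1}}}(\cA^n)$. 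Dividing by $n$, letting $n\to+\infty$ (Definition~\ref{h_mu (Q,A)}), and taking the supremum over all finite measurable partitions $\cA$ (Definition~\ref{measure-theoretic entropy of transition probability kernels}) gives $h_\mu(\cQ)=h_\mu(\cR)$. The main obstacle is the reversal identity: the delicate point is the bookkeeping in the induction — tracking how the peeled-off operator $\cR g_0$ threads into the nested $\cR$-transfer-operator expression so that the induction hypothesis applies cleanly — whereas the marginal computation and the coordinate-reversal symmetry of $\cA^n$ are routine.
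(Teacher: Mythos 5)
Your proposal is correct and takes essentially the same approach as the paper: the paper's proof invokes its Lemma~\ref{A9}, which is exactly your reversal identity $\mu\cQ^{\zeroto{n}}=\bigl(\mu\cR^{\zeroto{n}}\bigr)\circ\gamma_{n+1}^{-1}$ proved by the same induction on $n$ driven by the two-coordinate hypothesis, with invariance read off from the marginals and the entropy equality from the fact that the reversal merely permutes the elements of $\cA^{n+1}$. Your only departure is cosmetic: you run the induction through the adjointness relation $\int_X f\cdot(\cQ g)\diff\mu=\int_X g\cdot(\cR f)\diff\mu$ and nested transfer-operator expressions, whereas the paper manipulates measures of product sets directly.
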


\begin{proof}
	Suppose $\mu \cQ^{[1]} =\bigl(\mu \cR^{[1]}\bigr) \circ \gamma_2^{-1}$. Lemma~\ref{A9} indicates that $\mu \in \MMM (X , \cQ) \cap \MMM (X ,\cR)$.
	
	By Lemma~\ref{A9} and~(\ref{H_mu(A)=}), for every finite measurable partition $\cA$ of $X$ and every $n\in \N$, we have $h_{\mu \cQ^{[n]}} \bigl(\cA^{n+1} \bigr) =h_{\mu \cR^{[n]}} \bigl(\cA^{n+1} \bigr)$. Consequently, by Definition~\ref{h_mu (Q,A)}, we have $h_\mu (\cQ , \cA) =h_\mu (\cR ,\cA)$. Therefore, by Definition~\ref{measure-theoretic entropy of transition probability kernels}, $h_\mu (\cQ) =h_\mu (\cR)$.
\end{proof}

\subsection{A characterization of the measure-theoretic entropy}\label{subsct_Measure-theoretic entropy of transition probability kernels and the shift map are equal}

In this subsection, we aim to establish Theorem~\ref{measure-theoretic entropy coincide with its lift}.
Denote by $\sigma$ the shift map on $X^\omega$ given by
$\sigma (\vect{x}{1}{\infty} )\= \vect{x}{2}{\infty}$
for $\vect{x}{1}{\infty} \in X^\omega$. 
For two arbitrary finite measurable partitions $\cA_1$ and $\cA_2$ of $X$, the finite measurable partition $\cA_1 \vee \cA_2$ is given by $\cA_1 \vee \cA_2 \= \{A_1 \cap A_2  :  A_1 \in \cA_1 ,\, A_2 \in \cA_2 \}$.

Let $\cQ \in \tpk( X )$ and $\mu \in \MMM(X,  \cQ)$. For arbitrary $n\in \N$ and measurable set $A\in \SAA (X^n)$, by (\ref{muQ^omega(B)=muQ^omega(X*B)}) in Corollary~\ref{c:muQ} we have
\begin{equation*}
	(\mu \cQ^\omega) \bigl( \sigma^{-1} (A\times X^\omega) \bigr)= (\mu \cQ^\omega) (X \times A \times X^\omega)= (\mu \cQ^\omega) (A\times X^\omega).
\end{equation*}

By Dynkin's $\pi$-$\lambda$ theorem, we get $(\mu \cQ^\omega) \circ \sigma^{-1} =\mu \cQ^\omega$, which means that $\mu \cQ^\omega$ is $\sigma$-invariant. As a result, the \emph{measure-theoretic entropy $h_{\mu \cQ^\omega} (\sigma)$ of $\sigma$ for $\mu \cQ^\omega$} is well-defined in the sense of classical ergodic theory for single-valued maps (see e.g., \cite[Chapter~2]{PU10}).

\begin{theorem}\label{measure-theoretic entropy coincide with its lift}
    Let $\cQ$ be a transition probability kernel on a measurable space $(X,  \SAA (X))$, $\mu \in \MMM(X, \cQ)$, and $\sigma$ be the shift map on $X^\omega$. Then we have
    $h_\mu (\cQ)= h_{\mu \cQ^\omega} (\sigma)$.
\end{theorem}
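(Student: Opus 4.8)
The plan is to reduce the statement to classical Kolmogorov--Sinai theory for the single-valued shift $\sigma$ on $(X^\omega,  \mu \cQ^\omega)$, using Proposition~\ref{q9pg237pq} as the bridge between one-coordinate data on $X$ and shift-invariant data on $X^\omega$. Throughout, write $\nu \= \mu \cQ^\omega$, and for each finite measurable partition $\cA$ of $X$ let $\wh{\cA} \= \{A\times X^\omega : A\in \cA\}$ denote its lift, which is a finite measurable partition of $X^\omega$.

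First I would match a lifted partition with the kernel entropy. Since $\sigma^{-k}(A\times X^\omega)= X^k\times A\times X^\omega$, a direct computation shows $\bigvee_{k=0}^{n-1}\sigma^{-k}\wh{\cA}=\cA^n\times\{X^\omega\}$ for every $n\in \N$. Combining the defining relation (\ref{muQ^N(A*X^infty)=muQ^[n](A)}) with the entropy formula (\ref{H_mu(A)=}) gives $H_\nu\bigl(\bigvee_{k=0}^{n-1}\sigma^{-k}\wh{\cA}\bigr)=H_{\mu \cQ^{\zeroto{n-1}}}(\cA^n)$; dividing by $n$ and letting $n\to +\infty$ (the limit on the left exists by subadditivity, the one on the right by Proposition~\ref{q9pg237pq}) yields the key identity $h_\nu(\sigma , \wh{\cA})=h_\mu(\cQ , \cA)$ for every $\cA$. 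Taking the supremum over $\cA$ already produces the inequality $h_\mu(\cQ)\leqslant h_\nu(\sigma)$, since every $\wh{\cA}$ is an admissible partition of $X^\omega$.

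The substance lies in the reverse inequality, for which I would show that lifted partitions are rich enough to compute the full shift entropy, namely $h_\nu(\sigma)=\sup_{\cA}h_\nu(\sigma , \wh{\cA})$. The scheme is the standard generator/approximation argument: given an arbitrary finite measurable partition $\xi$ of $X^\omega$ and $\epsilon>0$, I would use that the cylinder algebra generates $\SAA(X^\omega)$ to produce a finite partition $\cA$ of $X$ and an $m\in \N$ with $H_\nu\bigl(\xi \mid \cA^m\times\{X^\omega\}\bigr)<\epsilon$. Then the classical inequalities $h_\nu(\sigma , \xi)\leqslant h_\nu(\sigma , \zeta)+H_\nu(\xi\mid\zeta)$ and $h_\nu\bigl(\sigma , \bigvee_{k=0}^{m-1}\sigma^{-k}\wh{\cA}\bigr)=h_\nu(\sigma , \wh{\cA})$, applied with $\zeta=\cA^m\times\{X^\omega\}=\bigvee_{k=0}^{m-1}\sigma^{-k}\wh{\cA}$, give $h_\nu(\sigma , \xi)\leqslant h_\nu(\sigma , \wh{\cA})+\epsilon$. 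Letting $\epsilon\to 0$ and taking suprema proves the claim, and together with the identity of the previous paragraph it yields $h_{\mu \cQ^\omega}(\sigma)=\sup_{\cA}h_\nu(\sigma , \wh{\cA})=\sup_{\cA}h_\mu(\cQ , \cA)=h_\mu(\cQ)$.

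I expect the main obstacle to be this approximation step: choosing, for a partition $\xi$ of $X^\omega$ whose atoms are arbitrary measurable sets, a single finite partition $\cA$ of $X$ and level $m$ whose product lift $\cA^m\times\{X^\omega\}$ nearly refines $\xi$. The idea is to approximate each atom of $\xi$ in $\nu$-measure by an element of the cylinder algebra (a finite union of finite-dimensional rectangles), to collect the finitely many $X$-sides into one common finite partition $\cA$ of $X$, and then to invoke continuity of the conditional entropy $H_\nu(\xi\mid\,\cdot\,)$ under such measure approximations. None of these ingredients needs $X$ to be regular --- only that $\SAA(X^\omega)$ is, by construction, generated by the cylinder algebra --- but assembling them carefully for a general measurable state space is the delicate part of the argument.
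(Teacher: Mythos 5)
Your proposal is correct and follows essentially the same route as the paper's proof: the identity $h_{\mu\cQ^\omega}(\sigma ,\, \cA\times\{X^\omega\}) = h_\mu(\cQ ,\, \cA)$ for lifted partitions (the paper's Lemma~\ref{h_mu(Q,A)=h_muQ^N(sigma,A*X^+infty)}), together with the fact that lifted product partitions suffice to compute $h_{\mu\cQ^\omega}(\sigma)$ because the rectangle cylinder sets generate $\SAA(X^\omega)$, with \cite[Theorem~4.12~(vi)]{GTM79} collapsing $\cA^m\times\{X^\omega\}$ to $\cA\times\{X^\omega\}$. The only difference is presentational: where the paper cites \cite[Theorem~4.21]{GTM79} as a black box and then refines partitions from the cylinder algebra by product partitions, you inline the standard approximation and conditional-entropy argument that proves that cited theorem.
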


Before establishing Theorem~\ref{measure-theoretic entropy coincide with its lift}, we give three lemmas.

\begin{lemma}\label{h_mu(Q,A)=h_muQ^N(sigma,A*X^+infty)}
    Let $\cQ \in \tpk( X )$, $\mu \in \MMM( X, \cQ)$, $\sigma$ be the shift map on $X^\omega$, and $\cA$ be a finite measurable partition of $X$. Then
    \begin{equation*}
    	h_\mu (\cQ ,  \cA)= h_{\mu \cQ^\omega} (\sigma ,  \cA \times \{ X^\omega \}).
    \end{equation*}
\end{lemma}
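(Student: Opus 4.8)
The plan is to rewrite the right-hand side so that, for each fixed $n$, its defining entropy coincides with the entropy appearing on the left, and then pass to the limit. Recall from Definition~\ref{h_mu (Q,A)} that
\[
	h_\mu (\cQ ,  \cA)= \lim_{n\to +\infty} \frac{1}{n} H_{\mu \cQ^{\zeroto{n-1}}} (\cA^n),
\]
while, since $\mu \cQ^\omega$ is $\sigma$-invariant, the classical measure-theoretic entropy of the single-valued shift is
\[
	h_{\mu \cQ^\omega} (\sigma ,  \cA \times \{ X^\omega \}) = \lim_{n\to +\infty} \frac{1}{n} H_{\mu \cQ^\omega} \Bigl( \textstyle\bigvee_{k=0}^{n-1} \sigma^{-k} (\cA \times \{ X^\omega \}) \Bigr).
\]
Thus it suffices to establish, for each $n\in \N$, the identity $H_{\mu \cQ^\omega} \bigl( \bigvee_{k=0}^{n-1} \sigma^{-k} (\cA \times \{ X^\omega \}) \bigr) = H_{\mu \cQ^{\zeroto{n-1}}} (\cA^n)$.

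First I would compute the join concretely. Because $\sigma^{-k} (A \times X^\omega) = X^k \times A \times X^\omega$ for every $A \in \SAA (X)$, the partition $\sigma^{-k} (\cA \times \{ X^\omega \})$ depends only on the $(k+1)$-th coordinate; intersecting over $k \in \{0, \dots, n-1\}$ yields atoms of the form $A_0 \times A_1 \times \cdots \times A_{n-1} \times X^\omega$ with each $A_i \in \cA$. Hence
\[
	\textstyle\bigvee_{k=0}^{n-1} \sigma^{-k} (\cA \times \{ X^\omega \}) = \cA^n \times \{ X^\omega \},
\]
the partition of $X^\omega$ into cylinders over the product partition $\cA^n$ of $X^n$.

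Second I would match the two measures on these cylinders. For every atom $A$ of $\cA^n$, viewed as an element of $\SAA (X^n)$, equation~(\ref{muQ^N(A*X^infty)=muQ^[n](A)}) (read with $n$ replaced by $n-1$) gives $(\mu \cQ^\omega)(A \times X^\omega) = (\mu \cQ^{\zeroto{n-1}})(A)$. Therefore the two probability vectors indexed by the atoms of $\cA^n$ coincide, and since by~(\ref{H_mu(A)=}) the entropy of a finite measurable partition depends only on the measures of its atoms, we obtain $H_{\mu \cQ^\omega} (\cA^n \times \{ X^\omega \}) = H_{\mu \cQ^{\zeroto{n-1}}} (\cA^n)$, which is exactly the per-$n$ identity sought.

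Dividing by $n$ and letting $n \to +\infty$ then yields the lemma: the left-hand limit exists by Proposition~\ref{q9pg237pq}, and since the $n$-th terms agree it follows that the right-hand limit exists and equals it. There is no serious obstacle in this argument, which is essentially bookkeeping; the only two points requiring a little care are the index alignment in~(\ref{muQ^N(A*X^infty)=muQ^[n](A)}) (stated for $\SAA(X^{n+1})$ and $\cQ^{\zeroto{n}}$, hence applied here with a shift of index) and the verification $\sigma^{-k}(A \times X^\omega) = X^k \times A \times X^\omega$, which is precisely where the shift structure of $\sigma$ enters.
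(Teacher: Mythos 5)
Your proof is correct and follows essentially the same route as the paper's: computing the join $\bigvee_{k=0}^{n-1} \sigma^{-k}(\cA \times \{X^\omega\}) = \cA^n \times \{X^\omega\}$, matching the measures of atoms via~(\ref{muQ^N(A*X^infty)=muQ^[n](A)}) to get $H_{\mu \cQ^{\zeroto{n-1}}}(\cA^n) = H_{\mu \cQ^\omega}(\cA^n \times \{X^\omega\})$, and then passing to the limit using the classical characterization of $h_{\mu\cQ^\omega}(\sigma, \cA \times \{X^\omega\})$ as $\lim_{n\to+\infty} \frac{1}{n} H_{\mu\cQ^\omega}\bigl(\bigvee_{k=0}^{n-1}\sigma^{-k}(\cA \times \{X^\omega\})\bigr)$, which the paper cites from \cite[Lemma~2.4.2]{conformal fractals}. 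Your index-alignment remark and the verification of the cylinder identity are exactly the bookkeeping points the paper's proof also relies on.
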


Here the definition of the \emph{measure-theoretic entropy $h_{\mu \cQ^\omega} (\sigma ,  \cA \times \{ X^\omega \})$ of the single-valued map $\sigma$ with respect to the partition} $\cA \times \{ X^\omega \}$ can be found in \cite[Lemma~2.4.2]{PU10}.

\begin{proof}
    For each $n\in \N$, we have 
    $\bigvee_{j=0}^{n-1} \sigma^{-j} (\cA \times \{ X^\omega \})= \bigvee_{j=0}^{n-1} \{ X^j \} \times \cA \times \{ X^\omega \}= \cA^n \times \{ X^\omega \}$.
    Recall $H_{\mu \cQ^{\zeroto{n-1}}} (\cA^n)= H_{\mu \cQ^\omega} (\cA^n \times \{ X^\omega \})$ from the proof of Proposition~\ref{q9pg237pq}. Therefore,
    \begin{alignat*}{4}
        h_\mu (\cQ ,  \cA)
        &= \lim_{n\to +\infty} \frac{1}{n} H_{\mu \cQ^{\zeroto{n-1}}} (\cA^n)
        = \lim_{n\to +\infty} \frac{1}{n} H_{\mu \cQ^\omega} (\cA^n \times \{ X^\omega \})\\
        &= \lim_{n\to +\infty} \frac{1}{n} H_{\mu \cQ^\omega} \biggl( \bigvee_{j=0}^{n-1} \sigma^{-j} (\cA \times \{ X^\omega \}) \biggr)
        = h_{\mu \cQ^\omega} (\sigma ,  \cA \times \{ X^\omega \}),
    \end{alignat*}
    where the last equality follows from \cite[Lemma~2.4.2]{PU10}. %
\end{proof}

\begin{lemma}[{\cite[Theorem~4.12~(vi)]{Wa82}}]\label{h_mu(T,A)=h_mu(T,vee_i=0^k T^-i(A))}
     Let $F\: X\to X$ be a measurable map, $\mu \in \MMM(X,F)$, $\cA$ be a finite measurable partition of $X$, and $k\in \N$. Then $h_\mu (F,  \cA)= h_\mu \bigl(F,  \cA \vee F^{-1} (\cA) \vee \cdots \vee F^{-k} (\cA)\bigr)$.
\end{lemma}

\begin{lemma}[{\cite[Theorem~4.21]{Wa82}}]\label{some partitions determine the measure-theoretic entropy}%

     Let $F\: X\to X$ be a measurable map, $\mu \in \MMM(X,F)$, and $\cS$ be a sub-algebra %
    of $\SAA (X)$ satisfying that $\SAA (X)$ is the $\sigma$-algebra generated by $\cS$.
    Then 
    \begin{equation*}
    	h_\mu (F)= \sup_{\cA} h_\mu (F,  \cA),
    \end{equation*}
    where $\cA$ ranges over all finite partitions of $Y$ satisfying $\cA \subseteq \cS$.
\end{lemma}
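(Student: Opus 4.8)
The plan is to prove both inequalities, the nontrivial one being $h_\mu (F) \le \sup_{\cA \subseteq \cS} h_\mu (F, \cA)$. The reverse inequality $\sup_{\cA \subseteq \cS} h_\mu (F, \cA) \le \sup_{\cA} h_\mu (F, \cA) = h_\mu (F)$ is immediate, since on the left the supremum ranges over a subclass of all finite measurable partitions. Thus the entire content is the following: given an arbitrary finite measurable partition $\cP = \{P_1, \dots, P_k\}$ of $Y$ and an arbitrary $\epsilon > 0$, I would produce a finite partition $\cA$ with $\cA \subseteq \cS$ and $h_\mu (F, \cA) \ge h_\mu (F, \cP) - \epsilon$. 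Taking the supremum over $\cP$ and letting $\epsilon \to 0^+$ then yields $h_\mu (F) \le \sup_{\cA \subseteq \cS} h_\mu (F, \cA)$, and hence equality.

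The first ingredient is a set-approximation lemma. The collection $\cG$ of all $B \in \SAA (Y)$ with the property that for every $\delta > 0$ there exists $A \in \cS$ with $\mu (A \triangle B) < \delta$ is a $\sigma$-algebra. Indeed, $\cG$ contains the algebra $\cS$, is closed under complements because $A \triangle B = A^c \triangle B^c$, and is closed under countable unions: a countable union is approximated up to $\delta / 2$ in measure by a finite subunion, which in turn lies within $\delta / 2$ of a member of $\cS$ since $\cS$ is an algebra. As $\cG \supseteq \cS$ and $\SAA (Y) = \sigma (\cS)$, we get $\cG = \SAA (Y)$. Consequently, for any prescribed $\delta > 0$, I can choose $A_1 ,\, \dots ,\, A_k \in \cS$ with $\mu (A_i \triangle P_i) < \delta$ for each $i \in \oneto{k}$.

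The second ingredient converts these sets into an honest partition lying in $\cS$. Since $\cS$ is an algebra, the sets $A_i' \= A_i \setminus \bigcup_{j < i} A_j$ for $i \in \oneto{k-1}$, together with $A_k' \= Y \setminus \bigl( A_1' \cup \cdots \cup A_{k-1}' \bigr)$, form a finite partition $\cA \= \{A_1' ,\, \dots ,\, A_k'\}$ of $Y$ all of whose atoms belong to $\cS$. A routine estimate bounds $\sum_{i=1}^k \mu (P_i \triangle A_i')$ by a constant multiple of $k \delta$, so by shrinking $\delta$ I can make each atom $A_i'$ arbitrarily close in $\mu$-measure to $P_i$. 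The standard continuity of conditional entropy under symmetric-difference perturbations (ultimately the uniform continuity of $t \mapsto -t \log t$ near $0$, as in \cite[Chapter~2]{conformal fractals}) then shows that $H_\mu (\cP \mid \cA)$ can be made smaller than any preassigned positive number by choosing $\delta$ small enough.

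Finally I would invoke the standard chain $h_\mu (F, \cP) \le h_\mu (F, \cP \vee \cA) \le h_\mu (F, \cA) + H_\mu (\cP \mid \cA)$, where the first step is monotonicity of $h_\mu (F, \cdot)$ under refinement and the second bounds the dynamical conditional entropy by the static one $H_\mu (\cP \mid \cA)$. Choosing $\delta$ so that $H_\mu (\cP \mid \cA) < \epsilon$ gives $h_\mu (F, \cA) \ge h_\mu (F, \cP) - \epsilon$ with $\cA \subseteq \cS$, as required. The main obstacle is the quantitative bookkeeping in the second ingredient: controlling $H_\mu (\cP \mid \cA)$ in terms of the symmetric differences $\mu (P_i \triangle A_i')$ \emph{uniformly} in the number $k$ of atoms. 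This step needs the explicit modulus-of-continuity estimate for conditional entropy under small symmetric-difference perturbations, rather than mere qualitative continuity, and it is where the error introduced by the disjointification $A_i \mapsto A_i'$ must be tracked with care.
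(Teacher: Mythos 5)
Your proposal is correct, but note that the paper does not prove this lemma at all: it is quoted verbatim as \cite[Theorem~4.21]{GTM79} and used as a black box. What you have written is, in substance, the standard textbook proof of that theorem, and all three ingredients are sound: the collection of sets approximable in $\mu$-measure by members of the algebra $\cS$ is a $\sigma$-algebra containing $\cS$, hence equals $\SAA (Y)$; the disjointification $A_i \mapsto A_i'$ stays inside the algebra $\cS$ and the bookkeeping $\mu (P_i \triangle A_i') = O(k\delta)$ goes through because $P_i \cap P_j = \emptyset$ lets you absorb the overlaps $A_j \cap P_i$ into $\mu (A_j \triangle P_j)$; and the chain $h_\mu (F, \cP) \leqslant h_\mu (F, \cA) + H_\mu (\cP \mid \cA)$ together with the quantitative continuity of conditional entropy (this is \cite[Lemma~4.15]{GTM79}, or the corresponding estimates in \cite[Chapter~2]{conformal fractals}) closes the argument. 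One small remark: the uniformity in $k$ that you flag as the main obstacle is not actually needed, since the target partition $\cP$ is fixed before $\delta$ is chosen, so a modulus of continuity depending on both $k$ and $\epsilon$ (which is exactly what the standard lemma provides) suffices; also, some atoms $A_i'$ may be empty, but finite partitions with null atoms are harmless for entropy. So there is no gap — your proof is a valid self-contained replacement for the citation.
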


\begin{proof}[Proof of Theorem~\ref{measure-theoretic entropy coincide with its lift}]
    First, we set some notations:
    \begin{align*}
    \fA_0 &\= \{\cA^n \times \{ X^\omega \}  :  \cA \text{ is a finite measurable partition of } X \text{ and } n\in \N\},\\
    \cG     &\= \{B_1 \times \cdots \times B_n \times X^\omega  :  n\in \N \text{ and } B_1 ,\, \dots ,\, B_n \in \SAA (X)\},\\
    \cS      &\= \{ E  :  E \text{ is a finite union of sets in } \cG \},\\
    \fA      &\= \{ \cD  :  \cD \text{ is a finite measurable partition of } X^\omega \text{ and } \cD \subseteq \cS \}.
    \end{align*}

    We can verify that $\cS$ is a sub-algebra of $\SAA (X^\omega)$ and that $\SAA (X^\omega)$ is the $\sigma$-algebra generated by $\cS$. So by Lemma~\ref{some partitions determine the measure-theoretic entropy}, we get
    \begin{equation}\label{3q8740r}
        h_{\mu \cQ^\omega} (\sigma)= \sup \{ h_{\mu \cQ^\omega} (\sigma ,  \cD)  :  \cD \in \fA \}.
    \end{equation}

    Fix an arbitrary $\cD \in \fA$. Suppose $\cD =\{ D_1 ,\, \dots ,\, D_p \}$, $D_i =G_{i1} \cup \cdots \cup G_{iq_i}\in \cS$ for all $i\in \oneto{p}$, and $G_{ij}=B_{ij1} \times \cdots \times B_{ijr_{ij}} \times X^\omega \in \cG$ for all $i\in \oneto{p}$ and $j\in \oneto{q_i}$, where $B_{ijk} \in \SAA (X)$ for all $i\in \oneto{p}$, $j\in \oneto{q_i}$, and $k\in \oneto{r_{ij}}$. Set $\cA \=\bigvee_{i=1}^p \bigvee_{j=1}^{q_i} \bigvee_{k=1}^{r_{ij}} \bigl\{ B_{ijk},\, B_{ijk}^c \bigr\}$, a finite measurable partition of $X$, then $\cA^n \times \{ X^\omega \} \in \fA_0$ is a finer partition of $X^\omega$ than $\cD$, where $n\=\max \{ r_{ij}:  i\in \oneto{p},\, j\in \oneto{q_i} \}$.

    Hence, for each partition in $\fA$, we can find a finer partition in $\fA_0$.

    We can conclude by (\ref{h_mu(T,A)=}) that if a finite measurable partition $\cA '$ of $X^\omega$ is finer than another finite measurable partition $\cA ''$, then $h_{\mu \cQ^\omega} (\sigma ,  \cA ')\geq h_{\mu \cQ^\omega} (\sigma ,  \cA '')$. Thus, $\sup \{ h_{\mu \cQ^\omega} (\sigma ,  \cD)  :  \cD \in \fA \} \leq \sup \{ h_{\mu \cQ^\omega} (\sigma ,  \cA_0)  :  \cA_0 \in \fA_0 \}$. Moreover, we can check that $\fA_0 \subseteq \fA$, and thus
    \begin{equation}\label{sf9e}
        \sup \{ h_{\mu \cQ^\omega} (\sigma ,  \cD)  :  \cD \in \fA \} = \sup \{ h_{\mu \cQ^\omega} (\sigma ,  \cA_0)  :  \cA_0 \in \fA_0 \}.
    \end{equation}

    For each $\cA^n \times \{ X^\omega \} \in \fA_0$,  where $\cA$ is a measurable partition of $X$ and $n\in \N$, recall $\bigvee_{j=0}^{n-1} \sigma^{-j} (\cA \times \{ X^\omega \})= \cA^n \times \{ X^\omega \}$ from the proof of Lemma~\ref{h_mu(Q,A)=h_muQ^N(sigma,A*X^+infty)}. By (\ref{3q8740r}), (\ref{sf9e}), and Lemma~\ref{h_mu(T,A)=h_mu(T,vee_i=0^k T^-i(A))}, we have
    \begin{equation*}
    \begin{aligned}
        h_{\mu \cQ^\omega} (\sigma)
        &= \sup \{ h_{\mu \cQ^\omega} (\sigma ,  \cD)  :  \cD \in \fA \} 
        = \sup \{ h_{\mu \cQ^\omega} (\sigma ,  \cA_0)  :  \cA_0 \in \fA_0 \}
        = \sup_{\cA} h_{\mu \cQ^\omega} (\sigma ,  \cA^n \times \{ X^\omega \})\\ %
        &= \sup_{\cA} h_{\mu \cQ^\omega} \biggl(\sigma ,  \bigvee_{i=0}^{n-1} \sigma^{-i} (\cA \times \{ X^\omega \})\biggr)
        = \sup_{\cA} h_{\mu \cQ^\omega} (\sigma ,  \cA \times \{ X^\omega \}),
    \end{aligned}
    \end{equation*}
    where $\cA$ ranges over all finite measurable partitions of $X$.

    Therefore, by Definition~\ref{measure-theoretic entropy of transition probability kernels} and Lemma~\ref{h_mu(Q,A)=h_muQ^N(sigma,A*X^+infty)} we get $h_{\mu \cQ^\omega} (\sigma) =\sup_\cA h_\mu (\cQ ,  \cA)= h_\mu (\cQ)$, where $\cA$ ranges over all finite measurable partitions of $X$.
\end{proof}

The next corollary, which is useful in Subsection~\ref{subsct_Lee--Lyubich--Markorov--Mukherjee anti-holomorphic correspondences}, follows from Theorem~\ref{measure-theoretic entropy coincide with its lift}:

\begin{cor}\label{restrict measure-theoretic entropy}
	Let $\cQ$ be a transition probability kernel on a measurable space $(X , \SAA (X))$, $Y \in \SAA (X)$, and $\cQ '$ be a transition probability kernel on the measurable space $(Y ,\SAA (Y))$, where $\SAA (Y)$ refers to the $\sigma$-algebra induced by $\SAA (X)$. Suppose that $\mu \in \MMM (Y , \cQ ')$ and $\hmu \in \cP (X)$ satisfy $\hmu (A) =\mu (A)$ for all $A \in \SAA (Y)$. If for each $A \in \SAA (Y)$, the equality $\cQ (y ,A) =\cQ '(y ,A)$ holds for $\mu$-almost every $y\in Y$, then we have $\hmu \in \MMM (X ,\cQ)$ and $h_{\hmu} (\cQ) =h_\mu (\cQ ')$.
\end{cor}

\begin{proof}
	Denote by $\sigma_X$ the shift map on $X^\omega$, and $\sigma_Y \= \sigma_X |_{Y^\omega}$ be the shift map on $Y^\omega$. The conditions above indicate that $\hmu \cQ =\hmu$ by (\ref{transition probability kernel act on measures}), and that the measure $\hmu \cQ^\omega$ is the extension of the measure $\mu \cQ'^\omega$ from $Y^\omega$ to $X^\omega$, i.e., $(\hmu \cQ^\omega) (B) =(\mu \cQ'^\omega) (B \cap Y^\omega)$ holds for all $B \in \SAA (X^\omega)$. Thus, the inclusion map from $Y^\omega$ to $X^\omega$ is an isomorphism between measure-preserving systems $(Y^\omega ,\SAA (Y^\omega), \sigma_Y ,\mu \cQ '^\omega)$ and $(X^\omega ,\SAA (X^\omega) ,\sigma_X ,\hmu \cQ^\omega)$, so $h_{\mu \cQ '^\omega} (\sigma_Y) =h_{\hmu \cQ^\omega} (\sigma_X)$. Then $h_{\hmu} (\cQ) =h_\mu (\cQ ')$ by Theorem~\ref{measure-theoretic entropy coincide with its lift}.
\end{proof}

\section{Variational Principle for forward expansive correspondences}\label{sct_Variational_principle_for_positively_RW-expansive_correspondences}

In this section, we establish the Variational Principle when the correspondence $T$ on a compact metric space $(X,  d)$ has a property called \emph{forward expansive}. More precisely, we will prove Theorem~\ref{t_VP_forward_exp}, the first main result of this article. The proof of this theorem is the most technical part of this article. %
In Subsection~\ref{subsct_Forward expansiveness}, we introduce forward expansiveness for correspondences. Subsection~\ref{subsct_Rokhlin formula for measure-theoretic entropy of transition probability kernels and of the corresponding shift maps} is devoted to establishing a version of the Rokhlin formula for measure-theoretic entropy of transition probability kernels and of the corresponding shift maps. Theorem~\ref{t_HVP} is established in Subsection~\ref{subsct_HVP}, and is used in the proof of Theorem~\ref{t_VP_forward_exp}. Finally, in Subsection~\ref{subsct_proof of Theorem}, we derive an inequality about measure-theoretic entropy (Proposition~\ref{measure-theoretic entropy less than the corresponding Markov measure-theoretic entropy}) from the Rokhlin formulas and establish Theorem~\ref{t_VP_forward_exp}.

\subsection{Forward expansiveness}\label{subsct_Forward expansiveness}

{\ifFirstInitial R.~K.~\fi}Williams has defined a type of expansiveness for correspondences or set-valued functions in \cite[Definition~3]{Wi70}, which is called \defn{RW-expansiveness}\index{RW-expansiveness} by {\ifFirstInitial M.~J.~\fi}Pac\'ifico and {\ifFirstInitial J.~L.~\fi}Vieitez in \cite[Definition~3.2]{PV17}. In Definition~\ref{forward expansiveness} below, what we call \emph{forward expansiveness} is inspired but different from what {\ifFirstInitial M.~J.~\fi}Pac\'ifico and {\ifFirstInitial J.~L.~\fi}Vieitez called \defn{RW-expansiveness}.

\begin{definition}[Forward expansiveness]\label{forward expansiveness}
    Let $T$ be a correspondence on a compact metric space $(X,  d)$. We say that $T$ is \defn{forward expansive}\index{forward expansive}, if there exists a number $\epsilon >0$ such that for each pair of distinct orbits $\vect{x}{1}{\infty},\, \vect{y}{1}{\infty}\in \cO_\omega (T)$, we have $d(x_n ,  y_n)>\epsilon$ for some $n\in \N$. Such a positive number $\epsilon$ is called an \defn{expansive constant}\index{expansive constant} of $T$.
\end{definition}

\begin{rem}\label{preimage of forward expansive correspondence is finite}
    Let $T$ be a forward expansive correspondence on a compact metric space $(X,  d)$ with an expansive constant $\epsilon >0$. Fix an arbitrary point $x_1 \in X$ and choose an orbit $\vect{x}{1}{\infty} \in \cO_\omega (T)$. For a pair of distinct points $x_0 ,\, x_0 ' \in X$ satisfying $x_1\in T(x_0) \cap T(x_0 ')$, we have $\vect{x}{0}{\infty} \in \cO_\omega (T)$ and $(x_0 ' ,  x_1 ,  \dots )\in \cO_\omega (T)$. Then the forward expansiveness of $T$ yields $d(x_0 ,  x_0 ')> \epsilon$. Since $X$ is compact, we know that $T^{-1} (x)= \{ y\in X :  x\in T(y) \} \leq M_\epsilon <+\infty$ for all $x\in X$, where $M_\epsilon$ refers to the largest cardinality of an $\epsilon$-separated subset of $X$.
\end{rem}

If a correspondence $T$ on $X$ degenerates to a singe-valued continuous map, then the forward expansiveness for $T$ is equivalent to the forward expansiveness for the corresponding single-valued map. Moreover, the following proposition indicate that the forward expansiveness of $(\cO_\omega (T),  \sigma)$ and the forward expansiveness of $T$ are equivalent, and is straightforward to verify from direct calculations.

\begin{prop}\label{expansiveness of correspondence with its lift}
	Let $T$ be a correspondence on a compact metric space $(X, d)$ and $\sigma \: \cO_\omega (T) \to \cO_\omega (T)$ be the shift map. If $T$ is forward expansive with an expansive constant $\epsilon>0$, then $\sigma$ is forward expansive with an expansive constant $\frac{\epsilon}{2(1+\epsilon)}$.
	Conversely, if $\sigma$ is forward expansive with an expansive constant $\epsilon \in (0,1)$, then $T$ is forward expansive with an expansive constant $\frac{\epsilon}{1-\epsilon}$.
\end{prop}

Let $X$ be a compact metric space and $T$ be a correspondence on $X$. In Subsections~\ref{subsct_Measure-theoretic entropy of transition probability kernels and the shift map are equal} and~\ref{subsct_A characterization of the topological pressure}, we gave two different shift maps, one on $X^\omega$ and one on $\cO_\omega (T)$. We denote below by $\sigma ' \: X^\omega \to X^\omega$ the shift map in Subsection~\ref{subsct_Measure-theoretic entropy of transition probability kernels and the shift map are equal}, and by $\sigma \: \cO_\omega (T) \to \cO_\omega (T)$ the shift map in Subsection~\ref{subsct_A characterization of the topological pressure}.

\begin{lemma}\label{positive measurability of forward expansive correspondence}
	If a correspondence $T$ on a compact metric space $X$ is forward expansive, then for every $A \in \SBB (X)$, the set $T(A)$ is Borel measurable.
\end{lemma}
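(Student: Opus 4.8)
The plan is to realize $T(A)$ as the image of a Borel set under a coordinate projection restricted to the graph $\cO_2(T)$, and then to exploit the finiteness of the fibers of this projection, which is exactly what forward expansiveness provides. Concretely, recall that by definition $T(A) = \bigcup_{x\in A} T(x)$, and observe the identity
\begin{equation*}
	T(A) = \tpi_2 \bigl( \cO_2 (T) \cap (A \times X) \bigr),
\end{equation*}
where $\tpi_2 \: X^2 \rightarrow X$ is the projection onto the second coordinate from (\ref{tpi=}); indeed $x_2$ lies in the right-hand side if and only if there is some $x_1 \in A$ with $(x_1 ,  x_2) \in \cO_2 (T)$, i.e.\ with $x_2 \in T(x_1)$.

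First I would record the relevant structural facts. Since $T$ is a correspondence, $\cO_2 (T)$ is closed, hence compact, in $X^2$, so $(\cO_2 (T) ,  d_2)$ is a compact metric (in particular, standard Borel) space, and the restriction $f \= \tpi_2 |_{\cO_2 (T)}$ is continuous, hence Borel. For $A \in \SBB (X)$ the set $\cO_2 (T) \cap (A \times X)$ is the trace on $\cO_2 (T)$ of the Borel set $A \times X$, hence Borel, and $T(A) = f \bigl( \cO_2 (T) \cap (A \times X) \bigr)$ is its image under $f$.

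The key point, and the only place where forward expansiveness enters, is that the fibers of $f$ are finite. For $w \in X$ we have $f^{-1} (w) = T^{-1} (w) \times \{ w \}$, and Remark~\ref{preimage of forward expansive correspondence is finite} gives $\card (T^{-1} (w)) \leqslant M_\epsilon < +\infty$, where $\epsilon$ is an expansive constant of $T$ and $M_\epsilon$ is the largest cardinality of an $\epsilon$-separated subset of the compact space $X$. Thus $f$ is a Borel map between standard Borel spaces all of whose fibers are countable (indeed, uniformly finite).

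With this in hand I would invoke the image form of the Lusin--Novikov theorem: a Borel map between standard Borel spaces with countable fibers carries Borel sets to Borel sets. Applying it to $f$ and to the Borel set $\cO_2 (T) \cap (A \times X)$ yields that $T(A)$ is Borel. Equivalently, Lusin--Novikov lets one write $\cO_2 (T) = \bigcup_{n \in \N} \Gamma_n$, where each $\Gamma_n = \{ (g_n (w) ,  w) : w \in D_n \}$ is the graph of a partial Borel function $g_n$ defined on a Borel domain $D_n \subseteq X$; then $T(A) = \bigcup_n \bigl( D_n \cap g_n^{-1} (A) \bigr)$ exhibits $T(A)$ as a countable union of Borel sets. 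The main obstacle is conceptual rather than computational: projections of Borel sets are in general only analytic, so without extra information $T(A)$ need not be Borel. Forward expansiveness removes this obstacle by forcing the fibers of $\tpi_2 |_{\cO_2 (T)}$ to be finite, after which the descriptive-set-theoretic input does the rest; no delicate estimates are needed, and one only has to note that the spaces involved are standard Borel, which follows from compactness.
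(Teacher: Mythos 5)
Your proof is correct, but it takes a genuinely different route from the paper's. You treat $T(A)$ as the image of the Borel set $\cO_2 (T) \cap (A \times X)$ under the projection $\tpi_2 |_{\cO_2 (T)}$, use Remark~\ref{preimage of forward expansive correspondence is finite} only for the conclusion that the fibers $T^{-1} (w) \times \{w\}$ are (uniformly) finite, and then invoke the Lusin--Novikov theorem to conclude that a countable-to-one Borel image of a Borel set is Borel. The paper instead uses the \emph{disjointness} statement behind that same remark: if $T(x) \cap T(y) \neq \emptyset$ for $x \neq y$, then $d(x, y) > \epsilon$, so on each closed ball $\overline{B_{\epsilon /2} (x)}$ the correspondence is the preimage map $g_x^{-1}$ of a single-valued function $g_x \: T\bigl(\overline{B_{\epsilon /2} (x)}\bigr) \rightarrow \overline{B_{\epsilon /2} (x)}$, whose continuity follows from compactness of its graph (a slice of $\cO_2 (T)$); hence $T\bigl(A \cap \overline{B_{\epsilon /2} (x)}\bigr) = g_x^{-1} (A)$ is Borel, and a finite cover of $X$ by such balls finishes. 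What each approach buys: yours is shorter modulo the cited theorem, isolates the real descriptive-set-theoretic issue (projections of Borel sets are in general only analytic), and works under the weaker hypothesis that $\tpi_2 |_{\cO_2 (T)}$ is merely countable-to-one; the paper's argument is elementary and self-contained, requiring no machinery beyond continuity and compactness, and exhibits the stronger local structure (that $T$ is locally the inverse of a continuous map), which is the form in which expansiveness is exploited elsewhere in the paper. One small point to make explicit if you write this up: $X$ and $\cO_2 (T)$ are standard Borel because $X$ is compact metric and $\cO_2 (T)$ is closed in $X^2$, which you noted, and the version of Lusin--Novikov you need is the image form (equivalently the graph decomposition you sketch), so the citation should be precise on that.
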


\begin{proof}
    By Remark~\ref{preimage of forward expansive correspondence is finite}, if distinct points $x,\, y \in X$ satisfy $T (x) \cap T(y) \neq \emptyset$, then $d(x,  y)> \epsilon$.
    
    Fix an arbitrary $x \in X$. Then $T(y_1) \cap T(y_2) =\emptyset$ for all $y_1 ,\, y_2 \in \overline{B_{\epsilon /2} (x)} \= \{y \in X : d(x,y) \leq \epsilon /2 \}$ with $y_1 \neq y_2$, so for each $z \in T\bigl(\overline{B_{\epsilon /2} (x)}\bigr)$, there is exactly one $y\in \overline{B_{\epsilon /2} (x)}$ satisfying $z\in T(y)$. Suppose $g_x \: T\bigl(\overline{B_{\epsilon /2} (x)}\bigr) \to \overline{B_{\epsilon /2} (x)}$ is the map with the property that $g_x (z)$ is the unique point $y \in \overline{B_{\epsilon /2} (x)}$ with $z\in T(y)$, i.e., $T= g_x^{-1}$ on $\overline{B_{\epsilon /2} (x)}$. Since $\cO_2 (T)$ is compact by the definition of correspondences, $T\bigl(\overline{B_{\epsilon /2} (x)}\bigr)$, the projection of $\cO_2 (T) \cap \overline{B_{\epsilon /2} (x)} \times X$ on the second coordinate, is compact. Since $\overline{B_{\epsilon /2} (x)}$ is compact and $\bigl\{(y,z) : z\in T\bigl(\overline{B_{\epsilon /2} (x)}\bigr) ,\, y=g_x (z) \bigr\} =\bigl\{(y,z) : y\in \overline{B_{\epsilon /2} (x)} ,\, z\in T(y)\bigr\} =\cO_2 (T) \cap \overline{B_{\epsilon /2} (x)} \times X$ is compact, we get that $g_x \: T\bigl(\overline{B_{\epsilon /2} (x)}\bigr) \to \overline{B_{\epsilon /2} (x)}$ is continuous. Thereby, for each Borel set $A \subseteq \overline{B_{\epsilon /2} (x)}$, $T(A) =g_x^{-1} (A)$ is Borel measurable in $T\bigl(\overline{B_{\epsilon /2} (x)}\bigr)$, and thus is Borel measurable in $X$ due to the fact that $T\bigl(\overline{B_{\epsilon /2} (x)}\bigr)$ is a closed subset of $X$.

    Since $X$ is compact, we can choose a finite collection of points $\{x_1,\, \dots,\, x_n\} \subseteq X$ such that $\bigl\{ \overline{B_{\epsilon / 2} (x_i)} \bigr\}_{i=1}^n$ covers $X$. Let $A \in \SBB (X)$ be arbitrary. For each $i\in \oneto{n}$, the Borel set $A \cap \overline{B_{\epsilon / 2} (x_i)}$ is contained in $\overline{B_{\epsilon / 2} (x_i)}$, and thus $T \bigl( A \cap \overline{B_{\epsilon / 2} (x_i)} \bigr)$ is Borel measurable. Therefore, $T(A) =\bigcup_{i=1}^n T\bigl( A \cap \overline{B_{\epsilon / 2} (x_i)} \bigr)$ is Borel measurable.
\end{proof}

\begin{cor}\label{positive measurability of forward expansive maps}
    Let $(X,  d)$ be a compact metric space. If a continuous map $f\: X \to X$ is forward expansive, then for each Borel measurable set $A \in \SBB (X)$, the set $f(A)$ is Borel measurable.
\end{cor}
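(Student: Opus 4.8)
The plan is to deduce this corollary directly from Lemma~\ref{positive measurability of forward expansive correspondence} by realizing the single-valued map $f$ as a correspondence. First I would let $\cC_f$ denote the correspondence induced by $f$, that is, the map $X \to \cF(X)$ given by $\cC_f(x) \= \{f(x)\}$ for all $x \in X$; this is a genuine correspondence since $\cO_2(\cC_f) = \{(x, f(x)) : x \in X\}$ is the graph of the continuous map $f$, which is closed in $X^2$ by the continuity of $f$ and compactness of $X$, so condition~(ii) of Lemma~\ref{upper-semicontinuity equivalent to closedness of graph} holds.

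Next I would transfer the hypothesis of forward expansiveness. Since $f$ is forward expansive as a single-valued map, the equivalence recorded in Appendix~\ref{subsct_Several properties for correspondences}~(i) shows that the induced correspondence $\cC_f$ is forward expansive in the sense of Definition~\ref{forward expansiveness}. Indeed, an orbit $(x_1, x_2, \dots) \in \cO_\omega(\cC_f)$ is precisely a forward orbit $x_{k+1} = f(x_k)$ of $f$, so distinct orbits of $\cC_f$ correspond to distinct forward orbits of $f$, and an expansive constant for $f$ serves as an expansive constant for $\cC_f$.

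The remaining observation is the identity $\cC_f(A) = f(A)$ for every $A \subseteq X$: by definition $\cC_f(A) = \bigcup_{x \in A} \cC_f(x) = \bigcup_{x \in A} \{f(x)\} = f(A)$. Applying Lemma~\ref{positive measurability of forward expansive correspondence} to the forward expansive correspondence $\cC_f$, we obtain that $\cC_f(A) = f(A)$ is Borel measurable for every $A \in \SBB(X)$, which is the assertion.

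I do not expect any genuine obstacle here, as the corollary is a specialization of the lemma; the only points requiring care are the two routine verifications above—that the graph identity makes $\cC_f$ a correspondence and that forward expansiveness of $f$ passes to $\cC_f$—both of which are already supplied by the cited lemma and appendix. The entire content is the translation between the single-valued and set-valued languages, so the write-up will be brief.
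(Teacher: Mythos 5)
Your proposal is correct and follows exactly the paper's own argument: pass to the induced correspondence $\cC_f$, invoke Appendix~D~(i) to transfer forward expansiveness, and apply Lemma~\ref{positive measurability of forward expansive correspondence} together with the identity $\cC_f(A)=f(A)$. The additional verifications you include (closedness of the graph, the orbit correspondence) are routine and consistent with what the paper cites from its appendices.
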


\begin{proof}
    By Remark~\ref{r:C_f_properties}~(i), $\cC_f$ is forward expansive. Therefore, by Lemma~\ref{positive measurability of forward expansive correspondence}, $f(A) =\cC_f (A)$ is Borel measurable for all $A \in \SBB (X)$.
\end{proof}

Lemma~\ref{positive measurability of forward expansive correspondence} and Corollary~\ref{positive measurability of forward expansive maps} are important in Sections~\ref{sct_Variational_principle_for_positively_RW-expansive_correspondences} and~\ref{sct_Thermodynamic_formalism_of_expansive_correspondences}, because Theorems~\ref{t_VP_forward_exp},~\ref{equilibrium state 1}, and~\ref{phwfq9} all assume that the correspondence $T$ is forward expansive. When the correspondence $T$ is forward expansive and at the same time $\sigma \: \cO_\omega (T) \to \cO_\omega (T)$ is forward expansive, we can write $T(A)$ as an Borel subset of $X$ for every $A \in \SBB (X)$ and $\sigma (B)$ as an Borel subset of $\cO_\omega (T)$ for every $B \in \SBB (\cO_\omega (T))$.

Let $T$ be a correspondence on a compact metric space $(X,  d)$ and $\cA$ be a finite Borel measurable partition of $X$. Set
\begin{equation*}
	\mesh \cA \= \sup \{ \diam B  :  B\in \cA \}.
\end{equation*}
For each $n\in \N$, the pair $(T ,  \cA)$ induces a finite Borel measurable partition $\wt{\cA}^n_T$ of the orbit space $\cO_\omega (T)$ given by
\begin{equation*}\label{28h3ewicndi8}
	\wt{\cA}^n_T \= \{ B_1 \times \cdots \times B_n \times X^\omega \cap \cO_\omega (T)  :  B_1 ,\, \dots ,\, B_n \in \cA \}.
\end{equation*}
Note that $\wt{\cA}_T^n = \bigvee_{k=0}^{n-1} \sigma^{-k} \bigl( \wt{\cA}_T^1 \bigr)$ for all $n\in \N$.
For each $x\in \cO_\omega (T)$ and each $n\in \N$, denote by $\wt{\cA}^n_T (x)$ the element in $\wt{\cA}^n_T$ containing $x$.

\begin{lemma}\label{locally long behavior of forward expansive correspondence}
    Let $T$ be a forward expansive correspondence on a compact metric space $(X,  d)$ with expansive constants $\epsilon_1 >0$ and $\epsilon_2 >0$. There exists $L\in \N$ with the following property:
    
    \smallskip

    For each $n\in \N$ greater than $L$, if two orbits $\vect{x}{1}{n}  ,\, \vect{y}{1}{n} \in \cO_n (T)$ satisfy $d(x_k ,  y_k)< \epsilon_2$ for all $k\in \oneto{n}$, then $d(x_k ,  y_k)< \epsilon_1$ holds for all $k\in \oneto{ n-L}$.
\end{lemma}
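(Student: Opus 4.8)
The plan is to argue by contradiction, combining a ``centering'' (shifting) trick with the compactness of the orbit space $\cO_\omega (T)$. Suppose no such $L$ exists. Then for every $L\in \N$ there are an integer $n_L >L$ and a pair of orbits $(x_1^{(L)} ,  \dots ,  x_{n_L}^{(L)}),\, (y_1^{(L)} ,  \dots ,  y_{n_L}^{(L)}) \in \cO_{n_L} (T)$ with $d\bigl(x_k^{(L)} ,  y_k^{(L)}\bigr) <\epsilon_2$ for all $k\in \oneto{n_L}$, and yet with some index $k_L \in \oneto{n_L -L}$ at which $d\bigl(x_{k_L}^{(L)} ,  y_{k_L}^{(L)}\bigr) \geqslant \epsilon_1$.

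The first step is to normalize the position of the violation so that it sits at the first coordinate. Since $T(x)$ is always a nonempty closed set, each finite orbit extends to an infinite orbit; let $\xi^{(L)} ,\, \upsilon^{(L)} \in \cO_\omega (T)$ be such extensions of the two orbits above. Because $\cO_\omega (T)$ is invariant under the shift $\sigma$, the orbits $\alpha^{(L)} \= \sigma^{k_L -1} \bigl(\xi^{(L)}\bigr)$ and $\beta^{(L)} \= \sigma^{k_L -1} \bigl(\upsilon^{(L)}\bigr)$ again lie in $\cO_\omega (T)$. By construction their first coordinates satisfy $d\bigl(\alpha_1^{(L)} ,  \beta_1^{(L)}\bigr) =d\bigl(x_{k_L}^{(L)} ,  y_{k_L}^{(L)}\bigr) \geqslant \epsilon_1$, while $d\bigl(\alpha_j^{(L)} ,  \beta_j^{(L)}\bigr) <\epsilon_2$ for every $j\in \oneto{n_L -k_L +1}$, and the choice $k_L \leqslant n_L -L$ guarantees $n_L -k_L +1 \geqslant L+1$. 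Thus the two shifted orbits remain within $\epsilon_2$ on an initial block of length at least $L+1$, while being at least $\epsilon_1$ apart at the very first coordinate.

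Next I would pass to a limit. Since $X^\omega$ is compact and $\cO_\omega (T)$ is closed in it by Lemma~\ref{upper-semicontinuity equivalent to closedness of graph}, the space $\cO_\omega (T)$ is compact, and convergence there is coordinatewise. Extracting a subsequence, I may assume $\alpha^{(L)} \to \alpha$ and $\beta^{(L)} \to \beta$ in $\cO_\omega (T)$ as $L\to +\infty$. For each fixed coordinate $i$, once $L$ is large enough that $L+1 \geqslant i$ we have $i\leqslant n_L -k_L +1$, hence $d\bigl(\alpha_i^{(L)} ,  \beta_i^{(L)}\bigr) <\epsilon_2$; letting $L\to +\infty$ yields $d(\alpha_i ,  \beta_i) \leqslant \epsilon_2$ for all $i\in \N$. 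On the other hand, $d(\alpha_1 ,  \beta_1) =\lim_L d\bigl(\alpha_1^{(L)} ,  \beta_1^{(L)}\bigr) \geqslant \epsilon_1 >0$, so $\alpha$ and $\beta$ are distinct orbits. This contradicts the forward expansiveness of $T$ with expansive constant $\epsilon_2$, which would force $d(\alpha_n ,  \beta_n) >\epsilon_2$ for some $n\in \N$. Hence an $L$ with the stated property must exist.

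I expect this to be a soft compactness argument with no serious obstacle; the one point requiring care is the passage from the strict bounds $d<\epsilon_2$ on the finite orbits to the nonstrict bound $d\leqslant \epsilon_2$ in the limit, which is nonetheless enough to contradict expansiveness, since the latter only asserts separation strictly exceeding $\epsilon_2$. It is worth emphasizing that only the positivity of $\epsilon_1$ is used, namely to keep the two limit orbits distinct, whereas it is the role of $\epsilon_2$ as an expansive constant that actually produces the contradiction.
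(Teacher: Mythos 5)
Your proof is correct and follows essentially the same route as the paper's: a contradiction argument that shifts the violating index to the first coordinate (the paper does this by truncating the finite orbits, you by applying $\sigma^{k_L-1}$ after extending — an immaterial difference), extends to infinite orbits, extracts a convergent subsequence by compactness of $\cO_\omega (T)$, and plays the limit bound $d \leqslant \epsilon_2$ at every coordinate against the separation $\geqslant \epsilon_1$ at the first coordinate to contradict forward expansiveness with constant $\epsilon_2$. The paper phrases the final step by first concluding coordinatewise equality of the limit orbits and then contradicting $\epsilon_1 > 0$, which is logically the same contradiction you obtain.
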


\begin{proof}
    We argue by contradiction and assume that for every $l\in \N$, we can choose two orbits $\vect{x^l}{1}{n_l}   ,\, \vect{y^l}{1}{n_l}  \in \cO_{n_l} (T)$,  $n_l \in \N$, $n_l >l$ satisfying that $d\bigl(x_k^l ,  y_k^l\bigr)<\epsilon_2$ for every $k\in \oneto{n_l}$, and that there exists $j\in \oneto{n_l -l}$ such that $d\bigl(x_j^l ,  y_j^l\bigr)\geq \epsilon_1$. Assume $j=1$, otherwise substitute $\vect{x^l}{j}{n_l}$ and $\vect{y^l}{j}{n_l}$ for $\vect{x^l}{1}{n_l}$ and $\vect{y^l}{1}{n_l}$, respectively.

    Extend each pair of orbits $\vect{x^l}{1}{n_l}   ,\, \vect{y^l}{1}{n_l} \in \cO_{n_l} (T)$ to $\vect{x^l}{1}{\infty}   ,\, \vect{y^l}{1}{\infty}\in \cO_\omega (T)$. Since $\cO_\omega (T) \times \cO_\omega (T)$ is compact, we can choose an increasing sequence of positive integers $l_r\in \N$, $r\in \N$, such that $\vect{x^{l_r}}{1}{\infty}$ and $\vect{y^{l_r}}{1}{\infty}$ converge to $\vect{x^0}{1}{\infty}\in \cO_\omega (T)$ and $\vect{y^0}{1}{\infty} \in \cO_\omega (T)$ as $r\to +\infty$, respectively, i.e., for each $k\in \N$, $x_k^{l_r}$ and $y_k^{l_r}$ converge to $x_k^0$ and $y_k^0$ as $r \to +\infty$, respectively.

    Fix an arbitrary $k\in \N$, since for each $r\in \N$ with $n_{l_r} >l_r \geq k$, we have $d \bigl( x_k^{l_r} ,  y_k^{l_r} \bigr)< \epsilon_2$, and since $l_r$ tends to $+\infty$ as $r\to +\infty$, we get $d\bigl(x_k^0 ,  y_k^0\bigr)= \lim_{r\to +\infty} d\bigl( x_k^{l_r} ,  y_k^{l_r} \bigr)\leq \epsilon_2$. This implies $x_k^0 =y_k^0$ for each $k\in \N$ because $\epsilon_2$ is an expansive constant for $T$.

    Recall $d\bigl(x_1^l ,  y_1^l\bigr)\geq \epsilon_1$ for all $l\in \N$. Thus, we have 
    $0= d(x_1^0 ,  y_1^0)= \lim_{r\to +\infty} d \bigl( x_1^{l_r} ,  y_1^{l_r} \bigr)\geq \epsilon_1 >0$,
    which is impossible.
\end{proof}

This lemma leads to the following corollaries.

\begin{cor}\label{diameter of n level partition tends to 0}
    Let $T$ be a forward expansive correspondence on a compact metric space $(X,  d)$ with an expansive constant $\epsilon>0$ and $\cA$ be a finite Borel measurable partition of $X$ with $\mesh \cA < \epsilon$. Then
    $
    	\lim_{n\to +\infty} \mesh \wt{\cA}^n_T =0.
    $
\end{cor}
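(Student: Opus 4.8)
The plan is to unwind the definition of $\mesh \wt{\cA}^n_T$ and to bound the $d_\omega$-diameter of a single element of the partition $\wt{\cA}^n_T$ given in (\ref{28h3ewicndi8}), with Lemma~\ref{locally long behavior of forward expansive correspondence} doing the essential work. Fix $\delta > 0$; the goal is to produce $N \in \N$ with $\mesh \wt{\cA}^n_T \leqslant \delta$ for all $n \geqslant N$. Take an arbitrary element $C \in \wt{\cA}^n_T$, say $C = B_1 \times \cdots \times B_n \times X^\omega \cap \cO_\omega (T)$ with $B_k \in \cA$, together with two orbits $\udx = (x_1 ,  x_2 ,  \dots) ,\, \udy = (y_1 ,  y_2 ,  \dots) \in C$. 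Since $x_k ,\, y_k \in B_k$ for each $k \in \oneto{n}$ and $\mesh \cA < \epsilon$, the truncations $(x_1 ,  \dots ,  x_n) ,\, (y_1 ,  \dots ,  y_n) \in \cO_n (T)$ satisfy $d(x_k ,  y_k) \leqslant \mesh \cA < \epsilon$ for every $k \in \oneto{n}$.

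To estimate $d_\omega (\udx ,  \udy) = \sum_{k=1}^{+\infty} \frac{d(x_k ,  y_k)}{2^k (1+d(x_k ,  y_k))}$ from (\ref{d_omega=}), I split the sum at an index $M$. The tail $\sum_{k > M}$ is bounded by $\sum_{k > M} 2^{-k} = 2^{-M}$ regardless of the orbits, so I first choose $M$ with $2^{-M} < \delta / 2$. For the head $k \leqslant M$, I need $d(x_k ,  y_k)$ uniformly small, and this is exactly where forward expansiveness enters. Observe that every positive number $\leqslant \epsilon$ is again an expansive constant of $T$, since distinct orbits separate by more than $\epsilon$, hence by more than any smaller constant. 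So fix an expansive constant $\epsilon_1 \in (0 ,  \epsilon]$ with $\epsilon_1 < \delta / 2$, and apply Lemma~\ref{locally long behavior of forward expansive correspondence} with this $\epsilon_1$ and $\epsilon_2 = \epsilon$ to obtain $L \in \N$. For $n > L$, the inequality $d(x_k ,  y_k) < \epsilon = \epsilon_2$ on $\oneto{n}$ upgrades to $d(x_k ,  y_k) < \epsilon_1$ on $\oneto{n-L}$.

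Setting $N \= M + L$, for $n \geqslant N$ the block $\oneto{M}$ lies inside $\oneto{n-L}$, so $d(x_k ,  y_k) < \epsilon_1$ for all $k \leqslant M$. Using $t / (1+t) \leqslant t$ for $t \geqslant 0$, the head is then bounded by $\epsilon_1 \sum_{k=1}^M 2^{-k} < \epsilon_1 < \delta / 2$, while the tail is at most $2^{-M} < \delta / 2$. Combining the two estimates yields $d_\omega (\udx ,  \udy) < \delta$. Since $\udx ,\, \udy \in C$ and $C \in \wt{\cA}^n_T$ were arbitrary, this gives $\mesh \wt{\cA}^n_T \leqslant \delta$ for all $n \geqslant N$, and as $\delta > 0$ was arbitrary, $\lim_{n \to +\infty} \mesh \wt{\cA}^n_T = 0$.

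Since Lemma~\ref{locally long behavior of forward expansive correspondence} already captures the expansive mechanism, the remaining work is essentially bookkeeping. The one genuine subtlety is that membership in a common element of $\wt{\cA}^n_T$ only forces the corresponding coordinates to be within $\mesh \cA$, a fixed threshold below $\epsilon$, rather than arbitrarily close; it is precisely forward expansiveness, via the Lemma, that converts this fixed closeness on the first $n$ coordinates into arbitrarily small closeness on the growing initial block $\oneto{n-L}$, which is what makes the head estimate work. I would also double-check the strict-versus-nonstrict inequalities: the bound $d(x_k ,  y_k) \leqslant \mesh \cA < \epsilon$ is strict, so the Lemma's hypothesis $d(x_k ,  y_k) < \epsilon_2$ with $\epsilon_2 = \epsilon$ is indeed met.
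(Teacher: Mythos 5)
Your proof is correct and follows essentially the same route as the paper's: both hinge on Lemma~\ref{locally long behavior of forward expansive correspondence} applied with a small expansive constant $\epsilon_1 < \delta/2$ against $\epsilon_2 = \epsilon$, followed by a head/tail split of the series defining $d_\omega$, with the head controlled by the lemma and the tail by geometric decay. The only difference is bookkeeping — you split the sum at a fixed index $M$ with $2^{-M} < \delta/2$ and take $N = M+L$, while the paper splits at $n-L$ and requires $n > N+L$ — which does not change the substance of the argument.
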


\begin{proof}
Fix an arbitrary $\delta \in (0 ,\epsilon)$. Since $\epsilon$ is an expansive constant for $T$, $\delta$ is also an expansive constant for $T$. Choose $N \in \N$ such that $\frac{1}{2^N} <\frac{\delta}{2}$. By Lemma~\ref{locally long behavior of forward expansive correspondence}, we can choose $L\in \N$ greater than $N$ with the following property:
for each $n\in \N$ greater than $L$, if two orbits $\vect{x}{1}{n}  ,\, \vect{y}{1}{n} \in \cO_n (T)$ satisfy $d(x_k ,  y_k)< \epsilon$ for all $k\in \oneto{n}$, then $d(x_k ,  y_k)< \frac{\delta}{2}$ holds for all $k\in \oneto{ n-L}$.

Fix an arbitrary $n \in \N$ greater than $N+L$. If two orbits $\vect{x}{1}{\infty} , \, \vect{y}{1}{\infty} \in \cO_\omega (T)$ belong to the same element in the partition $\wt{\cA}^n_T$, then we have $d(x_k ,y_k) < \epsilon$ for all $k \in \oneto{n}$ by $\mesh \cA <\epsilon$. Then it follows from the property of $L$ that $d(x_k ,y_k) <\frac{\delta}{2}$ for all $k\in \oneto{ n-L}$, so
\begin{equation*}
	d_\omega (\vect{x}{1}{\infty} , \vect{y}{1}{\infty}) \leq \sum_{k=1}^{n-L} \frac{1}{2^k} \frac{\delta}{1+\delta} +\sum_{k= n-L+1}^{+\infty} \frac{1}{2^k} <\frac{\delta}{2} +\frac{1}{2^N} \leq \delta.
\end{equation*}
Thus, $\mesh \wt{\cA}^n_T \leq \delta$. Since $\delta$ is chosen arbitrarily, we conclude that $\lim_{n\to +\infty} \mesh \wt{\cA}^n_T =0$.
\end{proof}

\begin{cor}\label{measure-theoretic entropy coincide with that with respect to a partition whose diameter is sufficiently small}
	Let $T$ be a forward expansive correspondence on a compact metric space $X$ with an expansive constant $\epsilon >0$, $\nu \in \MMM(\cO_\omega (T),\sigma)$, and $\cA$ be a finite Borel measurable partition of $X$ with $\mesh \cA <\epsilon$. Then the partition $\wt{\cA}_T^1$ of $\cO_\omega (T)$ is a finite one-sided generator for $\nu$, i.e., if $\udx ,\, \udy \in \cO_\omega (T)$ satisfy $\wt{\cA}_T^1 (\sigma^n (\udx)) =\wt{\cA}_T^1 (\sigma^n (\udy))$ for all $n\in \N$, then $\udx=\udy$. Moreover, we have
	\begin{equation}\label{33333334onjf039wwwwwwwwww}
		h_\nu (\sigma) =h_\nu \bigl(\sigma ,  \wt{\cA}_T^1 \bigr). 
	\end{equation}
\end{cor}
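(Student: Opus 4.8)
The plan is to deduce both assertions from Corollary~\ref{diameter of n level partition tends to 0}, which already supplies $\lim_{n\to+\infty}\mesh\wt{\cA}_T^n=0$ under the hypothesis $\mesh\cA<\epsilon$. First I would record the identity $\wt{\cA}_T^n=\bigvee_{k=0}^{n-1}\sigma^{-k}\bigl(\wt{\cA}_T^1\bigr)$, noted right after (\ref{28h3ewicndi8}), so that two orbits $\udx,\udy\in\cO_\omega(T)$ lie in the same atom of every $\wt{\cA}_T^n$ exactly when $\wt{\cA}_T^1(\sigma^k(\udx))=\wt{\cA}_T^1(\sigma^k(\udy))$ for all $k\ge0$ (here $\sigma^0=\id$ controls the first coordinate); this is the content of the one-sided generator hypothesis.

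For the generator claim, suppose $\udx$ and $\udy$ share a common atom of $\wt{\cA}_T^n$ for every $n\in\N$. Since both $\udx,\udy\in\wt{\cA}_T^n(\udx)$, I obtain $d_\omega(\udx,\udy)\le\diam\wt{\cA}_T^n(\udx)\le\mesh\wt{\cA}_T^n$, and letting $n\to+\infty$ the right-hand side tends to $0$ by Corollary~\ref{diameter of n level partition tends to 0}. Hence $d_\omega(\udx,\udy)=0$, i.e.\ $\udx=\udy$, which is precisely the statement that $\wt{\cA}_T^1$ is a finite one-sided generator for $\nu$.

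For the entropy identity (\ref{33333334onjf039wwwwwwwwww}) I would argue in the style of the proof of Theorem~\ref{measure-theoretic entropy coincide with its lift}. The same mesh estimate shows that the algebra $\cS$ of finite unions of atoms of the partitions $\wt{\cA}_T^n$, $n\in\N$, generates $\SBB(\cO_\omega(T))$: because $\mesh\wt{\cA}_T^n\to0$, every open ball is a countable union of such atoms, so every open set lies in $\sigma(\cS)$. Lemma~\ref{some partitions determine the measure-theoretic entropy} then gives $h_\nu(\sigma)=\sup_{\cD}h_\nu(\sigma,\cD)$, where $\cD$ ranges over finite partitions contained in $\cS$. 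Any such $\cD$ is refined by some $\wt{\cA}_T^n$; by monotonicity of $h_\nu(\sigma,\cdot)$ under refinement together with Lemma~\ref{h_mu(T,A)=h_mu(T,vee_i=0^k T^-i(A))}, which yields $h_\nu\bigl(\sigma,\wt{\cA}_T^n\bigr)=h_\nu\bigl(\sigma,\bigvee_{k=0}^{n-1}\sigma^{-k}\wt{\cA}_T^1\bigr)=h_\nu\bigl(\sigma,\wt{\cA}_T^1\bigr)$, I conclude $h_\nu(\sigma,\cD)\le h_\nu\bigl(\sigma,\wt{\cA}_T^1\bigr)$. Taking the supremum gives $h_\nu(\sigma)\le h_\nu\bigl(\sigma,\wt{\cA}_T^1\bigr)$, and the reverse inequality is immediate from the definition of $h_\nu(\sigma)$ as a supremum over finite partitions. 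Alternatively, the point-separation proved above permits a direct appeal to the Kolmogorov--Sinai generator theorem (see \cite{GTM79}).

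The computations are routine once Corollary~\ref{diameter of n level partition tends to 0} is in hand; the only step requiring care is verifying that $\mesh\wt{\cA}_T^n\to0$ forces $\cS$ to generate the full Borel $\sigma$-algebra of $\cO_\omega(T)$ — equivalently, that $\wt{\cA}_T^1$ is genuinely generating for the \emph{non-invertible} shift $\sigma$, for which only the forward refinements $\wt{\cA}_T^n$ are available rather than a two-sided join. This is where I expect the only real, if mild, obstacle to lie.
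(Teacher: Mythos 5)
Your proof is correct, and it takes a genuinely different route from the paper's. The paper first combines Corollary~\ref{diameter of n level partition tends to 0} with Proposition~\ref{expansiveness of correspondence with its lift} to produce a partition $\wt{\cA}_T^n$ whose mesh is below an expansive constant of the shift $\sigma$, and then outsources both claims to Przytycki--Urba\'{n}ski: \cite[Lemma~3.5.5]{conformal fractals} gives that $\wt{\cA}_T^n$ (hence, via the join identity $\wt{\cA}_T^n = \bigvee_{k=0}^{n-1} \sigma^{-k}\bigl(\wt{\cA}_T^1\bigr)$, also $\wt{\cA}_T^1$) is a one-sided generator, and \cite[Theorem~2.8.7~(b)]{conformal fractals} (Kolmogorov--Sinai) gives (\ref{33333334onjf039wwwwwwwwww}). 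You instead prove the separation property directly from $\mesh \wt{\cA}_T^n \to 0$ --- no expansiveness of $\sigma$ and no external generator lemma needed --- and you obtain the entropy identity by re-running the Walters machinery already used in the proof of Theorem~\ref{measure-theoretic entropy coincide with its lift}: your algebra $\cS$ generates $\SBB(\cO_\omega(T))$ because the meshes tend to zero, Lemma~\ref{some partitions determine the measure-theoretic entropy} reduces the supremum defining $h_\nu(\sigma)$ to finite partitions inside $\cS$, and refinement monotonicity together with Lemma~\ref{h_mu(T,A)=h_mu(T,vee_i=0^k T^-i(A))} bounds each such partition's entropy by $h_\nu\bigl(\sigma, \wt{\cA}_T^1\bigr)$. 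The paper's route is shorter; yours is more self-contained (its only external inputs are the two Walters lemmas the paper already quotes) and it makes explicit the one point you rightly single out as the crux, namely why the forward joins alone generate the Borel $\sigma$-algebra of $\cO_\omega(T)$ for the non-invertible shift.

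One remark on indexing: you read the generator hypothesis with $k \geq 0$ (i.e., over $\N_0$), while the statement literally quantifies over $n \in \N$. Your reading is the intended one: with indices starting at $1$ the first coordinates are uncontrolled, and pointwise separation then fails whenever two distinct points of $X$ have intersecting images under $T$ (take $(x_1, z_2, z_3, \dots)$ and $(y_1, z_2, z_3, \dots)$ with $x_1 \neq y_1$ and $z_2 \in T(x_1) \cap T(y_1)$). The paper's own proof quantifies over $m \in \N$ as well, so it carries the same harmless index slip; nothing downstream is affected, since only the entropy identity (\ref{33333334onjf039wwwwwwwwww}) is used later.
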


\begin{proof}
	By Corollary~\ref{diameter of n level partition tends to 0} and Lemma~\ref{expansiveness of correspondence with its lift}, we can choose $n\in \N$ such that $\mesh \wt{\cA}^n_T$ is less than some expansive constant for $\sigma$. By \cite[Lemma~3.5.5]{PU10}, we get that the partition $\wt{\cA}^n_T$ is a finite one-sided generator for $\nu$, i.e., if $\udx ,\, \udy \in \cO_\omega (T)$ satisfy $\wt{\cA}_T^n (\sigma^m (\udx)) =\wt{\cA}_T^n (\sigma^m (\udy))$ for all $m\in \N$, then $\udx=\udy$. Recall $\wt{\cA}_T^n = \bigvee_{k=0}^{n-1} \sigma^{-k} \bigl( \wt{\cA}_T^1 \bigr)$, so the equality $\wt{\cA}_T^n (\sigma^m (\udx)) =\wt{\cA}_T^n (\sigma^m (\udy))$ is equivalent to the statement that $\wt{\cA}_T^1 \bigl(\sigma^{m+k} (\udx)\bigr) =\wt{\cA}_T^1 \bigl(\sigma^{m+k} (\udy)\bigr)$ holds for all $k\in \zeroto{n-1}$. Hence, %
	$\wt{\cA}_T^1$ is a finite one-sided generator for $\nu$. Finally, (\ref{33333334onjf039wwwwwwwwww}) follows by \cite[Theorem~2.8.7~(b)]{PU10}.
\end{proof}

The next corollary follows immediately from Corollary~\ref{measure-theoretic entropy coincide with that with respect to a partition whose diameter is sufficiently small} and the Martingale Convergence Theorem (see e.g., \cite[Theorem~2.1.4]{PU10}).

\begin{cor}\label{almost every point is Lebesgue point}
	Let $T$ be a forward expansive correspondence on a compact metric space $(X ,  d)$ with an expansive constant $\epsilon >0$, $\cA$ be a finite measurable partition of $X$ with $\mesh \cA <\epsilon$, $\nu \in \MMM(\cO_\omega (T), \sigma)$, and $f \in \BBB(\cO_\omega (T) , \R)$. Then for $\nu$-almost every $\udx \in \cO_\omega (T)$,
		\begin{equation*}
			\lim_{n\to +\infty} \frac{1}{\nu \bigl(\wt{\cA}^n_T (\udx)\bigr)} \int_{\wt{\cA}^n_T (\udx)} \! f   \diff \nu =f(\udx).
		\end{equation*}
\end{cor}

\subsection{Rokhlin formulas}\label{subsct_Rokhlin formula for measure-theoretic entropy of transition probability kernels and of the corresponding shift maps}
This subsection is devoted to Proposition~\ref{h_nu(sigma) when T is expansive} and Theorem~\ref{t_h_mu(Q) when T is expansive}. We first use the Shannon--McMillan--Breiman theorem and Corollary~\ref{almost every point is Lebesgue point} to establish Proposition~\ref{h_nu(sigma) when T is expansive}, a variant of the Rokhlin formula for measure-theoretic entropy of shift maps. Then we use Proposition~\ref{h_nu(sigma) when T is expansive} to establish Theorem~\ref{t_h_mu(Q) when T is expansive}, our Rokhlin formula for measure-theoretic entropy of transition probability kernels. Finally, in Remark~\ref{rohlin formula2}, we point out that an equivalent form of the classical Rokhlin formula for forward expansive maps (see e.g., \cite[Theorem~2.9.7]{PU10}) follows from Theorem~\ref{t_h_mu(Q) when T is expansive}. 

Let $\mu$ be a probability measure on some measurable space $(Y,  \SBB (Y))$. If there exists a countable measurable set $A \in \SBB (Y)$ such that $\nu (A)=1$, then we set
\begin{equation}\label{H(P_(x_1,x_2,dots))=}
	H(\nu) \= -\sum_{y\in A}   \nu (\{y\}) \log (\nu (\{y\})),
\end{equation}
where we follow the convention that $0 \log 0 =0$.
If $\nu (A) <1$ for all countable measurable set $A \in \SBB(Y)$, then we set $H(\nu) \=+\infty$.

Let $\nu$ be an arbitrary Borel probability measure on $\cO_\omega (T)$. Denote by $\hnu$ the Borel probability measure on $X^\omega$ given by $\hnu (A) \=\nu (A \cap \cO_\omega (T))$ for all $A \in \SBB (X^\omega)$.

\begin{prop}\label{h_nu(sigma) when T is expansive}
    Let $T$ be a forward expansive correspondence on a compact metric space $(X,  d)$ with an expansive constant $\epsilon >0$, $\nu \in \MMM(\cO_\omega (T), \sigma)$, and $\cP$ be a backward conditional transition probability kernel of $\hnu$ from $X^\omega$ to $X$ supported on $\cO_2 (T) \times X^\omega$. Then we have
    \begin{equation*}
    	h_\nu (\sigma) =\int_{\cO_\omega (T)} \! H(\cP_{\udx})   \diff \nu (\udx).
    \end{equation*}
\end{prop}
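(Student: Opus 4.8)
The plan is to express $h_\nu(\sigma)$ as the conditional entropy of a single generating partition given the future, and then to identify this conditional entropy with the average Shannon entropy of the backward kernel $\cP$, the passage between the two being controlled by forward expansiveness.

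First I would fix a finite Borel partition $\cA$ of $X$ with $\mesh \cA < \epsilon$. By Corollary~\ref{measure-theoretic entropy coincide with that with respect to a partition whose diameter is sufficiently small} the partition $\wt{\cA}_T^1$ is a one-sided generator for $\nu$ and $h_\nu(\sigma) = h_\nu\bigl(\sigma, \wt{\cA}_T^1\bigr)$; moreover $\wt{\cA}_T^n = \bigvee_{k=0}^{n-1}\sigma^{-k}\wt{\cA}_T^1$, so $\bigvee_{k\geqslant1}\sigma^{-k}\wt{\cA}_T^1$ generates $\sigma^{-1}(\SBB(\cO_\omega(T)))$ modulo $\nu$-null sets. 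Writing $D_n(\udx)$ for the cell of $\bigvee_{k=1}^n \sigma^{-k}\wt{\cA}_T^1$ containing $\udx$, one checks that $D_n(\udx) = \sigma^{-1}\bigl(\wt{\cA}_T^n(\sigma\udx)\bigr)$, whence $\nu(D_n(\udx)) = \nu\bigl(\wt{\cA}_T^n(\sigma\udx)\bigr)$ by $\sigma$-invariance. The ratios $\nu\bigl(\wt{\cA}_T^{n+1}(\udx)\bigr)/\nu\bigl(\wt{\cA}_T^n(\sigma\udx)\bigr)$ are therefore the conditional probabilities of the $\cA$-cell of $x_1$ given the $\cA$-cells of $x_2, \dots, x_{n+1}$.

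Next I would combine the Shannon--McMillan--Breiman theorem (in its conditional-information form, which sidesteps any ergodic decomposition of $\nu$) with the martingale convergence supplied by Proposition~\ref{almost every point is Lebesgue point} to let $n \to \infty$. The former identifies $h_\nu\bigl(\sigma, \wt{\cA}_T^1\bigr)$ with the conditional entropy $H_\nu\bigl(\wt{\cA}_T^1 \mid \sigma^{-1}(\SBB(\cO_\omega(T)))\bigr)$, while the latter guarantees that the cell-ratios above converge $\nu$-a.e.\ to the conditional probabilities given the whole future $\sigma^{-1}(\SBB(\cO_\omega(T)))$. Forward expansiveness then pins down the limit: given the tail $(x_2, x_3, \dots)$, the first coordinate must lie in the finite, pairwise $\epsilon$-separated set $T^{-1}(x_2)$ by Remark~\ref{preimage of forward expansive correspondence is finite}, so the limiting conditional measure is purely atomic; and since $\mesh \cA < \epsilon$ separates these preimages, the $\cA$-cell distribution coincides with the atomic distribution and no entropy is lost. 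Finally, by the disintegration characterizing the backward conditional kernel $\cP$ of $\hnu$ and the $\sigma$-invariance of $\nu$, the conditional law of $x_1$ given $\sigma\udx$ agrees, after the substitution $\udy = \sigma\udx$, with $\cP_\udy$, so that $\int H(\text{law of }x_1 \mid \sigma\udx)\,\diff\nu(\udx) = \int H(\cP_\udy)\,\diff\nu(\udy)$; assembling the three identities gives $h_\nu(\sigma) = \int_{\cO_\omega(T)} H(\cP_\udx)\,\diff\nu(\udx)$.

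The hard part will be the middle step: rigorously matching the martingale limit of the cell-ratios with the backward kernel. This demands verifying that the limiting conditional measure is supported precisely on the preimage set --- which rests on forward expansiveness together with the compactness of $\cO_2(T)$ and the Borel measurability of $T(A)$ from Lemma~\ref{positive measurability of forward expansive correspondence} --- and then controlling the convergence of the finite conditional entropies well enough to interchange the limit with the integral, for which the uniform cardinality bound $\#\,T^{-1}(x)\leqslant M_\epsilon$ from Remark~\ref{preimage of forward expansive correspondence is finite} is the essential ingredient.
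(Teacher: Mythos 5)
Your proposal is correct and follows essentially the same route as the paper's proof: reduce to the generator $\wt{\cA}_T^1$ via Corollary~\ref{measure-theoretic entropy coincide with that with respect to a partition whose diameter is sufficiently small}, apply the Shannon--McMillan--Breiman Theorem to the ratios $\nu\bigl(\wt{\cA}_T^{n+1}(\udx)\bigr)/\nu\bigl(\wt{\cA}_T^n(\sigma\udx)\bigr)$, identify their a.e.\ limit with $\cP_{\sigma\udx}$ of the corresponding $\cA$-cell via Proposition~\ref{almost every point is Lebesgue point} and the disintegration property of $\cP$, and use $\mesh\cA<\epsilon$ together with the $\epsilon$-separation of preimages (Remark~\ref{preimage of forward expansive correspondence is finite}) to match $\cA$-cells with the atoms of $T^{-1}(x_1)$. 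The points you flag as delicate (atomicity of the limiting conditional measure and the limit--integral interchange, dominated by $\log M_\epsilon$) are exactly the ones the paper's argument handles.
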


See Definition~\ref{backward conditional transition probability kernel} for the notion of backward conditional transition probability kernels.

\begin{proof}
     Choose a finite Borel measurable partition $\cA$ of $X$ with $\mesh \cA <\epsilon$. Recall for each $n\in \N$, $\wt{\cA}^n_T = \{ B_1 \times \cdots \times B_n \times X^\omega \cap \cO_\omega (T)  :  B_1 ,\, \dots ,\, B_n \in \cA \}$, and that for each $\udx \in \cO_\omega (T)$, $\wt{\cA}^n_T (\udx)$ is the element in $\wt{\cA}^n_T$ containing $\udx$.

     For each $n\in \N$, set $\wt{\cA}^{1 ,  n+1}_T \= \bigl\{ X\times A \cap \cO_\omega (T)  :  A\in \wt{\cA}^n_T \bigr\} =\sigma ^{-1} \bigl( \wt{\cA}^n_T \bigr)$, and for each $\udx \in \cO_\omega (T)$, denote by $\wt{\cA}^{1,  n+1}_T (\udx)$ the element in $\wt{\cA}^{1 ,  n+1}_T$ containing $\udx$. We can check that $\wt{\cA}^{1 ,  n+1}_T (\udx)= \sigma^{-1} \bigl( \wt{\cA}^n_T (\sigma (\udx)) \bigr)$ holds for all $\udx \in \cO_\omega (T)$, thus $\nu \bigl( \wt{\cA}^{1 ,  n+1}_T (\udx) \bigr) = \bigl( \nu \circ \sigma^{-1} \bigr) \bigl( \wt{\cA}^n_T (\sigma (\udx)) \bigr) =\nu \bigl( \wt{\cA}^n_T (\sigma (\udx)) \bigr)$.

     Note $\wt{\cA}^n_T =\bigvee_{k=0}^{n-1} \sigma^{-k} \bigl( \wt{\cA}^1_T \bigr) \text{ and } \wt{\cA}^{1 ,  n+1}_T =\bigvee_{k=1}^n \sigma^{-k} \bigl( \wt{\cA}^1_T \bigr)$. By Corollary~\ref{measure-theoretic entropy coincide with that with respect to a partition whose diameter is sufficiently small}, $h_\nu (\sigma) =h_\nu \bigl( \sigma,  \wt{\cA}^1_T \bigr)$.

     Applying the Shannon--McMillan--Breiman theorem to $(\cO_\omega (T) ,  \SBB (\cO_\omega (T)) ,  \nu ,  \sigma)$ with the partition $\wt{\cA}^1_T$, we get that for $\nu$-almost every $\udx \in \cO_\omega (T)$, $\nu \bigl( \wt{\cA}^{1 ,  n+1}_T (\udx) \bigr) >0$ holds for all $n\in \N$, the limit $\lim\limits_{n\to +\infty} \frac{\nu ( \wt{\cA}^{n+1}_T (\udx))}{\nu ( \wt{\cA}^{1 ,  n+1}_T (\udx))}$ exists, and we have
     \begin{equation*}
     	h_\nu (\sigma) =h_\nu \bigl(\sigma ,  \wt{\cA}^1_T \bigr)= \int_{\cO_\omega (T)} \! -\log \biggl(   \lim_{n\to +\infty} \frac{\nu \bigl( \wt{\cA}^{n+1}_T (\udx) \bigr)}{\nu \bigl( \wt{\cA}^{1 ,  n+1}_T (\udx) \bigr)}  \biggr)   \diff \nu (\udx).
     \end{equation*}

    Note that $\sigma ' \: X^\omega \to X^\omega$ is the projection map from $X^\omega =X \times X^\omega$ onto $X^\omega$ and $\wt{\cA}^{1 ,  n+1}_T ( \vect{x}{0}{\infty} ) =\sigma^{-1} \bigl( \wt{\cA}^n_T (\vect{x}{1}{\infty}) \bigr)$. Applying (\ref{int_X_1*X_2f(x_1,x_2)dnu(x_1,x_2)=int_X_1(int_X_2f(x_1,x_2)dQ_x_1(x_2))dmu(x_1)}) in Remark~\ref{two equivalent conditions for the conditional transition probability kernel} and writing 
    \begin{equation*}
    	L_1 
    	\=\lim_{n\to +\infty} \frac{\nu \bigl( \wt{\cA}^{n+1}_T ( \vect{x}{0}{\infty} ) \bigr)}{\nu \bigl( \wt{\cA}^{1 ,  n+1}_T ( \vect{x}{0}{\infty} ) \bigr)} 
    	=\lim_{n\to +\infty} \frac{\nu \bigl( \wt{\cA}^{n+1}_T ( \vect{x}{0}{\infty} ) \bigr)}{\nu \bigl( \wt{\cA}^n_T ( \vect{x}{1}{\infty} ) \bigr)}  ,
    \end{equation*}
    we get
    \begin{equation}\label{0923edcwjszoi}
    \begin{aligned}
        h_\nu (\sigma) 
        &= \int_{X^\omega} \! \biggl( \int_{X} \! -\log L_1    \diff \cP_{\vect{x}{1}{\infty}} (x_0) \biggr)   \diff \bigl( \hnu \circ (\sigma ')^{-1} \bigr) (\vect{x}{1}{\infty})\\
        &= \int_{\cO_\omega (T)} \! \biggl( \sum_{x_0 \in T^{-1} (x_1)}  -\cP_{\vect{x}{1}{\infty}} (\{ x_0 \}) \log L_1 \biggr)   \diff \nu (\vect{x}{1}{\infty}).
    \end{aligned}
    \end{equation}

    Since $\sigma (\cO_\omega (T)) =\cO_\omega (T) \cap T(X) \times X^\omega$, we have $\sigma ^{-1} (\cO_\omega (T) \cap T(X) \times X^\omega) =\cO_\omega (T)$, and thus $\nu (\cO_\omega (T) \cap T(X) \times X^\omega) =\nu (\cO_\omega (T)) =1$. This allows us to assume $\vect{x}{1}{\infty} \in \cO_\omega (T) \cap T(X) \times X^\omega$ in the integrand on the right-hand side of (\ref{0923edcwjszoi}). Now we fix an arbitrary orbit $\vect{x}{1}{\infty} \in \cO_\omega (T)$ with $x_1 \in T(X)$ and compute this integrand.
    
    Fix an arbitrary $x_0 \in T^{-1} (x_1)$. Recall that $\cA (x_0)$ refers to the element of $\cA$ containing $x_0$. Since $\cP$ is a backward conditional transition probability kernel of $\hnu$ from $X^\omega$ to $X$ supported on $\cO_2 (T) \times X^\omega$ and since $x_1 \in T(X)$, the measure $\cP_{\vect{x}{1}{\infty}}$ is supported on $T^{-1} (x_1)$. Note that the diameter of $\cA (x_0)$ is less than $\epsilon$, an expansive constant for $T$, by Remark~\ref{preimage of forward expansive correspondence is finite} we get $\cA (x_0)\cap T^{-1} (x_1) =\{x_0\}$. Consequently,
    \begin{equation}\label{0923edcwjszoi1}
    	\cP_{\vect{x}{1}{\infty}} (\cA (x_0)) =\cP_{(x_1 ,  x_2 ,  \dots)} (\{x_0\}).
    \end{equation}
    Also, by the definition of the partitions $\wt{\cA}_T^n$, we have $\wt{\cA}_T^{n+1} (\vect{x}{0}{\infty}) =\cA (x_0) \times \cA (x_1) \times \cdots \times \cA (x_n) \times X^\omega \cap \cO_\omega (T) =\cA (x_0) \times \wt{\cA}_T^n (\vect{x}{1}{\infty}) \cap \cO_\omega (T)$. Applying Corollary~\ref{almost every point is Lebesgue point} to the Borel measurable function that assigns each $\udx \in \cO_\omega (T)$ the value $\cP (\udx ,  \cA (x_0))$, by Definition~\ref{backward conditional transition probability kernel}~(b), we get
    \begin{equation}\label{0923edcwjszoi2}
    \begin{aligned}
         \cP (\vect{x}{1}{\infty} ,  \cA (x_0))
        = \lim_{n\to +\infty} \frac{\int_{\wt{\cA}^n_T (\vect{x}{1}{\infty})} \! \cP (\udx ,  \cA (x_0))   \diff \nu (\udx)}
        {\nu \bigl( \wt{\cA}^n_T (\vect{x}{1}{\infty}) \bigr)} 
        &= \lim_{n\to +\infty} \frac{\nu \bigl( \cA (x_0)\times \wt{\cA}^n_T ( \vect{x}{1}{\infty} ) \cap \cO_\omega (T) \bigr)}{\nu \bigl( \wt{\cA}^n_T ( \vect{x}{1}{\infty} ) \bigr)}  \\
        &= \lim_{n\to +\infty} \frac{\nu \bigl( \wt{\cA}_T^{n+1} ( \vect{x}{0}{\infty} ) \bigr)}{\nu \bigl( \wt{\cA}^n_T ( \vect{x}{1}{\infty} ) \bigr)}.
    \end{aligned}
    \end{equation}
    By (\ref{0923edcwjszoi}), (\ref{0923edcwjszoi1}), and~(\ref{0923edcwjszoi2}), we have
    $h_\nu (\sigma) = - \int_{\cO_\omega (T)} \! \sum_{x_0 \in T^{-1} (x_1)}  \cP_{ \vect{x}{1}{\infty} } (\{x_0\}) \log \bigl( \cP_{ \vect{x}{1}{\infty} } (\{x_0\}) \bigr)    \diff \nu ( \vect{x}{1}{\infty} )$.

    Therefore, by (\ref{H(P_(x_1,x_2,dots))=}) we conclude $h_\nu (\sigma) =\int_{\cO_\omega (T)} \! H(\cP_{\udx})   \diff \nu (\udx)$ as we want.
\end{proof}

\begin{theorem}[Rokhlin formula]\label{t_h_mu(Q) when T is expansive}
	Let $T$ be a forward expansive correspondence on a compact metric space $(X,  d)$, $\cQ \in \tpk( X ; T )$, and  $\mu \in \MMM(X, \cQ)$. If $\cR$ is a backward conditional transition probability kernel of $\mu \cQ^{\zeroto{1}}$ from $X$ to $X$ supported on $\cO_2 (T)$, then we have
	$h_\mu (\cQ) =\int_X \! H(\cR_x) \diff \mu (x)$.
\end{theorem}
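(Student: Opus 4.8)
The plan is to transfer the computation to the shift map on the orbit space, where the Rokhlin-type formula of Proposition~\ref{h_nu(sigma) when T is expansive} is already available, and then descend back to $X$. Set $\nu \= \mu \cQ^\omega$. Since $\cQ$ is supported by $T$, every one-step transition lands in $T(x)$ almost surely, so $\nu$ is supported on $\cO_\omega (T)$ and may be regarded as a Borel probability measure there; moreover $\nu$ is $\sigma$-invariant by the discussion preceding Theorem~\ref{measure-theoretic entropy coincide with its lift}. With this choice $\hnu = \mu \cQ^\omega$ (the extension by zero outside $\cO_\omega (T)$), and Theorem~\ref{measure-theoretic entropy coincide with its lift} gives $h_\mu (\cQ) = h_{\mu \cQ^\omega} (\sigma) = h_\nu (\sigma)$. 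Thus it suffices to evaluate $h_\nu (\sigma)$.

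First I would supply the backward conditional transition probability kernel demanded by Proposition~\ref{h_nu(sigma) when T is expansive}. Starting from the given $\cR$, which is a backward conditional transition probability kernel of $\mu \cQ^{\zeroto{1}}$ from $X$ to $X$ supported on $\cO_2 (T)$, Lemma~\ref{backward conditional transition probability kernel of muQ^N_0} produces the kernel $\wt{\cR}$ defined by (\ref{tcR=}), which is a backward conditional transition probability kernel of $\mu \cQ^\omega = \hnu$ from $X^\omega$ to $X$ supported on $\cO_2 (T) \times X^\omega$. Applying Proposition~\ref{h_nu(sigma) when T is expansive} with $\wt{\cR}$ in the role of $\cP$ yields
\begin{equation*}
	h_\nu (\sigma) = \int_{\cO_\omega (T)} \! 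H\bigl(\wt{\cR}_{\udx}\bigr) \diff \nu (\udx).
\end{equation*}
By the defining formula (\ref{tcR=}) we have $\wt{\cR}_{(x_1 ,  x_2 ,  \dots)} = \cR_{x_1}$, so the integrand depends only on the first coordinate: $H\bigl(\wt{\cR}_{(x_1 ,  x_2 ,  \dots)}\bigr) = H(\cR_{x_1})$.

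It then remains to change variables so as to reduce the integral over $\cO_\omega (T)$ to one over $X$. The integrand factors through the first-coordinate projection as $H(\cR_{x_1}) = H(\cR_{\,\cdot}) \circ \tpi_1$, and the pushforward $\nu \circ \tpi_1^{-1}$ equals $\mu$: taking $n = 0$ in (\ref{muQ^N(A*X^infty)=muQ^[n](A)}) and recalling $\cQ^{\zeroto{0}} = \wh{\operatorname{id}_X}$, we get $(\mu \cQ^\omega)(A \times X^\omega) = (\mu \cQ^{\zeroto{0}})(A) = \mu(A)$ for all $A \in \SBB (X)$. Hence the change-of-variables formula for pushforwards gives
\begin{equation*}
	\int_{\cO_\omega (T)} \! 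H(\cR_{x_1}) \diff \nu (\udx) = \int_X \! H(\cR_x) \diff \mu (x),
\end{equation*}
and combining the three displayed identities yields $h_\mu (\cQ) = \int_X H(\cR_x) \diff \mu (x)$, as claimed. The substantive analytic work is entirely contained in Proposition~\ref{h_nu(sigma) when T is expansive} and Lemma~\ref{backward conditional transition probability kernel of muQ^N_0}, so no serious obstacle remains; the only point needing care is the measurability of $x \mapsto H(\cR_x)$, required for the final integral to be meaningful. This is inherited from the measurability of $\udx \mapsto H(\wt{\cR}_{\udx})$ established inside Proposition~\ref{h_nu(sigma) when T is expansive}, and can also be seen directly: by Remark~\ref{preimage of forward expansive correspondence is finite}, forward expansiveness makes $T^{-1}(x)$ finite with $\cR_x$ supported on it, so $H(\cR_x)$ is a finite sum of terms $-\cR_x(\{y\}) \log \cR_x(\{y\})$ assembled from the measurable maps $x \mapsto \cR(x, \cdot)$.
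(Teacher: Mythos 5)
Your proof is correct and follows essentially the same route as the paper's own argument: both pass from $h_\mu (\cQ)$ to $h_{\mu \cQ^\omega |_T} (\sigma)$ via Theorem~\ref{measure-theoretic entropy coincide with its lift}, lift $\cR$ to $\wt{\cR}$ by Lemma~\ref{backward conditional transition probability kernel of muQ^N_0}, apply the Rokhlin formula of Proposition~\ref{h_nu(sigma) when T is expansive}, and descend to $X$ by taking $n=0$ in (\ref{muQ^N(A*X^infty)=muQ^[n](A)}). Your closing remark on the measurability of $x \mapsto H(\cR_x)$ is a valid extra check not spelled out in the paper, but it does not change the argument.
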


We need the following lemma in the proof of Theorem~\ref{t_h_mu(Q) when T is expansive}.

\begin{lemma}\label{backward conditional transition probability kernel of muQ^N_0}
    Let $T$ be a correspondence on a compact metric space $(X,  d)$, $\cQ \in \tpk(  X ; T)$, and $\mu \in \MMM(X, \cQ)$. %
    If $\cR$ is a backward conditional transition probability kernel of $\mu \cQ^{\zeroto{1}}$ from $X$ to $X$ supported on $\cO_2 (T)$, then $\wt{\cR} \in \tpk( X^\omega, X)$ given by
    \begin{equation}\label{tcR=}
        \wt{\cR} (\vect{x}{1}{\infty} ,  B) \= \cR (x_1 ,  B) \quad \text{for all } \vect{x}{1}{\infty} \in X^\omega \text{ and } B \in \SBB (X)
    \end{equation}
    is a backward conditional transition probability kernel of $\mu \cQ^\omega$ from $X^\omega$ to $X$ supported on $\cO_2 (T)\times X^\omega$.%
\end{lemma}

\begin{proof}
    To verify that $\wt{\cR}$ is a backward conditional transition probability kernel of $\mu \cQ^\omega$ from $\cO_\omega (T)$ to $X$ supported on $\cO_2 (T) \times X^\omega$, we should check  properties~(a) and~(b) in Definition~\ref{backward conditional transition probability kernel}.

    First, because $\cR$ is a backward conditional transition probability kernel of $\mu \cQ^{\zeroto{1}}$ from $X$ to $X$ supported on $\cO_2 (T)$, by Definition~\ref{backward conditional transition probability kernel}~(a), $\cR_x$ is supported on $T^{-1} (x)$ for all $x\in T(X)$. For each $\vect{x}{1}{\infty} \in T(x) \times X^\omega$, since $\cR_{x_1}$ is supported on $T^{-1} (x_1)$, we have $\wt{\cR} \bigl( \vect{x}{1}{\infty} ,  T^{-1} (x_1) \bigr) =\cR \bigl(x_1 , T^{-1} (x_1)\bigr) =1$. Thus, Definition~\ref{backward conditional transition probability kernel}~(a) holds for $\wt{\cR}$ as $T(X) \times X^\omega =\sigma ' (\cO_2 (T) \times X^\omega)$.
    
    By Remark~\ref{nu=muQ^[1]}, $\mu \cQ^{[1]} =\bigl(\mu \cR^{[1]}\bigr) \circ \gamma_2^{-1}$. So Lemma~\ref{A9} indicates $\mu \cQ^{[n]} =\bigl(\mu \cR^{[n]}\bigr) \circ \gamma_{n+1}^{-1}$ for all $n\in \N$. Thus, by (\ref{muQ^N(A*X^infty)=muQ^[n](A)}) and~(\ref{muQ^{n}(A_0**A_n)}) in Lemma~\ref{l_Qun}, for all $n\in \N$ and $A_0 ,\, A_1 ,\, \dots ,\, A_n \in \SBB (X)$, we have
    \begin{alignat*}{4}
    	\int_{A_1 \times \cdots \times A_n \times X^\omega} \! \wt{\cR} (\udx ,  A_0)   \diff (\mu \cQ^\omega) (\udx) 
    	&=\int_{A_1 \times \cdots \times A_n} \! \cR (x_1 ,  A_0)   \diff \bigl( \mu \cQ^{\zeroto{n-1}} \bigr) (x_1 , \dots ,x_n)\\
    	& =\int_{A_n \times \cdots \times A_1} \! \cR (x_1 ,  A_0)   \diff \bigl( \mu \cR^{\zeroto{n-1}} \bigr) (x_n , \dots ,x_1) \\
    	&=\bigl(\mu \cR^{[n]}\bigr) (A_n \times \cdots \times A_0)
    	=\bigl( \mu \cQ^{[n]} \bigr) (A_0 \times \cdots \times A_n) \\
    	&=(\mu \cQ^\omega) (A_0 \times \cdots \times A_n \times X^\omega).
    \end{alignat*}
    This is equivalent to property~(b) in Definition~\ref{backward conditional transition probability kernel} for $\wt{\cR}$ by Dynkin's $\pi$-$\lambda$ theorem (see the equivalence between  properties~(b) and~(b2) in Remark~\ref{two equivalent conditions for the conditional transition probability kernel}). Therefore, $\wt{\cR}$ is a backward conditional transition probability kernel of $\mu \cQ^\omega$ from $X^\omega$ to $X$ supported on $\cO_2 (T)\times X^\omega$.
\end{proof}

\begin{proof}[Proof of Theorem~\ref{t_h_mu(Q) when T is expansive}]
    First, by Lemma~\ref{backward conditional transition probability kernel of muQ^N_0}, the transition probability kernel $\wt{\cR}$ given by (\ref{tcR=})
    is a backward conditional transition probability kernel of $\mu \cQ^\omega$ from $X^\omega$ to $X$ supported on $\cO_2 (T)\times X^\omega$.

    Thus, by Proposition~\ref{h_nu(sigma) when T is expansive} and Theorem~\ref{measure-theoretic entropy coincide with its lift},
    \begin{equation*}
    \begin{aligned}
        h_\mu (\cQ) 
        &=h_{\mu \cQ^\omega |_T} (\sigma) =\int_{\cO_\omega (T)} \! H \bigl( \wt{\cR}_{ \vect{x}{1}{\infty} } \bigr)   \diff (\mu \cQ^\omega |_T) ( \vect{x}{1}{\infty} )\\
        &= \int_{\cO_\omega (T)} \! H(\cR_{x_1})   \diff (\mu \cQ^\omega |_T) ( \vect{x}{1}{\infty} )
        = \int_X \! H(\cR_{x_1})   \diff \mu (x_1)  ,
    \end{aligned}
    \end{equation*}
    where the last equality follows from taking $n=0$ in (\ref{muQ^N(A*X^infty)=muQ^[n](A)}). Therefore, $h_\mu (\cQ) =\int_X H(\cR_x)   \diff \mu (x)$.
\end{proof}

\begin{rem}\label{rohlin formula2}
	If the forward expansive correspondence $T$ in Theorem~\ref{t_h_mu(Q) when T is expansive} is induced by a single-valued forward expansive map $f$, i.e., $T=\cC_f$, we can conclude the following statement, which is equivalent to the Rokhlin formula (see e.g., \cite[Theorem~2.9.7]{PU10}):
	
	\smallskip
	
	Let $X$ be a compact metric space, $f\: X\to X$ be a forward expansive continuous map, and %
	$\mu \in \MMM(X,f)$. %
	If $\cR \in \tpk( X )$ satisfies $\mu \bigl( A\cap f^{-1} (B) \bigr) =\int_B \! \cR (x ,  A)   \diff \mu (x)$ for all $A ,\, B \in \SBB (X)$, then
	$h_\mu (f)= \int_X \! H(\cR_x)   \diff \mu (x)$.
\end{rem}

\subsection{Proof of Theorem~\ref{t_HVP}}\label{subsct_HVP}

\begin{lemma}\label{the orbit space is of full measure}
	Let $(X,  d)$ be a compact metric space, $\cQ \in \tpk(X)$ be supported by a correspondence $T$ on $X$, and $\mu\in \PPP(X)$. Then the measure $\mu \cQ^{\zeroto{n-1}}$ is supported on $\cO_n (T)$ for every $n\in \N$, and the measure $\mu \cQ^\omega$ is supported on $\cO_\omega (T)$.
\end{lemma}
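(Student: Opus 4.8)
The plan is to reduce everything to the pointwise claim that $\cQ^{\zeroto{n-1}}_x(\cO_n (T)) = 1$ for every $x \in X$ and every $n \in \N$, prove this by induction on $n$, and then obtain the assertion for $\mu\cQ^\omega$ by continuity from above. Since $\cO_n (T)$ is closed in $X^n$ by Lemma~\ref{upper-semicontinuity equivalent to closedness of graph}, being supported on $\cO_n (T)$ means exactly that $\mu\cQ^{\zeroto{n-1}}(\cO_n (T)) = 1$; granting the pointwise claim, integrating against $\mu$ as in Definition~\ref{transition probability kernel act on measures} gives $\mu\cQ^{\zeroto{n-1}}(\cO_n (T)) = \int_X \cQ^{\zeroto{n-1}}(x, \cO_n (T)) \diff\mu(x) = 1$, so the finite-level statement follows at once.

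For the induction, the base case $n = 1$ is immediate: $\cO_1 (T) = X$ and $\cQ^{\zeroto{0}}_x = \delta_x$, whence $\cQ^{\zeroto{0}}_x(\cO_1 (T)) = 1$. For the inductive step, assuming $\cQ^{\zeroto{n-1}}_x(\cO_n (T)) = 1$ for all $x$, I would apply the recursive formula (\ref{cQ^n+1(x,A_n+1)=}) with $A_{n+1} = \cO_{n+1}(T)$. The crux is to identify the fiber (\ref{pi_n+1(x_1,...,x_n;A_n+1)=}): for $(x_1, \dots, x_n) \in \cO_n (T)$ one has $\pi_{n+1}(x_1, \dots, x_n; \cO_{n+1}(T)) = T(x_n)$, since appending an entry $x_{n+1}$ keeps the orbit in $\cO_{n+1}(T)$ precisely when $x_{n+1} \in T(x_n)$. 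Splitting the integral in (\ref{cQ^n+1(x,A_n+1)=}) over $\cO_n (T)$ and its complement --- the latter being $\cQ^{\zeroto{n-1}}_x$-null by the inductive hypothesis --- and using that $\cQ$ is supported by $T$, i.e.\ $\cQ(x_n, T(x_n)) = 1$ (Definition~\ref{support}), the integrand is identically $1$ on $\cO_n (T)$, so $\cQ^{\zeroto{n}}_x(\cO_{n+1}(T)) = \cQ^{\zeroto{n-1}}_x(\cO_n (T)) = 1$, completing the induction.

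Finally, to pass to $\mu\cQ^\omega$, I would write $\cO_\omega (T) = \bigcap_{n \geqslant 1}(\cO_n (T) \times X^\omega)$, a decreasing intersection of Borel sets. By (\ref{muQ^N(A*X^infty)=muQ^[n](A)}) together with the finite-level result, $\mu\cQ^\omega(\cO_n (T) \times X^\omega) = \mu\cQ^{\zeroto{n-1}}(\cO_n (T)) = 1$ for every $n$; since $\mu\cQ^\omega$ is a probability measure and the sets decrease, continuity from above yields $\mu\cQ^\omega(\cO_\omega (T)) = \lim_{n \to +\infty} \mu\cQ^\omega(\cO_n (T) \times X^\omega) = 1$. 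The argument is essentially routine; the only points demanding care are the off-by-one bookkeeping in the index of $\cQ^{\zeroto{n}}$ (recall $\cQ^{\zeroto{n-1}}_x$ is a measure on $X^n$) and the measurability of the integrand $(x_1, \dots, x_n) \mapsto \cQ(x_n, \pi_{n+1}(x_1, \dots, x_n; \cO_{n+1}(T)))$, which is guaranteed because $\cQ^{\zeroto{n}}$ is a genuine transition probability kernel (Lemma~\ref{Q^[n] is transition probability kernel}) and $\cO_{n+1}(T)$ is Borel. I do not expect any substantive obstacle beyond this bookkeeping.
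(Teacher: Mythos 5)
Your proposal is correct and follows essentially the same route as the paper's proof: the same induction establishing the pointwise identity $\cQ^{\zeroto{n-1}}_x(\cO_n(T))=1$ via the recursive formula, the same identification of the fiber as $T(x_n)$ together with the support hypothesis $\cQ(x_n, T(x_n))=1$, and the same passage to $\mu\cQ^\omega$ by writing $\cO_\omega(T)$ as the decreasing intersection $\bigcap_{n}\bigl(\cO_n(T)\times X^\omega\bigr)$ and invoking continuity from above with (\ref{muQ^N(A*X^infty)=muQ^[n](A)}). No gaps; the bookkeeping points you flag are exactly the ones the paper handles implicitly.
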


\begin{proof}
	First we show that for each $n\in \N$ and each point $x\in X$, we have
	\begin{equation}\label{Q^[n-1](x,O_n(T)capx*X^n-1)=1}
		\cQ^{\zeroto{n-1}} (x,  \cO_n (T))=1.
	\end{equation}
	
	If $n=1$, (\ref{Q^[n-1](x,O_n(T)capx*X^n-1)=1}) holds because $\cQ^{\zeroto{0}} (x,  \cO_1 (T))= \wh{\operatorname{id}_X} (x,  X)=1$.
	
	Now suppose that (\ref{Q^[n-1](x,O_n(T)capx*X^n-1)=1}) holds for some $n\in \N$. By (\ref{cQ^n+1(x,A_n+1)=}), we have
	\begin{equation*}
		\begin{aligned}
			\cQ^{\zeroto{n}} (x,  \cO_{n+1} (T))
			&= \int_{X^n} \! \cQ (x_n ,  \pi_{n+1} ( \vect{x}{1}{n}  ;  \cO_{n+1} (T)))   \diff \cQ_x^{\zeroto{n-1}} ( \vect{x}{1}{n}  )\\
			&= \int_{\cO_n (T)} \! \cQ (x_n ,  T(x_n))   \diff \cQ_x^{\zeroto{n-1}} ( \vect{x}{1}{n}  )
			= \int_{\cO_n (T)} \diff \cQ_x^{\zeroto{n-1}} ( \vect{x}{1}{n}  )
			= \cQ^{\zeroto{n-1}} (x,  \cO_n (T))
			=1,
		\end{aligned}
	\end{equation*}
	where the third equality follows from $\cQ (x_n ,  T(x_n))=1$ because $\cQ$  is supported by $T$.
	Hence, by induction, we get that (\ref{Q^[n-1](x,O_n(T)capx*X^n-1)=1}) holds for all $n\in \N$.
	
	For each $n\in \N$, by (\ref{Q^[n-1](x,O_n(T)capx*X^n-1)=1}) and Definition~\ref{transition probability kernel act on measures}, we have
	\begin{equation*}
		\bigl(\mu \cQ^{\zeroto{n-1}}\bigr) (\cO_n (T))
		= \int_X \! \cQ^{\zeroto{n-1}} (x,  \cO_n (T))   \diff \mu (x)
		= \int_X \diff \mu 
		=1.
	\end{equation*}
	
	By (\ref{muQ^N(A*X^infty)=muQ^[n](A)}), we get
	\begin{equation*}
		\begin{aligned}
			(\mu \cQ^\omega) (\cO_\omega (T))
			&= (\mu \cQ^\omega) \biggl(\bigcap_{n=1}^{+\infty} \cO_n (T)\times X^\omega\biggr)
			= \lim_{n\to +\infty} (\mu \cQ^\omega) \bigl(\cO_n (T)\times X^\omega\bigr)\\
			&= \lim_{n\to +\infty} \bigl(\mu \cQ^{\zeroto{n-1}}\bigr) (\cO_n (T))
			= \lim_{n\to +\infty} 1
			=1.
		\end{aligned}
	\end{equation*}
	Therefore, $\mu \cQ^{\zeroto{n-1}}$ is supported on $\cO_n (T)$ for every $n\in \N$, and $\mu \cQ^\omega$ is supported on $\cO_\omega (T)$.
\end{proof}

\begin{rem}\label{muQ|_T}
	Suppose that $\cQ$ is supported by $T$. Denote by $\mu \cQ^\omega |_T$ the restricted measure of $\mu \cQ^\omega$ on $\cO_\omega (T)$. Since $(\mu \cQ^\omega) (\cO_\omega (T))=1$ and $\cO_\omega (T)$ is forward-invariant under $\sigma ' \: X^\omega \to X^\omega$, the measure-preserving system $(\cO_\omega (T),  \SBB (\cO_\omega (T)),  \mu \cQ^\omega |_T ,  \sigma)$ is isomorphic to $(X^\omega ,  \SBB (X^\omega) ,  \mu \cQ^\omega ,  \sigma' )$, so their entropies are equal. Thus, we can rewrite Theorem~\ref{measure-theoretic entropy coincide with its lift} as
	\begin{equation}\label{h_mu(Q)=h_muQ^N|_T(sigma)}
		h_\mu (\cQ)= h_{\mu \cQ^\omega |_T} (\sigma).
	\end{equation}
\end{rem}

Recall the projection maps $\tpi_1$, $\tpi_2$, and $\tpi_{12}$ given in (\ref{tpi=}). Let $T$ be a correspondence on a compact metric space $(X,  d)$. In Proposition~\ref{decompose a two-dimentional measure}, if $X_1 =X_2 =X$ and $M= \cO_2 (T)$, then for $\mu \in \PPP(X)$ and $\cQ \in \tpk( X )$ from Proposition~\ref{decompose a two-dimentional measure}, $\cQ$ is supported by $T$, $\mu \cQ^{\zeroto{1}} =\nu$, and $\mu =\nu \circ \tpi_1^{-1}$. Hence, we get the following:

\begin{prop}\label{decompose a two-dimentional measure with support}
	Let $T$ be a correspondence on a compact metric space $(X,  d)$ and $\nu \in \PPP \bigl( X^2 \bigr)$ be supported on $\cO_2 (T)$. Then there exists $\cQ\in \tpk( X ; T )$ such that $\bigl( \nu \circ \tpi_1^{-1} \bigr) \cQ^{\zeroto{1}} =\nu$.
	
	Moreover, if $\cQ, \, \cQ ' \in \tpk( X )$ satisfy $\nu =\bigl(\nu \circ \tpi_1^{-1}\bigr) \cQ^{\zeroto{1}} =\bigl(\nu \circ \tpi_1^{-1}\bigr) \cQ '^{\zeroto{1}}$, then for each $A\in \SBB (X)$, $\cQ (x,  A)= \cQ '(x,  A)$ holds for $\bigl(\nu \circ \tpi_1^{-1}\bigr)$-almost every $x\in X$.
\end{prop}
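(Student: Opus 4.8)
The plan is to obtain both assertions from the general disintegration result Proposition~\ref{decompose a two-dimentional measure}, specialized exactly as in the paragraph preceding the statement: I take $X_1 = X_2 = X$, let $M \= \cO_2(T)$, and write $\mu \= \nu \circ \tpi_1^{-1}$ for the first marginal of $\nu$. Since $\nu$ is supported on $M = \cO_2(T)$ by hypothesis, that proposition yields a transition probability kernel $\cQ$ on $X$ for which property~(a) states that $\cQ_x$ is carried by the fiber $\{ x_2 \in X : (x, x_2) \in \cO_2(T) \} = T(x)$ --- that is, $\cQ$ is supported by $T$ in the sense of Definition~\ref{support} --- while property~(b) reads $\mu \cQ^{\zeroto{1}} = \nu$. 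As the same proposition identifies $\mu = \nu \circ \tpi_1^{-1}$, this is precisely $(\nu \circ \tpi_1^{-1}) \cQ^{\zeroto{1}} = \nu$, establishing existence.

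The step I expect to need the most care is the gap between a disintegration holding for $\mu$-almost every base point and the requirement in Definition~\ref{support} that $\cQ_x(T(x)) = 1$ for \emph{every} $x \in X$. If Proposition~\ref{decompose a two-dimentional measure} supplies the support condition only $\mu$-almost everywhere, say off a $\mu$-null Borel set $N$, then I would repair $\cQ$ on $N$ by fixing a Borel measurable branch $a_0$ of $T$ (which exists by \cite[Lemma~1.1]{invariant measures for correspondences}) and redefining $\cQ_x \= \delta_{a_0(x)}$ for $x \in N$. This keeps $x \mapsto \cQ(x, A)$ measurable and forces $\cQ_x(T(x)) = 1$ for all $x$; moreover, since $N$ is $\mu$-null, changing the integrand on $N$ leaves $(\mu \cQ^{\zeroto{1}})(A) = \int_X \cQ^{\zeroto{1}}(x, A) \diff \mu(x)$ untouched, so the identity $\mu \cQ^{\zeroto{1}} = \nu$ is preserved. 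This gluing is the only genuinely nonautomatic point in the argument.

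For the uniqueness clause I would argue directly. Unwinding Definition~\ref{Q^[n]} on product sets gives $\cQ^{\zeroto{1}}(x, B \times A) = \mathbbold{1}_B(x)\, \cQ_x(A)$, whence, writing $\mu = \nu \circ \tpi_1^{-1}$, for all $A,\, B \in \SBB(X)$,
\begin{equation*}
	\int_B \! \cQ(x, A) \diff \mu(x) = (\mu \cQ^{\zeroto{1}})(B \times A) = \nu(B \times A) = (\mu \cQ'^{\zeroto{1}})(B \times A) = \int_B \! \cQ'(x, A) \diff \mu(x).
\end{equation*}
Fixing $A$, the two functions $x \mapsto \cQ(x, A)$ and $x \mapsto \cQ'(x, A)$ are bounded and $\SBB(X)$-measurable and have equal integrals over every $B \in \SBB(X)$, so they coincide for $\mu$-almost every $x$; this is exactly $\cQ(x, A) = \cQ'(x, A)$ for $(\nu \circ \tpi_1^{-1})$-almost every $x$. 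Once Proposition~\ref{decompose a two-dimentional measure} is granted, everything beyond the support-everywhere adjustment is routine bookkeeping with the definitions of $\cQ^{\zeroto{1}}$ and of ``supported by''.
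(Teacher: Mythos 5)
Your proposal is correct and follows essentially the same route as the paper: the paper obtains this proposition as an immediate corollary of Proposition~\ref{decompose a two-dimentional measure}, specialized exactly as you do with $X_1 = X_2 = X$ and $M = \cO_2(T)$. Two remarks. First, the precautionary repair you describe is unnecessary: property~(a) of Proposition~\ref{decompose a two-dimentional measure} already asserts $\cQ\bigl(x_1, \{x_2 : (x_1,x_2)\in M\}\bigr)=1$ for \emph{every} $x_1 \in \kappa(M)$, and here $\kappa(\cO_2(T)) = X$ because a correspondence takes values in non-empty closed sets; in fact, the measurable-branch patch you propose is precisely the device used \emph{inside} the paper's proof of Proposition~\ref{decompose a two-dimentional measure} to upgrade the almost-everywhere support condition to an everywhere one, so by the time you invoke that proposition the work is already done. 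Second, your direct uniqueness computation via $\cQ^{\zeroto{1}}(x, B\times A) = \mathbbold{1}_B(x)\,\cQ_x(A)$ is not only correct but slightly more careful than citing the uniqueness clause of Proposition~\ref{decompose a two-dimentional measure} verbatim: that clause, as stated, assumes both kernels satisfy properties (a) and (b), whereas the uniqueness assertion here assumes only the marginal identity $(\nu\circ\tpi_1^{-1})\cQ^{\zeroto{1}} = \nu$ (i.e., property~(b)) and does not require the kernels to be supported by $T$; your argument shows (b) alone suffices, which matches what the paper's underlying proof actually uses.
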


\begin{lemma}\label{387efwgyuc83w2}
	Let $T$ be a correspondence on a compact metric space $X$ and $\nu \in \MMM(\cO_\omega (T), \sigma)$. Then there exists $\cQ\in \tpk( X ; T)$ such that $\nu \circ \tpi_1^{-1} \in \MMM (X ,\cQ)$ and $\bigl(\nu \circ \tpi_1^{-1}\bigr) \cQ^{[1]} =\nu \circ \tpi_{12}$.
\end{lemma}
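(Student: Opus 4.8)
The plan is to reduce everything to Proposition~\ref{decompose a two-dimentional measure with support} applied to the two-dimensional marginal of $\nu$, and then to extract $\cQ$-invariance of the first marginal from the $\sigma$-invariance of $\nu$. Concretely, I would set $\mu \= \nu \circ \tpi_1^{-1}$ and let $\nu_2 \= \nu \circ \tpi_{12}^{-1}$ be the Borel probability measure on $X^2$ obtained by projecting $\nu$ onto its first two coordinates. First I would check that $\nu_2$ is supported on $\cO_2 (T)$: since $\nu$ is supported on $\cO_\omega (T)$ and every orbit $(x_1 ,  x_2 ,  \dots) \in \cO_\omega (T)$ satisfies $x_2 \in T(x_1)$, i.e.\ $(x_1 ,  x_2) \in \cO_2 (T)$, we have $\tpi_{12} (\cO_\omega (T)) \subseteq \cO_2 (T)$, whence $\nu_2 (X^2 \setminus \cO_2 (T)) =0$. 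I would also record the elementary identity $\nu_2 \circ \tpi_1^{-1} =\nu \circ \tpi_1^{-1} =\mu$, which follows by unwinding the definitions of the projections.

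With these preliminaries, Proposition~\ref{decompose a two-dimentional measure with support} (applied with $\nu_2$ in place of its ``$\nu$'') yields a transition probability kernel $\cQ$ on $X$ supported by $T$ with $(\nu_2 \circ \tpi_1^{-1}) \cQ^{\zeroto{1}} =\nu_2$. Substituting $\nu_2 \circ \tpi_1^{-1} =\mu$ and $\nu_2 =\nu \circ \tpi_{12}^{-1}$, this is exactly the desired identity $\mu \cQ^{\zeroto{1}} =\nu \circ \tpi_{12}^{-1}$.

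It remains to show $\mu \in \cM (X ,  \cQ)$, i.e.\ $\mu \cQ =\mu$, and this is where the $\sigma$-invariance enters. Taking the second-coordinate marginal of the identity just obtained and using $(\mu \cQ^{\zeroto{1}}) \circ \tpi_2^{-1} =\mu \cQ$ (equation~(\ref{muQ^[1]circhpi_2^-1=muQ})), I get $\mu \cQ =(\nu \circ \tpi_{12}^{-1}) \circ \tpi_2^{-1} =\nu \circ \tpi_2^{-1}$, since $\tpi_2 \circ \tpi_{12} =\tpi_2$ as maps on $\cO_\omega (T)$. The key observation is then that $\tpi_2 =\tpi_1 \circ \sigma$ on $\cO_\omega (T)$, so that $\nu \circ \tpi_2^{-1} =(\nu \circ \sigma^{-1}) \circ \tpi_1^{-1}$; the $\sigma$-invariance $\nu \circ \sigma^{-1} =\nu$ then gives $\nu \circ \tpi_2^{-1} =\nu \circ \tpi_1^{-1} =\mu$, so that $\mu \cQ =\mu$ as required, and $\mu \in \cM (X ,  \cQ)$ by Definition~\ref{invariant measure}.

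The argument is essentially bookkeeping once Proposition~\ref{decompose a two-dimentional measure with support} is in hand; the only genuinely non-formal step is the passage from $\sigma$-invariance to $\cQ$-invariance, which hinges on correctly matching the second-coordinate marginal of $\mu \cQ^{\zeroto{1}}$ with the pushforward of $\nu$ under $\sigma$. I expect the main care to be needed in keeping straight which projection acts on $X^2$ and which on $\cO_\omega (T)$, and in confirming that the cited marginal identities hold for measures merely supported on (rather than defined on) the relevant subspaces.
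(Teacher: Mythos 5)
Your proposal is correct and follows essentially the same route as the paper: both apply Proposition~\ref{decompose a two-dimentional measure with support} to the marginal $\nu \circ \tpi_{12}^{-1}$ (noting it is supported on $\cO_2(T)$), and both derive $\cQ$-invariance of $\nu \circ \tpi_1^{-1}$ from the identity $\bigl(\mu \cQ^{\zeroto{1}}\bigr) \circ \tpi_2^{-1} = \mu \cQ$ of (\ref{muQ^[1]circhpi_2^-1=muQ}) together with $\tpi_2 = \tpi_1 \circ \sigma$ and the $\sigma$-invariance of $\nu$. The bookkeeping points you flag (which projection acts on which space, and the marginal identities for measures supported on subspaces) are handled exactly as in the paper's argument.
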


\begin{proof}
	The measure $\nu \circ \tpi_{12}^{-1}$ is a Borel probability measure on $X^2$ supported on $\cO_2 (T)$ because the image of $\tpi_{12}$ lies in $\cO_2 (T)$. By Proposition~\ref{decompose a two-dimentional measure with support}, we can choose $\cQ \in \tpk( X ; T )$ such that $\nu \circ \tpi_{12}^{-1} =\bigl(\bigl(\nu \circ \tpi_{12}^{-1} \bigr) \circ \tpi_1^{-1} \bigr) \cQ^{\zeroto{1}} =\bigl(\nu \circ \tpi_1^{-1} \bigr) \cQ^{[1]}$. Since $\nu$ is $\sigma$-invariant and $\bigl( \bigl(\nu \circ \tpi_1^{-1} \bigr) \cQ^{\zeroto{1}} \bigr) \circ \tpi_2^{-1} = \bigl(\nu \circ \tpi_1^{-1} \bigr) \cQ$ from (\ref{muQ^[1]circhpi_2^-1=muQ}), we have
	\begin{equation*}
			\bigl(\nu \circ \tpi_1^{-1} \bigr) \cQ 
			=\bigl(\bigl(\nu \circ \tpi_1^{-1} \bigr) \cQ^{\zeroto{1}} \bigr) \circ \tpi_2^{-1}
			=\bigl(\nu \circ \tpi_{12}^{-1} \bigr) \circ \tpi_2^{-1} 
			=\nu \circ \tpi_2^{-1}
			=\bigl(\nu \circ \sigma^{-1} \bigr) \circ \tpi_1^{-1}
			= \nu \circ \tpi_1^{-1}.
	\end{equation*}
	Therefore, $\nu \circ \tpi_1^{-1} =\mu$ is $\cQ$-invariant.
\end{proof}

\begin{proof}[Proof of Theorem~\ref{t_HVP}]

	(i) 
	By the Bogolyubov–Krylov theorem (see e.g., \cite[Theorem~3.1.8]{PU10}), we can choose $\nu \in \MMM(\cO_\omega (T),\sigma)$. Then statement~(i) follows from Lemma~\ref{387efwgyuc83w2}.
	
	\smallskip
	
	(ii) Fix arbitrary $\cQ \in \tpk( X ; T )$ and $\mu \in \MMM( X, \cQ)$.

	We apply the (classical) Variational Principle to the dynamical system $(\cO_\omega (T),  \sigma)$:
	\begin{equation*}
		P\bigl( \sigma ,  \tphi \bigr)= \sup_{\nu \in \MMM(\cO_\omega (T),  \sigma)} \bigg\{  h_\nu (\sigma)+ \int_{\cO_\omega (T)} \! \tphi   \diff \nu \bigg\}.
	\end{equation*}
	
	Since $\mu \cQ^\omega |_T   \in \MMM(\cO_\omega (T),  \sigma)$ for arbitrary $\mu$ and $\cQ$, we get
	\begin{equation*}
		P \bigl( \sigma ,  \tphi \bigr)\geq \sup_{\cQ ,  \mu} \biggl\{ h_{\mu \cQ^\omega |_T} (\sigma)+ \int_{\cO_\omega (T)} \! \tphi   \diff (\mu \cQ^\omega |_T)\biggr\}.
	\end{equation*}
	
	Recall $P(T ,  \phi)= P\bigl( \sigma ,  \tphi \bigr)$ from Theorem~\ref{topological pressure coincide with the lift} and $h_\mu (\cQ)= h_{\mu \cQ^\omega |_T} (\sigma)$ from (\ref{h_mu(Q)=h_muQ^N|_T(sigma)}). We can rewrite the inequality above as
	$P(T,  \phi)\geq \sup_{\cQ ,  \mu} \bigl\{ h_\mu (\cQ)+ \int_{\cO_\omega (T)} \! \tphi   \diff (\mu \cQ^\omega |_T)\bigr\}$.

	Since $\cQ_x (T(x))=1$ for all $x \in X$ and since $\mu \cQ^\omega (\cO_\omega (T))=1$ from Lemma~\ref{the orbit space is of full measure}, (\ref{int_O_+infty(T)tphidmuQ^N|_T=intphidmu}) can be written as
	$\int_{\cO_\omega (T)} \! \tphi   \diff (\mu \cQ^\omega) =\int_X \! \int_{T(x_1)} \! \phi (x_1 ,  x_2)   \diff \cQ_{x_1} (x_2)   \diff \mu (x_1)$.

	Therefore, we get
	$P(T,  \phi)\geq \sup_{\cQ ,  \mu} \bigl\{ h_\mu (\cQ)+ \int_X \! \int_{T(x_1)} \! \phi (x_1 ,  x_2)   \diff \cQ_{x_1} (x_2)   \diff \mu (x_1) \bigr\}$.
\end{proof}

The following result is used in Subsection~\ref{subsct_Lee--Lyubich--Markorov--Mukherjee anti-holomorphic correspondences} and its proof uses similar techniques in this subsection.

\begin{prop}\label{aiubcw092d9ua3wwc}
	Let $T$ be a correspondences on a compact metric space $X$, $Y\in \cF (X)$ such that $T|_Y$ is a correspondence on $Y$, and $\phi \in C(\cO_2 (X) , \R)$. Assume that for each $\mu \in \PPP(X)$ with the property that there exists $\cQ \in \tpk( X ; T)$ such that $\mu \in \MMM(X,  \cQ)$, we have $\mu (Y)=1$. Then $P(T , \phi) =P(T|_Y , \phi)$.
\end{prop}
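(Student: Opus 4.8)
The plan is to transfer everything to the shift dynamics via Theorem~\ref{topological pressure coincide with the lift} and then exploit the hypothesis to show that \emph{every} $\sigma$-invariant measure of the shift on $\cO_\omega(T)$ already lives on the subsystem $\cO_\omega(T|_Y)$. First I would record the identification $\cO_\omega(T|_Y) = \cO_\omega(T) \cap Y^\omega$, which follows directly from $T|_Y(x) = T(x) \cap Y$; since $Y \in \cF(X)$ is closed and $\cO_\omega(T)$ is closed, this set is a compact, $\sigma$-invariant subset of $\cO_\omega(T)$, so the restricted shift is a continuous map on a compact metric space. Note also that the restriction of the lift $\tphi$ to $\cO_\omega(T|_Y)$ is precisely the lift of $\phi|_{\cO_2(T|_Y)}$. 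By Theorem~\ref{topological pressure coincide with the lift} applied to $T$ and to $T|_Y$, it therefore suffices to prove $P(\sigma, \tphi) = P\bigl(\sigma|_{\cO_\omega(T|_Y)}, \tphi|_{\cO_\omega(T|_Y)}\bigr)$.

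Next I would apply the classical Variational Principle (as invoked in the proof of Theorem~\ref{t_HVP}) to both shift systems, writing
\begin{equation*}
	P(\sigma, \tphi) = \sup_\nu \Bigl\{ h_\nu(\sigma) + \int \tphi \diff \nu \Bigr\},
\end{equation*}
where $\nu$ ranges over all $\sigma$-invariant Borel probability measures on $\cO_\omega(T)$, together with the analogous formula on $\cO_\omega(T|_Y)$. The inclusion $\cO_\omega(T|_Y) \hookrightarrow \cO_\omega(T)$ sends every $\sigma$-invariant measure on the subsystem to one on $\cO_\omega(T)$ with the same entropy and the same integral of $\tphi$, so the supremum over $\cO_\omega(T)$ dominates that over $\cO_\omega(T|_Y)$. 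The crux is the reverse inequality, for which I must show that each $\sigma$-invariant $\nu$ on $\cO_\omega(T)$ satisfies $\nu(\cO_\omega(T|_Y)) = 1$; granting this, its entropy and its integral of $\tphi$ coincide with those computed in the subsystem, so its contribution is bounded by the subsystem supremum.

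To establish the concentration, I would fix a $\sigma$-invariant $\nu$ on $\cO_\omega(T)$. By Lemma~\ref{387efwgyuc83w2} there is a transition probability kernel $\cQ$ on $X$ supported by $T$ for which $\mu \= \nu \circ \tpi_1^{-1}$ is $\cQ$-invariant; the standing hypothesis then forces $\mu(Y) = 1$, that is, $\nu(\{(x_1, x_2, \dots) : x_1 \in Y\}) = 1$. Since $\{x_k \in Y\} = \sigma^{-(k-1)}\{x_1 \in Y\}$ as subsets of $\cO_\omega(T)$, the $\sigma$-invariance of $\nu$ gives $\nu(\{x_k \in Y\}) = 1$ for every $k \in \N$, and intersecting these countably many full-measure sets yields $\nu(\cO_\omega(T) \cap Y^\omega) = \nu(\cO_\omega(T|_Y)) = 1$. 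Combining the two inequalities gives equality of the suprema, hence $P(T, \phi) = P(T|_Y, \phi|_{\cO_2(T|_Y)})$.

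I expect the only delicate point to be the bookkeeping that the entropy $h_\nu(\sigma)$ and the potential integral of such a $\nu$ are genuinely computed inside the subsystem: this is the standard invariance of the measure-preserving structure under passage to a full-measure $\sigma$-invariant Borel subset, but it should be stated explicitly so that the restricted Variational Principle can be legitimately invoked. Everything else is a short chain of the already-available shift characterizations (Theorem~\ref{topological pressure coincide with the lift} and Lemma~\ref{387efwgyuc83w2}) together with the elementary $\sigma$-invariance propagation of the first-coordinate estimate to all coordinates.
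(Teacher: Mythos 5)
Your proposal is correct and follows essentially the same route as the paper's proof: reduce both pressures to shift systems via Theorem~\ref{topological pressure coincide with the lift}, use Lemma~\ref{387efwgyuc83w2} together with the hypothesis to show every $\sigma$-invariant measure on $\cO_\omega(T)$ gives full mass to $\{x_1 \in Y\}$, propagate this to all coordinates by $\sigma$-invariance so that $\nu(\cO_\omega(T|_Y)) = 1$, and conclude by the classical Variational Principle on both systems. The "delicate point" you flag (that entropy and the potential integral are computed identically on a full-measure invariant subsystem) is the same implicit isomorphism-of-measure-preserving-systems argument the paper relies on, so there is no gap.
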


\begin{proof}
	Denote by $\sigma_X$ the shift map on $\cO_\omega (T)$, by $\sigma_Y \= \sigma_X |_{Y^\omega \cap \cO_\omega (X)}$ the shift map on $\cO_\omega (T|_Y) =Y^\omega \cap \cO_\omega (X)$, by $\tphi_X \in C(\cO_\omega (T) ,\R)$ the function given by $\tphi_X ( \vect{x}{1}{\infty} ) \= \phi (x_1 ,x_2)$ for all $\vect{x}{1}{\infty}  \in \cO_\omega (T)$, and by $\tphi_Y \in C(\cO_\omega (T|_Y) ,\R)$ the function given by $\tphi_Y ( \vect{x}{1}{\infty} ) \= \phi (x_1 ,x_2)$ for all $\vect{x}{1}{\infty}  \in \cO_\omega (T|_Y)$. By Theorem~\ref{topological pressure coincide with the lift}, we have $P(T ,\phi) =P \bigl( \sigma_X ,\tphi_X\bigr)$ and $P(T|_Y , \phi) =P\bigl( \sigma_Y ,\tphi_Y\bigr)$.
	
	Fix an arbitrary $\nu \in \MMM(\cO_\omega (T), \sigma_X)$. By Lemma~\ref{387efwgyuc83w2}, the measure $\mu$ on $X$ given by $\mu (A) \= \nu (A \times X^\omega \cap \cO_\omega (T))$ for all $A \in \SBB (X)$ is $\cQ$-invariant for some $\cQ \in \tpk( X )$, so $\mu (Y) =1$, i.e., $\nu (Y \times X^\omega \cap \cO_\omega (T)) =1$. Since $\nu$ is $\sigma_X$-invariant, $\nu (X^n \times Y \times X^\omega \cap \cO_\omega (T)) =\nu (\sigma_X^{-n} (Y \times X^\omega \cap \cO_\omega (T))) =\nu (Y \times X^\omega \cap \cO_\omega (T)) =1$ holds for all $ n\in \N_0$. Consequently, we have $\nu (\cO_\omega (T|_Y)) =\nu (Y^\omega \cap \cO_\omega (T)) =\nu \bigl(\bigcap_{n=0}^{+\infty} (X^n \times Y \times X^\omega \cap \cO_\omega (T)) \bigr) =1$. 
	
	We have verified $\nu (\cO_\omega (T|_Y)) =1$ for every $\nu \in \MMM( \cO_\omega (T), \sigma_X )$. Since $\tphi_Y =\tphi_X |_{\cO_\omega (T|_Y)}$, applying the classical Variational Principle for $\sigma_X$ with potential $\tphi_X$ and $\sigma_Y$ with potential $\tphi_Y$, we get $P \bigl( \sigma_X ,\tphi_X\bigr) =P\bigl( \sigma_Y ,\tphi_Y\bigr)$. Recall $P(T ,\phi) =P \bigl( \sigma_X ,\tphi_X\bigr)$ and $P(T|_Y , \phi) =P\bigl( \sigma_Y ,\tphi_Y\bigr)$, so we conclude $P(T , \phi) =P(T|_Y , \phi)$.
\end{proof}

\subsection{Proof of Theorem~\ref{t_VP_forward_exp}}\label{subsct_proof of Theorem}

For a correspondence $T$ on a compact metric space $(X ,  d)$, recall that $\tpi_{12} \: \cO_\omega (T) \to X^2$ is the projection given by $\tpi_{12} ( \vect{x}{1}{\infty} )= (x_1 ,  x_2)$. For each $\nu \in \PPP( \cO_\omega (T) )$, set $\nu_{12} \= \nu \circ \tpi_{12}^{-1} \in \PPP \bigl(X^2\bigr)$. Note that $\nu_{12}$ is supported on $\cO_2 (T)$ because the image of $\tpi_{12}$ lies in $\cO_2 (T)$. The following proposition is useful to prove Theorem~\ref{t_VP_forward_exp}.

\begin{prop}\label{measure-theoretic entropy less than the corresponding Markov measure-theoretic entropy}
    Let $T$ be a forward expansive correspondence on a compact metric space $(X,  d)$ with an expansive constant $\epsilon >0$ and $\nu \in \MMM(\cO_\omega (T),\sigma)$. If $\cQ \in \tpk( X ; T )$ and $\mu \in \MMM(X,\cQ)$ satisfy $\nu_{12} =\mu \cQ^{\zeroto{1}}$, %
    then
    $
    	h_\nu (\sigma) \leq h_\mu (\cQ).
    $
\end{prop}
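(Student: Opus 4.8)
The plan is to evaluate both sides through the Rokhlin formulas already established and then compare them by a convexity argument. Applying Proposition~\ref{h_nu(sigma) when T is expansive} to $\nu$ produces a backward conditional transition probability kernel $\cP$ of $\hnu$ from $X^\omega$ to $X$ supported on $\cO_2(T)\times X^\omega$ with
\[
	h_\nu(\sigma)=\int_{\cO_\omega(T)} H(\cP_{\udx})\diff\nu(\udx),
\]
while Theorem~\ref{t_h_mu(Q) when T is expansive} applied to $(\cQ,\mu)$ produces a backward conditional transition probability kernel $\cR$ of $\mu\cQ^{\zeroto{1}}$ from $X$ to $X$ supported on $\cO_2(T)$ with
\[
	h_\mu(\cQ)=\int_X H(\cR_{x_1})\diff\mu(x_1).
\]
By Remark~\ref{preimage of forward expansive correspondence is finite}, for each $x_1\in X$ the fibre $T^{-1}(x_1)$ is finite, and both $\cR_{x_1}$ and every $\cP_{\udx}$ whose first coordinate is $x_1$ are supported on $T^{-1}(x_1)$; hence all the quantities $H(\cP_{\udx})$ and $H(\cR_{x_1})$ are Shannon entropies of probability vectors on a common finite set, and in particular are finite.

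Next I would identify $\cR$ as the conditional average of $\cP$. Taking first-coordinate marginals in the hypothesis $\nu_{12}=\mu\cQ^{\zeroto{1}}$ shows $\nu\circ\tpi_1^{-1}=\mu$, so $\nu$ disintegrates as $\nu=\int_X \nu_{x_1}\diff\mu(x_1)$, where $\nu_{x_1}$ is carried by the orbits whose first entry is $x_1$. Let $\Lambda$ denote the joint law of the pair $(x_0,\udx)$ with $\udx\sim\nu$ and $x_0\mid\udx\sim\cP_{\udx}$. Using the defining property~(b) of the backward conditional kernel $\cP$ of $\hnu$ together with the $\sigma$-invariance of $\nu$, the law of $(x_0,x_1)$ under $\Lambda$ equals $\nu_{12}$, and by the hypothesis this is $\mu\cQ^{\zeroto{1}}$, whose conditional law of $x_0$ given $x_1$ (with $x_1$-marginal $\mu$) is exactly $\cR_{x_1}$ by the defining property of $\cR$. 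On the other hand, since $x_1$ is a function of $\udx$, the tower property for $\Lambda$ identifies this same conditional law with $\int_{\cO_\omega(T)}\cP_{\udx}\diff\nu_{x_1}(\udx)$. By the essential uniqueness of conditional distributions, $\cR_{x_1}=\int_{\cO_\omega(T)}\cP_{\udx}\diff\nu_{x_1}(\udx)$ for $\mu$-almost every $x_1$.

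The comparison then follows from concavity. As a sum of the concave functions $t\mapsto-t\log t$, the functional $H$ is concave on the simplex of probability measures on the finite set $T^{-1}(x_1)$, so Jensen's inequality applied to the average $\cR_{x_1}=\int\cP_{\udx}\diff\nu_{x_1}(\udx)$ gives
\[
	\int_{\cO_\omega(T)} H(\cP_{\udx})\diff\nu_{x_1}(\udx)\leqslant H\Bigl(\int_{\cO_\omega(T)}\cP_{\udx}\diff\nu_{x_1}(\udx)\Bigr)=H(\cR_{x_1})
\]
for $\mu$-almost every $x_1$. Integrating against $\mu$ and using the disintegration $\nu=\int_X\nu_{x_1}\diff\mu(x_1)$ yields
\[
	h_\nu(\sigma)=\int_X\!\int_{\cO_\omega(T)} H(\cP_{\udx})\diff\nu_{x_1}(\udx)\diff\mu(x_1)\leqslant\int_X H(\cR_{x_1})\diff\mu(x_1)=h_\mu(\cQ),
\]
which is the desired inequality.

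I expect the main obstacle to be the identification $\cR_{x_1}=\int\cP_{\udx}\diff\nu_{x_1}(\udx)$. This step requires carefully matching the backward conditional kernel of $\hnu$, which lives over the infinite orbit space $X^\omega$ and reconstructs $x_0$ from the \emph{entire} future, with the backward conditional kernel of the two-step measure $\mu\cQ^{\zeroto{1}}$ on $X^2$, which reconstructs $x_0$ from $x_1$ alone; the argument rests on Definition~\ref{backward conditional transition probability kernel}, on a disintegration of $\nu$ over $\mu$, and crucially on the hypothesis $\nu_{12}=\mu\cQ^{\zeroto{1}}$, which is precisely what forces the two backward reconstructions of $x_0$ to share the same $\sigma(x_1)$-conditional average. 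The finite-fibre structure coming from forward expansiveness is what keeps all the entropies finite and reduces the concavity estimate to the elementary Jensen inequality on a finite simplex.
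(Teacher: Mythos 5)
Your proposal is correct and follows essentially the same route as the paper's own proof: both sides are computed via the Rokhlin formulas (Proposition~\ref{h_nu(sigma) when T is expansive} and Theorem~\ref{t_h_mu(Q) when T is expansive}), the measure $\nu$ is disintegrated over $\mu = \nu \circ \tpi_1^{-1}$, the backward kernel $\cP$ is averaged over each fiber, and Jensen's inequality for the concave entropy on the finite simplex over $T^{-1}(x_1)$ (finiteness coming from forward expansiveness) yields the comparison. The only cosmetic difference is directional: the paper \emph{defines} $\cR_{x_1}$ as the fiber-average of $\cP$ and then verifies properties (a) and (b) of Definition~\ref{backward conditional transition probability kernel} by direct computation so that Theorem~\ref{t_h_mu(Q) when T is expansive} applies to it, whereas you take $\cR$ from that theorem and identify it with the fiber-average by almost-everywhere uniqueness of conditional distributions (the uniqueness part of Proposition~\ref{decompose a two-dimentional measure}, whose proof uses only property (b)) --- the same fact established in the opposite order.
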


\begin{proof}

    Recall that $\hnu$ is the measure on $X^\omega$ given by $\hnu (A) = \nu (A \cap \cO_\omega (T))$ for all $A \in \SBB (X^\omega)$. Let $\cP$ be a backward conditional transition probability kernel of $\hnu$ from $X^\omega$ to $X$ supported on $\cO_2 (T) \times X^\omega$ and $\cS$ be a forward conditional transition probability kernel of $\hnu$ from $X$ to $X^\omega$ supported on $\cO_\omega (T)$. Proposition~\ref{h_nu(sigma) when T is expansive} indicates
    $h_\nu (\sigma) =\int_{\cO_\omega (T)} \! H(\cP_{\udx})   \diff \nu (\udx)$.

    By Lemma~\ref{l_muQ^[1]circhpi_1^-1=mu}, $\nu_{12} =\mu \cQ^{\zeroto{1}}$ implies $\mu =\nu_{12} \circ \tpi_1^{-1} = \bigl(\nu \circ \tpi_{12}^{-1}\bigr) \circ \tpi_1^{-1} =\nu \circ \tpi_1^{-1}$. Applying (\ref{int_X_1*X_2f(x_1,x_2)dnu(x_1,x_2)=int_X_1(int_X_2f(x_1,x_2)dQ_x_1(x_2))dmu(x_1)}) in Remark~\ref{two equivalent conditions for the conditional transition probability kernel}~(b3) for $\cS$, we get
    \begin{equation}\label{h_nu(sigma)=int_X(int_O_+infty(T)H(P_(x_1,x_2,dots))dS_x_1(x_2,x_3,dots))dmu(x_1)}
        h_\nu (\sigma) =\int_X \! \biggl( \int_{\cO_\omega (T)} \! H(\cP_{ \vect{x}{1}{\infty} })   \diff \cS_{x_1} ( \vect{x}{2}{\infty} ) \biggr)   \diff \mu (x_1).
    \end{equation}

    Recall from Remark~\ref{preimage of forward expansive correspondence is finite} that $T^{-1} (x_1)$, on which the measures $\cP_{ \vect{x}{1}{\infty} }$ are supported, is a finite set. Since the map $x\mapsto x \log x$ is a convex function for $x\in [0,1]$, by (\ref{H(P_(x_1,x_2,dots))=}) and Jensen's inequality, we have for each $x_1 \in X$,
    \begin{equation}\label{int_O_+inftyTH(P(x_1,x_2,dots),x_0)logP((x_1,x_2,dots),x_0)dS(x_2,x_3,dots)leqslantsum_x_0inT^-1x_1-R(x_1,x_0)logR(x_1,x_0)}
    \begin{aligned}
         \int_{\cO_\omega (T)} \! H(\cP_{ \vect{x}{1}{\infty} })   \diff \cS_{x_1} ( \vect{x}{2}{\infty} )
        & = - \sum_{x_0 \in T^{-1} (x_1)}  \int_{\cO_\omega (T)} \! \cP ( \vect{x}{1}{\infty}  ,  \{ x_0 \}) \log (\cP ( \vect{x}{1}{\infty} ,  \{ x_0 \}))   \diff \cS_{x_1} ( \vect{x}{2}{\infty} )\\
        & \leq - \sum_{x_0 \in T^{-1} (x_1)}  \cR (x_1 ,  \{ x_0 \}) \log (\cR (x_1 ,  \{ x_0 \})),
    \end{aligned}
    \end{equation}
    where $\cR (x_1 ,  \{ x_0 \}) \= \int_{\cO_\omega (T)} \! \cP ( \vect{x}{1}{\infty} ,  \{ x_0 \})   \diff \cS_{x_1} ( \vect{x}{2}{\infty} )$ for all $(x_0 ,  x_1) \in \cO_2 (T)$.
    Moreover, for each $x_1 \in X$ and $A \in \SBB (X)$, define
    \begin{equation}\label{R(x_1,A)=}
        \cR (x_1 ,  A) \= \int_{\cO_\omega (T)} \! \cP ( \vect{x}{1}{\infty} ,  A)   \diff \cS_{x_1} ( \vect{x}{2}{\infty} ).
    \end{equation}
    We can check that $\cR \in \tpk( X )$. Now we verify that $\cR$ is a backward conditional transition probability kernel of $\mu \cQ^{\zeroto{1}}$ from $X$ to $X$ supported on $\cO_2 (T)$.
    
    First, recall that $\cP$ is a backward conditional transition probability kernel of $\hnu$ from $X^\omega$ to $X$ supported on $\cO_2 (T) \times X^\omega$. By Definition~\ref{backward conditional transition probability kernel}~(a), we have $\cP \bigl( \vect{x}{1}{\infty} ,  T^{-1} (x_1) \bigr)=1$ for all $\vect{x}{1}{\infty} \in T(X) \times X^\omega$. By (\ref{R(x_1,A)=}), we have $\cR \bigl( x_1 ,  T^{-1} (x_1) \bigr)=1$ for all $x_1 \in T(X)$.

    Second, applying (\ref{int_X_1*X_2f(x_1,x_2)dnu(x_1,x_2)=int_X_1(int_X_2f(x_1,x_2)dQ_x_1(x_2))dmu(x_1)}) for $\cS$, we have
    \begin{equation*}
        \int_B \! \cR (x_1 ,  A)   \diff \mu (x_1) 
        = \int_B \! \biggl( \int_{\cO_\omega (T)} \! \cP ( \vect{x}{1}{\infty} ,  A)   \diff \cS_{x_1} (  \vect{x}{2}{\infty} )
        \biggr)   \diff \mu (x_1)
        = \int_{B\times X^\omega \cap \cO_\omega (T)} \! \cP ( \vect{x}{1}{\infty} ,  A)   \diff \nu ( \vect{x}{1}{\infty} ).
    \end{equation*}

    Recall that $\cP$ is a backward conditional transition probability kernel of $\hnu$ from $X^\omega$ to $X$ supported on $\cO_2 (T) \times X^\omega$. Definition~\ref{backward conditional transition probability kernel}~(b) implies
    \begin{equation*}
    	\nu (A\times B \times X^\omega \cap \cO_\omega (T)) =\int_{B\times X^\omega \cap \cO_\omega (T)} \! \cP ( \vect{x}{1}{\infty} ,  A)   \diff \nu ( \vect{x}{1}{\infty} ).
    \end{equation*}

    Hence, $\int_B \! \cR (x_1 ,  A)   \diff \mu (x_1) =\nu (A\times B \times X^\omega \cap \cO_\omega (T)) =\nu_{12} (A\times B)= \bigl(\mu \cQ^{\zeroto{1}}\bigr) (A\times B)$. Applying Dynkin's $\pi$-$\lambda$ theorem (see the equivalence between properties~(b) and~(b1) in Remark~\ref{two equivalent conditions for the conditional transition probability kernel}), we conclude that $\cR$ is a backward conditional transition probability kernel of $\mu \cQ^{\zeroto{1}}$ from $X$ to $X$ supported on $\cO_2 (T)$ by Definition~\ref{backward conditional transition probability kernel}. Thus, Theorem~\ref{t_h_mu(Q) when T is expansive} indicates $h_\mu (\cQ) =\int_X \! H(\cR_x)   \diff \mu (x)$. By (\ref{int_O_+inftyTH(P(x_1,x_2,dots),x_0)logP((x_1,x_2,dots),x_0)dS(x_2,x_3,dots)leqslantsum_x_0inT^-1x_1-R(x_1,x_0)logR(x_1,x_0)}), we get
    \begin{equation*}
    \begin{aligned}
        h_\mu (\cQ) 
        =\int_X \! \sum_{x_0 \in T^{-1} (x_1)}   -\cR (x_1 ,  \{ x_0 \}) \log (\cR (x_1 ,  \{ x_0 \}))   \diff \mu (x_1)
        \geq \int_X \!  \int_{\cO_\omega (T)} \! H\bigl( \cP_{ \vect{x}{1}{\infty} } \bigr)   \diff \cS_{x_1} ( \vect{x}{2}{\infty} )    \diff \mu (x_1).
    \end{aligned}
    \end{equation*}

    Therefore, by (\ref{h_nu(sigma)=int_X(int_O_+infty(T)H(P_(x_1,x_2,dots))dS_x_1(x_2,x_3,dots))dmu(x_1)}) we conclude $h_\nu (\sigma) \leq h_\mu (\cQ)$.
\end{proof}

With all the preparations in previous subsections, we are now ready to prove Theorem~\ref{t_VP_forward_exp}.

Recall that a pair $(\mu ,\, \cQ)$ consisting of $\cQ \in \tpk( X ; T)$ and $\mu \in \MMM(X,\cQ)$ is called an equilibrium state for the correspondence $T$ and potential function $\phi$ if it satisfies (\ref{P(T,phi)=sup_Q,mu(h_mu(Q)+int_Xphidmu)}).

\begin{proof}[Proof of Theorem~\ref{t_VP_forward_exp}]
    By Proposition~\ref{expansiveness of correspondence with its lift}, the forward expansiveness of $T$ implies that the shift map $\sigma \: \cO_\omega (T) \to \cO_\omega (T)$ is forward expansive. By \cite[Theorem~3.5.6]{PU10}, we get $\nu \in \MMM(\cO_\omega (T),\sigma)$ with
    \begin{equation*}
    	P\bigl( \sigma ,  \tphi \bigr) =h_\nu (\sigma)+ \int_{\cO_\omega (T)} \! \tphi   \diff \nu.
    \end{equation*}

    We choose $\cQ\in \tpk( X ; T )$ and $\mu \in \MMM(X,\cQ)$ such that $\nu_{12} =\mu \cQ^{\zeroto{1}}$ (the existence of this choice is ensured by Proposition~\ref{decompose a two-dimentional measure with support}). Then Proposition~\ref{measure-theoretic entropy less than the corresponding Markov measure-theoretic entropy} indicates that $h_\nu (\sigma) \leq h_\mu (\cQ)$.

    By Theorem~\ref{topological pressure coincide with the lift} and Lemma~\ref{l:int_O_+infty(T)tphidmuQ^N|_T=intphidmu}, we have
    \begin{equation*}
    \begin{aligned}
        P(T ,  \phi)
        &= P \bigl( \sigma ,  \tphi \bigr) 
        =h_\nu (\sigma)+ \int_{\cO_\omega (T)} \! \tphi   \diff \nu
        \leq h_\mu (\cQ)+ \int_{\cO_2 (T)} \! \phi (x_1 ,  x_2)   \diff \nu_{12}\\
        &=h_\mu (\cQ)+ \int_{\cO_2 (T)} \! \phi (x_1 ,  x_2)   \diff \bigl(\mu \cQ^{\zeroto{1}}\bigr) 
        =h_\mu (\cQ)+ \int_X \! \int_{T(x_1)} \! \phi (x_1 ,  x_2)   \diff \cQ_{x_1} (x_2)   \diff \mu (x_1).
    \end{aligned}
    \end{equation*}

    By Theorem~\ref{t_HVP}, we have $P(T ,  \phi) \geq h_\mu (\cQ) +\int_X \! \int_{T(x_1)} \! \phi (x_1 ,  x_2)   \diff \cQ_{x_1} (x_2)   \diff \mu (x_1)$. Thus, we have $P(T ,  \phi) =h_\mu (\cQ) +\int_X \! \int_{T(x_1)} \! \phi (x_1 ,  x_2)   \diff \cQ_{x_1} (x_2)   \diff \mu (x_1)$. Together with Theorem~\ref{t_HVP}, we manage to prove Theorem~\ref{t_VP_forward_exp} except for $P(T ,  \phi) \in \R$. Now we show it.

    First, Remark~\ref{P(T,phi)>-infty} indicates that $P(T ,  \phi) >-\infty$.
	Recall from Remark~\ref{preimage of forward expansive correspondence is finite} that there exists  $M\in \N$ such that $\# T^{-1} (x)\leq M$ for all $x\in X$. Suppose that $\cR\in \tpk( X )$ satisfies properties~(a) and~(b2) in Lemma~\ref{backward conditional transition probability kernel of muQ^N_0}. Then Theorem~\ref{t_h_mu(Q) when T is expansive} indicates that $h_\mu (\cQ) =\int_X \! H(\cR_x) \diff \mu (x).$ Since for each $x\in X$, the transition probability kernel $\cR_x$ is supported on $T^{-1}(x)$ (property~(a) in Lemma~\ref{backward conditional transition probability kernel of muQ^N_0}), we have $H(\cR_x) =-\sum_{y\in T^{-1} (x)}   \cR_x (\{ y \}) \log ( \cR_x (\{ y \})) \leq \log M$ for all $x\in X$. As a result, $h_\mu (\cQ)\leq \log M$, and thus $P(T ,  \phi)= h_\mu (\cQ) +\int_X \! \int_{T(x_1)} \! \phi (x_1 ,  x_2)   \diff \cQ_{x_1} (x_2)   \diff \mu (x_1) \leq \log M + \norm{\phi}_\infty <+\infty$.
\end{proof}

Let $T$ be a forward expansive correspondence on a compact metric space $X$. If the potential function is identically zero, then Theorem~\ref{t_VP_forward_exp}~(ii) suggests that there exist $\cQ \in \tpk( X ; T )$ and $\mu \in \MMM(X,\cQ)$ such that $h(T)=h_\mu (\cQ)$.
One can show that $h(T)$ and $h_\mu (\cQ)$ are both non-negative, so only in the case that $h(T)>0$ is the equality $h(T)=h_\mu (\cQ)$ non-trivial. There have been some results that show $h(T)>0$ for some kinds of correspondences $T$, see e.g., \cite[Theorem~C]{PV17} and \cite[Theorem~3.3]{RT18}. Moreover, under their restrictions on $T$, we conclude $h_\mu (\cQ) >0$.

\section{Thermodynamic formalism for correspondences}\label{sct_Thermodynamic_formalism_of_expansive_correspondences}

In this section, we develop thermodynamic formalism in two different settings for forward expansive correspondence $T$ on a compact metric space $X$ with a continuous potential function $\phi \: \cO_2 (T)\to \R$.

In the first version, we assume that $T$ has the specification property (Definition~\ref{specification correspondence}) and that $\phi$ is Bowen summable (Definition~\ref{Bowen summable potential}). Then the Variational Principle holds, the equilibrium state exists and is unique in the sense of Theorem~\ref{equilibrium state 1}, and the unique equilibrium state can be obtained by the eigenvectors of the Ruelle operator and its adjoint operator (see Theorem~\ref{equilibrium state 1}).

In the second version, we assume that $T$ is distance-expanding (Definition~\ref{distance-expanding correspondence}), open (Definition~\ref{open correspondence}), and strongly transitive (Definition~\ref{strongly transitive correspondence}) and that $\phi$ is H\"{o}lder continuous. Then similar results hold, and in addition, we get some equidistribution properties (see Theorem~\ref{phwfq9}).

\subsection{Specification property and Bowen summability}

We introduce the specification property for correspondences. %
The notion of specification for correspondences or set-valued maps has been discussed by {\ifFirstInitial B.~E.~\fi}Raines and {\ifFirstInitial T.~\fi}Tennant \cite{RT18}, as well as {\ifFirstInitial W.~\fi}Cordeiro and {\ifFirstInitial M.~J.~\fi}Pac\'ifico \cite{CP16}. However, in order to ensure Remark~\ref{r:C_f_properties}~(ii) and Proposition~\ref{lift of specification property}, we give a definition with subtle differences from theirs and slightly stronger than the specification property given in \cite[Definition~5.1]{CP16}.

\begin{definition}[Specification property]\label{specification correspondence}
    We say that a correspondence $T$ on a compact metric space $(X,  d)$ has the \defn{specification property}\index{specification property} if, for each $\epsilon >0$, there exists $M\in \N$ with the following property:
    
    \smallskip

    For arbitrary $n\in \N$, $\vect{m}{1}{n}, \, \vect{p}{1}{n}   \in \N^n$ with $p_j >M$ every $j\in \oneto{n}$, and orbits $\vect{x^j}{0}{m_j -1} \in \cO_{m_j} (T)$ for $j\in\oneto{n}$, there exists an orbit $\vect{y}{0}{\infty} \in \cO_\omega (T)$ such that $d\bigl(y_{m(j-1)+i} ,  x^j_i \bigr)< \epsilon$ for all $j\in\oneto{n}$ and $i\in \zeroto{m_j -1}$, where $m(j)\= \sum_{k=1}^j   (m_k +p_k)$.
\end{definition}

Recall {\ifFirstInitial D.~\fi}Ruelle's definition of specification property for a continuous map from \cite[Section~1]{Ru92}:

\begin{definition}[Ruelle's specification property]\label{specification single-valued map}
    A continuous map $f\: X\to X$ on a compact space $(X,  d)$ has the \defn{specification property}\index{specification property} if, for each $\epsilon >0$, there exists $M\in \N$ with the following property:
    
    \smallskip

    For arbitrary $n\in \N$, $\vect{x}{1}{n} \in X^n$, and $\vect{m}{1}{n}, \, \vect{p}{1}{n}   \in \N^n$ with $p_j >M$ for every $j\in \oneto{n}$, there exists $z\in X$ such that $d \bigl( f^{m(j-1)+i} (z),  f^i (x_j) \bigr)< \epsilon$ for all $j\in \oneto{n}$ and $i\in \zeroto{m_j -1}$, where $m(j)\= \sum_{k=1}^j   (m_k +p_k)$.%
\end{definition}

In fact, the specification property of a correspondence $T$ in the sense of Definition~\ref{specification correspondence} implies the specification property of the corresponding shift map $\sigma \: \cO_\omega (T) \to \cO_\omega (T)$ in the sense of Definition~\ref{specification single-valued map}. This proposition corresponds to \cite[Theorem~4.1]{RT18}, with a similar proof. For the convenience of our reader, we include a proof here due to the subtle differences between Definition~\ref{specification correspondence} and the definition of specification property for correspondences in \cite{RT18}.

\begin{prop}\label{lift of specification property}
	Let $T$ be a correspondence on a compact metric space $(X, d)$ and $\sigma \: \cO_\omega (T) \to \cO_\omega (T)$ be the shift map. If $T$ has the specification property in the sense of Definition~\ref{specification correspondence}, then $\sigma$ has the specification property in the sense of Definition~\ref{specification single-valued map}.
\end{prop}

\begin{proof}
	Fix an arbitrary number $\epsilon >0$. Choose $K\in \N$ such that $1/2^K <\epsilon /2$.
	
	By the specification property of $T$, suppose that $M\in \N$ satisfies the following property:
	
	For arbitrary $n\in \N$, $\vect{m}{1}{n}, \, \vect{p}{1}{n}   \in \N^n$ with $p_j >M$ for every $j\in \oneto{n}$, and orbits $\vect{x^j}{0}{m_j -1}  \in \cO_{m_j} (T)$ for $j\in \oneto{n}$, there exists an orbit $\udy =\vect{y}{0}{\infty} \in \cO_\omega (T)$ such that $d\bigl(y_{m(j-1)+i} ,  x^j_i \bigr)< \epsilon /2$ for all $j\in \oneto{n}$ and $i\in \zeroto{m_j -1}$, where $m(j)= \sum_{k=1}^j   (m_k +p_k)$.
	
	Now fix arbitrary $n\in \N$, orbits $\vect{x^j}{0}{\infty} \in \cO_\omega (T)$, $j\in \oneto{n}$, and $\vect{m}{1}{n}, \, \vect{p}{1}{n}   \in \N^n$ with $p_j >M+K$. Since $p_j -K>M$, we can choose an orbit $\vect{y}{0}{\infty} \in \cO_\omega (T)$ such that $d \bigl(y_{m(j-1)+i} ,  x_i^j \bigr)< \frac{\epsilon}{2}$ for all $j\in \oneto{n}$ and $i\in \zeroto{m_j +K-1}$, where $m(j)=\sum_{k=1}^j   (m_k +p_k) = \sum_{k=1}^j   (m_k +K+p_k -K)$.
	
	Then for all $j\in \oneto{n}$ and $i\in \zeroto{m_j -1}$, we have
	\begin{equation*}
		\begin{aligned}
			d_\omega \bigl(\sigma^{m(j-1)+i} (z) ,  \sigma^i (x^j)\bigr)
			&= d_\omega \bigl(  \vect{y}{m(j-1)+i}{\infty}  ,  \vect{x^j}{i}{\infty} \bigr)
			= \sum_{r=0}^{+\infty}   \frac{1}{2^{r+1}} \frac{d \bigl( y_{m(j-1)+i+r} ,  x_{i+r}^j \bigr) }{ 1+d \bigl(y_{m(j-1)+i+r} ,  x_{i+r}^j\bigr) }\\
			&\leq \sum_{r=0}^{K-1}   \frac{ d \bigl(y_{m(j-1)+i+r} ,  x_{i+r}^j \bigr) }{2^{r+1}}  +\sum_{r=K}^{+\infty}   \frac{1}{2^{r+1}}
			\leq \sum_{r=1}^K   \frac{1}{2^r} \frac{\epsilon}{2} +\frac{1}{2^K}  
			<\epsilon.
		\end{aligned}
	\end{equation*}
	
	Therefore, the shift map $\sigma$ has the specification property.
\end{proof}

\begin{definition}[Bowen summability]\label{Bowen summable potential}
    Let $T$ be a forward expansive correspondence on a compact metric space $(X,  d)$. For a bounded Borel measurable function $\phi \: \cO_2 (T) \to \R$, denote
    \begin{equation*}\label{K_phi,T(delta,n)=}
    	K_{\phi ,  T} (\delta ,  n) 
    	\= \sup  \{ \abs{ S_n \phi (\udx )  - S_n \phi ( \udy ) }  : 
    	\udx , \udy \in \cO_{n+1} (T), \, d_{n+1}  ( \udx , \udy ) < \delta \} 
    \end{equation*}
    for each $n\in \N$ and each $\delta >0$.

    Choose an expansive constant $\epsilon >0$ for $T$, write $K_{\phi ,  T} (\epsilon) \= \sup \{ K_{\phi ,  T} (\epsilon ,  n)  :  n\in \N \}$, and define $\cV_T \= \{ \phi  :  K_{\phi ,  T} (\epsilon)<+ \infty \}$.
    Functions in $\cV_T$ are called \defn{Bowen summable}\index{Bowen summable} with respect to $T$.
\end{definition}

The notation $\cV_T$ above does not contain $\epsilon$ because it does not depend on $\epsilon$, which we will prove in Proposition~\ref{Bowen's property not depend on the expansive constant}.

\begin{prop}\label{Bowen's property not depend on the expansive constant}
    Let $T$ be a forward expansive correspondence on a compact metric space $(X,  d)$, $\epsilon_1 ,\, \epsilon_2 >0$ be two expansive constants for $T$ with $\epsilon_1 <\epsilon_2$, and $\phi \in \BBB(\cO_2 (T) , \R)$. There exists $L\in \N$ such that for each $n\in \N$ with $n>L$, we have
    \begin{equation}\label{K_phi,T(epsilon_1,n)leqslantK_phi,T(epsilon_2,n)leqslantK_phi,T(epsilon_1,n-L)+2L||phi||_infty}
        K_{\phi ,  T} (\epsilon_1 ,  n) \leq K_{\phi ,  T} (\epsilon_2 ,  n) \leq K_{\phi ,  T} (\epsilon_1 ,  n-L)+2L  \norm{\phi}_\infty.
    \end{equation}
\end{prop}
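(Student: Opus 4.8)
The plan is to exploit Lemma~\ref{locally long behavior of forward expansive correspondence}, which is tailor-made for exactly this kind of comparison between two expansive constants. The left inequality in (\ref{K_phi,T(epsilon_1,n)leqslantK_phi,T(epsilon_2,n)leqslantK_phi,T(epsilon_1,n-L)+2L||phi||_infty}) is immediate and I would dispose of it first: since $\epsilon_1 < \epsilon_2$, any pair of orbits $(x_1, \dots, x_{n+1})$ and $(y_1, \dots, y_{n+1})$ in $\cO_{n+1}(T)$ satisfying $d(x_k, y_k) < \epsilon_1$ for all $k$ also satisfies $d(x_k, y_k) < \epsilon_2$ for all $k$. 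Hence the supremum in (\ref{K_phi,T(delta,n)=}) defining $K_{\phi, T}(\epsilon_1, n)$ is taken over a subset of the orbit-pairs allowed in the definition of $K_{\phi, T}(\epsilon_2, n)$, giving $K_{\phi, T}(\epsilon_1, n) \leqslant K_{\phi, T}(\epsilon_2, n)$ directly.

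The substance is the right inequality. Here I would first invoke Lemma~\ref{locally long behavior of forward expansive correspondence} with the two expansive constants $\epsilon_1$ and $\epsilon_2$ (noting that both are expansive constants for $T$, as required) to obtain the integer $L \in \N$ with the stated property: for every $n > L$, whenever two orbits $(x_1, \dots, x_n), (y_1, \dots, y_n) \in \cO_n(T)$ satisfy $d(x_k, y_k) < \epsilon_2$ for all $k \in \oneto{n}$, one has $d(x_k, y_k) < \epsilon_1$ for all $k \in \oneto{n-L}$. Now fix $n > L$ and take an arbitrary pair of orbits $(x_1, \dots, x_{n+1}), (y_1, \dots, y_{n+1}) \in \cO_{n+1}(T)$ with $d(x_k, y_k) < \epsilon_2$ for all $k \in \oneto{n+1}$, contributing to the supremum defining $K_{\phi, T}(\epsilon_2, n)$. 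Applying the property of $L$ to these orbits of length $n+1$ (so ``$n$'' in the Lemma plays the role of $n+1$ here), I get $d(x_k, y_k) < \epsilon_1$ for all $k \in \oneto{n+1-L}$.

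The key estimate is then to split the telescoping sum at the index $n-L$:
\begin{equation*}
    \sum_{k=1}^n (\phi(x_k, x_{k+1}) - \phi(y_k, y_{k+1})) = \sum_{k=1}^{n-L} (\phi(x_k, x_{k+1}) - \phi(y_k, y_{k+1})) + \sum_{k=n-L+1}^n (\phi(x_k, x_{k+1}) - \phi(y_k, y_{k+1})).
\end{equation*}
For the first sum, the indices $k$ and $k+1$ both lie in $\oneto{n+1-L}$ when $k \leqslant n-L$, so $d(x_k, y_k) < \epsilon_1$ and $d(x_{k+1}, y_{k+1}) < \epsilon_1$; thus the pair $(x_1, \dots, x_{n-L+1}), (y_1, \dots, y_{n-L+1})$ is an admissible pair for $K_{\phi, T}(\epsilon_1, n-L)$, and the absolute value of the first sum is bounded by $K_{\phi, T}(\epsilon_1, n-L)$. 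For the second sum, I bound each of the $L$ terms crudely by $2\norm{\phi}_\infty$, giving $2L\norm{\phi}_\infty$. Combining via the triangle inequality and taking the supremum over all admissible orbit-pairs yields $K_{\phi, T}(\epsilon_2, n) \leqslant K_{\phi, T}(\epsilon_1, n-L) + 2L\norm{\phi}_\infty$, as desired.

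The only mild subtlety — and the place I would be most careful — is the bookkeeping on the index ranges: one must verify that the truncated orbits $(x_1, \dots, x_{n-L+1})$ and $(y_1, \dots, y_{n-L+1})$ indeed lie in $\cO_{n-L+1}(T)$ (they do, being prefixes of orbits in $\cO_{n+1}(T)$, which is closed under taking initial segments by the definition of the orbit space), and that the condition $d(x_k, y_k) < \epsilon_1$ needed for $K_{\phi, T}(\epsilon_1, n-L)$ holds precisely on $\oneto{n-L+1}$, which is exactly the range $\oneto{(n+1)-L}$ delivered by the Lemma. Once the alignment of indices is pinned down, no real computation remains; this is a clean two-constant comparison rather than a genuine obstacle.
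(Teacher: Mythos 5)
Your proposal is correct and follows essentially the same route as the paper's proof: the trivial inclusion of admissible orbit-pairs for the left inequality, then Lemma~\ref{locally long behavior of forward expansive correspondence} (applied to orbits of length $n+1$) to get $d(x_k,y_k)<\epsilon_1$ on $\oneto{n+1-L}$, followed by splitting the sum at index $n-L$ and bounding the tail of $L$ terms by $2L\norm{\phi}_\infty$. Your extra care with the index alignment and the closedness of $\cO_{n-L+1}(T)$ under truncation is exactly the bookkeeping the paper leaves implicit.
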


\begin{proof}
    First, since $\epsilon_1 <\epsilon_2$, if $\vect{x}{1}{n+1} , \, \vect{y}{1}{n+1} \in \cO_{n+1} (T)$ satisfy $d(x_k ,  y_k)< \epsilon_1$ for every $k\in \oneto{n+1}$, then $d(x_k ,  y_k)< \epsilon_2$ for every $k\in \oneto{n+1}$. Thus, by Definition~\ref{Bowen summable potential}, we have $K_{\phi ,  T} (\epsilon_1 ,  n) \leq K_{\phi ,  T} (\epsilon_2 ,  n)$.

    Now we focus on the second inequality in (\ref{K_phi,T(epsilon_1,n)leqslantK_phi,T(epsilon_2,n)leqslantK_phi,T(epsilon_1,n-L)+2L||phi||_infty}).
    By Lemma~\ref{locally long behavior of forward expansive correspondence}, we can choose $L\in \N$ such that
    for each $n\in \N$ greater than $L$, if two orbits $\vect{x}{1}{n+1}  , \,  \vect{y}{1}{n+1}  \in \cO_{n+1} (T)$ satisfy $d(x_k ,  y_k)< \epsilon_2$ for every $k\in \oneto{n+1}$, then $d(x_k ,  y_k)< \epsilon_1$ holds for every $k\in \oneto{n+1-L}$. Since%
    \begin{equation*}
         \Absbigg{ \sum_{k=1}^n   (\phi (x_k ,  x_{k+1}) -\phi (y_k ,  y_{k+1})) }
        \leq \Absbigg{ \sum_{k=1}^{n-L}   (\phi (x_k ,  x_{k+1})- \phi (y_k ,  y_{k+1})) } +2L  \norm{\phi }_\infty,
    \end{equation*}
    by Definition~\ref{Bowen summable potential}, we get $K_{\phi ,  T} (\epsilon_2 ,  n) \leq K_{\phi ,  T} (\epsilon_1 ,  n-L)+2L  \norm{\phi}_\infty$.%
\end{proof}

\begin{definition}[Distance-expanding]\label{distance-expanding correspondence}
    Let $T$ be a correspondence on a compact metric space $(X,  d)$. We say that $T$ is \defn{distance-expanding}\index{distance-expanding} if there exist $\lambda >1$, $\eta >0$, and $n\in \N$ with the property that for each $x,\, y\in X$, if $d(x ,  y)\leq \eta$, then 
    $\inf \{ d(x' ,  y')  :  x' \in T^n (x) ,\, y' \in T^n (y) \} \geq \lambda d(x ,  y)$.
\end{definition}

\begin{rem}
	Let $T$ be a distance-expanding correspondence on a compact metric space $X$. Then $T$ must be forward expansive, and thus we can say whether a bounded function $\phi \in \BBB(\cO_2 (T), \R)$ is Bowen summable. Moreover, one can check that if $\phi \in \CCC(\cO_2 (T), \R)$ is H\"{o}lder continuous with respect to the metric $d_2$ on $\cO_2 (T)$, then $\phi$ is Bowen summable.
\end{rem}

\begin{prop}\label{lift of distance-expanding}
	Let $T$ be a distance-expanding correspondence on a compact metric space $X$ and $\sigma \: \cO_\omega (T) \to \cO_\omega (T)$ be the shift map. Suppose that $\lambda >1$, $\eta >0$, and $n\in \N$ satisfy $\inf \{ d(x' ,  y')  :  x' \in T^n (x),\, y' \in T^n (y) \}\geq \lambda d(x ,  y)$ for all $x ,\, y\in X$ with $d(x ,  y)\leq \eta$. Then for an arbitrary $\lambda '\in (1 ,  \lambda)$, there exists $\eta '>0$ and $k \in \N$ with the following property:
	
	For all $\udx ,\, \udy \in \cO_\omega (T)$, if $d_\omega (\udx,  \udy) <\eta '$, then 
	$d_\omega \bigl( \sigma^{kn} (\udx) ,  \sigma^{kn} ( \udy ) \bigr)\geq \lambda' d_\omega ( \udx , \udy )$.
\end{prop}

In short, if $T$ is distance-expanding, then $\sigma$ is distance-expanding.

\begin{proof}
	Choose $k\in \N$ with $2^{kn} \cdot \frac{\lambda -\lambda '}{2\lambda} \geq \lambda '$ and set $\eta '\=  2^{-2kn} \min \bigl\{\frac{\eta}{1+\eta} ,\, \frac{\lambda -\lambda '}{2 \lambda ' (\lambda -1)}\bigr\}$. Fix arbitrary $\udx =\vect{x}{1}{\infty} ,\, \udy =\vect{y}{1}{\infty} \in \cO_\omega (T)$ with $d_\omega (\udx ,\udy) <\eta '$. We aim to prove $d_\omega \bigl( \sigma^{kn} (\udx) ,  \sigma^{kn} (\udy) \bigr) \geq \lambda ' d_\omega (\udx ,  \udy)$.
	
	For each $j \in \oneto{2kn}$, since
	\begin{equation*}
			 2^{-2kn} \min \Bigl\{\frac{\eta}{1+\eta} ,\, \frac{\lambda -\lambda '}{2 \lambda ' (\lambda -1)}\Bigr\} 
			=\eta' 
			>d_\omega ( \udx , \udy ) 
			\geq  \frac{2^{-j} d(x_j ,y_j)}{1+d(x_j ,y_j)} 
			\geq \frac{2^{-2kn} d(x_j ,y_j)}{1+d(x_j ,y_j)},
	\end{equation*}
	we have $d(x_j ,y_j)< \min\bigl\{\eta ,\, \frac{\lambda -\lambda '}{2\lambda \lambda ' -\lambda -\lambda '}\bigr\} \leq \eta$. This implies $d(x_{j+n} ,y_{j+n}) \geq \lambda d(x_j ,y_j)$ for all $j \in \oneto{2kn}$ since $x_{j+n} \in T^n (x_j)$ and $y_{j+n} \in T^n (y_j)$. Thus, $d(x_{j+kn} ,y_{j+kn}) \geq \lambda^k d(x_j ,y_j) \geq \lambda d(x_j ,y_j)$ for each $j\in \oneto{kn}$. In addition, $d(x_j ,y_j)< \frac{\lambda -\lambda '}{2\lambda \lambda ' -\lambda -\lambda '}$ implies $\frac{\lambda +\lambda '}{2\lambda} \cdot \frac{\lambda d(x_j ,y_j)}{1+ \lambda d(x_j ,y_j)} \geq \lambda ' \frac{d(x_j ,y_j)}{1+ d(x_j ,y_j)}$, which holds for all $j \in \oneto{kn}$. Recall $2^{kn} \cdot \frac{\lambda -\lambda '}{2\lambda} \geq \lambda '$. From the arguments above, for every $j \in \oneto{kn}$ we have
	\begin{equation*} 
		\begin{aligned}
			\frac{2^{kn} d(x_{j+kn} ,y_{j+kn})}{1+d(x_{j+kn} ,y_{j+kn})} 
			&\geq \frac{\lambda +\lambda '}{2\lambda} \cdot \frac{2^{kn} \lambda d(x_j ,y_j)}{1+ \lambda d(x_j ,y_j)} +\frac{\lambda -\lambda '}{2\lambda} \cdot \frac{2^{kn} d(x_{j+kn} ,y_{j+kn})}{1+d(x_{j+kn} ,y_{j+kn})}\\
			&\geq  \frac{2^{kn} \lambda' d(x_j ,y_j)}{1+ d(x_j ,y_j)} + \frac{\lambda' d(x_{j+kn} ,y_{j+kn})}{1+d(x_{j+kn} ,y_{j+kn})}
		\end{aligned}
	\end{equation*}
	Dividing both sides of the inequality above by $2^{kn+j}$ and then summing over $j$ from $1$ to $kn$, we get
	\begin{equation}\label{387d8w326726323}
		\sum_{j=1}^{kn} \frac{1}{2^j} \cdot \frac{d(x_{j+kn} ,y_{j+kn})}{1+d(x_{j+kn} ,y_{j+kn})} 
		\geq \lambda ' \sum_{j=1}^{2kn} \frac{1}{2^j} \cdot \frac{d(x_j ,y_j)}{1+ d(x_j ,y_j)}
	\end{equation}
	
	Additionally, since $\lambda ' \leq 2^{kn} \cdot \frac{\lambda -\lambda '}{2\lambda} \leq 2^{kn}$, we have
	\begin{equation}\label{38ecv372}
		\sum_{j= 2kn+1}^{+\infty} \frac{1}{2^{j-kn}} \cdot \frac{d(x_j ,y_j)}{1+ d(x_j ,y_j)} 
		\geq \lambda ' \sum_{j= 2kn+1}^{+\infty} \frac{1}{2^j} \cdot \frac{d(x_j ,y_j)}{1+ d(x_j ,y_j)}
	\end{equation}
	
	Adding (\ref{387d8w326726323}) and~(\ref{38ecv372}), we get 
	$d_\omega \bigl( \sigma^{kn} (\udx) ,  \sigma^{kn} (\udy) \bigr) \geq \lambda ' d_\omega (\udx ,  \udy)$.
	The proof is complete.
\end{proof}

Recall the notion of Bowen summability for $\varphi \in B(X ,\R)$ with respect to a forward expansive continuous map $f\: X\to X$ from \cite[Section~1]{Ru92}:

\begin{definition}[Bowen summability]
    Let $(X,  d)$ be a compact metric space and $f\: X\to X$ be a forward expansive continuous map. For a bounded Borel measurable function $\varphi \: X\to \R$, denote
    \begin{equation}\label{K_phi,f(delta,n)=}
        K_{\varphi ,  f} (\delta ,  n) \= \sup \biggl\{ \Absbigg{ \sum_{k=0}^{n-1}   \bigl(\varphi \bigl(f^k (x)\bigr) -\varphi \bigl(f^k (y)\bigr)\bigr) }  :  x,\, y \in X \text{ with } y\in B_x (\epsilon ,  n) \biggr\}
    \end{equation}
    for each $n\in \N$ and each $\delta >0$, where $B_x (\epsilon ,  n)$ is the \emph{Bowen ball} given by
    \begin{equation}\label{B_x(epsilon,n)=}
        B_x (\epsilon ,  n)\= \bigl\{ y\in X :  d\bigl(f^k (x) ,  f^k (y)\bigr)<\delta \text{ for every } k\in \zeroto{n-1} \bigr\}.
    \end{equation}

    Choose an expansive constant $\epsilon$ for $f$, we write $K_{\varphi ,  f} (\epsilon) \= \sup \{ K_{\varphi ,  f} (\epsilon ,  n)  :  n\in \N \}$, and define $\cV_f \= \{ \varphi  :  K_{\varphi ,  f} (\epsilon)<+ \infty \}$. Functions in $\cV_f$ are called \defn{Bowen summable}\index{Bowen summable} with respect to $f$.
\end{definition}

 Note that $\cV_f$ does not contain $\epsilon$ because it does not depend on $\epsilon$ (see \cite[Section~1]{Ru92}).

\begin{prop}\label{Bowen's property of phi with its lift}
    Let $T$ be a correspondence on a compact metric space $(X,  d)$. If a function $\phi\: \cO_2 (T) \to \R$ is Bowen summable with respect to $T$, then the corresponding function $\tphi \: \cO_\omega (T)\to \R$ is Bowen summable with respect to the shift map $\sigma \: \cO_\omega (T) \to \cO_\omega (T)$.
\end{prop}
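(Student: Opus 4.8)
The plan is to compare the two quantities $K_{\tphi, \sigma}(\delta, n)$ and $K_{\phi, T}(\epsilon, n)$ directly, reducing Bowen summability of $\tphi$ with respect to $\sigma$ to that of $\phi$ with respect to $T$. First I would observe that since $\phi$ is Bowen summable with respect to $T$, the correspondence $T$ is in particular forward expansive, so by Proposition~\ref{expansiveness of correspondence with its lift} the shift map $\sigma$ on $\cO_\omega(T)$ is forward expansive as well; this is what makes the notion of Bowen summability with respect to $\sigma$ meaningful. I would then fix an expansive constant $\epsilon > 0$ for $T$ with $K_{\phi, T}(\epsilon) < +\infty$. The key algebraic observation is that, since $\tphi(\sigma^k(x_1, x_2, \dots)) = \phi(x_{k+1}, x_{k+2})$, the Birkhoff sum for $(\sigma, \tphi)$ telescopes into the correspondence Birkhoff sum: for all $\udx = (x_1, x_2, \dots)$ and $\udy = (y_1, y_2, \dots)$ in $\cO_\omega(T)$,
\begin{equation*}
	\sum_{k=0}^{n-1} \bigl(\tphi(\sigma^k(\udx)) - \tphi(\sigma^k(\udy))\bigr) = \sum_{j=1}^n \bigl(\phi(x_j, x_{j+1}) - \phi(y_j, y_{j+1})\bigr).
\end{equation*}
Consequently, the only genuine work is to show that membership of $\udy$ in the Bowen ball $B_\udx(\delta, n)$ forces the coordinatewise closeness $d(x_m, y_m) < \epsilon$ for all $m \in \oneto{n+1}$ required in the defining supremum (\ref{K_phi,T(delta,n)=}) of $K_{\phi, T}(\epsilon, n)$.

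This comparison of neighborhoods is the main obstacle, and it is where the explicit form of the metric $d_\omega$ in (\ref{d_omega=}) enters. I would choose $\delta > 0$ to be an expansive constant for $\sigma$ that additionally satisfies $\delta < \epsilon / (4(1+\epsilon))$; such a $\delta$ exists since any smaller positive number is again an expansive constant. Given $\udy \in B_\udx(\delta, n)$, i.e.\ $d_\omega(\sigma^k(\udx), \sigma^k(\udy)) < \delta$ for all $k \in \zeroto{n-1}$, I would isolate single terms of the defining series of $d_\omega$. For $m \in \oneto{n}$, the $j = 1$ term of $d_\omega(\sigma^{m-1}(\udx), \sigma^{m-1}(\udy))$ yields $\tfrac{1}{2} \cdot \tfrac{d(x_m, y_m)}{1 + d(x_m, y_m)} < \delta$, whereas for the exceptional final coordinate $m = n+1$ I would instead use the $j = 2$ term of $d_\omega(\sigma^{n-1}(\udx), \sigma^{n-1}(\udy))$, which only gives the weaker bound $\tfrac{1}{4} \cdot \tfrac{d(x_{n+1}, y_{n+1})}{1 + d(x_{n+1}, y_{n+1})} < \delta$. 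Solving these inequalities produces $d(x_m, y_m) < 4\delta / (1 - 4\delta) < \epsilon$ uniformly in $m \in \oneto{n+1}$. The appearance of the factor $\tfrac{1}{4}$ (rather than $\tfrac{1}{2}$) for the last coordinate is the mild technical subtlety to be handled carefully, and it is precisely why I build the factor $4$ and the term $(1+\epsilon)$ into the choice of $\delta$.

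With these two ingredients in hand, the argument closes at once. Truncating $\udx$ and $\udy$ to the orbits $(x_1, \dots, x_{n+1})$ and $(y_1, \dots, y_{n+1})$ in $\cO_{n+1}(T)$, the telescoped sum displayed above is bounded in absolute value by $K_{\phi, T}(\epsilon, n) \leqslant K_{\phi, T}(\epsilon)$. Taking the supremum over all admissible $\udy$ and then over $n \in \N$ gives $K_{\tphi, \sigma}(\delta) = \sup_n K_{\tphi, \sigma}(\delta, n) \leqslant K_{\phi, T}(\epsilon) < +\infty$, so $\tphi$ is Bowen summable with respect to $\sigma$. Since the class of Bowen summable functions for the single-valued map $\sigma$ does not depend on the choice of expansive constant, the particular $\delta$ selected is immaterial to the conclusion.
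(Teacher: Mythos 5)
Your proof is correct, and at the top level it follows the same strategy as the paper's own proof: rewrite the Birkhoff sums of $\tphi$ under $\sigma$ as the sums $\sum_{j=1}^{n}\bigl(\phi(x_j,x_{j+1})-\phi(y_j,y_{j+1})\bigr)$ along orbits of $T$, use the explicit series (\ref{d_omega=}) defining $d_\omega$ to convert Bowen-ball closeness for $\sigma$ into the coordinatewise closeness demanded in (\ref{K_phi,T(delta,n)=}), and conclude that $K_{\tphi,\sigma}$ is bounded by $K_{\phi,T}(\epsilon)<+\infty$. The difference lies in the bookkeeping of the last coordinate, and there your treatment is actually more careful than the paper's. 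The paper sets $\tepsilon = \frac{\epsilon}{2(1+\epsilon)}$, extracts only the $j=1$ term of each series $d_\omega\bigl(\sigma^k(\udx),\sigma^k(\udy)\bigr)$ (which controls $d(x_m,y_m)<\epsilon$ only for $m \in \oneto{n+1}$), and then asserts $K_{\tphi,\sigma}(\tepsilon,n+1) \leqslant K_{\phi,T}(\epsilon,n)$; but the sum defining $K_{\tphi,\sigma}(\tepsilon,n+1)$ has $n+1$ summands and involves the uncontrolled coordinate $x_{n+2}$, so as written that inequality is off by one summand and should carry an extra additive $2\norm{\phi}_\infty$ --- harmless for the finiteness conclusion, but a genuine slip. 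You avoid this entirely by matching $K_{\tphi,\sigma}(\delta,n)$ against $K_{\phi,T}(\epsilon,n)$ (equal numbers of summands) and controlling the single extra coordinate $x_{n+1}$ via the $j=2$ term of $d_\omega\bigl(\sigma^{n-1}(\udx),\sigma^{n-1}(\udy)\bigr)$, paying for the factor $\frac{1}{4}$ with the stricter choice $\delta < \frac{\epsilon}{4(1+\epsilon)}$; this yields the clean bound $K_{\tphi,\sigma}(\delta) \leqslant K_{\phi,T}(\epsilon)$ with no error term. Your closing appeal to the fact that Bowen summability for $\sigma$ does not depend on the choice of expansive constant is also correct and genuinely needed, since you verify finiteness only for your particular $\delta$.
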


\begin{proof}
    Choose a number $\epsilon >0$ small enough such that $\epsilon$ is an expansive constant for $T$ and that $\tepsilon \= \frac{\epsilon}{2(1+\epsilon)}$ is an expansive constant for the shift map $\sigma$.
    Suppose that $\udx=\vect{x}{1}{\infty}, \, \udy=\vect{y}{1}{\infty}\in \cO_\omega (T)$ satisfy $\udy \in B_{\udx} ( \tepsilon  ,  n+1 )$, i.e., $d_\omega \bigl( \sigma^k (\udx) ,\sigma^k (\udy) \bigr) < \tepsilon$ holds for all $k\in \zeroto{n}$. Then for every $k\in \zeroto{n}$,
    \begin{equation*}
         \tepsilon 
        >d_\omega \bigl( \sigma^k (\udx) ,  \sigma^k (\udy) \bigr)  
         \geq d(x_{k+1} ,  y_{k+1}) 2^{-1} (1+d(x_{k+1} ,  y_{k+1}))^{-1}.
    \end{equation*}
    This implies that $d(x_{k+1} ,\, y_{k+1})< \epsilon$ for every $k\in \zeroto{n}$. By (\ref{K_phi,f(delta,n)=}) and Definition~\ref{Bowen summable potential}, we get $K_{\tphi ,  \sigma} (  \tepsilon ,  n+1)\leq K_{\phi ,  T} (\epsilon ,  n)$. Since $\phi$ is Bowen summable with respect to $T$, we have
    \begin{equation*}
        K_{\tphi ,  \sigma} ( \tepsilon )
                    =  \sup_{n\in \N_0}  K_{\tphi ,  \sigma} (  \tepsilon ,  n+1)  
        \leq \max \Bigl\{ K_{\tphi ,  \sigma}  ( \tepsilon ,  1) , \, \sup_{n\in \N}  K_{\phi ,  T} (\epsilon ,  n)  \Bigr\}
        \leq \max \{ 2 \norm{\phi}_\infty ,  \, K_{\phi ,  T} (\epsilon) \}
        <+\infty.
    \end{equation*}
    Therefore, $\tphi$ is Bowen summable with respect to the shift map $\sigma$.
\end{proof}

\subsection{Forward expansive correspondences with the specification property}\label{subsct_Thermodynamic formalism for forward expansive correspondences with the specification property}

We aim to establish Theorem~\ref{equilibrium state 1}.
We first recall some definitions and results from \cite{RT18}.

Let $Y$ be a compact metric space, $f\: Y\to Y$ be a forward expansive continuous map with specification property, and $\psi \in \CCC ( Y, \R)$ be Bowen summable.

We recall the definition of the Ruelle operator $\cL_\psi$ acting on real-valued functions $\Phi$ on $Y$ given by
\begin{equation}\label{aovewyacbwco8ey0fawe}
    \cL_\psi (\Phi) (x)\= \sum_{y\in f^{-1} (x)}   \Phi (y) \exp (\psi (y)).
\end{equation}

The operator $\cL_\psi$ is linear and maps the space of bounded Borel functions onto itself. The action of $\cL_\psi$ on continuous functions determines completely the adjoint operator $\cL_\psi^*$, a bounded linear map on finite Borel measures on $Y$, i.e., for a finite Borel measure $\nu$ on $Y$, if $\Phi \in \CCC ( Y, \R)$ then
\begin{equation}\label{int_YPhidL_psi*(nu)=int_YL_psi(Phi)dnu}
    \int_Y \! \Phi   \diff \cL_\psi^* (\nu)= \int_Y \cL_\psi (\Phi)   \diff \nu  .
\end{equation}
This implies that (\ref{int_YPhidL_psi*(nu)=int_YL_psi(Phi)dnu}) holds for all bounded Borel measurable functions $\Phi \: Y\to \R$.

If $A\subseteq Y$ is a Borel set satisfying that $f|_A$ is injective, then we have
\begin{equation}\label{L_psi*(nu)(A)=int_f(A)psicirc(f|_A)^-1dnu}
    \cL_\psi^* (\nu) (A)= \int_Y \! \mathbbold{1}_A   \diff \cL_\psi^* (\nu)= \int_Y \cL_\psi (\mathbbold{1}_A)   \diff \nu = \int_{f(A)} \! \exp \psi \circ (f|_A)^{-1}   \diff \nu.
\end{equation}

If a non-zero Borel measure $\nu$ on $Y$ satisfies $\cL_\psi^* (\nu)= \lambda \nu$, then $\cL_\psi$ defines an operator on $L^1 (\nu)$.

We recall \cite[Theorem~2.1]{RT18}, i.e., the Ruelle--Perron--Frobenius theorem, as follows.

\begin{prop}\label{equilibrium state for single-valued maps}
    Let $Y$ be a compact metric space, $f\: Y\to Y$ be a forward expansive continuous map with specification property, and $\psi \in \CCC ( Y, \R)$ be Bowen summable. Then the following statements are true:
    \begin{enumerate}
        \smallskip
        \item[(i)] There is a unique eigenvector $\nu$ (up to a multiplicative constant) of $\cL_\psi^*$ acting on finite Borel measures on $Y$, i.e.,~$\cL_\psi^* (\nu) =\lambda \nu$.
        Moreover, $\lambda =e^{P(f,  \psi)}$ and $\nu$ is a Gibbs measure\footnote{That $\nu$ is a Gibbs measure for $\psi$ means that there is a number $c>0$ such that for all $x\in Y$ and $n\in \N$,
        \begin{equation*}
        	\exp \biggl( \sum_{k=0}^{n-1}  \bigl(\psi \bigl(f^k (x)\bigr)-nP (f,  \psi)-c\bigr) \biggr)
        	\leq \nu (B_x (\epsilon ,  n))\leq \exp \biggl( \sum_{k=0}^{n-1}   \bigl( \psi \bigl(f^k (x)\bigr)-nP (f,  \psi)+c\bigr) \biggr),
        \end{equation*}
        where $B_x (\epsilon ,  n)$ is the Bowen ball given in (\ref{B_x(epsilon,n)=}).} for $\psi$.
        \smallskip
        \item[(ii)] There is a unique non-negative eigenfunction $\Phi \in L^1 (\nu)$ (up to a multiplicative constant) of $\cL_\psi$ acting on $L^1 (\nu)$, i.e., 
        $\cL_\psi (\Phi)= \lambda \Phi \geq 0$.
        Moreover, $\lambda= e^{P(f,  \psi)}$, $\log \Phi$ is $\nu$-essentially bounded, and $\Phi \nu$ is the unique equilibrium state for $\psi$.
        \smallskip
        \item[(iii)] %
        $\lim_{n\to +\infty} e^{- n P(f,  \psi)} \cL_\psi^n (\mathbbold{1}_Y)= \Phi \text{ in } L^1 (\nu)$.
    \end{enumerate}
\end{prop}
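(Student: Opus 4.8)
The plan is to reconstruct the classical Ruelle--Perron--Frobenius machinery for the expansive system $(Y,f)$, since the only structural inputs are forward expansiveness (which makes every fiber $f^{-1}(x)$ finite, so that $\cL_\psi$ in (\ref{aovewyacbwco8ey0fawe}) is a well-defined positive operator, and which, via the standard fact that a positively expansive map is open in a suitable compatible metric, makes $\cL_\psi$ map $C(Y,\R)$ into itself), the specification property, and the Bowen summability of $\psi$. First I would produce the eigenmeasure of part~(i) by a fixed-point argument. Consider the normalized dual action $\nu \mapsto \cL_\psi^*(\nu)/(\cL_\psi^*(\nu))(Y)$ on the space $\PPP(Y)$ of Borel probability measures; since $\cL_\psi$ preserves continuity, (\ref{int_YPhidL_psi*(nu)=int_YL_psi(Phi)dnu}) shows this map is weak*-continuous, and $\PPP(Y)$ is weak*-compact and convex, so the Schauder--Tychonoff theorem yields a fixed point $\nu$ with $\cL_\psi^*(\nu) = \lambda\nu$ and $\lambda = \int_Y \cL_\psi(\mathbf{1}_Y)\diff\nu > 0$.

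Next I would establish the Gibbs property and the identification $\lambda = \exp(P(f,\psi))$ simultaneously. Writing $S_n\psi = \sum_{k=0}^{n-1}\psi\circ f^k$, Bowen summability furnishes a uniform distortion bound: there is $K>0$ with $|S_n\psi(x)-S_n\psi(y)|\le K$ whenever $y\in B_x(\epsilon,n)$ (cf.\ (\ref{B_x(epsilon,n)=})), while forward expansiveness makes $\epsilon$-Bowen balls behave as the atoms of the refining partitions. Iterating (\ref{L_psi*(nu)(A)=int_f(A)psicirc(f|_A)^-1dnu}) over branches of $f^n$ and combining with the distortion bound gives the upper Gibbs estimate $\nu(B_x(\epsilon,n))\le C\exp(S_n\psi(x)-n\log\lambda)$; the specification property supplies the matching lower bound $\nu(B_x(\epsilon,n))\ge C^{-1}\exp(S_n\psi(x)-n\log\lambda)$, by allowing any orbit segment to be connected to the previously covered ones. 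Summing these comparisons over a maximal $(n,\epsilon)$-separated set and taking $\frac1n\log$ as $n\to+\infty$ pins down $\log\lambda = P(f,\psi)$, and the two-sided bound is exactly the Gibbs property. Uniqueness of $\nu$ up to scale then follows from the standard fact that two Gibbs measures for the same potential are mutually absolutely continuous with bounded densities, which, via ergodicity, forces normalized eigenmeasures of $\cL_\psi^*$ to coincide.

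For part~(ii) I would construct the eigenfunction as a limit of $\Phi_n := \lambda^{-n}\cL_\psi^n(\mathbf{1}_Y)$, noting $\cL_\psi^n(\mathbf{1}_Y)(x) = \sum_{y\in f^{-n}(x)}\exp(S_n\psi(y))$. The Gibbs estimates of the previous step show that $\log\Phi_n$ is uniformly bounded above and below, independently of $n$ and $x$, and the specification property yields an asymptotic almost-multiplicativity $\Phi_{m+n}\approx \Phi_m\,(\Phi_n\circ f^m)$ up to a uniform constant, which forces the sequence to stabilize along the dynamics. I would then extract a limit $\Phi = \lim_n \Phi_n$ in $L^1(\nu)$, verify $\cL_\psi(\Phi) = \lambda\Phi$ by passing the relation $\cL_\psi\Phi_n = \lambda\Phi_{n+1}$ to the limit, and record that $\log\Phi$ is $\nu$-essentially bounded; normalizing $\int_Y\Phi\diff\nu = 1$ gives the convergence statement~(iii). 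Finally, setting $\mu := \Phi\nu$, invariance follows from the transfer identity $\cL_\psi((g\circ f)\Phi) = g\,\cL_\psi(\Phi)$ via $\int g\circ f\diff\mu = \lambda^{-1}\int\cL_\psi\bigl((g\circ f)\Phi\bigr)\diff\nu = \lambda^{-1}\int g\,\cL_\psi(\Phi)\diff\nu = \int g\diff\mu$, and the Gibbs property combined with the Shannon--McMillan--Breiman/Rokhlin identity yields $h_\mu(f)+\int\psi\diff\mu = P(f,\psi)$, so $\mu$ is an equilibrium state; its uniqueness follows from ergodicity of the Gibbs measure and the uniqueness of $\nu$ and $\Phi$.

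The hard part will be the convergence of $\Phi_n = \lambda^{-n}\cL_\psi^n(\mathbf{1}_Y)$ to a genuine eigenfunction and the independence of the limit from any subsequence. With merely Bowen-summable (rather than H\"older) potentials and only the specification property (rather than a spectral gap or a uniform expansion giving a Hilbert-metric contraction), one cannot invoke a contraction-mapping argument; instead the limit must be forced out of the uniform boundedness of $\log\Phi_n$ together with the specification-driven almost-multiplicativity. This is the most delicate estimate of the argument, and it is precisely where the quantitative form of the specification property in Definition~\ref{specification single-valued map} enters.
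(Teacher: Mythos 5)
The paper does not prove Proposition~\ref{equilibrium state for single-valued maps} at all: it is quoted verbatim from \cite[Theorem~2.1]{specification correspondence}, i.e.\ it is Ruelle's theorem from \cite{specification single map}, so your reconstruction goes beyond anything in the paper and must be judged on its own merits. It breaks at the very first step. The ``standard fact'' you invoke is Reddy's adapted-metric theorem, which makes a positively expansive map \emph{distance-expanding} in a compatible metric; it does not make it \emph{open}, and openness is exactly what is needed for $\cL_\psi$ to map $C(Y,\R)$ into itself. Concretely, take $Y$ the one-sided even shift (the mixing sofic subshift of $\{0,1\}^{\N_0}$ in which consecutive $1$'s are separated by an even number of $0$'s): it is compact, forward expansive, and has the specification property, yet the shift is not open (a subshift has open shift map only if it is of finite type) and the preimage count is not locally constant --- the points $0^{2n+1}10^\infty$ have one preimage each while their limit $0^\infty$ has two, so already $\cL_0 (\mathbf{1}_Y)$ is discontinuous, and with $\nu_n =\delta_{0^{2n+1}10^\infty} \to \delta_{0^\infty}$ the total masses $(\cL_0^* \nu_n)(Y) =1$ do not converge to $(\cL_0^* \delta_{0^\infty})(Y) =2$. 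Hence $\nu \mapsto \cL_\psi^* (\nu)$ is not weak*-continuous on $\PPP (Y)$ and your Schauder--Tychonoff argument for part~(i) fails precisely in the generality of the hypotheses. This generality is the whole point of Ruelle's formulation: openness is not assumed, which is why the eigenfunction lives only in $L^1 (\nu)$ and the convergence in~(iii) holds only in $L^1 (\nu)$ (contrast Proposition~\ref{equilibrium state for single-valued maps2}, where openness is hypothesized and the eigenfunction is H\"{o}lder). The eigenmeasure must be produced without duality against $C(Y)$, e.g.\ by Bowen's construction as a weak* limit of normalized weighted sums over $(n, \epsilon)$-separated sets, with the Gibbs property and the eigenequation verified afterwards.

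The second gap you flag yourself but do not close: the $L^1 (\nu)$-convergence of $\Phi_n =\lambda^{-n} \cL_\psi^n (\mathbf{1}_Y)$, which carries all of parts~(ii) and~(iii). Uniform two-sided bounds on $\log \Phi_n$ together with almost-multiplicativity up to uniform constants give only precompactness and subsequential limits; they neither force convergence of the full sequence nor guarantee that a subsequential limit satisfies $\cL_\psi (\Phi) =\lambda \Phi$ rather than a two-sided comparison. Since, as you correctly note, no spectral-gap or cone-contraction mechanism is available for merely Bowen-summable potentials, deferring this estimate leaves the construction of $\Phi$, the $\nu$-essential boundedness of $\log \Phi$, and statement~(iii) unproved, and the downstream uniqueness claims (which additionally lean on an unestablished ergodicity of $\Phi \nu$, and on the unproved step that bounded mutual densities of two Gibbs eigenmeasures force equality) inherit the gap. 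Given that the paper treats this proposition as a black box, the faithful route is simply the citation; a self-contained proof would have to supply the two missing ingredients above, for which see \cite{specification single map}.
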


Now suppose that $T$ is a forward expansive correspondence with the specification property on a compact metric space $X$ and that $\phi \in \CCC (\cO_2 (T) , \R)$ is Bowen summable.
By Theorem~\ref{topological pressure coincide with the lift}, (\ref{h_mu(Q)=h_muQ^N|_T(sigma)}), and~(\ref{int_O_+infty(T)tphidmuQ^N|_T=intphidmu})%
, (\ref{P(T,phi)=h_mu(Q)+int_Xphidmu intro}) is equivalent to the following equality concerning the dynamical system $(\cO_\omega (T),  \sigma)$:
\begin{equation*}\label{P(sigma,tphi)=h_muQ^N|_T(sigma)+int_O_infty(T)tphidmuQ^N|_T}
    P \bigl( \sigma ,  \tphi \bigr)= h_{\mu_\phi \cQ^\omega|_T} (\sigma) +\int_{\cO_\omega (T)} \! \tphi   \diff (\mu_\phi \cQ^\omega|_T).
\end{equation*}

By Propositions~\ref{expansiveness of correspondence with its lift} and~\ref{lift of specification property}, the forward expansiveness and specification property of $T$ imply the forward expansiveness and specification property of $(\cO_\omega (T),  \sigma)$, respectively. By Proposition~\ref{Bowen's property of phi with its lift}, the Bowen summability of $\phi$ with respect to $T$ implies the Bowen summability of $\tphi\in \CCC ( \cO_\omega (T), \R)$ with respect to $\sigma$.
As a result, we can apply Proposition~\ref{equilibrium state for single-valued maps} for $\sigma$ and $\tphi$ in the proof of Theorem~\ref{equilibrium state 1}.%

\begin{proof}[Proof of Theorem~\ref{equilibrium state 1}]
	It suffices to verify statements~(i), (ii), (iii), and the uniqueness of $(\mu_\phi,  \cQ)$.
	
	\smallskip

    (i). By Proposition~\ref{equilibrium state for single-valued maps}~(i), we can choose $\nu \in \PPP( \cO_\omega (T) )$ with
    \begin{equation}\label{L_tphi^*nu=lambda*nu}
        \cL_{\tphi}^* (\nu) = \lambda \cdot \nu,   \qquad \text{where } \lambda \=\exp \bigl( P\bigl( \sigma ,  \tphi \bigr) \bigr).
    \end{equation}

    Set $\nu_{12} \= \nu \circ \tpi_{12}^{-1}$, a Borel probability measure on $X^2$ supported on $\cO_2 (T)$, and $m_\phi \= \nu_{12} \circ \tpi_1^{-1} =\bigl(\nu \circ \tpi_{12}^{-1}\bigr) \circ \tpi_1^{-1} =\nu \circ \tpi_1^{-1}$ (see (\ref{tpi=})). By Proposition~\ref{decompose a two-dimentional measure with support}, we can choose $\cQ\in \tpk( X ; T)$ such that $m_\phi \cQ^{\zeroto{1}} =\nu_{12}$.

    We will prove $\cL_{\tphi}^* (m_\phi \cQ^\omega|_T) = \lambda \cdot m_\phi \cQ^\omega|_T$, or equivalently, for each $n\in \N \smallsetminus \{ 1\}$ and arbitrary Borel sets $A_1 ,\, \dots ,\, A_n \in \SBB (X)$ with $\diam A_1$ less than some expansive constant $\epsilon$ for $T$, we have
    \begin{equation}\label{L_tphi*(mu'Q^N|_T)(A_1*dots*A_n*X^+inftycapO_+infty(T))=lambdamu'Q^N|_T(A_1*dots*A_n*X^+inftycapO_+infty(T))}
    \begin{aligned}
         \cL_{\tphi}^* \bigl(m_\phi \cQ^\omega|_T \bigr) (A_1 \times \cdots \times A_n \times X^\omega \cap \cO_\omega (T))
        = \lambda \cdot (m_\phi \cQ^\omega|_T) (A_1 \times \cdots \times A_n \times X^\omega \cap \cO_\omega (T)).
    \end{aligned}
    \end{equation}

	Denote by $\overline{A_1}$ the closure of $A_1$. By Remark~\ref{preimage of forward expansive correspondence is finite}, the fact that $\diam \overline{A_1}$ is no more than an expansive constant for $T$ implies that $\overline{A_1} \cap T^{-1} (x_2)$ is a singleton for all $x_2 \in T \bigl(\overline{A_1} \bigr)$. This allows us to define a map $J \: T\bigl(\overline{A_1} \bigr) \to \overline{A_1}$ satisfying $\overline{A_1} \cap T^{-1} (x_2) =\{ J(x_2) \}$ for all $x_2 \in T\bigl(\overline{A_1} \bigr)$. The map $J$ is continuous because its domain, image, and graph are all compact.

    If two orbits $\udx^{(1)} ,\, \udx^{(2)} \in A_1 \times X^\omega \cap \cO_\omega (T)$ satisfy $\sigma \bigl( \udx^{(1)} \bigr)= \sigma \bigl( \udx^{(2)} \bigr)$, then $\udx^{(1)}$ and $\udx^{(2)}$ are of the form $\bigl(x_1^{(1)} ,  x_2 ,  x_3 ,  \dots \bigr)\in \cO_\omega (T)$ and $\bigl(x_1^{(2)} ,  x_2 ,  x_3 ,  \dots \bigr) \in \cO_\omega (T)$, respectively, where $x_1^{(1)} ,\, x_1^{(2)} \in A_1 ,\, x_2 \dots ,\, x_n \in X$. Since $x_2 \in T\bigl( x_1^{(1)} \bigr) \subseteq T\bigl(\overline{A_1} \bigr)$ and $x_1^{(1)} ,\, x_1^{(2)} \in \overline{A_1} \cap T^{-1} (x_2)$, we have $x_1^{(1)} =J(x_2) =x_1^{(2)}$. Thus, $\sigma$ is injective on $A_1 \times X^\omega \cap \cO_\omega (T)$ and we have $(\sigma |_{A_1 \times X^\omega \cap \cO_\omega (T)})^{-1} ( \vect{x}{2}{\infty} ) =(J(x_2) ,  x_2 ,  x_3 ,  \dots)$ for all $\vect{x}{2}{\infty} \in \sigma (A_1 \times X^\omega \cap \cO_\omega (T))$. By (\ref{L_psi*(nu)(A)=int_f(A)psicirc(f|_A)^-1dnu}), we have
    \begin{equation}\label{L_tphi*(mu'Q^N|_T)(A_1*dots*A_n*X^+inftycapO_+infty(T))=int_(x_2,dots,x_n)in(T(A_1)capA_2)*A_3*dots*A_n,x_1inA_1capT^-1(A_2)expphi(x_1)dmu'Q^[n-2](x_2,dots,x_n)}
    \begin{aligned}
        &\cL_{\tphi}^* (m_\phi \cQ^\omega|_T) (A_1 \times \cdots \times A_n \times X^\omega \cap \cO_\omega (T))\\
        &\qquad= \int_{\sigma (A_1 \times \cdots \times A_n \times X^\omega \cap \cO_\omega (T))} \! \exp  \bigl( \tphi \circ  (\sigma|_{A_1 \times X^\omega \cap \cO_\omega (T)} )^{-1} \bigr)   \diff (m_\phi \cQ^\omega|_T)\\
        &\qquad= \int_{(T(A_1) \cap A_2) \times A_3 \times \cdots \times A_n \times X^\omega} \! \exp (\phi (J(x_2) ,  x_2))   \diff (m_\phi \cQ^\omega) (\vect{x}{2}{\infty})\\
        &\qquad= \int_{(T(A_1) \cap A_2) \times A_3 \times \cdots \times A_n} \! \exp (\phi (J(x_2) ,  x_2))   \diff \bigl(m_\phi \cQ^{\zeroto{n-2}} \bigr) (\vect{x}{2}{n} ).
    \end{aligned}
    \end{equation}

    In addition, we have
    \begin{equation}\label{mu'Q^N|_T(A_1*dots*A_n*X^+inftycapO_+infty(T))=mu'Q^[n-1](A_1*dots*A_n)}
    \begin{aligned}
        & (m_\phi \cQ^\omega|_T) (A_1 \times \cdots \times A_n \times X^\omega \cap \cO_\omega (T))\\
        &\qquad= (m_\phi \cQ^\omega) (A_1 \times \cdots \times A_n \times X^\omega)= \bigl(m_\phi \cQ^{\zeroto{n-1}}\bigr) (A_1 \times \cdots \times A_n).
    \end{aligned}
    \end{equation}

    By (\ref{L_tphi*(mu'Q^N|_T)(A_1*dots*A_n*X^+inftycapO_+infty(T))=int_(x_2,dots,x_n)in(T(A_1)capA_2)*A_3*dots*A_n,x_1inA_1capT^-1(A_2)expphi(x_1)dmu'Q^[n-2](x_2,dots,x_n)}) and~(\ref{mu'Q^N|_T(A_1*dots*A_n*X^+inftycapO_+infty(T))=mu'Q^[n-1](A_1*dots*A_n)}), the equality (\ref{L_tphi*(mu'Q^N|_T)(A_1*dots*A_n*X^+inftycapO_+infty(T))=lambdamu'Q^N|_T(A_1*dots*A_n*X^+inftycapO_+infty(T))}) is equivalent to 
    \begin{equation}\label{int_(x_2,dots,x_n)in(T(A_1)capA_2*A_3*cdots*A_n,x_1inA_1capT^-1(A_2))expphi(x_1)dmuQ^[n-1](x_2,dots,x_n)=lambdamuQ^[n-1](A_1*dots*A_n)}
    \int_{(T(A_1) \cap A_2) \times A_3 \times \cdots \times A_n} \! \exp (\phi (J(x_2) ,  x_2))   \diff \bigl(m_\phi \cQ^{\zeroto{n-2}} \bigr) (\vect{x}{2}{n} )  
     = \lambda \cdot \bigl(m_\phi \cQ^{\zeroto{n-1}}\bigr) (A_1 \times \cdots \times A_n).
    \end{equation}

    We prove (\ref{int_(x_2,dots,x_n)in(T(A_1)capA_2*A_3*cdots*A_n,x_1inA_1capT^-1(A_2))expphi(x_1)dmuQ^[n-1](x_2,dots,x_n)=lambdamuQ^[n-1](A_1*dots*A_n)}) by induction on $n$. 
    If $n=2$, we rewrite (\ref{int_(x_2,dots,x_n)in(T(A_1)capA_2*A_3*cdots*A_n,x_1inA_1capT^-1(A_2))expphi(x_1)dmuQ^[n-1](x_2,dots,x_n)=lambdamuQ^[n-1](A_1*dots*A_n)}) as
    \begin{equation}\label{int_x_2inT(A_1)capA_2,x_1inA_1capT^-1(A_2)expphi(x_1)dmu(x_2)=lambda*muQ^[1](A_1*A_2)}
        \int_{T(A_1)\cap A_2} \! \exp (\phi (J(x_2) ,  x_2))   \diff m_\phi (x_2)
        = \lambda \cdot \bigl(m_\phi \cQ^{\zeroto{1}}\bigr) (A_1 \times A_2).
    \end{equation}
    To prove (\ref{int_x_2inT(A_1)capA_2,x_1inA_1capT^-1(A_2)expphi(x_1)dmu(x_2)=lambda*muQ^[1](A_1*A_2)}), we come back to the property of $\nu$. The equality (\ref{L_tphi^*nu=lambda*nu}) implies
    \begin{equation*}
    \begin{aligned}
        & \int_{\sigma (A_1 \times A_2 \times X^\omega \cap \cO_\omega (T))} \! \exp \bigl( \tphi \circ \bigl( \sigma |_{A_1 \times X^\omega \cap \cO_\omega (T)} \bigr)^{-1} \bigr)   \diff \nu \\
        &\qquad=\lambda \cdot \nu (A_1 \times A_2 \times X^\omega \cap \cO_\omega (T))
            =\lambda \cdot \nu_{12} (A_1 \times A_2 \cap \cO_2 (T)) 
           = \lambda \cdot \bigl(m_\phi \cQ^{\zeroto{1}}\bigr) (A_1 \times A_2).
    \end{aligned}
    \end{equation*}
    Moreover, recalling $m_\phi =\nu \circ \tpi_1^{-1}$, we have
    \begin{equation*}
    \begin{aligned}
        &\int_{\sigma (A_1 \times A_2 \times X^\omega \cap \cO_\omega (T))} \! \exp \bigl( \tphi \circ \bigl( \sigma |_{A_1 \times X^\omega \cap \cO_\omega (T)} \bigr)^{-1} \bigr)   \diff \nu\\
        &\qquad= \int_{(T(A_1)\cap A_2) \times X^\omega \cap \cO_\omega (T)} \! \exp (\phi (J(x_2) ,  x_2))   \diff \nu ( \vect{x}{2}{\infty} )  
        = \int_{T(A_1)\cap A_2} \! \exp (\phi (J(x_2) ,  x_2))   \diff m_\phi (x_2).
    \end{aligned}
    \end{equation*}
    Hence, (\ref{int_x_2inT(A_1)capA_2,x_1inA_1capT^-1(A_2)expphi(x_1)dmu(x_2)=lambda*muQ^[1](A_1*A_2)}) holds, i.e.,  (\ref{int_(x_2,dots,x_n)in(T(A_1)capA_2*A_3*cdots*A_n,x_1inA_1capT^-1(A_2))expphi(x_1)dmuQ^[n-1](x_2,dots,x_n)=lambdamuQ^[n-1](A_1*dots*A_n)}) holds for $n=2$.
    Suppose that (\ref{int_(x_2,dots,x_n)in(T(A_1)capA_2*A_3*cdots*A_n,x_1inA_1capT^-1(A_2))expphi(x_1)dmuQ^[n-1](x_2,dots,x_n)=lambdamuQ^[n-1](A_1*dots*A_n)}) holds for $n-1 ,\, n\geq 3$, i.e.,
    \begin{equation*}
    	\begin{aligned}
    		        \int_{(T(A_1) \cap A_2) \times A_3 \times \cdots \times A_{n-1}} \! \exp (\phi (J(x_2) ,  x_2))   \diff \bigl(m_\phi \cQ^{\zeroto{n-3}} \bigr) \bigl( \vect{x}{2}{n-1} \bigr)
    		         = \lambda \cdot \bigl(m_\phi \cQ^{\zeroto{n-2}}\bigr) (A_1 \times \cdots \times A_{n-1}).
        \end{aligned}
    \end{equation*}
    This and~(\ref{muQ^{n}(A_0**A_n)}) in Lemma~\ref{l_Qun} imply that%
    \begin{equation*}
    \begin{aligned}
         \lambda \cdot \bigl(m_\phi \cQ^{\zeroto{n-1}}\bigr) (A_1 \times \cdots \times A_n)    
        & = \lambda \int_{A_1 \times \cdots \times A_{n-1}} \! \cQ (x_{n-1} ,  A_n)   \diff \bigl(m_\phi \cQ^{\zeroto{n-2}} \bigr) \bigl( \vect{x}{1}{n-1} \bigr) \\
        & = \int_{(T(A_1) \cap A_2) \times A_3 \times \cdots \times A_{n-1}} \! \cQ (x_{n-1} ,  A_n) e^{\phi (J(x_2) ,  x_2)}   \diff \bigl(m_\phi \cQ^{\zeroto{n-3}} \bigr) \bigl( \vect{x}{2}{n-1} \bigr)\\
        & = \int_{(T(A_1) \cap A_2) \times A_3 \times \cdots \times A_n} \! e^{\phi (J(x_2) ,  x_2)} \diff \bigl(m_\phi \cQ^{\zeroto{n-2}} \bigr) ( \vect{x}{2}{n} ).
    \end{aligned}
    \end{equation*}

    Hence, (\ref{int_(x_2,dots,x_n)in(T(A_1)capA_2*A_3*cdots*A_n,x_1inA_1capT^-1(A_2))expphi(x_1)dmuQ^[n-1](x_2,dots,x_n)=lambdamuQ^[n-1](A_1*dots*A_n)}) holds for $n$, and therefore $\cL_{\tphi}^* \bigl(m_\phi \cQ^\omega|_T \bigr) = \lambda \cdot m_\phi \cQ^\omega|_T$.

    (ii). Let $v\: X\to \R$ be a non-negative bounded Borel measurable function. For each $\vect{x}{1}{\infty} \in \cO_\omega (T)$, we have
    \begin{equation*}
    \begin{aligned}
        \cL_{\tphi} \tv (\vect{x}{1}{\infty})= \sum_{\vect{x}{0}{\infty}\in \sigma^{-1} ( \vect{x}{1}{\infty} )}   \tv ( \vect{x}{0}{\infty} ) \exp \bigl(\tphi ( \vect{x}{0}{\infty} )\bigr)
        = \sum_{x_0 \in T^{-1} (x_1)}  v(x_0) \exp (\phi (x_0 ,  x_1)),
    \end{aligned}
    \end{equation*}
    which indicates that $\cL_{\tphi} \tv ( \vect{x}{1}{\infty} )$ only depends on $x_1$. Consequently, there exists a function $w\: X\to \R$ such that $\tw =\cL_{\tphi} \tv$. Then one can check the non-negativeness, boundedness, and Borel measurability of $w$.

    Thus,
    $\exp \bigl(-n P\bigl(\sigma ,  \tphi \bigr)\bigr) \cdot \cL_{\tphi}^n (\mathbbold{1}_{\cO_\omega (T)})
    	= \exp \bigl(-n P\bigl(\sigma ,  \tphi \bigr)\bigr) \cdot \cL_{\tphi}^n \bigl( \wt{\mathbbold{1}_X} \bigr)$
    is of the form $\tu_n$ for some non-negative bounded Borel measurable function $u_n \: X\to \R$ for each $n\in \N$. By Proposition~\ref{equilibrium state for single-valued maps}~(ii) and~(iii), $\{ \tu_n \}_{n\in \N}$ converges to $\Phi$ in $L^1 (m_\phi \cQ^\omega|_T)$, where $\Phi$ is the only non-negative eigenfunction of $\cL_{\tphi}$ acting on $L^1 (m_\phi \cQ^\omega|_T)$.

    By taking $n=0$ in (\ref{muQ^N(A*X^infty)=muQ^[n](A)}), we get that the sequence $\{ u_n \}_{n\in \N}$ converges in $L^1 (m_\phi)$. Suppose that $u_n$ converges to $u_\phi \in L^1 (m_\phi)$ as $n\to +\infty$ in $L^1 (m_\phi)$. Then $\tu_n$ converges to $\tu_\phi$ as $n\to +\infty$ in $L^1 (m_\phi\cQ^\omega|_T)$. Thus, $\Phi =\tu_\phi$ in $L^1 (m_\phi)$. By Proposition~\ref{equilibrium state for single-valued maps}~(ii), we have $\cL_{\tphi} (\tu_\phi)= \lambda \tu_\phi$.

    (iii). By Proposition~\ref{equilibrium state for single-valued maps}~(ii), $\tu_\phi (m_\phi\cQ^\omega|_T) \in \PPP (\cO_\omega (T))$ is the unique equilibrium state for $\sigma$.

    Set $\mu_\phi \= u_\phi m_\phi$. %
    For each Borel set $M\in \SBB (X^\omega)$, by Lemmas~\ref{3w94g8r0q7gwd0awedcbx} and~\ref{203q9j}, we have
    \begin{equation*}
    \begin{aligned}
         \tu_\phi ( m_\phi\cQ^\omega|_T) (M\cap \cO_\omega (T))
        &= \int_{M\cap \cO_\omega (T)} \! \tu_\phi ( \vect{x}{1}{\infty} )   \diff (m_\phi\cQ^\omega|_T) ( \vect{x}{1}{\infty} )
        =\int_M \! u_\phi (x_1)   \diff (m_\phi\cQ^\omega) ( \vect{x}{1}{\infty} )\\
        &= \int_X \! \biggl( \int_{X^\omega} \! \mathbbold{1}_M ( \vect{x}{1}{\infty} ) \cdot u_\phi (x_1)   \diff \cQ^\omega_x ( \vect{x}{1}{\infty} ) \biggr)   \diff m_\phi (x) \\
        &= \int_X \! \biggl( \int_{X^\omega} \! \mathbbold{1}_M ( \vect{x}{1}{\infty} ) \cdot u_\phi (x)   \diff \cQ^\omega_x ( \vect{x}{1}{\infty} ) \biggr)   \diff m_\phi (x) \\
        &= \int_X \! u_\phi (x) \cdot \cQ^\omega (x,  M)   \diff m_\phi(x) 
        = \int_X \! \cQ^\omega (x,  M)   \diff (u_\phi m_\phi) (x)\\
        &=((u_\phi m_\phi)\cQ^\omega) (M) 
        =((u_\phi m_\phi) \cQ^\omega|_T) (M\cap \cO_\omega (T)).
    \end{aligned}
    \end{equation*}

    Hence, $\mu_\phi \cQ^\omega|_T = (u_\phi m_\phi)\cQ^\omega|_T = \tu_\phi (m_\phi\cQ^\omega|_T)$ is the unique equilibrium state for $\sigma$, and therefore (\ref{P(T,phi)=h_mu(Q)+int_Xphidmu intro}) holds for $(\mu_\phi ,  \cQ)$.

\smallskip

We have finished constructing an equilibrium state $(\mu_\phi,  \cQ)$ for the correspondence $T$ and potential function $\phi$. %
Now we show that it is unique in the sense of Theorem~\ref{equilibrium state 1}.

    Recall that (\ref{P(T,phi)=h_mu(Q)+int_Xphidmu intro}) is equivalent to that $\mu \cQ^\omega |_T$ is an equilibrium state for $\tphi$ in the dynamical system $(\cO_\omega (T),  \sigma)$. Since $\sigma \: \cO_\omega (T)\to \cO_\omega (T)$ is a forward expansive continuous map with specification property and $\tphi \in \CCC ( \cO_\omega (T) , \R)$ is Bowen summable, Proposition~\ref{equilibrium state for single-valued maps}~(ii) says that the equilibrium state for $\tphi$ in the dynamical system $(\cO_\omega (T),  \sigma)$ is unique.

    Suppose that both $(\mu ,  \cQ)$ and $(\mu ' ,  \cQ ')$ are equilibrium states for the correspondence $T$ and potential function $\phi$, then both $\mu \cQ^\omega |_T$ and $\mu ' (\cQ ')^\omega |_T$ are equilibrium states for $\sigma$ and $\tphi$, and thus $\mu \cQ^\omega |_T = \mu ' (\cQ ')^\omega |_T$. Thereby, we have $\mu \cQ^\omega = \mu ' (\cQ ')^\omega$ by Lemma~\ref{the orbit space is of full measure}. By (\ref{muQ^N(A*X^infty)=muQ^[n](A)}), we have $\mu \cQ^{\zeroto{1}} = \mu ' (\cQ ')^{\zeroto{1}}$. By Proposition~\ref{decompose a two-dimentional measure}, we conclude $\mu =\mu '$ and that for $\mu$-almost $x\in X$ and all $A\in \SBB (X)$, the equality $\cQ (x,  A)= \cQ '(x,  A)$ holds.
\end{proof}

\subsection{Open, distance-expanding, strongly transitive correspondences}

We aim to prove Theorem~\ref{phwfq9}, which provides another version of conditions that ensure the Variational Principle, the existence and uniqueness of the equilibrium state, and some equidistribution properties. 

\begin{definition}[Openness]\label{open correspondence}
    Let $T$ be a correspondence on a compact metric space $(X,d)$. We say that $T$ is \defn{open}\index{open} if, for each open subset $U\subseteq X$, $T(U)$ is an open subset of $X$.
\end{definition}

\begin{prop}\label{lift of open}
	Let $T$ be a correspondence on a compact metric space $X$ and $\sigma \: \cO_\omega (T) \to \cO_\omega (T)$ be the shift map. Then $T$ is open if and only if $\sigma$ is an open map.
\end{prop}

\begin{proof}
	First, we assume that $T$ is open.	
	Fix an arbitrary open set $\udU \subseteq \cO_\omega (T)$ and an arbitrary orbit $\udy \in \sigma (\udU)$. We claim that $\sigma (\udU)$ is a neighborhood of $\udy$. Indeed, we choose $\udx\in \udU$ such that $\sigma (\udx) =\udy$. Since $\udU$ is an open subset of $\cO_\omega (T)$, we can choose $n\in \N$ and open subsets $V_1 ,\, \dots ,\, V_n$ of $X$ such that $\udx \in V_1\times \cdots \times V_n \times X^\omega \cap \cO_\omega (T) \subseteq \udU$. Since $T$ is open in the sense of Definition~\ref{open correspondence}, we have $T(V_1)$ is an open subset of $X$, and thus $\sigma (V_1\times \cdots \times V_n \times X^\omega \cap \cO_\omega (T)) =(T(V_1) \cap V_2) \times V_3 \times \cdots \times V_n \times X^\omega \cap \cO_\omega (T)$ is an open subset of $\cO_\omega (T)$. The claim is thus established because $\udy =\sigma (\udx) \in \sigma (V_1\times \cdots \times V_n \times X^\omega \cap \cO_\omega (T)) \subseteq \sigma (\udU)$. Because $\udy \in \sigma (\udU)$ is chosen arbitrarily, we conclude that $\sigma (\udU)$ is an open subset of $\cO_\omega (T)$. Hence, $\sigma$ is an open map.
	
	Now we assume that $\sigma$ is an open map.
	Choose an arbitrary open subset $U$ of $X$. The openness of $\sigma$ yields that $\sigma (U\times X^\omega \cap \cO_\omega (T)) =T(U) \times X^\omega \cap \cO_\omega (T)$ is an open subset of $\cO_\omega (T)$. We argue by contradiction and assume that $T(U)$ is not open in $X$. This allows us to choose a sequence $\{x_n\}_{n\in \N}$ in $X \smallsetminus T(U)$ that converges to a point $x_0 \in T(U)$. For each $n\in \N$, we choose $\udx_n \in \cO_\omega (T)$ with $\tpi_1 (\udx_n) =x_n$, i.e., the first coordinate of $\udx_n$ is $x_n$. Since $\cO_\omega (T)$ is compact, there is an increasing sequence of positive integers $\{n_l\}_{l \in \N}$ such that $\udx_{n_l}$ converges to some orbit $\udx_0 \in \cO_\omega (T)$ as $l \to +\infty$. We have $\tpi_1 (\udx_0) =\lim_{l\to +\infty} \tpi_1 (\udx_{n_l}) =\lim_{l\to +\infty} x_{n_l} =x_0 \in T(U)$, which indicates $\udx_0 \in T(U) \times X^\omega \cap \cO_\omega (T)$. Since $T(U) \times X^\omega \cap \cO_\omega (T)$ is open in $\cO_\omega (T)$ and $\lim_{l \to +\infty} \udx_{n_l} =\udx_0$, there exists $l \in \N$ such that $\udx_{n_l} \in T(U) \times X^\omega \cap \cO_\omega (T)$, i.e., $x_{n_l} =\tpi_1 (\udx_{n_l}) \in T(U)$. This contradicts $x_n \in X \smallsetminus T(U)$ for all $n\in \N$. Hence, $T(U)$ is open, and we conclude that $T$ is open.
\end{proof}

\begin{definition}[Strong transitivity]\label{strongly transitive correspondence}
    We say that a correspondence $T$ on a compact metric space $X$ is \defn{strongly transitive}\index{strongly transitive} if $\bigcup_{n=1}^{+\infty} T^{-n} (x)$ is dense in $X$ for every $x\in X$.
\end{definition}

\begin{remark}
    We call this property \defn{strongly transitive} because if $T=\cC_f$ for some continuous map $f\: X\to X$, then this property is slightly stronger than topological transitivity (cf.~Remark~\ref{r:C_f_properties}~(v)).
\end{remark}

\begin{prop}\label{lift of transitive}
	Let $T$ be a correspondence on a compact metric space $X$ and $\sigma \: \cO_\omega (T) \to \cO_\omega (T)$ be the shift map. If $T$ is open and strongly transitive, then $\sigma$ is topologically transitive.
\end{prop}

\begin{proof}
	To prove that $\sigma$ is transitive, we choose two arbitrary non-empty open subsets $\udU_1 ,\, \udU_2$ of $\cO_\omega (T)$ and aim to show that there exists $n\in \N$ such that $\sigma^n (\udU_1) \cap \udU_2 \neq \emptyset$. Without loss of generality, we assume $\udU_1 =V_1 \times \cdots \times V_m \times X^\omega \cap \cO_\omega (T)$, where $m\in \N$ and $V_1 ,\, \dots ,\, V_m$ are open subsets of $X$. We define $W_1 ,\, \dots ,\, W_m$ recursively as:
	\begin{equation}\label{W:sigma^m-1}
		W_1 \= V_1 ,\, W_{k+1} \= T(W_k) \cap V_{k+1} \text{ for all } k\in \oneto{m-1}.
	\end{equation}
	Since $V_1 ,\, \dots ,\, V_m$ are open in $X$ and since $T$ is open, we can get that $W_k$ is open in $X$ for all $k \in \oneto{m}$ by induction on $k$. Moreover, we can prove
	\begin{equation*}
		\sigma^{j-1} (V_1 \times \cdots \times V_m \times X^\omega \cap \cO_\omega (T)) =W_j \times V_{j+1} \times \cdots \times V_m \times X^\omega \cap \cO_\omega (T)
	\end{equation*}
	for all $j\in \oneto{m}$ by induction on $j$. In particular, $\sigma^{m-1} (\udU_1) =W_m \times X^\omega \cap \cO_\omega (T)$. The open subset $W_m$ of $X$ is non-empty because $\sigma^{m-1} (\udU_1)\neq \emptyset$ due to $\udU_1 \neq \emptyset$.
	
	As $\udU_2\neq \emptyset$, we can choose an orbit $(x_1,  x_2,  \dots) \in \udU_2$. The strong transitivity of $T$ ensures that $\bigcup_{n=1}^{+\infty} T^{-n} (x_1)$ is dense in $X$, so there exists $n\in \N$ such that $T^{-n} (x_1) \cap W_m \neq \emptyset$. This allows us to choose $\vect{y}{0}{n}  \in \cO_n (T)$ with $y_0 \in W_m$ and $y_n =x_1$. Consider the orbit $\udx_0 \=(y_0 ,  \dots ,  y_{n-1} ,  x_1 ,  x_2 ,  \dots) \in \cO_\omega (T)$. Since $y_0 \in W_m$, we have $\udx_0 \in W_m \times X^\omega \cap \cO_\omega (T) =\sigma^{m-1} (\udU_1)$.
	Moreover, $\sigma^n (\udx_0) =\vect{x}{1}{\infty} \in \udU_2$. Hence, $\sigma^{m+n-1} (\udU_1) \cap \udU_2 \neq \emptyset$ and we conclude that $\sigma$ is topologically transitive.
\end{proof}

\begin{definition}[Topological exactness]\label{topologically exact}
    Let $T$ be a correspondence on a compact metric space $X$. We say that $T$ is \defn{topologically exact}\index{topologically exact}\footnote{{\ifFirstInitial C.~\fi}Siqueira and {\ifFirstInitial D.~\fi}Smania \cite[Section~4.4]{SS17} called this property locally eventually onto for $\boldsymbol{f}_c$ on what they called ``hyperbolic repellers''.} if for every non-empty open subset $U\subseteq X$, there exists $N\in \N$ such that $T^N (U)=X$.
\end{definition}

\begin{prop}\label{lift of topologically exact}
	Let $T$ be a correspondence on a compact metric space $X$ and $\sigma \: \cO_\omega (T) \to \cO_\omega (T)$ be the shift map. If $T$ is open and topologically exact, then $\sigma$ is topologically exact.
\end{prop}

\begin{proof}
	To prove that $\sigma$ is topologically exact, we fix an arbitrary open subset $\udU\neq \emptyset$ of $\cO_\omega (T)$ and aim to show that there exists $n\in \N$ such that $\sigma^n (\udU) =\cO_\omega (T)$. Without loss of generality, we assume $\udU =V_1 \times \cdots \times V_m \times X^\omega \cap \cO_\omega (T)$, where $m\in \N$ and $V_1 ,\, \dots ,\, V_m$ are open subsets of $X$. %
	We define $W_1 ,\, \dots ,\, W_m$ recursively as (\ref{W:sigma^m-1}). Then we have proved in the proof of Proposition~\ref{lift of transitive} that $\sigma^{m-1} (\udU) =W_m \times X^\omega \cap \cO_\omega (T)$ and that $W_m$ is a non-empty open subset of $X$ since $T$ is open.
	
	By the topologically exactness of $T$, there exists $N \in \N$ such that $T^N (W_m) =X$, and thus 
	\begin{equation*}
		\begin{aligned}
			\sigma^{m+N-1} (\udU) 
			=\sigma^N (W_m \times X^\omega \cap \cO_\omega (T)) 
			=T^N (W_m) \times X^\omega \cap \cO_\omega (T) 
			=X\times X^\omega \cap \cO_\omega (T) 
			=\cO_\omega (T).
		\end{aligned}
	\end{equation*}
	Therefore, $\sigma$ is topologically exact.
\end{proof}

\begin{prop}\label{lift of Holder}
    Let $X$ be a compact metric space and $T$ be a correspondence on $(X,  d)$. If $\phi \: \cO_2 (T) \to \R$ is $\alpha$-H\"{o}lder continuous with respect to the metric $d_2$ on $\cO_2 (T)$, then the function $\tphi \: \cO_\omega (T) \to \R$ is $\alpha$-H\"{o}lder continuous with respect to the metric $d_\omega$ on $\cO_\omega (T)$.
\end{prop}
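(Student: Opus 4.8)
The plan is to unwind both definitions and reduce everything to a single elementary inequality comparing $d_\omega$ restricted to the first two coordinates with $d_2$. Suppose $\phi$ is $\alpha$-H\"older with constant $C > 0$, so that $\Abs{\phi(x_1,x_2) - \phi(y_1,y_2)} \leqslant C \, d_2((x_1,x_2),(y_1,y_2))^\alpha = C \max\{d(x_1,y_1),\, d(x_2,y_2)\}^\alpha$ for all $(x_1,x_2),\, (y_1,y_2) \in \cO_2(T)$. Since $\tphi(x_1,x_2,\dots) = \phi(x_1,x_2)$, for arbitrary orbits $\udx = (x_1,x_2,\dots)$ and $\udy = (y_1,y_2,\dots)$ in $\cO_\omega(T)$ we immediately get $\Abs{\tphi(\udx) - \tphi(\udy)} \leqslant C \max\{d(x_1,y_1),\, d(x_2,y_2)\}^\alpha$. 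Thus it suffices to bound $\max\{d(x_1,y_1),\, d(x_2,y_2)\}$ above by a constant multiple of $d_\omega(\udx,\udy)$.

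The key step is to exploit that $d_\omega$ is a sum of non-negative terms, so dropping all but the $k$-th term gives $d_\omega(\udx,\udy) \geqslant \frac{1}{2^k}\cdot\frac{d(x_k,y_k)}{1+d(x_k,y_k)}$ for each $k \in \N$. Applying this for $k = 1$ and $k = 2$, and using that $X$ is compact so $D \= \diam X < +\infty$ and hence $1 + d(x_k,y_k) \leqslant 1 + D$, I would obtain $d_\omega(\udx,\udy) \geqslant \frac{1}{4}\cdot\frac{d(x_k,y_k)}{1+D}$ for $k \in \{1,2\}$, that is, $d(x_k,y_k) \leqslant 4(1+D)\, d_\omega(\udx,\udy)$ for $k = 1,2$. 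Taking the maximum over these two indices yields $\max\{d(x_1,y_1),\, d(x_2,y_2)\} \leqslant 4(1+D)\, d_\omega(\udx,\udy)$.

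Combining the two displays gives $\Abs{\tphi(\udx) - \tphi(\udy)} \leqslant C\,(4(1+D))^\alpha\, d_\omega(\udx,\udy)^\alpha$, so $\tphi$ is $\alpha$-H\"older continuous with constant $C' \= C\,(4(1+D))^\alpha$. There is no real obstacle here: the only point requiring a hypothesis beyond algebra is the finiteness of $\diam X$, which is guaranteed by compactness of $X$, and the coordinate-wise lower bound on $d_\omega$ is by design. The proof is short and I would present it essentially as written above, with the estimate for $k=1,2$ recorded as the single nontrivial inequality.
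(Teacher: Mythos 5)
Your proof is correct and follows essentially the same route as the paper's: both reduce the claim to bounding $\max\{d(x_1,y_1),\,d(x_2,y_2)\}$ by $4(1+\diam X)\,d_\omega(\udx,\udy)$ using the first two terms of the series defining $d_\omega$ together with compactness of $X$, arriving at the same Hölder constant $C\,(4(1+\diam X))^\alpha$. The only cosmetic difference is that the paper keeps the two terms summed before comparing with the maximum, whereas you treat the coordinates $k=1,2$ separately and then take the maximum.
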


\begin{proof}
    Suppose that $\phi$ is $\alpha$-H\"{o}lder continuous with respect to the metric $d_2$ on $\cO_2 (T)$ and that a constant $C>0$ satisfy 
    $\abs{\phi (x_1 ,  x_2)- \phi (y_1 ,  y_2)} \leq C \cdot d_2 ((x_1 ,  x_2) ,  (y_1 ,  y_2))^\alpha$
    for all $(x_1 ,  x_2) ,\, (y_1 ,  y_2)\in \cO_2 (T)$. Then for arbitrary $\udx =\vect{x}{1}{\infty}$ and $\udy =\vect{y}{1}{\infty}$ in $\cO_\omega (T)$, we have 
    \begin{equation*}
    	\Absbig{  \tphi (\udx) -\tphi (\udy) } = \abs{\phi (x_1 ,  x_2) -\phi (y_1 ,  y_2)}\leq C \cdot \max\{ d(x_1 ,  y_1) ,  d(x_2 ,  y_2) \}^\alpha.
    \end{equation*}
    Since 
    $d_\omega (\udx ,  \udy) 
    		\geq \frac{d(x_1 ,  y_1)}{2(1+ d(x_1 ,  y_1))} +\frac{d(x_2 ,  y_2)}{4(1+ d(x_2 ,  y_2))} 
    		\geq \frac{\max \{ d(x_1 ,  y_1) ,  d(x_2 ,  y_2) \}}{4(1+ \diam X)}$,
    we have 
    \begin{equation*}
    	\Absbig{ \tphi (\udx) -\tphi (\udy) } \leq C \cdot (4(1+ \diam X))^\alpha \cdot d_\omega (\udx ,  \udy)^\alpha.
    \end{equation*}
    Therefore, $\tphi$ is $\alpha$-H\"{o}lder continuous with respect to the metric $d_\omega$ on $\cO_\omega (T)$.
\end{proof}

Let $T$ be a correspondence on a compact metric space $X$, recall $\O_{-n} (x)= \cO_{n+1} (T) \cap X^n \times \{ x\} =\{ \vect{y}{0}{n}  \in \cO_{n+1} (T)  :  y_n =x \}$ for all $n\in \N$ and $x\in X$ from Theorem~\ref{phwfq9}. If $T$ is forward expansive, then the set $\O_{-n} (x)$ is finite for all $n\in \N$ and $x\in X$, ensured by the fact shown in Remark~\ref{preimage of forward expansive correspondence is finite} that $T^{-1} (y)=\{ z\in X  :  y\in T(z) \}$ is a finite set for all $y\in X$.

The proofs of Theorem~\ref{phwfq9}~(2) and the uniqueness of the equilibrium state are similar to the proofs of Theorem~\ref{equilibrium state 1}~(i),~(ii),~(iii), and the uniqueness of the equilibrium state and the proofs of Theorem~\ref{phwfq9}~(3)(a) and~(3)(b) are similar, so now we sketch the proofs of Theorem~\ref{phwfq9}~(2),~(3)(b), and the uniqueness of the equilibrium state and give a detailed proof of Theorem~\ref{phwfq9}~(3)(a).

First, to prove Theorem~\ref{phwfq9}~(2), we should note that if an open, strongly transitive, distance-expanding correspondence $T$ on $X$ is given, then $\sigma \: \cO_\omega (T) \to \cO_\omega (T)$ is open (by Proposition~\ref{lift of open}), topologically transitive (by Proposition~\ref{lift of transitive}), and distance-expanding (by Proposition~\ref{lift of distance-expanding}). Also, by Proposition~\ref{lift of Holder}, the lifted potential function $\tphi \: \cO_\omega (T) \to \R$ is $\alpha$-H\"{o}lder continuous if $\phi \: X\to \R$ is $\alpha$-H\"{o}lder continuous. Thereby, under the setting of Theorem~\ref{phwfq9}, we can apply the following version of the Ruelle--Perron--Frobenius theorem for $\sigma$ and potential  $\tphi$.

\begin{prop}\label{equilibrium state for single-valued maps2}
    Let $Y$ be a compact metric space, $f\: Y\to Y$ be an open, topologically transitive, distance-expanding continuous map, and $\psi\: Y\to \R$ be an $\alpha$-H\"{o}lder continuous function with respect to the metric on $Y$, where $\alpha \in (0,  1)$. Then the following statements are true:
    \begin{enumerate}
        \smallskip
        \item[(i)] There is a unique eigenvector $\nu$ (up to a multiplicative constant) of $\cL_\psi^*$ acting on finite Borel measures on $Y$, i.e.,
        $\cL_\psi^* (\nu) =\lambda \nu$. 
        Moreover, $\lambda =e^{P(f,  \psi)}$ and $\nu$ is a Gibbs measure for $\psi$.
        \smallskip
        \item[(ii)] There is a unique positive $\alpha$-H\"{o}lder eigenfunction $\Phi$ (up to a multiplicative constant) of $\cL_\psi$, i.e.,
        $\cL_\psi (\Phi)= \lambda \Phi>0$.
        Moreover, $\lambda= e^{P(f,  \psi)}$ and $\Phi \nu$ is the unique equilibrium state for $\psi$.
        \smallskip
        \item[(iii)] The sequence $e^{-n P(f,  \psi)} \cdot \cL_\psi^n (\mathbbold{1}_Y)$ converges uniformly to $\Phi$ as $n\to +\infty$.
    \end{enumerate}
    In addition, the backward orbits under $f$ are equidistributed with respect to the measure $\Phi \nu$. More precisely, if we write $W(z,n) \= \exp \bigl( \sum_{i=0}^{n-1}   \psi (f^i (z)) \bigr)$, the following statements are true for all $y\in Y$:
    \begin{enumerate}
        \smallskip
        \item[(a)] $\frac{1}{\sum_{z \in f^{-n} (y)}  W(z,n) } \sum_{z \in f^{-n} (y)}   \frac{\sum_{j=0}^n   \delta_{f^j (z) } W(z,n)}{n+1}   \in \PPP(Y)$
        converges to $\Phi \nu$ in the weak* topology as $n \to +\infty$.
        \smallskip
        \item[(b)] If, moreover, $f$ is topologically exact, then  
        $\frac{1}{\sum_{z \in f^{-n} (y)}   W(z,n) } \sum_{z \in f^{-n} (y)} \delta_z  W(z,n)  \in \PPP(Y)$
        converges to $\nu$ in the weak* topology as $n \to +\infty$.
    \end{enumerate}
\end{prop}

This proposition is summarized from \cite[Chapter~5]{PU10}. In detail, statement~(i) comes from \cite[Theorem~5.2.8, Propositions~5.2.11, and~5.1.1]{PU10}, statement~(ii) comes from \cite[Propositions~5.1.5,~5.3.1,~5.2.10, Theorems~5.3.2, and~5.6.2]{PU10}, statement~(iii) comes from \cite[Section~5.4, (5.4.2)]{PU10}, statement~(a) comes from \cite[Remark~4.4.4]{PU10}, and statement~(b) comes from \cite[Section~5.4, (5.4.4)]{PU10}.

By applying Proposition~\ref{equilibrium state for single-valued maps2}~(i) for $\sigma$ and $\tphi$, we can get the unique $\nu \in \PPP (\cO_\omega (T))$ with $\cL_{\tphi}^* (\nu) = \exp \bigl( P\bigl( \sigma ,  \tphi \bigr) \bigr) \cdot \nu$. The proof of Theorem~\ref{equilibrium state 1}~(i) indicates that $\nu$ is of the form $m_\phi \cQ^\omega |_T$, where $m_\phi \in \PPP(X)$ and $\cQ \in \tpk( X ; T )$. Consequently, part~(i) in Theorem~\ref{phwfq9}~(2) follows.

By applying Proposition~\ref{equilibrium state for single-valued maps2}~(ii) for $\sigma$ and $\tphi$, we can get the unique $\alpha$-H\"{o}lder function $\Phi \: \cO_\omega (T) \to \R$ with $\cL_{\tphi} (\Phi) = \exp \bigl( P\bigl( \sigma ,  \tphi \bigr) \bigr) \cdot \Phi$. Applying Proposition~\ref{equilibrium state for single-valued maps2}~(iii), we can see $\Phi =\tu_\phi$ for some function $u_\phi \in L^1 (m_\phi)$ following the proof of Theorem~\ref{equilibrium state 1}~(ii). In addition, suppose that $T$ is continuous in the sense of Definition~\ref{continuity}, we aim to prove that $u_\phi$ is continuous. Fix an arbitrary $\epsilon >0$. By Proposition~\ref{equilibrium state for single-valued maps2}~(ii), $\tu_\phi$ is H\"{o}lder continuous, so we can choose $\delta >0$ such that $\abs{\tu_\phi (\udx) -\tu_\phi (\udy)} <\epsilon$ holds for all $\udx ,\, \udy \in \cO_\omega (T)$ with $d_\omega (\udx , \udy) <2\delta$. Choose $n\in \N$ with $2^{-n} <\delta$. Set $\delta_n \= \delta$. By Definition~\ref{continuity}, the compactness of $X$ implies that $T\: X \to \cF (X)$ is uniformly continuous, with $\cF(X)$ equipped with the Hausdorff distance $d_H$. This allows us to choose $\delta_n > \delta_{n-1} > \delta_{n-2} > \cdots > \delta_1>0$ one by one such that $d_H (T(x) ,T(y)) <\delta_{k+1}$ for all $k\in \oneto{n-1}$ and $x ,\, y\in X$ with $d(x ,y)<\delta_k$.

Fix arbitrary $x_1 ,\, y_1 \in X$ with $d (x_1 ,y_1)< \delta_1$. By induction on $k$, we can choose $x_2 \in T(x_1) ,\, y_2 \in T(y_1) ,\, \dots ,\, x_n \in T(x_{n-1}) ,\, y_n \in T(y_{n-1})$ such that $d(x_k ,y_k) <\delta_k <\delta_n =\delta$ for all $k\in \oneto{n}$. Furthermore, we choose $\vect{x}{n+1}{\infty} ,\, \vect{y}{n+1}{\infty} \in X^\omega$ such that $\vect{x}{1}{\infty} \in \cO_\omega (T)$ and $\vect{y}{1}{\infty} \in \cO_\omega (T)$. We have
\begin{equation*}
	d_\omega (\vect{x}{1}{\infty} ,\vect{y}{1}{\infty}) =\sum_{k=1}^{+\infty} \frac{1}{2^k} \frac{d(x_k ,y_k)}{1+ d(x_k ,y_k)} \leq \sum_{k=1}^n \frac{1}{2^k} \delta +\sum_{k=n+1}^{+\infty} \frac{1}{2^k} <\delta +2^{-n} <2\delta.
\end{equation*}
This implies $\abs{u_\phi (x_1) -u_\phi (y_1)} =\abs{\tu_\phi (\vect{x}{1}{\infty}) -\tu_\phi (\vect{y}{1}{\infty})} <\epsilon$. Since $\epsilon$ is chosen arbitrarily, we conclude that $u_\phi$ is continuous. Part~(ii) in  Theorem~\ref{phwfq9}~(2) follows.

We have proved $(u_\phi m_\phi)\cQ^\omega|_T = \tu_\phi (m_\phi\cQ^\omega|_T)$ in the proof of Theorem~\ref{equilibrium state 1}~(iii). This equality and Proposition~\ref{equilibrium state for single-valued maps2}~(ii) imply part~(iii) in Theorem~\ref{phwfq9}~(2).

In the proof of the uniqueness of the equilibrium state in Theorem~\ref{equilibrium state 1}, we have shown that if the equilibrium state for the shift map $\sigma$ and potential $\tphi$ is unique, then the equilibrium state for the correspondence $T$ and potential $\phi$ is unique in the sense of Theorem~\ref{equilibrium state 1}. The uniqueness of the equilibrium state in the setting of Theorem~\ref{phwfq9} also follows by the uniqueness of the equilibrium state for the shift map $\sigma$ and potential $\tphi$ (see Proposition~\ref{equilibrium state for single-valued maps2}~(ii)) in the same way as the proof of the uniqueness of the equilibrium state in Theorem~\ref{equilibrium state 1}.

Now we give a detailed proof of Theorem~\ref{phwfq9}~(3)(a).

\begin{proof}[Proof of Theorem~\ref{phwfq9}~(3)(a)]
    We have pointed out that $\sigma \: \cO_\omega (T) \to \cO_\omega (T)$ is an open, topologically transitive, distance-expanding continuous map, and that $\tphi \: \cO_\omega (T) \to \R$ is a H\"{o}lder continuous function. This allows us to apply Proposition~\ref{equilibrium state for single-valued maps2}~(a) to the shift map $\sigma$ and potential $\tphi$:

    For each $\udx \in \cO_\omega (T)$, 
    $\frac{1}{\sum_{\udz \in \sigma^{-n} (\udx)}   W(\udz, n) } \sum_{\udz \in \sigma^{-n} (\udx)}   \frac{\sum_{j=0}^n   \delta_{\sigma^j (\udz)}  W(\udz, n) }{n+1} \in \PPP (\cO_\omega (T))$
    converges to $\mu_\phi \cQ^\omega |_T$ in the weak* topology as $n \to +\infty$. Here $W(\udz, n) \= \exp \bigl(  \sum_{i=0}^{n-1}   \tphi (\sigma^i (\udz)) \bigr)$.

    If we consider the projection of the sequence onto the first coordinate, i.e., we consider each item composing $\tpi_1^{-1}$, then we get Theorem~\ref{phwfq9}~(3)(a).
\end{proof}

Note that by Proposition~\ref{lift of topologically exact}, we can apply Proposition~\ref{equilibrium state for single-valued maps2}~(b) to the shift map $\sigma$ and potential $\tphi$ under the assumption that $T$ is topologically exact. Then we consider the projection of the sequence of measures in Proposition~\ref{equilibrium state for single-valued maps2}~(b), and then we get Theorem~\ref{phwfq9}~(3)(b) in the same way as the proof of Theorem~\ref{phwfq9}~(3)(a).

\appendix

\section{Transition probability kernels}

In this appendix, we collect some basic properties of transition probability kernels which are standard to experts. We refer the reader to e.g., \cite[Chapter~3]{Ka21}, \cite[Section~6.1]{Le16}, and~\cite[Chapter~3]{MT12} for more background.

Throughout this appendix, let $(X,  \SAA (X))$ and $(Y,  \SAA (Y))$ be measurable spaces.

\begin{comment}
	
We have recalled the definition of transition probability kernels and other related notions in Subsections~\ref{subsct_Basic properties of transition probability kernels} and~\ref{subsct_Transition probability kernel Q^[n] and Q^omega}. We discuss further about transition probability kernels in this appendix.

%

In this appendix, we prove the lemmas and propositions in Subsections~\ref{subsct_Basic properties of transition probability kernels} and~\ref{subsct_Transition probability kernel Q^[n] and Q^omega} and check that Definitions~\ref{compose of transition probability kernels},~\ref{Q^[n]}, and~\ref{Q^omega} are well-defined.

\end{comment}

Lemmas~\ref{continuity of Q} and~\ref{l_muQ^[1]circhpi_1^-1=mu}  follow from the definition and simple calculations. %

\begin{lemma}   \label{continuity of Q}
	For $\cQ \in \tpk( Y, X )$ and $f \in \BBB(X,\R)$, if a sequence of uniformly bounded $f_n\in \BBB(X,\R)$, $n\in \N$, converges pointwise to $f$ as $n\to +\infty$, then $\cQ f_n$ converges pointwise to $\cQ f$ as $n\to +\infty$. Moreover, $\cQ f$ is measurable and $ \norm{\cQ f}_\infty \leq  \norm{f}_\infty$.
\end{lemma}

\begin{lemma}\label{l_muQ^[1]circhpi_1^-1=mu}
	For $\cQ \in \tpk( X )$ and $\mu \in \PPP(X)$, we have
	$\bigl(\mu \cQ^{\zeroto{1}}\bigr) \circ \tpi_1^{-1} =\mu$.
\end{lemma}

\begin{comment}
\begin{proof}
	If we take $n=0$ and $m=1$ in Lemma~\ref{extendable to infinity}, then we can get $\cQ^{\zeroto{1}} (x ,  A \times X) =\cQ^{\zeroto{0}} (x ,  A) =\wh{\operatorname{id}_X} (x ,  A)$ for all $x\in X$ and $A \in \SAA (X)$. Thus, by Definition~\ref{transition probability kernel act on measures}, we have
	\begin{equation*}
		\begin{aligned}
					\bigl(\mu \cQ^{\zeroto{1}}\bigr) (A \times X) 
			=\int_X \! \cQ^{\zeroto{1}} (x ,  A \times X)   \diff \mu (x) 
			=\int_X \! \wh{\operatorname{id}_X} (x ,  A)   \diff \mu (x) 
			=\int_X \! \mathbbold{1}_A (x)  \diff \mu (x)
			=\mu (A)
		\end{aligned}
	\end{equation*}
	for all $A \in \SAA (X)$. Therefore, we get $\bigl(\mu \cQ^{\zeroto{1}}\bigr) \circ \tpi_1^{-1} =\mu.$
\end{proof}

\end{comment}

Lemmas~\ref{3w94g8r0q7gwd0awedcbx} and~\ref{l_Qun} are straightforward to check and standard (see  e.g., \cite[Lemma~3.3~(iii),(v)]{Ka21}).

\begin{lemma}\label{3w94g8r0q7gwd0awedcbx}
	For $\mu \in \PPP(Y)$, $\cQ \in \tpk( Y,X )$, and $f \in \BBB( X, \R)$, we have
	$\int_Y \! \cQ f  \diff\mu =\int_X \! f  \diff (\mu\cQ )$.
\end{lemma}

\begin{comment}
	Lemma~\ref{3w94g8r0q7gwd0awedcbx} is standard for any specialist. We include a proof for the convenience of the reader.

	\begin{proof}
		Choose a sequence of uniformly bounded simple functions $f_n \: X\to \R$, $n\in \N$, convergent pointwise to $f$ as $n \to +\infty$. By Lemma~\ref{continuity of Q}, $\cQ f_n$ is uniformly bounded and convergent pointwise to $\cQ f$ as $n \to +\infty$. Thus,
		$\lim\limits_{n\to +\infty} \int_X \! f_n   \diff (\mu \cQ )=\int_X \! f  \diff (\mu \cQ )$ and $\lim\limits_{n\to +\infty} \int_Y \! \cQ f_n   \diff\mu =\int_Y \! \cQ f  \diff\mu$.
		
		Thereby, we can reduce Lemma~\ref{3w94g8r0q7gwd0awedcbx} to the case where $f$ is a simple function. Moreover, 
		%
		(\ref{linearity of Q}) allows us to further reduce Lemma~\ref{3w94g8r0q7gwd0awedcbx} to the case where $f$ is a characteristic function. Let us verify Lemma~\ref{3w94g8r0q7gwd0awedcbx} when $f= \mathbbold{1}_B$, $B \in \SAA (X)$:
		\begin{equation*}
			\int_X \! \mathbbold{1}_B   \diff (\mu \cQ )=(\mu \cQ) (B)=\int_Y \! \cQ (y,  B)  \diff\mu (y)=\int_Y \! \cQ \mathbbold{1}_B   \diff\mu.
		\end{equation*}
		Therefore, Lemma~\ref{3w94g8r0q7gwd0awedcbx} holds when $f$ is a bounded measurable function.
	\end{proof}

The following is immediate from definition (cf.~\cite[Lemma~3.3~(iii)]{Ka21}).
\end{comment}

\begin{lemma}   \label{l_Qun}
	For $\cQ \in \tpk( X )$, $n\in \N$, and $\mu \in \cP (X)$, if $B \in \SAA \bigl( X^{n+1} \bigr)$ and $\vect{A}{0}{n}  \in (\SAA (X))^{n+1}$, then
	\begin{align}
		\bigl( \mu \cQ^{[n]} \bigr)(B) & =\int_{X^n} \! \cQ \bigl(x_{n+1} , \pi_{n+1} \bigl( \vect{x}{2}{n+1}  ;  B \bigr) \bigr)   \diff \bigl(\mu \cQ^{\zeroto{n-1}} \bigr) \bigl( \vect{x}{2}{n+1} \bigr), \label{muQ^[n](B)}  \\
		\bigl( \mu \cQ^{[n]} \bigr)(A_0 \times \cdots \times A_n) &=\int_{A_0 \times \cdots \times A_{n-1}} \! \cQ (x_n , A_n)   \diff \bigl(\mu \cQ^{\zeroto{n-1}} \bigr) ( \vect{x}{1}{n} ).            \label{muQ^{n}(A_0**A_n)}
	\end{align}
\end{lemma}

\begin{comment}
\begin{proof}
	By Lemma~\ref{3w94g8r0q7gwd0awedcbx} and Definitions~\ref{transition probability kernel act on functions},~\ref{Q^[n]}, and~\ref{transition probability kernel act on measures}, the right-hand side of (\ref{muQ^[n](B)}) is equal to
	\begin{equation*}
		\int_X \! \biggl(\int_{X^n} \! \cQ (x_n , \pi_{n+1} ( \vect{x}{1}{n}  ;  B))   \diff \cQ_y^{\zeroto{n-1}} ( \vect{x}{1}{n}  )\biggr)   \diff \mu (y)
		= \int_X \! \cQ^{\zeroto{n}} (y,  B)   \diff \mu (y)
		= \bigl(\mu \cQ^{\zeroto{n}}\bigr) (B).
	\end{equation*}
	Therefore, (\ref{muQ^[n](B)}) holds, and~(\ref{muQ^{n}(A_0**A_n)}) follows by taking $B =A_0 \times \cdots \times A_n$ in (\ref{muQ^[n](B)}).
\end{proof}
\end{comment}

The following lemma is intuitively clear and straightforward to check. 

\begin{lemma}\label{l_QQnw}
	If $\cQ \in \tpk( X )$, $n\in \N$, $x\in X$, $A\in \SAA (X^\omega)$, $B \in \SAA (X^n)$, and $C \in \SAA (X)$, then
	\begin{align}
		\bigl(\cQ \cQ^{\zeroto{n-1}}\bigr) (x,  B) &= \cQ^{\zeroto{n}} (x,  X\times B),   \label{QQ^[n-1]=Q^[n](X*.)} \\
		\cQ (x , C) &=\cQ^{[1]} (x, X \times C),   \label{Q=Q^[1](X*.)}  \\
		(\cQ \cQ^\omega )(x,  A) &= \cQ^\omega (x,  X\times A).   \label{QQ^N(x,A)=Q^N(x,X*A)}
	\end{align}
\end{lemma}

We have the following corollary from Lemma~\ref{l_QQnw} and Definition~\ref{transition probability kernel act on measures}.

\begin{cor}  \label{c:muQ}
	If $\cQ \in \tpk( X )$, $n\in \N$, $B \in \SAA (X^n)$, and $\mu \in \cP (X)$, then
    \begin{equation}\label{muQQ^[n-1](B)=muQ^[n](X*B)}
        \bigl( \mu \cQ \cQ^{\zeroto{n-1}} \bigr) (B)= \bigl( \mu \cQ^{\zeroto{n}} \bigr) (X\times B).
    \end{equation}
    Moreover, if $A \in \SAA (X^\omega)$, then $(\mu \cQ \cQ^\omega ) (A)= (\mu \cQ^\omega) (X\times A)$. Additionally, if $\mu$ is $\cQ$-invariant, then
    \begin{equation}\label{muQ^omega(B)=muQ^omega(X*B)}
        (\mu \cQ^\omega) (A) = (\mu \cQ^\omega) (X \times A).
    \end{equation}
\end{cor}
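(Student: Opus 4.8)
The plan is to derive each of the three displayed identities by integrating against $\mu$ the corresponding pointwise statements in the base variable $x$, all of which are already recorded in Lemma~\ref{l_QQnw}; the sole conversion tool is the definition of the pushforward measure in Definition~\ref{transition probability kernel act on measures}, which reads $(\mu\cR)(C)=\int_X \cR(x,C)\diff\mu(x)$ for a transition probability kernel $\cR$.

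First I would establish the finite-level identity (\ref{muQQ^[n-1](B)=muQ^[n](X*B)}). Reading $\mu\cQ\cQ^{\zeroto{n-1}}$ as the pushforward of $\mu$ by the composite kernel $\cQ\cQ^{\zeroto{n-1}}$ (legitimate since the law of association, Lemma~\ref{law of association for measures}, lets us drop parentheses), Definition~\ref{transition probability kernel act on measures} gives $(\mu\cQ\cQ^{\zeroto{n-1}})(B)=\int_X (\cQ\cQ^{\zeroto{n-1}})(x,B)\diff\mu(x)$. Substituting the pointwise identity (\ref{QQ^[n-1]=Q^[n](X*.)}), namely $(\cQ\cQ^{\zeroto{n-1}})(x,B)=\cQ^{\zeroto{n}}(x,X\times B)$, turns the right-hand side into $\int_X \cQ^{\zeroto{n}}(x,X\times B)\diff\mu(x)$, which is exactly $(\mu\cQ^{\zeroto{n}})(X\times B)$ by Definition~\ref{transition probability kernel act on measures} once more. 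The second identity $(\mu\cQ\cQ^\omega)(A)=(\mu\cQ^\omega)(X\times A)$ is obtained by the identical three-line computation, with the infinite-level pointwise identity (\ref{QQ^N(x,A)=Q^N(x,X*A)}) playing the role of (\ref{QQ^[n-1]=Q^[n](X*.)}).

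Finally, for (\ref{muQ^omega(B)=muQ^omega(X*B)}) I would bring in $\cQ$-invariance. By Definition~\ref{invariant measure} we have $\mu\cQ=\mu$, so associativity gives $\mu\cQ\cQ^\omega=(\mu\cQ)\cQ^\omega=\mu\cQ^\omega$. Feeding this into the second identity yields $(\mu\cQ^\omega)(A)=(\mu\cQ\cQ^\omega)(A)=(\mu\cQ^\omega)(X\times A)$, which is the claim.

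I do not anticipate a substantive obstacle: the corollary is a clean measure-theoretic transcription of kernel identities that were already proven pointwise in Lemma~\ref{l_QQnw}. The only places demanding a moment of care are the bracket-free reading of the iterated composition, which is exactly what Lemma~\ref{law of association for measures} authorizes, and the applicability of Definition~\ref{transition probability kernel act on measures}, which requires the integrands $x\mapsto\cQ^{\zeroto{n}}(x,X\times B)$ and $x\mapsto\cQ^\omega(x,X\times A)$ to be measurable and bounded; both are automatic because $\cQ^{\zeroto{n}}$ and $\cQ^\omega$ are transition probability kernels taking values in $[0,1]$.
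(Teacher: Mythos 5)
Your proposal is correct and follows essentially the same route as the paper, which derives this corollary directly from Lemma~\ref{l_QQnw} together with Definition~\ref{transition probability kernel act on measures}, exactly as you do by integrating the pointwise kernel identities against $\mu$ and invoking associativity plus $\mu\cQ=\mu$ for the invariant case. The measurability/boundedness remark at the end is a nice touch but, as you note, automatic from the definition of a transition probability kernel.
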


If we take $n=1$ in (\ref{muQQ^[n-1](B)=muQ^[n](X*B)}), we get

\begin{equation}\label{muQ^[1]circhpi_2^-1=muQ}
    \bigl(\mu \cQ^{\zeroto{1}}\bigr) \circ \tpi_2 =\mu \cQ.
\end{equation}

\begin{lemma}\label{203q9j}
    Let $\cQ \in \tpk( X )$, $n\in \N_0$, and $f\: X^{n+2} \to \R$ be a measurable function. Then $\cQ^{\zeroto{n}}_y (\{ y \} \times X^n)=1$ and, for each $y\in X$,
    \begin{equation*}
    		    \int_{X^{n+1}} \! f(x_0 ,  x_0 ,  x_1 ,  \dots ,  x_n)   \diff \cQ^{\zeroto{n}}_y ( \vect{x}{0}{n}  ) 
    		    =\int_{X^{n+1}} \! f(y ,  x_0 ,  x_1 ,  \dots ,  x_n)   \diff \cQ^{\zeroto{n}}_y ( \vect{x}{0}{n} ) .
    \end{equation*}
\end{lemma}

\begin{proof}
    By Lemma~\ref{continuity of Q}, for each $y\in X$, 
    $\cQ^{\zeroto{n}}_y (\{ y \} \times X^n) =\cQ^{\zeroto{0}} (y ,  \{ y \}) =\wh{\operatorname{id}_X} (y ,  \{ y \})=1$.
    Hence,  
    \begin{align*}
        & \int_{X^{n+1}} \! f(x_0 ,  x_0 ,  \dots ,  x_n)   \diff \cQ^{\zeroto{n}}_y ( \vect{x}{0}{n} )
         =\int_{\{ y \} \times X^n} \! f(x_0 ,  x_0 ,  \dots ,  x_n)   \diff \cQ^{\zeroto{n}}_y ( \vect{x}{0}{n} )    \\
        &\qquad =\int_{\{ y \} \times X^n} \! f(y ,  x_0 ,  \dots ,  x_n)   \diff \cQ^{\zeroto{n}}_y ( \vect{x}{0}{n} ) 
         =\int_{X^{n+1}} \! f(y ,  x_0 ,  \dots ,  x_n)   \diff \cQ^{\zeroto{n}}_y ( \vect{x}{0}{n} ) . \qedhere
    \end{align*}
\end{proof}

\begin{lemma} \label{l:int_O_+infty(T)tphidmuQ^N|_T=intphidmu}
	For $\cQ \in \tpk( X )$, $\phi \in B \bigl(X^2 , \R \bigr)$, and $\mu \in \cP (X)$, we have
	\begin{equation}\label{int_O_+infty(T)tphidmuQ^N|_T=intphidmu}
		\begin{aligned}
			\int_{X^\omega} \! \phi (x_1 ,  x_2)   \diff (\mu \cQ^\omega) (\vect{x}{1}{\infty}) 
			=\int_{X^2} \! \phi   \diff \bigl(\mu \cQ^{\zeroto{1}}\bigr)
			= \int_{X} \! \int_{X} \! \phi (x_1 ,  x_2)   \diff \cQ_{x_1} (x_2)   \diff \mu (x_1).			
		\end{aligned}
	\end{equation}
\end{lemma}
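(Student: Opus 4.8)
The plan is to prove the two asserted equalities in turn, in each case establishing the identity first for indicator functions and then extending to an arbitrary bounded measurable $\phi$ by linearity together with the bounded convergence theorem.

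For the second equality I would begin by recording the explicit form of the kernel $\cQ^{\zeroto{1}}$. Since $\cQ^{\zeroto{0}}_x =\delta_x$, equation (\ref{cQ^n+1(x,A_n+1)=}) gives $\cQ^{\zeroto{1}} (x ,  E) =\cQ (x ,  \pi_2 (x ;  E))$ for every $E\in \SAA (X^2)$, so by Definition~\ref{transition probability kernel act on measures},
\begin{equation*}
	\bigl(\mu \cQ^{\zeroto{1}}\bigr) (E) =\int_X \! \cQ (x ,  \pi_2 (x ;  E))   \diff \mu (x) =\int_X \! \cQ_x (\{ x_2 :  (x ,  x_2) \in E \})   \diff \mu (x).
\end{equation*}
Evaluating at a measurable rectangle $E =A_0 \times A_1$ shows that $\mu \cQ^{\zeroto{1}}$ agrees, on the generating $\pi$-system of rectangles, with the set function $\nu (E) \= \int_X \! \int_X \! \mathbbold{1}_E (x_1 ,  x_2)   \diff \cQ_{x_1} (x_2)   \diff \mu (x_1)$, whose value on $A_0 \times A_1$ is $\int_{A_0} \! \cQ_{x_1} (A_1)   \diff \mu (x_1)$. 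The set function $\nu$ is itself a Borel probability measure on $X^2$ (countable additivity following from the bounded convergence theorem), so by Dynkin's $\pi$-$\lambda$ theorem $\nu =\mu \cQ^{\zeroto{1}}$. Since $\int_{X^2} \! \phi   \diff \nu =\int_X \! \int_X \! \phi (x_1 ,  x_2)   \diff \cQ_{x_1} (x_2)   \diff \mu (x_1)$ for bounded measurable $\phi$ (again by passing from indicators through simple functions), the second equality follows.

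For the first equality I would view the integrand $\phi (x_1 ,  x_2)$ on $X^\omega$ as $\phi \circ \tpi_{12}$, with $\tpi_{12}$ the projection in (\ref{tpi=}), so that the change-of-variables formula for pushforward measures yields $\int_{X^\omega} \! \phi \circ \tpi_{12}   \diff (\mu \cQ^\omega) =\int_{X^2} \! \phi   \diff \bigl( (\mu \cQ^\omega) \circ \tpi_{12}^{-1}\bigr)$. Because $\tpi_{12}^{-1} (A) =A \times X^\omega$ for every $A\in \SAA (X^2)$, equation (\ref{muQ^N(A*X^infty)=muQ^[n](A)}) with $n=1$ gives $\bigl( (\mu \cQ^\omega) \circ \tpi_{12}^{-1}\bigr) (A) =(\mu \cQ^\omega) (A \times X^\omega) =\bigl(\mu \cQ^{\zeroto{1}}\bigr) (A)$, so the two measures coincide and the first equality drops out immediately.

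I expect the only point demanding genuine care to be the measurability of the inner integral $x_1 \mapsto \int_X \! \phi (x_1 ,  x_2)   \diff \cQ_{x_1} (x_2)$, which is what makes the right-hand side meaningful. For $\phi =\mathbbold{1}_{A_0 \times A_1}$ it equals $\mathbbold{1}_{A_0} (x_1) \cQ (x_1 ,  A_1)$ and is measurable by Definition~\ref{transition probability kernel}, and the general case follows from the same monotone-class reasoning used to pass from rectangles to bounded measurable functions, so this ultimately presents no real obstacle.
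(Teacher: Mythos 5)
Your proof is correct. For the first equality you do exactly what the paper does — invoke (\ref{muQ^N(A*X^infty)=muQ^[n](A)}) with $n=1$ — merely making explicit the change-of-variables step under the projection $\tpi_{12}$ that the paper leaves implicit. For the second equality, however, your route genuinely differs. The paper argues at the level of functions: it applies the adjoint identity of Proposition~\ref{3w94g8r0q7gwd0awedcbx} to the kernel $\cQ^{\zeroto{1}}$, giving $\int_{X^2} \phi \diff\bigl(\mu\cQ^{\zeroto{1}}\bigr) = \int_X \cQ^{\zeroto{1}}\phi \diff\mu$, and then evaluates $\cQ^{\zeroto{1}}\phi(y)$ using the facts that $\cQ^{\zeroto{1}}_y$ is carried by $\{y\}\times X$ (Lemma~\ref{203q9j}) and has second marginal $\cQ_y$ (equation (\ref{Q=Q^[1](X*.)}) in Lemma~\ref{l_QQnw}). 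You instead argue at the level of measures: you compute $\cQ^{\zeroto{1}}$ explicitly from the inductive formula (\ref{cQ^n+1(x,A_n+1)=}), identify $\mu\cQ^{\zeroto{1}}$ with the double-integral measure $\nu$ via agreement on rectangles plus Dynkin's $\pi$-$\lambda$ theorem, and then run the indicator–simple–bounded extension yourself. The paper's route buys brevity and reuse: its three cited lemmas already encapsulate all the monotone-class work, including the measurability of $y \mapsto \cQ^{\zeroto{1}}\phi(y)$ (Lemma~\ref{continuity of Q}), so no approximation argument is repeated. Your route buys self-containedness and transparency — it exhibits $\mu\cQ^{\zeroto{1}}$ directly as the disintegration measure — at the cost of redoing the standard machine, in particular the measurability of the section integral, which you correctly single out as the only delicate point and which the paper settled once and for all inside the Dynkin argument in the proof of Lemma~\ref{Q^[n] is transition probability kernel}.
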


\begin{proof}
	By taking $n=1$ in (\ref{muQ^N(A*X^infty)=muQ^[n](A)}), we get $\int_{X^\omega} \! \phi (x_1 ,  x_2)   \diff (\mu \cQ^\omega) ( \vect{x}{1}{\infty} ) 
	=\int_{X^2} \! \phi   \diff \bigl(\mu \cQ^{\zeroto{1}}\bigr)$.
	
	Moreover, by Lemma~\ref{3w94g8r0q7gwd0awedcbx} and Definition~\ref{transition probability kernel act on functions}, we have
	\begin{equation*}\label{3g78oybc348ec}
		\int_{X^2} \! \phi   \diff \bigl(\mu \cQ^{\zeroto{1}}\bigr) =\int_X \! \cQ^{[1]} \phi \diff \mu =\int_X \! \int_{X^2} \! \phi (x_1 ,x_2) \diff \cQ^{[1]}_y (x_1 ,x_2) \diff \mu (y)  .
	\end{equation*}
	By Lemma~\ref{203q9j} and~(\ref{Q=Q^[1](X*.)}) in Lemma~\ref{l_QQnw},
	$\int_{X^2} \! \phi (x_1 ,x_2) \diff \cQ^{[1]}_y (x_1 ,x_2) =\int_{X^2} \! \phi (y ,x_2) \diff \cQ^{[1]}_y (x_1 ,x_2) =\int_X \! \phi (y, x_2) \diff \cQ_y (x_2)$.
	Therefore,  
	$\int_{X^2} \! \phi   \diff \bigl(\mu \cQ^{\zeroto{1}}\bigr)
			= \int_{X} \! \int_{X} \! \phi (x_1 ,  x_2)   \diff \cQ_{x_1} (x_2)   \diff \mu (x_1)$.
\end{proof}

\begin{lemma}\label{A9}
	For $\cQ, \, \cR \in \tpk( X )$, and $\mu \in \cP (X)$, if $\mu \cQ^{[1]} =\bigl(\mu \cR^{[1]}\bigr) \circ \gamma_2^{-1}$, then $\mu \in \MMM (X ,\cQ) \cap \MMM (X ,\cR)$ and $\mu \cQ^{[n]} =\bigl(\mu \cR^{[n]}\bigr) \circ \gamma_{n+1}^{-1}$ for all $n\in \N$.
\end{lemma}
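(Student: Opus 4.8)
The plan is to treat the two assertions separately: the invariance of $\mu$ will drop straight out of the marginal identities already established, while the reversal identity $\mu\cQ^{[n]}=(\mu\cR^{[n]})\circ\gamma_{n+1}^{-1}$ will be proved by induction on $n$, with the hypothesis serving as the base case. First I would dispose of $\mu\in\cM(X,\cQ)\cap\cM(X,\cR)$ by pushing the hypothesis $\mu\cQ^{[1]}=(\mu\cR^{[1]})\circ\gamma_2^{-1}$ forward under the two coordinate projections, using the trivial identities $\tpi_1\circ\gamma_2=\tpi_2$ and $\tpi_2\circ\gamma_2=\tpi_1$. Applying $\tpi_1$ turns the left side into $\mu$ by Corollary~\ref{cor_muQ^[1]circhpi_1^-1=mu} and the right side into $(\mu\cR^{[1]})\circ\tpi_2^{-1}=\mu\cR$ by (\ref{muQ^[1]circhpi_2^-1=muQ}), giving $\mu=\mu\cR$; applying $\tpi_2$ symmetrically gives $\mu\cQ=\mu$. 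This settles the invariance claim in a few lines.

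For the reversal identity I would first record its test-function form: by (\ref{muQ^{n}(A_0**A_n)}) and Dynkin's $\pi$-$\lambda$ theorem, $\mu\cQ^{[n]}=(\mu\cR^{[n]})\circ\gamma_{n+1}^{-1}$ is equivalent to the statement $R_n$ that $\int f\diff(\mu\cQ^{[n]})=\int (f\circ\gamma_{n+1})\diff(\mu\cR^{[n]})$ for all bounded measurable $f$, that is, that the iterated $\cQ$-integral of $f(x_0,\dots,x_n)$ equals the iterated $\cR$-integral of $f(y_n,\dots,y_0)$. In particular $R_1$ is the compact reversibility relation
\[
\int_X\!\int_X\! f(x_0,x_1)\,\diff\cQ_{x_0}(x_1)\,\diff\mu(x_0)
=\int_X\!\int_X\! f(y_1,y_0)\,\diff\cR_{y_0}(y_1)\,\diff\mu(y_0).
\]
The induction $R_n\Rightarrow R_{n+1}$ would go as follows. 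Write $\mu\cQ^{[n+1]}$ through the last-step recursion (\ref{cQ^n+1(x,A_n+1)=}), appending a single $\cQ$-step $\cQ(x_n,\cdot)$ to an orbit $(x_0,\dots,x_n)\sim\mu\cQ^{[n]}$; then apply $R_n$ to the resulting bounded function of $(x_0,\dots,x_n)$, which reverses that orbit into an $\cR$-orbit $(y_0,\dots,y_n)\sim\mu\cR^{[n]}$ with $x_n=y_0$. The leftover $\cQ$-step now acts precisely from the base point $y_0$ of the reversed chain, so invoking $R_1$ converts it into an $\cR$-step and reattaches it as a new terminal coordinate $y_{n+1}$; one reads off $(x_0,\dots,x_{n+1})=\gamma_{n+2}(y_0,\dots,y_{n+1})$ with $(y_0,\dots,y_{n+1})\sim\mu\cR^{[n+1]}$, which is exactly $R_{n+1}$.

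The technical heart, and the step I expect to demand the most care, is the rearrangement of iterated kernel integrals that detaches the leftover $\cQ$-step from the body of the reversed $\cR$-chain and glues it back on at the end. Concretely, I must disintegrate $\mu\cR^{[n]}$ over its first coordinate $y_0$ (whose marginal is $\mu$) and reorder the integrals so that the $\diff\mu(y_0)$ and $\cQ_{y_0}$ factors sit together, making $R_1$ applicable to the pair $(y_0,x_{n+1})$ while the remaining coordinates ride along unchanged. Since the underlying measurable spaces need not be standard, I will avoid abstract regular conditional probabilities and rely only on the explicit iterated-integral representation of $\mu\cQ^{[n]}$ and $\mu\cR^{[n]}$ furnished by Definition~\ref{Q^[n]}, Lemma~\ref{l_Qun}, and Proposition~\ref{3w94g8r0q7gwd0awedcbx}; because every kernel involved is a probability kernel and every integrand is bounded and measurable, Tonelli's theorem justifies all the reorderings. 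Reducing $R_{n+1}$ to bounded measurable test functions (equivalently, to product sets $A_0\times\cdots\times A_{n+1}$) at the outset keeps the whole argument purely integral-theoretic and sidesteps any measurability issue with the reversal maps $\gamma_{n+1}$.
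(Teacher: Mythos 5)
Your proposal is correct and takes essentially the same route as the paper's proof: invariance is obtained by pushing the hypothesis forward under the two coordinate projections, and the reversal identity is proved by induction on $n$, where in the inductive step one peels off the last step of one chain, applies the induction hypothesis to reverse the body, applies the $n=1$ hypothesis to flip the peeled step, and reassembles via the prepend identities (Lemma~\ref{l_QQnw} together with Lemma~\ref{203q9j}) and Fubini-type rearrangements. The only differences are cosmetic: you phrase the induction with bounded test functions and peel the last $\cQ$-step, whereas the paper works directly with product sets $A_0\times\cdots\times A_n$ and peels the last $\cR$-step — mirror-image choices that are equivalent under the symmetry of the hypothesis.
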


\begin{proof}
	Since $\mu \cQ^{[1]} =\bigl(\mu \cR^{[1]}\bigr) \circ \gamma_2^{-1}$, we have $\bigl(\mu \cQ^{[1]}\bigr) \circ \tpi_1 =\bigl(\mu \cR^{[1]}\bigr) \circ \tpi_2$ and $\bigl(\mu \cQ^{[1]}\bigr) \circ \tpi_2 =\bigl(\mu \cR^{[1]}\bigr) \circ \tpi_1$. Thus, by Lemma~\ref{l_muQ^[1]circhpi_1^-1=mu} and  (\ref{muQ^[1]circhpi_2^-1=muQ}), we have $\mu =\mu \cR$ and $\mu \cQ =\mu$, i.e., $\mu \in \MMM (X ,\cQ) \cap \MMM (X ,\cR)$.
	
	Now we prove $\mu \cQ^{[n]} =\bigl(\mu \cR^{[n]}\bigr) \circ \gamma_{n+1}^{-1}$ by induction on $n\in \N$. The case $n=1$ holds by hypothesis.
	
	Suppose $\mu \cQ^{[n-1]} =\bigl(\mu \cR^{[n-1]}\bigr) \circ \gamma_n^{-1}$ holds for some $n\geq 2$. To show $\mu \cQ^{[n]} =\bigl(\mu \cR^{[n]}\bigr) \circ \gamma_{n+1}^{-1}$, it suffices to prove $\bigl(\mu \cQ^{[n]} \bigr) (A_0 \times \cdots \times A_n)=\bigl(\mu \cR^{[n]}\bigr) (A_n \times \cdots \times A_0)$ for all $\vect{A}{0}{n}  \in (\SAA (X))^{n+1}$.
	
	Fix arbitrary $\vect{A}{0}{n}  \in (\SAA (X))^{n+1}$. Write $A_i^n \= A_i \times \cdots \times A_n$, $A_n^i \= A_n \times \cdots \times A_i$, $\udx_i^n \= (x_i , \dots ,x_n)$, and $\udx_n^i \= (x_n ,\dots ,x_i)$ for $i=0$ and $i=1$.
	\begin{alignat*}{3}
		\bigl(\mu \cR^{[n]}\bigr) (A_n^0) 
		& =\int_{A_n^1} \! \cR (x_1 ,A_0) \diff \bigl( \mu \cR^{[n-1]}\bigr) (\udx_n^1) & \text{(by (\ref{muQ^{n}(A_0**A_n)}))}\\
		& = \int_{A_1^n} \! \cR (x_1 ,A_0) \diff \bigl( \mu \cQ^{[n-1]}\bigr) (\udx_1^n) & \text{(by } \mu \cQ^{[n-1]} =\bigl(\mu \cR^{[n-1]}\bigr) \circ \gamma_{n-1}^{-1} \text{)}\\
		& = \int_X \!\ \int_{A_1^n} \! \cR_{x_1} (A_0) \diff \cQ^{[n-1]}_y (\udx_1^n) \diff \mu (y) & \text{(by Lemma~\ref{3w94g8r0q7gwd0awedcbx} and Definition~\ref{transition probability kernel act on functions})}\\
		& = \int_X \! \cR_y (A_0) \cQ^{[n-1]}_y (A_1^n) \diff \mu (y) & \text{(by Lemma~\ref{203q9j})}\\
		& = \int_{X\times A_0} \! \cQ^{[n-1]}_y (A_1^n) \diff \bigl( \mu \cR^{[1]} \bigr) (y ,x_0) & \text{(by (\ref{int_O_+infty(T)tphidmuQ^N|_T=intphidmu}))}\\
		& = \int_{A_0 \times X} \! \cQ^{[n-1]}_y (A_1^n) \diff \bigl( \mu \cQ^{[1]} \bigr) (x ,y) & \text{(by } \mu \cQ^{[1]} =\bigl(\mu \cR^{[1]}\bigr) \circ \gamma_2^{-1} \text{)}\\
		& = \int_{A_0} \! \int_X \! \cQ^{[n-1]}_y (A_1^n) \diff \cQ_x (y) \diff \mu (x) & \text{(by (\ref{int_O_+infty(T)tphidmuQ^N|_T=intphidmu}))}\\
		& = \int_{A_0} \! \bigl(\cQ \cQ^{[n-1]}\bigr) (x, A_1^n) \diff \mu (x) & \text{(by Definitions~\ref{transition probability kernel act on measures} and~\ref{compose of transition probability kernels})}\\
		& = \int_{A_0} \! \cQ^{[n]} (x, X\times A_1^n) \diff \mu (x) & \text{(by (\ref{QQ^[n-1]=Q^[n](X*.)}))}\\
		& = \int_X \! \cQ^{[n]} (x, A_0 \times A_1^n) \diff \mu (x) & \text{(by } \cQ^{[n]} (x , \{x\} \times X^n) =1 \text{ in Lemma~\ref{203q9j})}\\
		& = \bigl(\mu \cQ^{[n]}\bigr) (A_0^n) & \text{(by Definition~\ref{transition probability kernel act on measures})}.
	\end{alignat*}

	Hence, we conclude $\mu \cQ^{[n]} =\bigl(\mu \cR^{[n]}\bigr) \circ \gamma_{n+1}^{-1}$, and therefore, Lemma~\ref{A9} follows.
\end{proof}

\begin{comment}
	
\subsection{Conditional transition probability kernels}\label{subsct_Conditional transition probability kernels}

\end{comment}

In Sections~\ref{sct_Variational_principle_for_positively_RW-expansive_correspondences} and~\ref{sct_Thermodynamic_formalism_of_expansive_correspondences}, the following question is central: for a probability measure $\nu$ on $X^2$, how to find $\mu \in \PPP(X)$ and $\cQ\in \tpk( X )$ such that $\nu =\mu \cQ^{\zeroto{1}}$? The following proposition is standard from the \emph{disintegration} theory (cf.~\cite[Theorem~3.4]{Ka21}). 

\begin{prop}\label{decompose a two-dimentional measure}

    Let $X_1$ and $X_2$ be compact metric spaces, $M\neq \emptyset$ be a closed subset of $X_1 \times X_2$, $\nu \in \PPP (X_1 \times X_2)$ with $\nu (M)=1$, and $\kappa \: X_1 \times X_2 \to X_1$ be given by $\kappa (x_1 ,  x_2)=x_1$ for $x_1 \in X_1$ and $x_2 \in X_2$. Then there exists $\mu \in \PPP(X_1)$ and $\cQ \in \tpk( X_1, X_2 )$ with the following properties:
    \begin{itemize}
        \smallskip
        \item[(a)] $\cQ (x_1 ,  \{ x_2 \in X_2  :  (x_1 ,  x_2) \in M \}) =1$ for each $x_1 \in \kappa (M)$.
        \smallskip
        \item[(b)] $\nu (C)= \int_{X_1} \! \cQ (x_1 ,  \{ x_2 \in X_2  :  (x_1 ,  x_2) \in C \})   \diff \mu (x_1)$ for each $C\in \SBB (X_1 \times X_2)$.                
    \end{itemize}
    Moreover,  $\mu$ must be $\nu \circ \kappa^{-1}$, and $\cQ$ is unique in the sense that if both $\mu,\, \cQ$ and $\mu,\, \cQ '$ satisfy properties~(a) and~(b), then $\cQ (x_1,  B)= \cQ '(x_1,  B)$ for $\mu$-almost every $x_1 \in X_1$ and all $B \in \SBB (X_2)$.
\end{prop}

\begin{rem}\label{two equivalent conditions for the conditional transition probability kernel}
    We list three properties equivalent to property~(b) in Proposition~\ref{decompose a two-dimentional measure} for the Borel probability measure $\mu$ on $X_1$ and the transition probability kernel $\cQ$ from $X_1$ to $X_2$:
    \begin{itemize}
        \smallskip
        \item[(b1)] For each $A\in \SBB (X_1)$ and each $B\in \SBB (X_2)$, the following equality holds:
        \begin{equation}\label{nu(AtimesB)=int_AQ(x_1,B)dmu(x_1)}
            \nu (A\times B)= \int_A \! \cQ (x_1 ,  B)   \diff \mu (x_1).
        \end{equation}
        \item[(b2)] There exist some $\pi$-systems $\fA_1 \subseteq \SBB (X_1)$ and $\fA_2 \subseteq \SBB (X_2)$ with the following property:
        \begin{enumerate}
            \smallskip
            \item[(i)] The $\sigma$-algebra generated by $\fA_i$ is $\SBB (X_i)$ for each $i\in \{1, \, 2\}$.
            \smallskip
            \item[(ii)] For each $A\in \fA_1$ and each $B\in \fA_2$, the equality (\ref{nu(AtimesB)=int_AQ(x_1,B)dmu(x_1)}) holds.
        \end{enumerate}
        \smallskip
        \item[(b3)] For each lower bounded Borel measurable function $f\: X_1 \times X_2 \to \R \cup \{+\infty \}$, we have
        \begin{equation}\label{int_X_1*X_2f(x_1,x_2)dnu(x_1,x_2)=int_X_1(int_X_2f(x_1,x_2)dQ_x_1(x_2))dmu(x_1)}
        \int_{X_1 \times X_2} \! f (x_1 ,  x_2)   \diff \nu (x_1 ,  x_2) = \int_{X_1} \! \biggl( \int_{X_2} \! f (x_1 ,  x_2)   \diff \cQ_{x_1} (x_2) \biggr)   \diff \mu (x_1).
        \end{equation}
    \end{itemize}

    The equivalence of properties~(b), (b1), and~(b2) can be verified by Dynkin's $\pi$-$\lambda$ theorem. Clearly (b3) implies (b). We explain why (b) implies (b3):

    Suppose (b) holds for $\mu$ and $\cQ$. Property~(b) implies that (\ref{int_X_1*X_2f(x_1,x_2)dnu(x_1,x_2)=int_X_1(int_X_2f(x_1,x_2)dQ_x_1(x_2))dmu(x_1)}) holds when $f$ is a characteristic function of an arbitrary Borel subset of $X_1 \times X_2$, and thus by Lemma~\ref{continuity of Q},  (\ref{int_X_1*X_2f(x_1,x_2)dnu(x_1,x_2)=int_X_1(int_X_2f(x_1,x_2)dQ_x_1(x_2))dmu(x_1)}) holds when $f$ is an arbitrary simple function on $X_1 \times X_2$. Because each lower bounded Borel measurable function on $X_1 \times X_2$ can be pointwise approached by an increasing sequence of bounded simple functions,  (\ref{int_X_1*X_2f(x_1,x_2)dnu(x_1,x_2)=int_X_1(int_X_2f(x_1,x_2)dQ_x_1(x_2))dmu(x_1)}) holds for all lower bounded Borel measurable functions $f \: X_1 \times X_2 \to \R \cup \{+\infty\}$.
\end{rem}

Let $X_1$ and $X_2$ be compact metric spaces, $M \neq \emptyset$ be a closed subset of $X_1 \times X_2$, and $\nu \in \PPP(X_1 \times X_2)$ be supported on $M$. Denote by $\kappa_1 \: X_1 \times X_2 \to X_1$ and $\kappa_2 \: X_1 \times X_2 \to X_2$ the projection maps given by $\kappa_1 (x_1 ,  x_2)=x_1$ and $\kappa_2 (x_1 ,  x_2)=x_2$, respectively, for all $x_1 \in X_1$ and $x_2 \in X_2$. Proposition~\ref{decompose a two-dimentional measure} ensures the notions defined in the following two definitions always exist.

\begin{definition}
    If $\cQ\in \tpk( X_1, X_2 )$ and the Borel probability measure $\mu =\nu \circ \kappa_1^{-1}$ on $X_1$ satisfy the two properties~(a) and~(b) in Proposition~\ref{decompose a two-dimentional measure}, then $\cQ$ is called a \defn{forward conditional transition probability kernel of $\nu$ from $X_1$ to $X_2$ supported on $M$}\index{forward conditional transition probability kernel}.
\end{definition}

\begin{definition}\label{backward conditional transition probability kernel}
    A transition probability kernel $\cQ \in \tpk( X_2, X_1 )$ is called a \defn{backward conditional transition probability kernel of $\nu$ from $X_2$ to $X_1$ supported on $M$}\index{backward conditional transition probability kernel} if it satisfies the following properties:
    \begin{itemize}
        \smallskip
        \item[(a)] $\cQ (x_2 ,  \{ x_1 \in X_1  :  (x_1 ,  x_2) \in M \}) =1$ for each $x_2 \in \kappa_2 (M)$.
        \smallskip
        \item[(b)] $\nu (C)= \int_{X_2} \! \cQ (x_2 ,  \{ x_1 \in X_2  :  (x_1 ,  x_2) \in C \})   \diff \bigl(\nu \circ \kappa_2^{-1}\bigr) (x_2)$ for each $C\in \SBB (X_1 \times X_2)$.
    \end{itemize}
\end{definition}

\begin{rem}\label{nu=muQ^[1]}
	If $X_1 =X_2$, then by (\ref{int_O_+infty(T)tphidmuQ^N|_T=intphidmu}) (in the case where $\phi$ is a characteristic function of a measurable subset of $X^2$), property~(b) in Proposition~\ref{decompose a two-dimentional measure} is equivalent to $\nu = \mu \cQ^{[1]}$. Similarly, (b) in Definition~\ref{backward conditional transition probability kernel} is equivalent to $\nu \circ \gamma_2^{-1} =\bigl(\nu \circ \kappa_2^{-1} \bigr) \cQ^{[1]}$, where $\gamma_2 (x ,y) =(y, x)$ for all $(x ,y) \in X^2$.
\end{rem}

\printindex


\begin{thebibliography}{1024}
%

\bibitem[AF90]{AF90}
\textsc{Aubin,~J.-P.} and \textsc{Frankowska,~H.},
\textit{Set-valued analysis},
Birkh\"auser, Boston, %
1990.%

\bibitem[AFL91]{AFL91}
\textsc{Aubin,~J.-P.}, \textsc{Frankowska,~H.}, and \textsc{Lasota,~A.},
Poincar\'e's recurrence theorem for set-valued dynamical systems.
\textit{Ann.\ Polon.\ Math.} 54 (1991), 85--91.%

\bibitem[BM10]{BM10} 
\textsc{Bonk,~M.} and \textsc{Meyer,~D.},
Expanding Thurston maps. Preprint, (arXiv:1009.3647v1), 2010.

\bibitem[BM17]{BM17} 
\textsc{Bonk,~M.} and \textsc{Meyer,~D.},
\textit{Expanding Thurston maps}, volume 225 of \textit{Math.\ Surveys Monogr.}, Amer.\ Math.\ Soc., Providence, RI, 2017.%


%
%
%
%

\bibitem[Bow75]{Bow75}
\textsc{Bowen,~R.},
\textit{Equilibrium states and the ergodic theory of Anosov diffeomorphisms},
volume~470 of \textit{Lecture Notes in Math.}, Springer, Berlin, 1975.%


\bibitem[Bu00]{Bu00}
\textsc{Bullett,~S.},
A combination theorem for covering correspondences and an application to mating polynomial maps with Kleinian groups.
\textit{Conform.\ Geom.\ Dyn.} 4 (2000), 75--96.%


\bibitem[BF05]{BF05}
\textsc{Bullett,~S.} and \textsc{Freiberger,~M.},
Holomorphic correspondences mating Chebyshev-like maps with Hecke groups.
\textit{Ergodic Theory Dynam.\ Systems} 25 (2005), 1057--1090.%

\bibitem[BH07]{BH07}
\textsc{Bullett,~S.} and \textsc{Ha\"{i}ssinsky,~P.},
Pinching holomorphic correspondences.
\textit{Conform.\ Geom.\ Dyn.} 11 (2007), 65--89.

\bibitem[BL20]{BL20}
\textsc{Bullett,~S.} and \textsc{Lomonaco,~L.},
Mating quadratic maps with the modular group II.
\textit{Invent.\ Math.} 220 (2020), 185--210.%

\bibitem[BL22]{BL22}
\textsc{Bullett,~S.} and \textsc{Lomonaco,~L.},
Dynamics of modular matings.
\textit{Adv.\ Math.} 410 (2022), 108758.

\bibitem[BL24]{BL24}
\textsc{Bullett,~S.} and \textsc{Lomonaco,~L.},
Mating quadratic maps with the modular group III: The modular Mandelbrot set.
\textit{Adv.\ Math.} 458 (2024), 109956.

\bibitem[BP94]{BP94}
\textsc{Bullett,~S.} and \textsc{Penrose,~C.},
Mating quadratic maps with the modular group.
\textit{Invent.\ Math.} 115 (1994), 483--511.%

%
%
%
%

\bibitem[BP01]{BP01}
\textsc{Bullett,~S.} and \textsc{Penrose,~C.},
Regular and limit sets for holomorphic correspondences.
\textit{Fund.\ Math.} 167 (2001), 111--171.

\bibitem[CPMP08]{CPMP08}
\textsc{Chinchuluun,~A.}, \textsc{Pardalos,~P.M.}, \textsc{Migdalas,~A.}, and \textsc{Pitsoulis,~L.},
\textit{Pareto optimality, game theory and equilibria},
Springer, New York, %
2008.%


\bibitem[CP16]{CP16}
\textsc{Cordeiro,~W.} and \textsc{Pac\'ifico,~M.J.},
Continuum-wise expansiveness and specification for set-valued functions and topological entropy.
\textit{Proc.\ Amer.\ Math.\ Soc.} 144 (2016), 4261--4271.%


\bibitem[DKW20]{DKW20}
\textsc{Dinh,~T.C.}, \textsc{Kaufmann,~L.}, and \textsc{Wu,~Hao},
Dynamics of holomorphic correspondences on Riemann surfaces.
\textit{Internat.\ J.\ Math.} 31 (2020), 2050036.

\bibitem[Do68]{Do68}
\textsc{Dobruschin,~R.L.},
The description of a random field by means of conditional probabilities and conditions for its regularity.
\textit{Theory Probab.\ Appl.} 13 (1968), 197--224.

\bibitem[Fa29]{Fa29}
\textsc{Fatou,~P.},
Notice sur les travaux scientifiques de MP Fatou, 1929.

%
%
%
%

\bibitem[Fo88]{Fo88}
\textsc{Forbus,~K.D.},
Chapter~7 -- \textit{Qualitative Physics: Past, Present, and Future},
Exploring Artificial Intelligence Survey Talks from the National Conferences on Artificial Intelligence 1988, Pages 239--296.


%
%
%
%


\bibitem[Ha02]{Ha02}
\textsc{Hadamard,~J.},
Sur les probl\`{e}mes aux d\'{e}riv\'{e}es partielles et leur signification physique.
\textit{Princeton University Bulletin} 13 (1902), 49--52.

\bibitem[HP09]{HP09}
\textsc{Ha\"{\i}ssinsky,~P.} and \textsc{Pilgrim,~K.M.},
Coarse expanding conformal dynamics.
\textit{Ast\'{e}risque} 325 (2009).

\bibitem[IM06]{IM06}
\textsc{Ingram,~W.T.} and \textsc{Mahavier,~W.S.},
Inverse limits of upper semi-continuous set valued functions (English summary).
\textit{Houston J.\ Math.} 32 (2006), 119--130.

\bibitem[Ka21]{Ka21}
\textsc{Kallenberg,~A.},
\textit{Foundations of modern probability}, 3rd ed.,
volume~99 of \textit{Probab.\ Theory Stoch.\ Model.},
Springer, Cham, 2021.

\bibitem[KT17]{KT17}
\textsc{Kelly,~J.P.} and \textsc{Tennant,~T.},
Topological entropy of set-valued functions.
\textit{Houston J.\ Math.} 43 (2017), 263--282.


%
%
%
%

\bibitem[Ku58]{Ku58}
\textsc{Kuratowski,~K.},
\textit{Topologie, volume~I},
PWN---Polish Scientific Publishers, Warsaw, 1958.


\bibitem[Le16]{Le16}
\textsc{Le Gall,~J.-F.},
\textit{Brownian motion, martingales, and stochastic calculus},
Springer ,%
Cham, 2016.%

\bibitem[LLMM21]{LLMM21}
\textsc{Lee,~S.}, \textsc{Lyubich,~M.Yu.}, \textsc{Makarov,~N.G.}, and \textsc{Mukherjee,~S.},
Schwarz reflections and anti-holomorphic correspondences.
\textit{Adv.\ Math.} 385 (2021), 107766.


\bibitem[LMM24]{LMM24}
\textsc{Lyubich,~M.Yu.}, \textsc{Mazor,~J.}, and \textsc{Mukherjee,~S.},
Antiholomorphic correspondences and mating I: Realization theorems.
\textit{Comm.\ Amer.\ Math.\ Soc.} 4 (2024), 495--547.

\bibitem[LM97]{LM97}
\textsc{Lyubich,~M.Yu.} and \textsc{Minsky,~Y.},
Laminations in holomorphic dynamics.
\textit{J.\ Differential Geom.} 47 (1997), 17--94.

\bibitem[Ma23a]{Ma23a}
\textsc{Matus~de~la~Parra,~V.},
Equidistribution for matings of quadratic maps with the modular group.
\textit{Ergodic Theory Dynam.\ Systems} (2023), 1--29.

\bibitem[Ma23b]{Ma23b}
\textsc{Matus~de~la~Parra,~V.},
Entropy of compositions of covering correspondences.
Preprint, (arXiv:2310.14330), 2023.


\bibitem[Mc95]{Mc95}
\textsc{McMullen,~C.T.},
The classification of conformal dynamical systems.
In \textit{Current developments in mathematics}, S.T.~Yau et al.~(ed.), 
Cambridge, MA 1995, pp.~323--360.

\bibitem[Mc96]{Mc96}
\textsc{McMullen,~C.T.},
\textit{Renormalization and 3-manifolds which fiber over the circle},
Princeton Univ.\  Press, 1996.

\bibitem[MT12]{MT12}
\textsc{Meyn,~S.} and \textsc{Tweedie,~R.L.},
\textit{Markov chains and stochastic stability}, 2nd ed.,
Springer, London, %
2012.%

\bibitem[Mic56a]{Mic561}
\textsc{Michael,~E.},
Continuous selections I.
\textit{Ann.\ of Math.\ (2)} 63 (1956), 361--382.%

\bibitem[Mic56b]{Mic562}
\textsc{Michael,~E.},
Continuous selections II.
\textit{Ann.\ of Math.\ (2)} 64 (1956), 562--580.%

\bibitem[Mic57]{Mic57}
\textsc{Michael,~E.},
Continuous selections III.
\textit{Ann.\ of Math.\ (2)} 65 (1957), 375--390.%

\bibitem[Mil95]{Mil95}
\textsc{Miller,~W.M.},
Frobenius-Perron operators and approximation of invariant measures for set-valued dynamical systems.
\textit{Set-Valued Var.\ Anal.} 3 (1995), 181--194.


\bibitem[MA99]{MA99}
\textsc{Miller,~W.} and \textsc{Akin,~E.},
Invariant measures for set-valued dynamical systems.
\textit{Trans.\ Amer.\ Math.\ Soc.} 351 (1999), 1203--1225.%


\bibitem[MM23]{MM23}
\textsc{Mj,~M.} and \textsc{Mukherjee,~S.},
Combining rational maps and Kleinian groups via orbit equivalence.
\textit{Proc.\ Lond.\ Math.\ Soc.\ (3)} 126 (2023), 1740--1809.

\bibitem[PV17]{PV17}
\textsc{Pac\'ifico,~M.J.} and \textsc{Vieitez,~J.L.},
Expansiveness, Lyapunov exponents and entropy for set valued maps.
Preprint, (arXiv:1709.05739), 2017.%


\bibitem[Pa64]{Pa64}
\textsc{Parry,~W.},
Intrinsic Markov chains.
\textit{Trans.\ Amer.\ Math.\ Soc.} 112 (1964), 55--66.%

\bibitem[Pe93]{Pe93}
\textsc{Petrosyan,~L.A.},
\textit{Differential games of pursuit},
World Scientific, Singapore, 1993.

%
%
%
%

\bibitem[Po21]{Po21}
\textsc{Pommaret,~J.-F.},
Differential correspondences and control theory.
\textit{Advances in Pure Mathematics} 11 (2021), 835--882.

\bibitem[PU10]{PU10}
\textsc{Przytycki,~F.} and \textsc{Urba\'{n}ski,~M.},
\textit{Conformal fractals: ergodic theory methods},
Cambridge Univ.\ Press, Cambridge, 2010.%


\bibitem[RT18]{RT18}
\textsc{Raines,~E.} and \textsc{Tennant,~T.},
The specification property on a set-valued map and its inverse limit.
\textit{Houston J.\ Math.} 44 (2018), 665--677.


%
%
%
%

\bibitem[Ru78]{Ru78}
\textsc{Ruelle,~D.},
\textit{Thermodynamic formalism},
Addison-Wesley, Reading, MA, 1978.

\bibitem[Ru92]{Ru92}
\textsc{Ruelle,~D.},
Thermodynamic formalism of maps satisfying positive expansiveness and specification.
\textit{Nonlinearity} 5 (1992), 1223--1236.%

%
%
%
%

\bibitem[Sin72]{Sin72}
\textsc{Sinai,~Ya.G.},
Gibbs measures in ergodic theory.
\textit{Russian Math.\ Surveys} 27 (1972), 21--69.

\bibitem[Siq15]{Siq15}
\textsc{Siqueira,~C.},
Dynamics of holomorphic correspondences. Ph.D.\ thesis, University of S\~ao Paulo, 2015.%

\bibitem[Siq22]{Siq22}
\textsc{Siqueira,~C.},
Dynamics of hyperbolic correspondences.
\textit{Ergodic Theory Dynam.\ Systems} 42 (2022), 2661--2692.%

\bibitem[Siq23]{Siq23}
\textsc{Siqueira,~C.},
Hausdorff dimension of Julia sets of unicritical correspondences.
\textit{Proc.\ Amer.\ Math.\ Soc.} 151 (2023), 633--645.%

\bibitem[SS17]{SS17}
\textsc{Siqueira,~C.} and \textsc{Smania,~D.},
Holomorphic motions for unicritical correspondences.
\textit{Nonlinearity} 30 (2017), 3104--3125.%

\bibitem[Su85]{Su85}
\textsc{Sullivan,~D.P.},
Quasiconformal homeomorphisms and dynamics I: Solution of the Fatou-Julia problem on wandering domains.
\textit{Ann.\ of Math.\ (2)} 122 (1985), 401--418.%

\bibitem[VS22]{VS22}
\textsc{Vivas,~K.J.} and \textsc{Sirvent,~V.F.},
Metric entropy for set-valued maps.
\textit{Discrete Contin.\ Dyn.\ Syst.\ Ser.\ B} 27 (2022), 6589--6604.

\bibitem[Wa76]{Wa76}
\textsc{Walters,~P.},
A variational principle for the pressure of continuous transformations.
\textit{Amer.\ J.\ Math.} 17 (1976), 937--971.%

\bibitem[Wa82]{Wa82}
\textsc{Walters,~P.},
\textit{An introduction to ergodic theory},
Springer, New York, 1982.%

\bibitem[Wi70]{Wi70}
\textsc{Williams,~R.K.},
Some results on expansive mappings. 
\textit{Proc.\ Amer.\ Math.\ Soc.} 26 (1970), 655--663.%

\bibitem[Wu20]{Wu20}
\textsc{Wu,~Hao},
Dynamics of holomorphic maps and correspondences. Ph.D.\ thesis, National University of Singapore, 2020.%

%

%




\end{thebibliography}
\end{document}